\begin{document}
%
%
%

%
%

\theoremstyle{plain} 
\newtheorem{theorem}{Theorem}[section] 
\newtheorem{lemma}[theorem]{Lemma} 
\newtheorem{proposition}[theorem]{Proposition} 
\newtheorem{corollary}[theorem]{Corollary} 
\newtheorem{claim}[theorem]{Claim}

\theoremstyle{definition}
\newtheorem{definition}[theorem]{Definition} 
\newtheorem{example}[theorem]{Example}
\newtheorem{rem}[theorem]{Remark} 
\newtheorem{summary}[theorem]{Summary} 
\newtheorem{remark}[theorem]{Remark}
\newtheorem{convention}[theorem]{Convention}
\newtheorem*{question}{Question}
\newtheorem*{notation}{Notation}
\newtheorem*{acknowledgement}{Acknowledgements} 

\numberwithin{equation}{section}
\numberwithin{figure}{section}

%
%

\newcommand{\set}[1]{\lfloor #1\rceil}
\newcommand{\phin}[2]
{  \begin{array}{c} 
\labellist \small \hair 2pt 
\pinlabel {\scriptsize $#1$} [B] at 3 3
\pinlabel {\scriptsize $#2$} [B] at 37 3
\endlabellist
\includegraphics[scale=0.5]{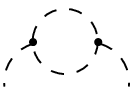}\\
\end{array}  }

\newcommand{\phio}[2]
{  \begin{array}{c} 
\labellist \small \hair 2pt 
\pinlabel {\scriptsize $#1$} [B] at 3 3
\pinlabel {\scriptsize $#2$} [B] at 37 3
\endlabellist
\includegraphics[scale=0.5]{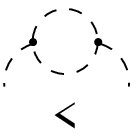}\\
\end{array}  }

\newcommand{\yn}[3]
{  \begin{array}{c} 
\labellist \small \hair 2pt 
\pinlabel {\scriptsize $#1$} [b] at 0 33
\pinlabel {\scriptsize $#2$} [b] at 27 0
\pinlabel {\scriptsize $#3$} [b] at 52 33
\endlabellist
\includegraphics[scale=0.5]{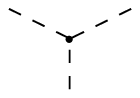}\\
\end{array}  }

\newcommand{\yo}[3]
{  \begin{array}{c} 
\labellist \small \hair 2pt 
\pinlabel {\scriptsize $#1$} [b] at 3 0
\pinlabel {\scriptsize $#2$} [b] at 27 0
\pinlabel {\scriptsize $#3$} [b] at 52 0
\endlabellist
\includegraphics[scale=0.5]{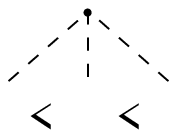}\\
\end{array}  }

\newcommand{\hn}[4]
{  \begin{array}{c} 
\labellist \small \hair 2pt 
\pinlabel {\scriptsize $#1$} [B] at 3 0
\pinlabel {\scriptsize $#2$} [B] at 27 0
\pinlabel {\scriptsize $#3$} [B] at 52 0
\pinlabel {\scriptsize $#4$} [B] at 78 0
\endlabellist
\includegraphics[scale=0.5]{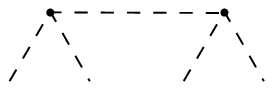}\\
\end{array}  }

\newcommand{\ho}[4]
{  \begin{array}{c} 
\labellist \small \hair 2pt 
\pinlabel {\scriptsize $#1$} [b] at 3 0
\pinlabel {\scriptsize $#2$} [b] at 27 0
\pinlabel {\scriptsize $#3$} [b] at 52 0
\pinlabel {\scriptsize $#4$} [b] at 78 0
\endlabellist
\includegraphics[scale=0.5]{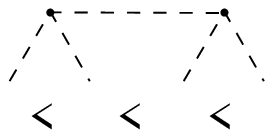}\\
\end{array}  }

\newcommand{\hob}[4]
{  \begin{array}{c} 
\labellist \small \hair 2pt 
\pinlabel {\scriptsize $#1$} [B] at 3 2
\pinlabel {\scriptsize $#2$} [B] at 29 2
\pinlabel {\scriptsize $#3$} [B] at 54 2
\pinlabel {\scriptsize $#4$} [B] at 80 2
\endlabellist
\includegraphics[scale=0.5]{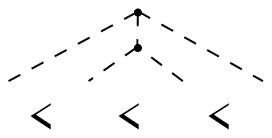}\\
\end{array}  }

\newcommand{\figtotext}[3]{\begin{array}{c}\includegraphics[width=#1pt,height=#2pt]{#3}\end{array}}

\newcommand{\thetagraph}{\hspace{-0.15cm} \figtotext{12}{12}{theta} \hspace{-0.15cm}}

\newcommand{\ie}{i.e.\ }

\newcommand{\clocase}[1]{\wideparen{#1}} 

\newcommand{\fset}[1]{\lfloor #1\rceil}

%
%

\newcommand{\Q}{\mathbb{Q}}
\newcommand{\R}{\mathbb{R}}
\newcommand{\Z}{\mathbb{Z}}

\newcommand{\jacobi}{\mathcal{A}}    

\newcommand{\Torelli}{\mathcal{I}} 
\newcommand{\Johnson}{\mathcal{K}}  
\newcommand{\Next}{\mathcal{L}} 
\newcommand{\manifolds}{\mathcal{V}}
\newcommand{\mcg}{\mathcal{M}}          
\newcommand{\mcyl}{\mathbf{c}}
\newcommand{\Jcob}{\mathcal{KC}}
\newcommand{\cyl}{\mathcal{IC}}
\newcommand{\cob}{\mathcal{C}}

\newcommand{\Aut}{\operatorname{Aut}}
\newcommand{\dd}{\operatorname{d}}
\newcommand{\DD}{\operatorname{D}}
\newcommand{\cEul}{\operatorname{Eul}_{\operatorname{c}}}
\newcommand{\gEul}{\operatorname{Eul}_{\operatorname{g}}}
\newcommand{\Fr}{\operatorname{Fr}}
\newcommand{\Gr}{\operatorname{Gr}}
\newcommand{\Hom}{\operatorname{Hom}}
\newcommand{\Id}{\operatorname{Id}}
\newcommand{\Img}{\operatorname{Im}}
\newcommand{\incl}{\operatorname{incl}}
\newcommand{\interior}{\operatorname{int}}
\newcommand{\Ker}{\operatorname{Ker}}
\newcommand{\Lk}{\operatorname{Lk}}
\newcommand{\N}{\mathring{\operatorname{N}}}
\newcommand{\ord}{\operatorname{ord}}
\newcommand{\RT}{\operatorname{RT}}
\newcommand{\Sp}{\operatorname{Sp}}
\newcommand{\Spin}{\operatorname{Spin}}
\newcommand{\Tors}{\operatorname{Tors}}

\newcommand{\Lie}{\mathfrak{L}}

\title[]{Equivalence relations for homology cylinders\\ and the core of the Casson invariant}

\date{February 28, 2012}

\author[]{Gw\'ena\"el Massuyeau}
\address{Institut de Recherche Math\'ematique Avanc\'ee, Universit\'e de Strasbourg \& CNRS,
7 rue Ren\'e Descartes, 67084 Strasbourg, France}
\email{massuyeau@math.unistra.fr}

\author[]{Jean--Baptiste Meilhan}
\address{Institut Fourier, Universit\'e de Grenoble 1 \& CNRS, 
100 rue des Maths -- BP 74, 38402 Saint Martin d'H\`eres, France}
\email{jean-baptiste.meilhan@ujf-grenoble.fr}

\begin{abstract}
Let $\Sigma$ be a compact oriented surface of genus $g$ with one boundary component.
Homology cylinders over $\Sigma$ form a monoid $\cyl$ into which 
the Torelli group $\Torelli$ of $\Sigma$ embeds by the mapping cylinder construction.
Two homology cylinders $M$ and $M'$ are said to be \emph{$Y_k$-equivalent} 
if $M'$  is obtained from $M$ by ``twisting''  an  arbitrary surface $S\subset M$ 
with a homeomorphim belonging to the $k$-th term of the lower central series of the Torelli group of $S$.
The \emph{$J_k$-equivalence} relation on $\cyl$ is defined in a similar way using the $k$-th term of the Johnson filtration.
In this paper, we characterize the $Y_3$-equivalence with three classical invariants:
(1) the action on the third nilpotent quotient of the fundamental group of $\Sigma$, 
(2) the quadratic part of the relative Alexander polynomial, 
and 
(3) a by-product of the Casson invariant.
Similarly, we show that the $J_3$-equivalence is classified by (1) and (2).
We also prove that the core of the Casson invariant
(originally defined by Morita on the second term of the Johnson filtration of $\Torelli$) 
has a unique extension (to the corresponding submonoid of $\cyl$)
that is preserved by $Y_3$-equivalence and the mapping class group action.
\end{abstract}

\maketitle

\section{Introduction} \label{sec:intro}

Let $\Sigma$ be a compact connected oriented surface with one boundary component,
and let $g\geq 0$ be the genus of $\Sigma$.
A \emph{homology cobordism} of $\Sigma$ is a pair $(M,m)$ where  $M$ is a compact connected oriented $3$-manifold 
and $m: \partial\left(\Sigma \times [-1,1] \right) \to \partial M$ is an orientation-preserving homeomorphism such that 
the inclusions $m_\pm: \Sigma \to M$ defined by $x \mapsto m(x,\pm 1)$ induce isomorphisms $H_*(\Sigma;\Z) \to H_*(M;\Z)$.
Thus the $3$-manifold $M$ is a cobordism (with corners) between $\partial_+ M := m_+(\Sigma)$ and  $\partial_- M := m_-(\Sigma)$.
It is convenient to denote the cobordism $(M,m)$ simply by $M$,
the convention being that the boundary parametrization is always denoted by the lower-case letter $m$.
In particular, we shall denote the \emph{trivial cobordism} $(\Sigma \times [-1,1], \Id)$  simply by $\Sigma \times [-1,1]$.
The set of homeomorphism classes of homology cobordisms of $\Sigma$ is denoted by
$$
\cob := \cob(\Sigma),
$$
where two homology cobordisms $M,M'$ are considered \emph{homeomorphic}
if there is an orientation-preserving homeomorphism $f:M \to M'$ such that $f|_{\partial M} \circ m = m'$.
The \emph{composition} of two cobordisms $M$ and $M'$ is defined by ``stacking'' $M'$ on the top of $M$, 
\ie we define
\begin{displaymath}
M \circ M'  := M \cup_{m_+ \circ (m'_-)^{-1}} M'
\end{displaymath}
with $\partial(M \circ M')$ parametrized in the obvious way. So  there is a monoid structure on $\cob$.

The \emph{mapping class group} of $\Sigma$ is
the group of isotopy classes of self-homeomorphisms of $\Sigma$ that leave the boundary pointwise invariant. We shall denote it by
$$
\mcg := \mcg(\Sigma).
$$
The mapping cylinder construction $\mcyl: \mcg \to \cob$ is defined in the usual way by
\begin{equation}
\label{eq:mapping_cylinder}
\mcyl(s) := \big(\Sigma \times [-1,1],  (\Id \times (-1)) \cup (\partial \Sigma \times \Id) \cup (s \times 1)\big).
\end{equation}
Since the homomorphism $\mcyl$ is injective, we shall sometimes consider
the group $\mcg$ as a submonoid of $\cob$ and remove $\mcyl$ from our notation.
A base point  $\star$ being fixed on $\partial \Sigma$, 
the  mapping class group acts on the fundamental group $\pi := \pi_1(\Sigma,\star)$.
The resulting homomorphism
$
\rho: \mcg \to \Aut(\pi)
$
is known as the  \emph{Dehn--Nielsen representation} and is injective. For each $k\geq 0$, 
this representation induces a group homomorphism
\begin{equation}
\label{eq:rho_k_mcg}
\rho_k: \mcg \longrightarrow \Aut(\pi/\Gamma_{k+1} \pi)
\end{equation}
where $\pi = \Gamma_1 \pi \supset \Gamma_2 \pi \supset \Gamma_3 \pi \supset \cdots$
denotes the lower central series of $\pi$.
The \emph{Johnson filtration} of the mapping class group is  the decreasing sequence of subgroups
$$
\mcg = \mcg[0] \supset \mcg[1] \supset \mcg[2] \supset \mcg[3] \supset \cdots  
$$
where $\mcg[k]$ denotes the kernel of $\rho_k$ for all $k\geq 1$. The group $\pi$ being residually nilpotent,
the Johnson filtration has trivial intersection.
By virtue of Stallings' theorem \cite{Stallings}, 
each  homomorphism $\rho_k$ can be extended to the monoid $\cob$, so that we have a filtration 
$$
\cob = \cob[0] \supset \cob[1] \supset \cob[2] \supset \cob[3] \supset \cdots  
$$
of $\cob$  by submonoids \cite{GL_tree}.
But the intersection of this filtration is far from being trivial
since, for $g=0$,  we have $\cob[k] = \cob$ for all $k\geq 0$.

The first term $\mcg[1]$ of the Johnson filtration is known
as the \emph{Torelli group} of the surface $\Sigma$.
This is the subgroup of $\mcg$ acting trivially 
on the first homology group  $H:= H_1(\Sigma;\Z)$.
We shall denote it here by
$$
\Torelli := \Torelli(\Sigma).
$$
The study of this group, from a topological point of view, started with works of Birman \cite{Birman_Siegel}
and was followed  by Johnson in the eighties. 
The reader is referred to his survey \cite{Johnson_survey} for an overview of his work.
The Torelli group is residually nilpotent for the following reason:
the commutator of two subgroups $\mcg[k]$ and $\mcg[l]$ of the Johnson filtration is contained 
in $\mcg[k+l]$ for all $k,l\geq 1$, so that the lower central series of $\Torelli$ 
$$
\Torelli = \Gamma_1 \Torelli \supset \Gamma_2 \Torelli \supset \Gamma_3 \Torelli \supset \cdots
$$
is finer than the Johnson filtration (\ie we have $\Gamma_k \Torelli \subset \mcg[k]$). 
The graded Lie ring 
$$
\Gr^\Gamma \Torelli := \bigoplus_{i\geq 1} \frac{\Gamma_i  \Torelli}{\Gamma_{i+1}  \Torelli}
$$
has been computed explicitly in degree $i=1$ by Johnson \cite{Johnson_abelianization}
and in degree $i=2$ with rational coefficients by Morita \cite{Morita_Casson1, Morita_Casson2}.
Computations in degree $i=2$ with integral coefficients have also been done by Yokomizo \cite{yokomizo}.  
Johnson's computation of the abelianized Torelli group $\Torelli/\Gamma_2 \Torelli$ 
involves the action of $\Torelli$ on $\pi/\Gamma_3 \pi$
as well as the Rochlin invariant of homology $3$-spheres
in the form of some homomorphisms introduced by Birman and Craggs \cite{BC}.
Morita's computation of $(\Gamma_2\Torelli/\Gamma_3 \Torelli)\otimes \Q$ 
involves refinements of the latter invariants, 
namely the action of  $\Torelli$ on $\pi/\Gamma_4 \pi$ and the Casson invariant.
Besides, Hain found a presentation of the graded  Lie algebra 
$\Gr^\Gamma\! \Torelli \otimes \Q$  using mixed Hodge theory \cite{Hain}.

Thus $3$-dimensional invariants play an important role in Johnson and Morita's works on the Torelli group.
In the same perspective, let us consider the monoid 
$$
\cyl := \cyl(\Sigma)
$$
of \emph{homology cylinders} over $\Sigma$,
\ie homology cobordisms $M$ such that $(m_-)_*^{-1}\circ (m_+)_*$ is the identity of $H$.
The Torelli group $\Torelli$ embeds into the monoid $\cyl$ by the homomorphism $\mcyl$
and, for  $g=0$, the monoid $\cyl$ can be identified with the monoid of homology $3$-spheres.
The monoid of homology cylinders has been introduced in great generality by Goussarov \cite{Goussarov} and Habiro \cite{Habiro}
in connection with finite-type invariants of $3$-manifolds. 
Their approach  to the monoid $\cyl$ and, at the same time, to the group $\Torelli$
has been the subject of  several recent works:  see the survey \cite{HM_survey}.
As a substitute to the lower central series for the monoid $\cyl$, 
Goussarov and Habiro consider the filtration
$$
\cyl = Y_1 \cyl \supset  Y_2 \cyl \supset  Y_3 \cyl \supset \cdots
$$
where the submonoid $Y_i \cyl$ consists of the homology cylinders that are $Y_i$-equivalent to $\Sigma \times [-1,1]$.
Here two compact oriented $3$-manifolds  $M$ and $M'$ (with parametrized boundary if any)
are said to be \emph{$Y_k$-equivalent} 
if $M'$ can be obtained from $M$ by ``twisting'' an arbitrary embedded surface $E$ in the interior of $M$ 
with an element of the $k$-th term of the lower central series of the Torelli group $\Torelli(E)$ of $E$.
(Here the surface $E$ has an arbitrary position in $M$, but it is assumed to be compact connected oriented with one boundary component.)
The identity $\cyl = Y_1 \cyl$ is not trivial \cite{Habiro,Habegger}.
This means in genus $g=0$ that any  homology $3$-sphere is $Y_1$-equivalent to $S^3$ 
which, according to Hilden \cite{Hilden}, has first been observed by Birman.
(More generally, the $Y_1$-equivalence is characterized for closed oriented $3$-manifolds by Matveev in  \cite{Matveev}.)
The ``clasper calculus'' developed by Goussarov and Habiro \cite{Goussarov_graphs,Habiro,GGP}
offers the appropriate tools to study the $Y_k$-equivalence relation, 
since this relation is generated by ``clasper surgery'' along graphs with $k$ nodes.
With these methods, Goussarov \cite{Goussarov,Goussarov_graphs} and Habiro \cite{Habiro} proved 
among other things that each quotient monoid $Y_i \cyl/ Y_{i+1}$ is an abelian group and that
$$
\Gr^Y \cyl := \bigoplus_{i\geq 1} \frac{Y_i  \cyl}{Y_{i+1}}
$$
has a natural structure of graded Lie ring. 
This has been computed explicitly in degree $i=1$ by Habiro in  \cite{Habiro} 
and the authors in \cite{MM}, where the $Y_2$-equivalence on $\cyl$ 
is shown to be classified by the action on $\pi/\Gamma_3 \pi$ and  the Birman--Craggs homomorphism.
Thus the determination of $\cyl/Y_2$ goes parallel 
to Johnson's computation of $\Torelli/\Gamma_2 \Torelli$ and the two groups happen to be isomorphic via $\mcyl$ (for $g\geq 3$).
Besides, diagrammatic descriptions of the graded Lie algebra $\Gr^Y \cyl \otimes \Q$ are obtained in \cite{HM_SJD}
using clasper calculus and the LMO homomorphism
$$
Z: \cyl \longrightarrow \jacobi(H_\Q)
$$
which takes values in a graded algebra $\jacobi(H_\Q)$ of diagrams
``colored'' by the symplectic vector space $H_\Q := H_1(\Sigma;\Q)$. 
This invariant of homology cylinders is derived from a functorial extension \cite{CHM} of the LMO invariant \cite{LMO},
so that it is universal among rational-valued finite-type invariants.\\

In this paper, we shall classify the $Y_3$-equivalence relation on $\cyl$.
In addition to the action $\rho_3(M) \in \Aut(\pi/\Gamma_4\pi)$
and to the Casson invariant $\lambda(M) \in \Z$
-- which have been both used by Morita for the computation of  $(\Gamma_2 \Torelli/ \Gamma_3\Torelli)\otimes \Q$ --
we need the Alexander polynomial of homology cylinders $M$ relative 
to their ``bottom'' boundary $\partial_- M$.
More precisely, we define the \emph{relative Alexander polynomial} of $M\in \cyl$ as
the order of the relative homology group of $(M,\partial_-M)$
with coefficients twisted by $m_{\pm,*}^{-1}:H_1(M;\Z) \to H$:
$$
\Delta(M,\partial_-M) := \ord H_1(M, \partial_-M;  \Z[H]) \ \in \Z[H]/\pm H.
$$
The multiplicative indeterminacy in $\pm H$ can be fixed 
by considering a relative version of the Reidemeister--Turaev torsion 
introduced by Benedetti and Petronio \cite{BP,FJR}.
For this refinement of the relative Alexander polynomial, it is necessary to fix 
an Euler structure on $(M,\partial_-M)$,
\ie a homotopy class of vector fields on $M$ with prescribed behaviour on the boundary.
We shall see that homology cylinders $M$ have a preferred Euler structure $\xi_0$, 
so that the class $\Delta(M,\partial_-M)$ has a preferred representative
$$
\tau(M,\partial_-M;\xi_0) \in \Z[H].
$$
This invariant of homology cylinders features the same finiteness properties
as the Reidemeister--Turaev torsion of closed oriented $3$-manifolds \cite{Massuyeau_torsion}. 
More precisely, if we denote by $I$ the augmentation ideal of $\Z[H]$, 
then the reduction of $\tau(M,\partial_-M;\xi_0) $ modulo $I^{k+1}$ is  for every $k \geq 0$
a finite-type invariant of degree $k$ in the sense of Goussarov and Habiro.
In particular, a ``quadratic part'' 
$$
\alpha(M) \in I^2/I^3 \simeq S^2H
$$
can be extracted from $\tau(M,\partial_-M;\xi_0)$ and is a finite-type invariant of degree $2$.
Then our characterization of the $Y_3$-equivalence  for homology cylinders takes the following form.

\smallskip\smallskip
\noindent {\bf Theorem~A.} 
{\it Let $M$ and $M'$ be two homology cylinders over $\Sigma$.  
The following assertions are equivalent:
\begin{enumerate}
\item[(a)] $M$ and $M'$ are $Y_3$-equivalent;
\item[(b)] $M$ and $M'$ are not distinguished by any Goussarov--Habiro finite-type invariants of degree at most $2$;
\item[(c)] $M$ and $M'$ share the same invariants $\rho_3, \alpha$ and $\lambda$;
\item[(d)] The LMO homomorphism $Z$ agrees on $M$ and $M'$ up to degree $2$.
\end{enumerate} 
}
\smallskip

\noindent
In genus $g=0$, the theorem asserts that two homology $3$-spheres are $Y_3$-equivalent
if and only if they have the same Casson invariant, which is due to Habiro \cite{Habiro}.
The equivalence between conditions (a), (b) and (d) is based 
on the universality of the LMO homomorphism among $\Q$-valued finite-type invariants, 
its good behaviour under clasper surgery and the torsion-freeness of a certain space of diagrams.
Next the equivalence of condition (c) with the other three follows
by determining precisely how the invariants  $\rho_3, \alpha$ and $\lambda$ 
are diagrammatically encoded in the LMO homomorphism.
We emphasize that the Birman--Craggs homomorphism, 
which is needed to classify the $Y_2$-equivalence, do not appear explicitly 
in the above statement for the $Y_3$-equivalence:
indeed it is determined by the triplet $(\rho_3,\alpha,\lambda)$ as we shall see in detail.
Furthermore, we shall use the diagrammatic techniques of clasper calculus
to compute the group $\cyl/Y_3$ and show some of its properties.

We shall also be interested in the \emph{$J_k$-equivalence} relation among homology cylinders.
This relation is defined for every $k\geq 1$ 
in a way similar to the $Y_k$-equivalence but using the Johnson filtration of the Torelli group 
instead of its lower central series. It follows from these definitions that  $J_1 =Y_1$
and that $Y_k$ implies $J_k$ for every $k\geq 1$. 
But the converse is not true for $k\geq 2$ as illustrated by the following statements.

\smallskip\smallskip
\noindent {\bf Theorem~B.} 
{\it Two homology cylinders $M$ and $M'$ over $\Sigma$ are $J_2$-equivalent  
if and only if we have $\rho_2(M)=\rho_2(M')$.
}
\smallskip\smallskip

\noindent
In genus $g=0$, the theorem asserts that any homology $3$-sphere is $J_2$-equivalent to $S^3$. 
This is already noticed by Morita in \cite{Morita_Casson1}  
and follows from Casson's observation that any homology $3$-sphere is obtained 
from $S^3$ by a finite sequence of surgeries along $(\pm1)$-framed knots \cite{GM}.
(A generalization of this result to closed oriented $3$-manifolds is proved by Cochran, Gerges and Orr in \cite{CGO}.)
Although it does not seem to have been observed before, Theorem~B easily follows
from the computation of  $\cyl/Y_2$ done in \cite{MM}.

\smallskip\smallskip
\noindent {\bf Theorem~C.} 
{\it Two homology cylinders $M$ and $M'$ over $\Sigma$ are $J_3$-equivalent  
if and only if we have $\rho_3(M)=\rho_3(M')$ and $\alpha(M)=\alpha(M')$.
}
\smallskip\smallskip

\noindent
In genus $g=0$, we obtain that any homology $3$-sphere is $J_3$-equivalent
to $S^3$, which is due to Pitsch \cite{Pitsch}. 
The proof of Theorem~C makes use of Theorem~A.

Let us now come back to the Casson invariant  of homology cylinders which appears in Theorem~A.
In contrast to $\rho_3$ and $\alpha$, the invariant $\lambda$ is not  canonical. Indeed it depends on the choice of 
a Heegaard embedding $j: \Sigma \hookrightarrow S^3$
and $\lambda(M) := \lambda_j(M)$ is defined as the Casson invariant of the homology $3$-sphere 
obtained by inserting $M$ in place of a regular neighborhood of $j(\Sigma)\subset S^3$.
In the case of the Torelli group, Morita considered the restriction of $\lambda$ to the \emph{Johnson subgroup}
$$
\Johnson := \Johnson(\Sigma)
$$
which coincides with the second term $\mcg[2]$ of the Johnson filtration.
He proved that $\lambda|_{\Johnson}$ is a group homomorphism which can be written as 
the sum of two homomorphisms $\Johnson \to \Q$: 
one of them is not canonical  and is determined by $\rho_3$,
while the other one is (up to a $1/24$ factor) a canonical homomorphism $d:\Johnson \to 8\Z$ 
and is called the \emph{core of the Casson invariant}.
In the case of homology cylinders, the second term $\cob[2]$ of the Johnson filtration of $\cob$ is denoted by
$$
\Jcob := \Jcob(\Sigma).
$$

\smallskip\smallskip
\noindent {\bf Theorem~D.} 
{\it 
Assume that $g\geq 3$.
Then there is a unique extension of the core of the Casson invariant to the monoid $\Jcob$\\[-0.5cm]
$$
\xymatrix{
\Johnson \ar[r]^-d \ar[d]_-\mcyl & 8\Z \\
\Jcob \ar@{-->}[ru]_-d & 
}
$$
that is invariant under $Y_3$-equivalence and under the action of the mapping class group by conjugation.
Moreover, the monoid homomorphism $d:\Jcob \to 8\Z $ 
can be written explicitly in terms of $\rho_3$, $\alpha$ and $\lambda$.
}
\smallskip\smallskip

\noindent
When $M\in \Jcob$ belongs to the Johnson subgroup $\Johnson$,
we have $\alpha(M)=0$ and our formula for $d(M)$ 
coincides with Morita's formula \cite{Morita_Casson1,Morita_Casson2}.
We shall also see that the assumption $g\geq 3$ in Theorem~D 
can be removed by stabilization of the surface $\Sigma$.

In another paper \cite{Morita_Casson3}, Morita gave 
a topological interpretation of $d(h)$ for $h\in \Johnson$ 
as the signature defect of the mapping torus of $h$ equipped with a certain $2$-framing.
It would be very interesting to generalize this intrinsic description of
$d: \Johnson \to 8\Z$ to the monoid $\Jcob$. See \cite{DP} in this connection.\\[-0.5cm]

The paper is organized as follows:
{\small \tableofcontents}

\newpage

In the rest of the paper, we shall use the following conventions.
We denote by $I$ the interval $[-1,1]$.
An abelian group $G$, or its action on a set, is written additively, except when it 
is seen as a subgroup of the group of units of $\Z[G]$.
Besides, (co)homology groups are taken with $\Z$ coefficients if no coefficients are  specified. 

\begin{acknowledgement}
The authors would like to thank Kazuo Habiro for stimulating discussions
about the diagrammatic description of the group $\cyl/Y_3$,
and for his comments on the first version of this manuscript.
They are also grateful to Takuya Sakasai
and the referee for their remarks. The first author was partially supported 
by the French ANR research project ANR-08-JCJC-0114-01.
The second author is supported by the French ANR research project ANR-11-JS01-00201.
\end{acknowledgement}

\section{Preliminaries on the equivalence relations} \label{sec:preliminaries}

In this section, we give the precise definitions of the $Y_k$-equivalence and the $J_k$-equivalence relations on $3$-manifolds,
and we recall their relationship with the Goussarov--Habiro theory of finite-type invariants.

\subsection{Definition of the equivalence relations} \label{subsec:definition_relations}

Let $R$ be a closed oriented  surface, which may be empty or disconnected.
We consider compact connected oriented $3$-manifolds $M$ whose boundary is \emph{parametrized} 
by $R$, \ie $M$ comes with an orientation-preserving homeomorphism $R \to \partial M$
which is denoted by the lower-case letter $m$.
Two such manifolds with parametrized boundary $M$ and $M'$ are considered \emph{homeomorphic}
if there is an orientation-preserving homeomorphism $f:M\to M'$ such that $f \circ m =m'$.
We denote by $\manifolds(R)$ the set of homeomorphism classes of compact connected
oriented $3$-manifolds with boundary parametrized by $R$.

One way to modify an $M \in \manifolds(R)$ is to choose
a  compact oriented connected surface $S \subset \interior(M)$ with one boundary component,
and an element  $s\in \Torelli(S)$ of the Torelli group of $S$. We then define
$$
M_s := \big(M \setminus \interior(S\times [-1,1])\big) \cup \mcyl(s)
$$
where $S\times [-1,1]$ denotes a regular neighborhood of $S$ in $M$ 
and $\mcyl(s)$ is the mapping cylinder of $s$ defined by (\ref{eq:mapping_cylinder}).
The boundary parametrization of $M_S$ is defined from $m$ in the obvious way.
The move $M \leadsto M_s$  in $\manifolds(R)$ is called a \emph{Torelli surgery}.

Let $k\geq 1$ be an integer,
and consider two compact connected oriented $3$-manifolds $M$ and $M'$ with boundary parametrized by $R$.
We say that $M$ is \emph{$Y_k$-equivalent} to $M'$
if there is a Torelli surgery $M \leadsto M_s$ such that $M_s=M' \in \manifolds(R)$ 
and the gluing  homeomorphism $s$ belongs to the $k$-th term $\Gamma_k \Torelli(S)$ of the lower central series of $\Torelli(S)$.
(Recall that the \emph{lower central series} of a group $G$ is
defined inductively by $\Gamma_1 G:=G$ and $\Gamma_{k+1} G :=[G,\Gamma_{k}G]$ for all $k\geq 1$.) 
It is easily checked that the $Y_k$-equivalence is an equivalence relation on the set $\manifolds(R)$.
The \emph{$J_k$-equivalence} relation on  $\manifolds(R)$ is defined in a similar way 
using the $k$-th term of the Johnson filtration instead of the lower central series.
Thus we have defined an infinity of equivalence relations, which are organized as follows: 
$$
\begin{array}{cccccccccccc}
Y_1 & \Longleftarrow & Y_2 & \Longleftarrow & Y_3 & \Longleftarrow & \cdots & Y_k &  \Longleftarrow & Y_{k+1} & \Longleftarrow & \cdots  \\
\parallel & & \Downarrow & & \Downarrow && &  \Downarrow & & \Downarrow && \\
J_1 & \Longleftarrow & J_2 & \Longleftarrow & J_3 & \Longleftarrow & \cdots & J_k &  \Longleftarrow & J_{k+1} & \Longleftarrow & \cdots 
\end{array}
$$
The weakest of these relations, namely the $Y_1$-equivalence, is already non-trivial
since a Torelli surgery  $M \leadsto M_s$  comes with a canonical isomorphism 
\begin{equation}
\label{eq:iso_homology}
\xymatrix{
H_1(M) \ar@{-->}[rr]^-{ \Phi_s}_-\simeq & & H_1(M_s),\\
&H_1 \big(M \setminus \interior(S\times [-1,1])\big) \ar@{->>}[lu]^-{\incl_*}  \ar@{->>}[ru]_-{\incl_*} & 
}
\end{equation}
whose existence is easily deduced from the Mayer--Vietoris theorem. 

In this paper, we shall restrict our study to the class of homology cylinders over $\Sigma$, 
as defined in the introduction, 
\ie to the subset 
$$
\cyl(\Sigma) \subset \manifolds\big(\partial(\Sigma\times [-1,1])\big).
$$
The $Y_k$-equivalence  and the $J_k$-equivalence relations are preserved 
by \emph{stabilization} of the surface $\Sigma$.
More precisely, assume that $\Sigma^s$ is the boundary connected sum of $\Sigma$ 
with another compact connected oriented surface $\Sigma'$ 
having one boundary component as shown in Figure \ref{fig:stabilization}.
Then, there is a canonical injection
\begin{equation}
\label{eq:stabilization}
\cyl(\Sigma) \longrightarrow \cyl(\Sigma^s), \ M \longmapsto M^s
\end{equation}
obtained by gluing to any homology cylinder $M$ over $\Sigma$ the trivial cylinder $\Sigma'\times I$ 
along the square $(\Sigma\cap \Sigma')\times I$.
Then the implications $M \stackrel{Y_k}{\sim} N \Rightarrow M^s \stackrel{Y_k}{\sim} N^s$
and $M \stackrel{J_k}{\sim} N \Rightarrow M^s \stackrel{J_k}{\sim} N^s$ obviously hold true for any $k\geq 1$.

\begin{figure}[h]
\begin{center}
{\labellist \small \hair 0pt 
\pinlabel {$\Sigma^s$} [t] at 572 -4 
\pinlabel {$\Sigma$} [lb] at 40 50
\pinlabel {$\Sigma'$} [lb] at 610 50
\endlabellist}
\includegraphics[scale=0.3]{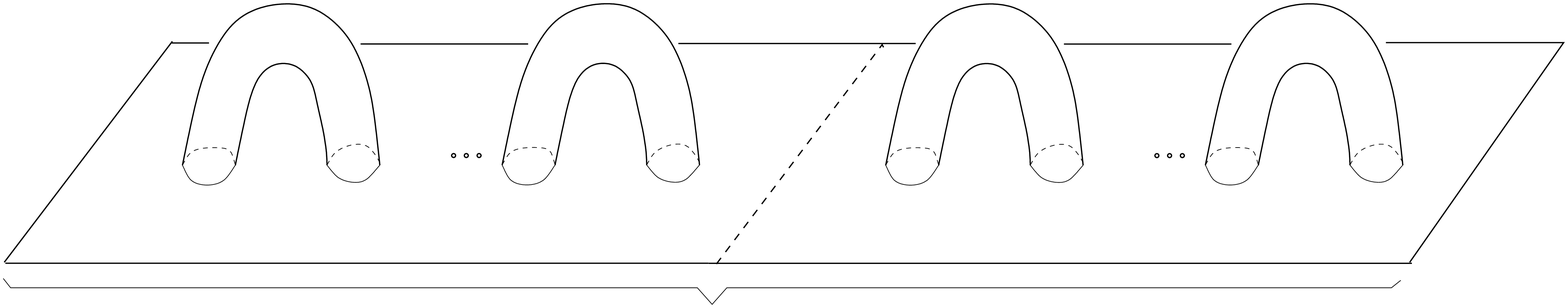}
\end{center}
\caption{A stabilization $\Sigma^s$ of the surface $\Sigma$.}
\label{fig:stabilization}
\end{figure}

The $Y_k$-equivalence  and the $J_k$-equivalence relations can also be defined in terms of Heegaard splittings.
Let us formulate this in the case of homology cylinders.
A homology cylinder $M$ over $\Sigma$ has a ``bottom surface'' $\partial_- M = m_-(\Sigma)$
and a ``top surface'' $\partial_+ M = m_+(\Sigma)$. Some collar neighborhoods of them
are suggestively denoted by $\partial_- M \times [-1,0]$ and $\partial_+ M \times [0,1]$.
A \emph{Heegaard splitting} of $M$  of \emph{genus} $r$ is a decomposition 
$$
M = M_- \cup M_+,
$$
where $M_-$ is obtained from $\partial_- M \times [-1,0]$ by adding $r$ $1$-handles along $\partial_- M \times \{0\}$, 
$M_+$ is obtained from $\partial_+ M \times [0,1]$ by adding $r$ $1$-handles along $\partial_+ M \times \{0\}$,
and $M_-\cap M_+ = \partial M_- \cap \partial M_+$ is called the \emph{Heegaard surface}.
Note that the Heegaard surface is a $2$-sided compact connected surface of genus $g+r$
with one boundary component which is properly embedded in $M$;
we give it the orientation inherited from $M_-$. 
Any homology cylinder $M$ has a Heegaard splitting,
since the cobordism $M$ can be obtained from the trivial cobordism
$\Sigma \times[-1,1]$ by attaching simultaneously some $1$-handles along the ``top surface'' $\Sigma \times \{1\}$ 
and, next, some $2$-handles along the new ``top surface''.
This fact follows from elementary Morse theory and is true for any $3$-dimensional cobordism  \cite{Milnor}.

\begin{lemma}
Two homology cylinders $M,M'$ over $\Sigma$ are $Y_k$-equivalent (respectively $J_k$-equivalent) 
if and only if there is a Heegaard splitting  $M=M_- \cup M_+$ with Heegaard surface $S$ 
and an  $s \in \Gamma_{k}\Torelli(S)$ (respectively an $s \in \mcg(S)[k]$) such that $M' = M_- \cup_s M_+$.
\end{lemma}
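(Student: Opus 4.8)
The plan is to establish the two directions separately, exploiting the equivalence between Torelli surgery along an arbitrary surface and Torelli surgery along a Heegaard surface. The ``if'' direction is essentially immediate: if $M = M_- \cup M_+$ is a Heegaard splitting with Heegaard surface $S$ and $M' = M_- \cup_s M_+$ with $s \in \Gamma_k \Torelli(S)$ (resp.\ $s \in \mcg(S)[k]$), then since $S$ is a compact connected oriented surface with one boundary component properly embedded in the interior of $M$, cutting along a bicollar $S \times I$ and regluing by $\mcyl(s)$ realizes $M'$ as $M_s$ in the sense of a Torelli surgery. Hence $M$ and $M'$ are $Y_k$-equivalent (resp.\ $J_k$-equivalent) by definition.

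For the ``only if'' direction, suppose $M$ and $M'$ are $Y_k$-equivalent, so that $M' = M_s$ for some Torelli surgery along a surface $E \subset \interior(M)$ with $s \in \Gamma_k \Torelli(E)$. The first step is to enlarge $E$ to a Heegaard surface. Start from any Heegaard splitting $M = M_- \cup M_+$ (which exists by elementary Morse theory, as recalled before the lemma) with Heegaard surface $S_0$; by a general position and stabilization argument one may arrange that the regular neighborhood $E \times I$ of $E$ lies entirely on one side, say inside the handlebody-like piece $M_+$. The key topological input is that one can then \emph{incorporate} $E$ into the Heegaard surface: by tubing $S_0$ to $E$ along an arc in $M_+$ (equivalently, by stabilizing the Heegaard splitting sufficiently and then sliding handles), one obtains a new Heegaard splitting $M = M_-' \cup M_+'$ whose Heegaard surface $S$ contains $E$ as an incompressible subsurface with $S \setminus \interior(E)$ lying in (a collar of) the original Heegaard surface. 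Because $E \subset S$ is a subsurface with one boundary component, the inclusion $\Torelli(E) \hookrightarrow \Torelli(S)$ carries $\Gamma_k \Torelli(E)$ into $\Gamma_k \Torelli(S)$ (a mapping class supported in $E$ that lies in the $k$-th lower central series term of $\Torelli(E)$ still lies in $\Gamma_k \Torelli(S)$, since the inclusion is a homomorphism and lower central series are functorial). Viewing $s$ as an element $\tilde s \in \Gamma_k \Torelli(S)$, the Torelli surgery $M \leadsto M_s$ is then exactly the regluing $M_-' \cup_{\tilde s} M_+'$, so $M' = M_-' \cup_{\tilde s} M_+'$ as desired. The Johnson-filtration case is identical, using that the inclusion $\Torelli(E) \hookrightarrow \Torelli(S)$ sends $\mcg(E)[k]$ into $\mcg(S)[k]$ (a homeomorphism of $S$ supported on $E$ acts trivially on $\pi_1(S)/\Gamma_{k+1}$ whenever it does so on $\pi_1(E)/\Gamma_{k+1}$, because $\pi_1(E) \to \pi_1(S)$ is $\pi_1$-injective and compatible with the lower central series).

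The main obstacle is the middle step: making precise the claim that an arbitrarily embedded surface $E \subset \interior(M)$ can be absorbed into a Heegaard surface without changing the $3$-manifold, in a way that is compatible with the boundary parametrization of the homology cylinder. This requires a careful handle-trading argument: one must push $E$ to one side of some initial Heegaard surface, stabilize, and perform handle slides so that $E$ literally becomes a subsurface of the new Heegaard surface; one must also check that the gluing map identifying $M_s$ with $M_-' \cup_{\tilde s} M_+'$ respects the boundary data, so the identity holds in $\manifolds\big(\partial(\Sigma \times I)\big)$ and not merely up to unparametrized homeomorphism. This is a standard-type argument in the Goussarov--Habiro framework (it underlies the passage between ``clasper surgery inside $M$'' and ``surgery presentations''), and I would carry it out by explicitly describing the stabilization and isotopy moves rather than invoking a black box.
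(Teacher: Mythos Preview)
Your outline is sound, and the ``if'' direction is indeed immediate (the paper does not even spell it out). For the converse, however, the paper takes a more direct route than your stabilization strategy: rather than starting from an arbitrary Heegaard splitting of $M$ and then modifying it to absorb $E$, it constructs the Heegaard splitting around $E$ from the outset. Concretely, one connects a half-neighborhood $E \times [-1,0]$ to a collar $\partial_- M \times [-1,0]$ by a single solid tube $T$, obtaining a piece $L_-$ which is simply $\partial_- M \times [-1,0]$ with (twice the genus of $E$) extra $1$-handles attached. One then takes a relative handle decomposition of the complementary cobordism $L_+ = \overline{M \setminus L_-}$ and ``stretches'' its $1$-handles down past $E$ onto $L_-$ (after an isotopy ensuring they are attached away from $E \times \{1\}$). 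The resulting Heegaard surface $S = \partial M_- \cap \partial M_+$ then visibly contains $E$ as a subsurface, and the conclusion follows.

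The payoff of the paper's construction is that it sidesteps exactly what you flag as the main obstacle: there is no need to isotope or stabilize to push $E$ to one side of a pre-existing splitting, because there is no pre-existing splitting. Your two steps --- pushing $E$ into $M_+$ by general position plus stabilization, then ``tubing $S_0$ to $E$'' --- can be made to work, but each requires nontrivial care (for instance, tubing $S_0$ to $E$ does not literally yield a Heegaard surface containing $E$; one must transfer a half-neighborhood of $E$ across the splitting, which amounts to several simultaneous stabilizations). Once those details are filled in, your argument essentially converges to the paper's direct construction.
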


\begin{proof}
Assume that $M'=M_e \in \cyl$ where $M_e$ is the result of a Torelli surgery along a compact connected oriented surface 
$E \subset \interior(M)$ with one boundary component. We consider a regular neighborhood $E \times [-1,1]$ of $E$ in $M$
that does not meet the collar neighborhood $\partial_-M \times [-1,0]$, and where $E\times\{0\}$ is the surface $E$ itself.
Next we connect $E \times [-1,0]$ to $\partial_-M \times [-1,0]$ by a solid tube $T$:
more precisely,  $T$ meets $E \times [-1,0]$ along a disk of $E \times \{-1\}$
and it meets $\partial_-M \times [-1,0]$ along a disk of $\partial_-M \times \{0\}$. 
Thus the union 
$$
L_- := \left(\partial_-M \times [-1,0]\right) \cup T \cup \left(E \times [-1,0]\right)
$$
is obtained from $\partial_-M \times [-1,0]$ by attaching some $1$-handles (twice the genus of $E$).
Let $L_+$ be the closure in $M$ of $M\setminus L_-$,
which we regard  as a cobordism with corners from $R:=L_-\cap L_+$ to $\partial_+ M$.
This cobordism has a handle decomposition
\begin{equation}\label{eq:L+}
L_+=\big(\left(R\times [0,1]\right) \cup \hbox{$1$-handles}\big)\cup  \hbox{$2$-handles}.
\end{equation}
Note that the surface $R \subset M$ contains $E$
and, after an isotopy, we can assume that the $1$-handles in (\ref{eq:L+}) 
are attached to $R\times \{1\}$ outside the surface $E\times \{1\}$.
Now, we can ``stretch'' each of these $1$-handles towards $R \times \{0\}$, 
where by ``stretching'' a $1$-handle $[-1,1]\times D^2$ we mean replacing it with 
$[-1-\varepsilon,1+\varepsilon]\times D^2$ for some positive $\varepsilon$, so that 
the ``stretched'' $1$-handles are now attached to $R\times \{0\}$ outside the surface $E\times \{0\}$.
Furthermore, we can ``contract'' the resulting $1$-handles so that they are all disjoint from $\partial_+M$. 
Here, by ``contracting'' a $1$-handle $D^1\times D^2$ we mean replacing it with 
$D^1\times (\varepsilon D^2)$ for some  $\varepsilon\in\, ]0,1[$. 
We now define $M_-$ to be the union of $L_-$ with these ``stretched'' and ``contracted'' $1$-handles, 
and we define $M_+$ as the exterior in $L_+$ of those $1$-handles. 
Thus we have found a Heegaard splitting $M_-\cup M_+$ of $M$,
whose Heegaard surface $S:= M_-\cap M_+$ contains $E$ as a subsurface. The conclusion easily follows.
\end{proof}

\subsection{Description by clasper surgery} \label{subsec:clasper}

Generators for the $Y_k$-equivalence relations are known, 
which makes these easier to study than the $J_k$-equivalence relations.
Indeed the $Y_k$-equivalence is generated by surgery along graph claspers of degree $k$.
This viewpoint, which we shall briefly recall, 
has been developed by Goussarov \cite{Goussarov,Goussarov_graphs} and Habiro \cite{Habiro}:
the $Y_k$-equivalence relation is the same as the ``$(k-1)$-equivalence'' in \cite{Goussarov}
or the ``$A_k$-equivalence'' in \cite{Habiro}.

Let $M$ be a compact oriented $3$-manifold. In the terminology of \cite{Habiro},
a {\em graph clasper}  in $M$ is a compact, connected surface $G$ embedded in  $\interior(M)$, 
which comes decomposed between {\em leaves}, {\em nodes} and {\em edges}.  Leaves are annuli and nodes are discs. 
Edges are $1$-handles connecting leaves and nodes,
so that each edge has two ``ends" (the attaching locus of the $1$-handle).
Each leaf should have exactly one end of an edge attached to it,
while each node should have exactly three ends of edges attached to it.
See Figure \ref{fig:graph_clasper} for an example of a graph clasper.

\begin{figure}[h]
\begin{center}
{\labellist \small \hair 0pt 
\pinlabel {an edge} [l] at 94 21
\pinlabel {a node} [l] at 190 333
\pinlabel {a leaf} [l] at 608 165
\pinlabel {$=$} at 809 90
\endlabellist}
\includegraphics[scale=0.3]{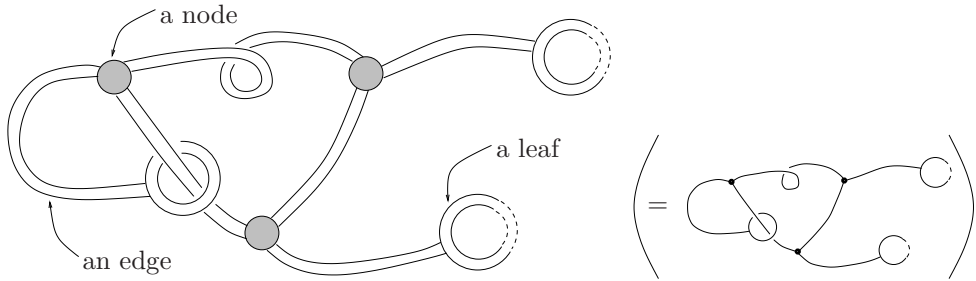}
\end{center}
\caption{An example of graph clasper with $3$ nodes, $3$ leaves and $6$ edges.
(And how it is drawn with the blackboard framing convention.)}
\label{fig:graph_clasper}
\end{figure}

\noindent
Given a graph clasper $G\subset M$,  one can forget the leaves of $G$ and
collapse the rest to a one-dimensional graph.
This finite graph, which has vertices of valency $1$ or $3$, is called the \emph{shape} of $G$.
Then one can be interested in graph claspers of a specific shape. 
For example, a \emph{$Y$-graph} is a graph clasper with shape $\textsf{Y}$,
and a graph clasper is said to be \emph{looped} if its shape contains a loop (as is the case in Figure \ref{fig:graph_clasper}).

The {\em degree} of a graph clasper $G$ is  the number of nodes contained in $G$. 
Graph claspers of degree $0$ are called \emph{basic claspers}  in \cite{Habiro} and consist of only one edge and two leaves:
$$
\includegraphics[scale=0.3]{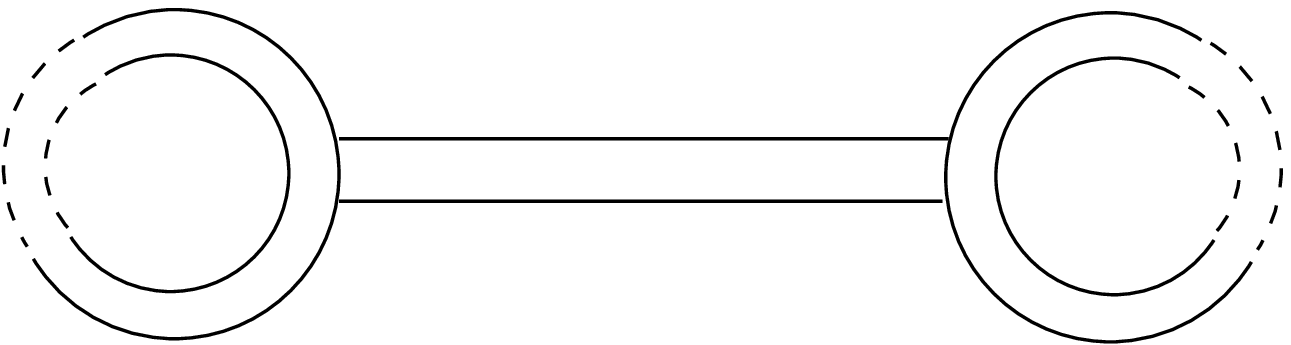}
$$
Surgery along a graph clasper $G\subset M$ is defined as follows.  
We first replace each node with three leaves linking like the Borromean rings in the following way: 
\begin{center}
{\labellist \small \hair 0pt 
\pinlabel {$\longrightarrow$} at 409 143
\endlabellist}
\includegraphics[scale=0.3]{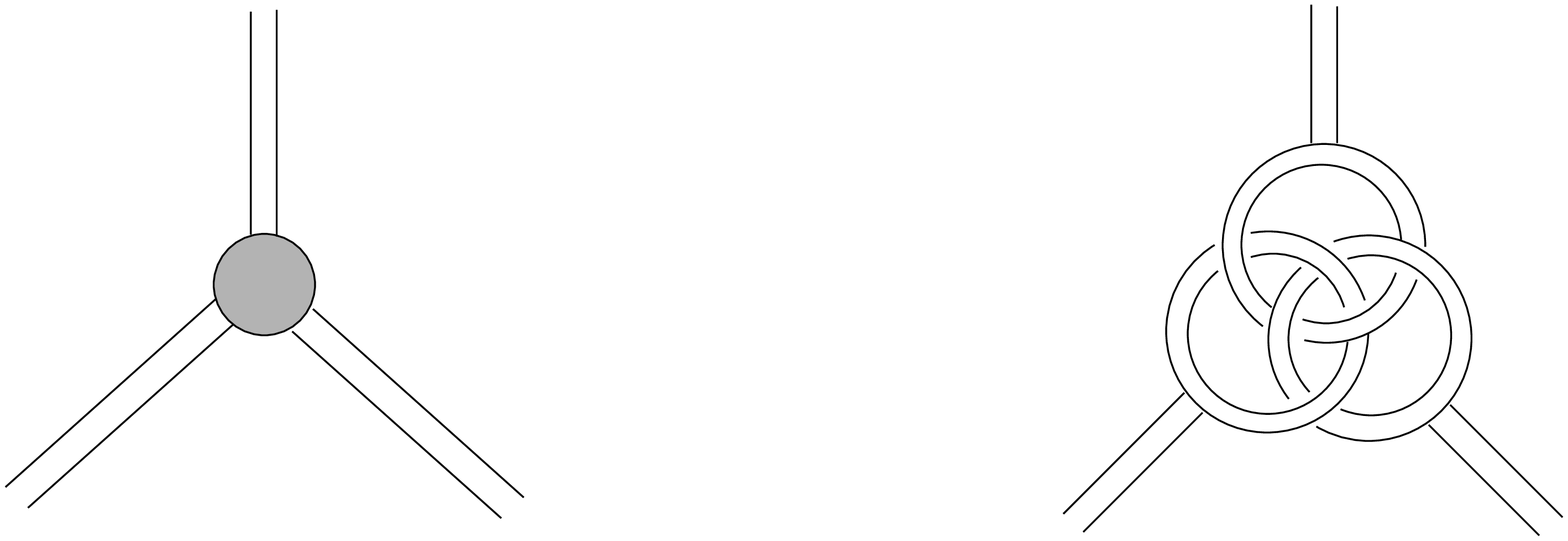}
\end{center}
Thus, we obtain a disjoint union of basic claspers.    
Next, we replace each basic clasper with a $2$-component framed link as follows:
\begin{center}
{\labellist \small \hair 0pt 
\pinlabel {$\longrightarrow$} at 460 50
\endlabellist}
\includegraphics[scale=0.3]{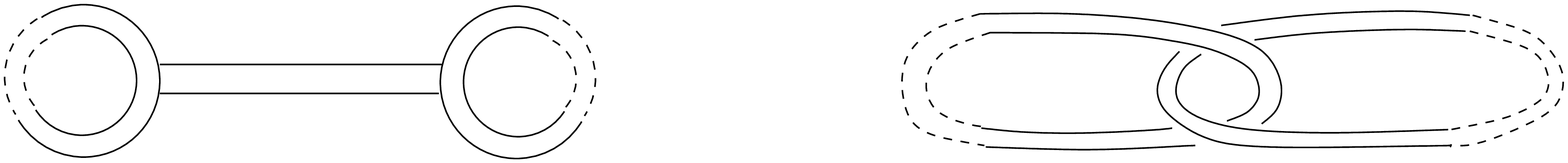}
\end{center}
Then, \emph{surgery} along the graph clasper $G$ is defined to be the surgery along the framed link thus obtained in $M$.
The resulting $3$-manifold is denoted by $M_G$.

\begin{proposition}[Habiro \cite{Habiro}] \label{prop:Y_clasper}
For any integer $k\geq 1$, the $Y_k$-equivalence relation is generated by surgeries along graph claspers of degree $k$.
\end{proposition}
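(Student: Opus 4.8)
The plan is to prove the proposition as two inclusions. Write $\sim_k$ for the equivalence relation on $\manifolds(R)$ generated by the moves $M\leadsto M_G$ with $G$ a graph clasper of degree $k$. To show that $\sim_k$ implies $Y_k$-equivalence it suffices to check that a single degree-$k$ clasper surgery is a $Y_k$-move; for the reverse implication we must realize an arbitrary $Y_k$-move, \ie a Torelli surgery $M\leadsto M_s$ with $s\in\Gamma_k\Torelli(S)$, by a finite sequence of degree-$k$ clasper surgeries.

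\emph{A degree-$k$ clasper surgery is a $Y_k$-move.} Let $G\subset\interior(M)$ be a connected graph clasper of degree $k$ and let $H:=N(G)$ be a regular neighborhood, a handlebody. By construction the framed link $L_G$ obtained from $G$ lies in $H$, so the surgery is supported there: $M_G=\big(M\setminus\interior H\big)\cup H_{L_G}$. One shows that, after possibly enlarging $H$ by a collar and a tube to a standardly embedded disc, the modification $H\leadsto H_{L_G}$ is realized by cutting $H$ along a properly embedded compact connected oriented surface $S$ with one boundary component and regluing by a self-homeomorphism $\phi_G$ of $S$ supported near $G$; globally this exhibits $M_G$ as the result of cutting $M$ along $S\subset\interior(M)$ and regluing by $\phi_G$. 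Since clasper surgery preserves first homology --- the computation underlying the canonical isomorphism \eqref{eq:iso_homology} --- we have $\phi_G\in\Torelli(S)$, so this is a Torelli surgery, which already settles $k=1$ (there $\phi_G$ is the Borromean homeomorphism). For $k\geq 2$ one invokes the clasper-calculus manipulations of Goussarov and Habiro that express surgery along a connected degree-$k$ clasper as an iterated ``commutator'' of $Y$-graph surgeries patterned on the shape of $G$; transported through the dictionary above and combined with $[\Gamma_a\Torelli(S),\Gamma_b\Torelli(S)]\subseteq\Gamma_{a+b}\Torelli(S)$, this yields $\phi_G\in\Gamma_k\Torelli(S)$.

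\emph{An arbitrary $Y_k$-move is a sequence of degree-$k$ clasper surgeries.} I would argue by induction on $k$, using two elementary facts: composing Torelli surgeries along a fixed surface $S$ multiplies the corresponding mapping classes (a copy of $S$ survives inside the surgered manifold), and the relations are preserved under stabilization of $\Sigma$ (Section~\ref{sec:preliminaries}), so that the surface $S$ carrying a given $Y_k$-move may be enlarged at will --- in particular we may assume $\Torelli(S)$ is generated by bounding-pair maps. The base case $k=1$ then reduces, via the first fact, to showing that the mapping-cylinder surgery of a bounding-pair map is a finite sequence of $Y$-graph surgeries, which is done by displaying the relevant $Y$-graphs explicitly. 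For the inductive step, write $\Gamma_{k+1}\Torelli(S)=[\Gamma_k\Torelli(S),\Torelli(S)]$; by the first fact it is enough to treat the surgery attached to one commutator $[u,v]$ with $u\in\Gamma_k\Torelli(S)$ and $v\in\Torelli(S)$. By the inductive hypothesis the $u$-surgery is a sequence of degree-$k$ clasper surgeries and the $v$-surgery a sequence of $Y$-graph surgeries; the clasper-calculus lemma that the commutator of a move realized by degree-$a$ claspers and one realized by degree-$b$ claspers is realized by degree-$(a+b)$ claspers then completes the step.

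\emph{The main obstacle.} All the genuine content sits in the clasper-calculus inputs: the identity relating commutators of clasper surgeries to addition of degrees, its combinatorial counterpart writing a connected degree-$k$ clasper as an iterated commutator of $Y$-graphs, and the base identifications of Borromean ($=Y$-graph) surgery with a homology-trivial Torelli surgery and, conversely, of bounding-pair map surgery with $Y$-graph surgeries. These are due to Goussarov and Habiro and I would cite them from \cite{Habiro} (see also \cite{Goussarov,Goussarov_graphs,GGP}) rather than reprove them. The remaining bookkeeping --- that clasper surgery is supported in a handlebody neighborhood which can be rewritten as a thickened once-punctured surface, that composition of Torelli surgeries corresponds to multiplication of mapping classes, and that the equivalence relations are stabilization-invariant --- is routine, but it is exactly what turns the topological surgery operations into statements about the lower central series of Torelli groups, and so deserves to be done with care.
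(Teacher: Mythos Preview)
The paper does not actually prove this proposition: immediately after the statement it simply writes ``See the appendix of \cite{Massuyeau_DSP} for a proof.'' So there is no argument to compare against, only a citation.

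Your outline follows the standard two-direction strategy that such a proof takes, and you correctly identify that the substantive content (the commutator/degree-addition lemma for claspers, the realization of a connected degree-$k$ clasper as an iterated commutator of $Y$-graphs, and the base identifications) is imported from \cite{Habiro,Goussarov_graphs,GGP}. One point to flag: your appeal to ``stabilization of $\Sigma$ (Section~\ref{sec:preliminaries})'' to justify enlarging the \emph{embedded} surface $S\subset M$ is a non-sequitur --- stabilizing the reference surface $\Sigma$ does nothing to $S$. What you want instead is the elementary observation that $S$ can be stabilized \emph{inside} $M$ by boundary-connect-summing with a trivially embedded once-punctured torus in a ball; this enlarges the genus of $S$ without changing the ambient manifold, and the inclusion $\Torelli(S)\hookrightarrow\Torelli(S')$ sends $\Gamma_k$ into $\Gamma_k$. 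With that correction the sketch is sound, though as you say it is really a roadmap through results proved elsewhere rather than a self-contained argument.
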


\noindent
See the appendix of \cite{Massuyeau_DSP} for a proof. 

A ``calculus of claspers'' is developed in \cite{Goussarov_graphs,Habiro,GGP},
in the sense that some specific ``moves'' between graph claspers are shown to produce  by surgery homeomorphic $3$-manifolds.
This calculus can be regarded as a topological analogue of the commutator calculus in groups \cite{Habiro}.
Thanks to Proposition \ref{prop:Y_clasper}, this calculus has proved to be a very efficient tool for the study of the $Y_k$-equivalence relations 
and, here again, we shall use it in a crucial way.
For the reader's convenience, we have collected all the technical results on claspers that we shall need in  Appendix \ref{sec:calc_clasper}.  
In particular, we need a number of relations satisfied by graph claspers $G$ with \emph{special} leaves, 
\ie leaves which bound disks disjoint from $G$ and which are $(-1)$-framed.

\subsection{Relationship with finite-type invariants}

The $Y_k$-equivalence relations are closely  connected to finite-type invariants.
Here, we are referring to the  Goussarov--Habiro theory of finite-type invariants 
for compact oriented $3$-manifolds \cite{Goussarov,GGP,Habiro}, 
which essentially generalizes Ohtsuki's theory \cite{Ohtsuki} for  homology $3$-spheres
but differs from the Cochran--Melvin theory \cite{CM}.

We fix as in \S \ref{subsec:definition_relations} a closed oriented surface $R$.
We also consider a $Y_1$-equivalence class $\mathcal{Y} \subset \manifolds(R)$.
An invariant $f: \mathcal{Y} \to A$ of  manifolds in this class with values in an abelian group $A$
is said to be a \emph{finite-type invariant} of \emph{degree} at most $d$ if we have
$$
\sum_{P \subset \{0,\dots,d\}}(-1)^{|P|} \cdot f(M_{P}) = 0 \ \in A
$$\\[-0.2cm]
for any $M \in \mathcal{Y}$, for any pairwise-disjoint surfaces $S_0 \sqcup \cdots \sqcup S_{d} \subset \interior(M)$ 
and for any $s_0 \in \Torelli(S_0),\dots, s_d \in \Torelli(S_d)$, 
where $M_{P}\in \manifolds(R)$ is obtained from $M$ by simultaneous Torelli surgeries  along the surfaces $S_p$ for which $p\in P$.
In other words, $f$ behaves like a  ``polynomial'' map of degree at most $d$ with respect to Torelli surgeries.
The original definition by Goussarov \cite{Goussarov} and Habiro \cite{Habiro} involves
clasper surgery instead of Torelli surgery, 
but it follows from Proposition \ref{prop:Y_clasper} that the two definitions are equivalent.

\begin{lemma}[Goussarov \cite{Goussarov}, Habiro \cite{Habiro}] \label{lem:Y_FTI}
Let $M,M' \in \mathcal{Y}$. If $M$ and $M'$ are $Y_{d+1}$-equivalent, 
then we have $f(M)=f(M')$ for any finite-type invariant $f: \mathcal{Y} \to A$ 
of degree at most $d$.
\end{lemma}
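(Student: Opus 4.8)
The plan is to use the generation result for $Y_{d+1}$-equivalence to reduce to one graph clasper, and then to read off the conclusion from the ``polynomial'' behaviour of $f$. Since ``$f(M)=f(M')$'' is an equivalence relation on $\mathcal{Y}$ and, by Proposition~\ref{prop:Y_clasper}, the $Y_{d+1}$-equivalence is generated by surgeries along graph claspers of degree $d+1$, it suffices to prove $f(M_G)=f(M)$ whenever $G\subset\interior(M)$ is a \emph{connected} graph clasper with exactly $d+1$ nodes. I shall use the clasper formulation of finite type, which is equivalent by Proposition~\ref{prop:Y_clasper} to the Torelli-surgery one: $f$ has degree $\leq d$ precisely when
\[
\sum_{P\subseteq\{0,\dots,d\}}(-1)^{|P|}\,f(M_{C_P})=0
\]
for every $M\in\mathcal{Y}$ and all pairwise-disjoint graph claspers $C_0,\dots,C_d$ in $M$, where $M_{C_P}$ is simultaneous surgery along the $C_p$ with $p\in P$.

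\textbf{Step 2: writing $M_G$ as an iterated commutator of $Y$-graph surgeries.} This is the heart of the argument, and here I would appeal to Habiro's clasper calculus \cite{Habiro,GGP} (the moves needed are gathered in the appendix). Cutting $G$ along a suitable set of edges --- say, along the edges of a spanning tree of its $d+1$ nodes --- and applying the leaf-sliding, IHX and ``commutator'' relations, one exhibits the surgery operation $M\leadsto M_G$ as an iterated commutator, of commutator-length $d+1$, of $d+1$ elementary surgeries along pairwise-disjoint $Y$-graphs $G_0,\dots,G_d$ that are clasped together according to the combinatorics of $G$. This is the clasper incarnation of the classical fact that a degree-$n$ tree clasper realises an $n$-fold group commutator; it is the genuine content of the lemma and is where the pictorial moves of the appendix do the work.

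\textbf{Step 3: conclusion.} It then remains to expand this iterated commutator by inclusion--exclusion over the subsets of $\{0,\dots,d\}$, using that surgeries along the pairwise-disjoint $G_i$ ``commute'' so that simultaneous surgery along any subfamily is well defined. Writing $M_{G_P}$ for simultaneous surgery along $\{G_p : p\in P\}$, the expansion identifies $f(M_G)-f(M)$ with $\pm$ the left-hand side of the identity of Step~1 applied to $G_0,\dots,G_d$ (one must keep track here of the higher-commutator-degree error terms produced by the clasper relations and check, via the appendix's moves or a secondary induction, that they do not obstruct this identification); since $f$ has degree at most $d$ the sum is zero, whence $f(M_G)=f(M)$. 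For orientation, the base case $d=0$ is trivial: an invariant of degree $\leq 0$ is constant on the $Y_1$-class $\mathcal{Y}$. I expect the main obstacle to be exactly the bookkeeping of Steps~2--3: controlling the clasper relations so that surgery along a connected degree-$(d+1)$ clasper is, modulo higher degree, precisely the $(d+1)$-fold commutator of $d+1$ $Y$-graph surgeries, with the error terms arranged so that the inclusion--exclusion of Step~3 still collapses.
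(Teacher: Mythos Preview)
Your route is in the right spirit but far more laborious than the paper's, and the parts you yourself flag as gaps (Steps~2--3) are exactly what the paper dispatches in one stroke. The paper never passes through claspers: by the very \emph{definition} of $Y_{d+1}$-equivalence, $M'=M_s$ for a single surface $S\subset\interior(M)$ and some $s\in\Gamma_{d+1}\Torelli(S)$. Extend $f$ linearly to $\Z\cdot\mathcal{Y}$ and let $\zeta:\Z[\Torelli(S)]\to\Z\cdot\mathcal{Y}$ send $r\mapsto M_r$. The finite-type condition (applied with $d+1$ parallel copies of $S$, one for each factor) says precisely that $f\circ\zeta$ annihilates the $(d+1)$-st power $I^{d+1}$ of the augmentation ideal. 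The classical inclusion $\Gamma_{d+1}G\subset 1+I^{d+1}$, valid in the group ring of any group $G$ and proved by the identity $[a,b]-1=a^{-1}b^{-1}\big((a-1)(b-1)-(b-1)(a-1)\big)$ together with induction, then gives $f(M')-f(M)=f\circ\zeta(s-1)=0$.

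What you attempt in Steps~2--3 --- cut a degree-$(d+1)$ clasper into $Y$-pieces, expand the resulting iterated commutator, and control the error terms --- is the geometric unpacking of the single algebraic line ``$s-1\in I^{d+1}$''. It can be made to work, but note the trap you almost fall into: if you record the error terms only as ``$Y_{\geq d+2}$-moves'', then discarding them under $f$ requires the lemma one degree higher, an induction going the wrong way. You must instead express each error term as an alternating sum over $\geq d+1$ \emph{disjoint} surgeries, so that the degree-$\leq d$ hypothesis kills it directly. The group-ring formulation does all of this bookkeeping for free, which is why the paper's proof fits in five lines.
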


\begin{proof}
Let $S \subset \interior(M)$ be the surface along which the Torelli surgery $M \leadsto M_s$ yields $M'$ for some $s \in \Gamma_{d+1} \Torelli(S)$.
Let $\Z\! \cdot\! \mathcal{Y}$ be the abelian group freely generated by the set $\mathcal{Y}$,
to which $f$ extends by linearity. We denote by $\Z[\Torelli(S)]$ the group ring of $\Torelli(S)$ with augmentation ideal $I$. 
The map $\Torelli(S) \to  \mathcal{Y}$ defined by $r \mapsto M_r$
extends by linearity to a map $\zeta: \Z[\Torelli(S)] \to \Z\! \cdot\! \mathcal{Y}$. 
It follows from the definition of a finite-type invariant that
$$
f\circ \zeta\left(I^{d+1}\right) =0.
$$
By assumption, $s-1 \in \Z[\Torelli(S)]$ belongs to $I^{d+1}$, so that we  have $f(M_s-M)=0\in A$.
\end{proof}

According to \cite{Habiro,Habegger}, any homology cylinder over $\Sigma$ is $Y_1$-equivalent to the trivial cylinder, 
so that  the above applies to the class $\mathcal{Y}:=\cyl(\Sigma)$. 
Goussarov and Habiro have conjectured the converse of Lemma \ref{lem:Y_FTI} to be true 
for homology cylinders, and they proved this converse in genus $g=0$ \cite{Goussarov_graphs,Habiro}.
The conjecture is also known to be true 
in degree $d=2$ (as will follow from Theorem A too), and in some weaker forms \cite{Massuyeau_DSP}.

\section{Some classical invariants of homology cylinders}\label{sec:invariants}

In this section, we define the topological invariants of homology cylinders 
that are needed to characterize the $Y_k$-equivalence and the $J_k$-equivalence relations for $k=2$ and $k=3$.
We also describe some of their relationship, and their variation under surgery along a graph clasper. 

\subsection{Johnson homomorphisms} \label{subsec:Johnson}

The Johnson homomorphisms have been introduced and studied  
by Johnson \cite{Johnson_first_homomorphism,Johnson_survey} and Morita \cite{Morita_abelian} on the Torelli group, 
and by Garoufalidis--Levine \cite{GL_tree} and Habegger \cite{Habegger} on the monoid of homology cylinders.

First of all, we recall how the Johnson filtration is defined.
Using the same notation as in the introduction, we  set $\pi:=\pi_1(\Sigma,\star)$ for the fundamental group of $\Sigma$ 
(with base point $\star$ on the boundary), 
and we denote by 
$
\pi = \Gamma_1 \pi \supset \Gamma_2 \pi \supset \Gamma_3 \pi \supset \cdots
$
the lower central series of $\pi$.
Let $(M,m)$ be a homology cobordism of $\Sigma$. 
According to Stallings \cite{Stallings}, the map $m_{\pm}$ induces an isomorphism $m_{\pm,*}$
at the level of the $k$-th nilpotent quotient $\pi_1(\cdot)/\Gamma_{k+1} \pi_1(\cdot)$ 
of the fundamental group, 
so that the composition ${m_{-,*}}^{-1}\circ m_{+,*}$ defines an element of $\Aut\left(\pi/\Gamma_{k+1}\pi \right)$.  
(Here, the base point of $M$ is $m(\star,0)$ and is connected to $m_\pm(\star)=m(\star,\pm1)$ 
through the segment $m(\{\star\} \times I)\subset \partial M$.)
So for each $k\ge 0$ we get a monoid homomorphism 
$$ 
\rho_k: \cob \longrightarrow \Aut\left(\pi/\Gamma_{k+1}\pi \right), \ M \longmapsto {m_{-,*}}^{-1}\circ m_{+,*}
$$
which is the group homomorphism (\ref{eq:rho_k_mcg}) on the mapping class group.
Thus, $\rho_k$ should be thought of as the ``$k$-th nilpotent approximation'' of the  Dehn--Nielsen representation.  
The descending sequence of submonoids
$$ 
\cob =\cob[0] \supset \cob[1] \supset \cob[2]\supset \cdots,
$$
where $\cob[k]$ is the kernel of $\rho_k$, is called the \emph{Johnson filtration} of $\cob$. 
We are particularly interested in the monoids $\cob[1]=\cyl$ and $\cob[2]=\Jcob$.

The \emph{$k$-th Johnson homomorphism} $\tau_k$ is then defined in the following way.  
An element $f\in \Hom\left(H,\Gamma_{k+1}\pi/\Gamma_{k+2}\pi \right)$ defines an automorphism of $\pi/\Gamma_{k+2} \pi$ 
by sending any $\{x\} \in \pi/\Gamma_{k+2} \pi$ to $f(\{x\})\cdot \{x^{-1}\}$.
Thus we obtain an  exact sequence of groups
$$
1 \rightarrow \Hom\left(H,\Gamma_{k+1}\pi/\Gamma_{k+2}\pi\right) \rightarrow 
\Aut\left(\pi/\Gamma_{k+2}\pi\right) \rightarrow \Aut\left(\pi/\Gamma_{k+1}\pi\right)
$$
and the restriction of $\rho_{k+1}$ to the submonoid $\cob[k]$ yields a monoid homomorphism
$$
\tau_k: \cob[k] \longrightarrow \Hom\left(H,\Gamma_{k+1}\pi/\Gamma_{k+2}\pi\right)
\simeq H^* \otimes \Gamma_{k+1}\pi/\Gamma_{k+2}\pi \simeq H  \otimes \Gamma_{k+1}\pi/\Gamma_{k+2}\pi.
$$ 
Here, the group $H$ is identified with $H^*=\Hom(H,\Z)$ by the map $h \mapsto \omega(h,\cdot)$ 
using the intersection form of $\Sigma$
$$
\omega: H \times H \longrightarrow \Z.
$$
One usually restricts the target of the $k$-th Johnson homomorphism  in the following way.
We denote by $\Lie(H)$ the graded Lie ring freely generated by $H$ in degree $1$.
There is a canonical isomorphism between $\Lie(H)$ 
and the graded Lie ring $\Gr^\Gamma \pi= \bigoplus_{k\geq1} \Gamma_k\pi/\Gamma_{k+1}\pi$ 
associated to the lower central series of $\pi$ \cite{Bourbaki}. 
Therefore, $\tau_k$ can be seen with values in $H\otimes \Lie_{k+1}(H)$.
It turns out that $\tau_k$ takes values in the kernel of the Lie bracket map
$$
\DD_{k}(H) := 
\Ker\left([\cdot,\cdot]: H \otimes \Lie_{k+1}(H) \longrightarrow \Lie_{k+2}(H)\right),
$$
see \cite{Morita_abelian,GL_tree}.
Here are a few properties of the  $k$-th Johnson homomorphism $\tau_k$:
\begin{itemize}
\item $\tau_k: \cob[k]\rightarrow \DD_{k}(H)$ is surjective \cite{GL_tree,Habegger};
\item $\tau_k$ is $Y_{k+1}$-invariant 
(since the map $\rho_{k+1}$ is invariant under $Y_{k+1}$-equivalence as follows, for example, from Lemma \ref{lem:rho_k} below);
\item $\tau_k$ is invariant under stabilization in the sense that, if $\Sigma$ is stabilized to a surface $\Sigma^s$
as shown in Figure \ref{fig:stabilization} so that $H$ embeds into $H^s := H_1(\Sigma^s)$,
then the following diagram is commutative:
$$
\xymatrix{
\cob(\Sigma)[k]\ar[d]_-{\tau_k}  \ar@{->}[r] & \cob(\Sigma^s)[k] \ar[d]^-{\tau_k} \\
\DD_{k}(H) \ar@{->}[r] & \DD_{k}(H^s).
}
$$
\end{itemize}

We now specialize to the cases  $k=1$ and $k=2$ which will be enough for our purposes.
Then the free abelian group $\DD_{k}(H)$ has the following description.
For $k=1$, there is an isomorphism
\begin{equation}\label{eq:D_3}
\Lambda^3 H \stackrel{\simeq}{\longrightarrow} \DD_{1}(H)
\end{equation}
given by $a \wedge b \wedge c \mapsto a\otimes [b,c]+ c\otimes [a,b] + b \otimes [c,a]$,
see \cite{Johnson_first_homomorphism}.
For $k=2$, the description of $\DD_{k}(H)$ is more delicate and involves 
$\left(\Lambda^2 H \otimes \Lambda^2H\right)^{\mathfrak{S}_2}$, 
\ie the symmetric part of the second tensor power of $\Lambda^2 H$.
This free abelian group contains an isomorphic image of the degree $2$ part $S^2 \Lambda^2 H$
of the symmetric algebra over $\Lambda^2 H$, 
which we regard as a quotient of the tensor algebra over $\Lambda^2H$.
Indeed, the map $S^2\Lambda^2 H \to \left(\Lambda^2 H \otimes \Lambda^2H\right)^{\mathfrak{S}_2}$
sending $x\cdot y$ to  $(x \leftrightarrow y) := x \otimes y + y \otimes x$ is injective,
and we denote its image by $\Lambda^2 H \leftrightarrow \Lambda^2H$.
Note that  we have an isomorphism
$$
\frac{\Lambda^2 H}{2\cdot\Lambda^2 H } \stackrel{\simeq}{\longrightarrow}
\frac{\left(\Lambda^2 H \otimes \Lambda^2H\right)^{\mathfrak{S}_2}}{\Lambda^2 H \leftrightarrow \Lambda^2H}, \quad
\{a \wedge b\}\longmapsto \{ (a\wedge b) \otimes (a\wedge b) \},
$$
hence a short exact sequence of abelian groups:
\begin{equation}
\label{eq:sym_to_sym}
\xymatrix{
0 \ar[r] & S^2 \Lambda^2H \ar[r]^-{\leftrightarrow} & \left(\Lambda^2 H \otimes \Lambda^2H\right)^{\mathfrak{S}_2} \ar[r] 
& \frac{\Lambda^2 H}{2\cdot\Lambda^2 H } \ar[r] & 0
} 
\end{equation}
where the map on the right side sends tensors of the form $(a\wedge b) \otimes (a\wedge b)$ to $\{a \wedge b\}$.
Finally, note that $\Lambda^4H$ can be embedded in 
$\left(\Lambda^2 H \leftrightarrow \Lambda^2H\right) \subset \left(\Lambda^2 H \otimes \Lambda^2H\right)^{\mathfrak{S}_2}$ 
by sending any $4$-vector  $a\wedge b\wedge c\wedge d$ to the sum
$$
(a\wedge b)\leftrightarrow (c\wedge d) - (a\wedge c)\leftrightarrow (b\wedge d) + (a\wedge d)\leftrightarrow (b\wedge c).
$$

\begin{proposition}[Morita, Levine]\label{prop:Morita-Levine}
There is a unique isomorphism 
\begin{equation}\label{eq:D_2}
\frac{\left(\Lambda^2 H \otimes \Lambda^2H\right)^{\mathfrak{S}_2}}{\Lambda^4H} 
\stackrel{\simeq}{\longrightarrow} \DD_{2}(H)
\end{equation}
that is defined by
\begin{equation}\label{eq:D_2_formula}
\big((a\wedge b)\leftrightarrow (c\wedge d)\big)  \mapsto 
a \otimes [b,[c,d]] + b \otimes [[c,d],a]+ c \otimes [d,[a,b]] + d \otimes [[a,b],c].
\end{equation}
\end{proposition}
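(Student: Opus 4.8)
The plan is to build the map from an auxiliary tensor-level map, check well-definedness and the target constraint by two short Jacobi-identity computations, settle uniqueness by a torsion-freeness argument, and then prove bijectivity by a rank count supplemented by a reduction-mod-$p$ argument.

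\textbf{Construction and well-definedness.} I would first introduce
$$
\psi\colon \Lambda^2 H \otimes \Lambda^2 H \longrightarrow H \otimes \Lie_3(H),\qquad
(a\wedge b)\otimes(c\wedge d)\longmapsto a\otimes[b,[c,d]] - b\otimes[a,[c,d]],
$$
which is multilinear and alternating separately in $(a,b)$ and in $(c,d)$, hence well defined. Restricting $\psi$ to the symmetric part $(\Lambda^2 H \otimes \Lambda^2 H)^{\mathfrak{S}_2}$ and evaluating on $(a\wedge b)\leftrightarrow(c\wedge d)$ reproduces exactly the right-hand side of \eqref{eq:D_2_formula}, after rewriting $[[c,d],a]=-[a,[c,d]]$ and $[[a,b],c]=-[c,[a,b]]$. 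A direct computation with the Jacobi identity shows that $\psi$ kills the image of $\Lambda^4 H$ inside $(\Lambda^2 H\otimes\Lambda^2 H)^{\mathfrak{S}_2}$: the coefficient of $a\otimes(\cdot)$ in the image of $a\wedge b\wedge c\wedge d$ is $[b,[c,d]]-[c,[b,d]]+[d,[b,c]]=0$, and the remaining coefficients vanish by the symmetry of the construction. Thus $\psi$ descends to a homomorphism $\bar\varphi$ on $(\Lambda^2 H\otimes\Lambda^2 H)^{\mathfrak{S}_2}/\Lambda^4 H$. That $\bar\varphi$ lands in $\DD_2(H)=\Ker\big([\cdot,\cdot]\colon H\otimes\Lie_3(H)\to\Lie_4(H)\big)$ is again immediate: applying the bracket to the four terms of \eqref{eq:D_2_formula} and grouping them in two pairs via the identity $[u,[v,w]]-[v,[u,w]]=[[u,v],w]$ yields $[[a,b],[c,d]]+[[c,d],[a,b]]=0$.

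\textbf{Uniqueness.} The elements $(a\wedge b)\leftrightarrow(c\wedge d)$ span $\Lambda^2 H\leftrightarrow\Lambda^2 H\cong S^2\Lambda^2 H$, which by \eqref{eq:sym_to_sym} has index $2^{\binom{2g}{2}}$ in $(\Lambda^2 H\otimes\Lambda^2 H)^{\mathfrak{S}_2}$; since $\DD_2(H)$ is a subgroup of the free abelian group $H\otimes\Lie_3(H)$ it is torsion-free, so two homomorphisms into it agreeing on a subgroup of $2$-power index must coincide. Hence there is at most one homomorphism satisfying \eqref{eq:D_2_formula}.

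\textbf{Bijectivity.} By Witt's formula $\operatorname{rk}\Lie_3(H)=\tfrac13((2g)^3-2g)$ and $\operatorname{rk}\Lie_4(H)=\tfrac14((2g)^4-(2g)^2)$; since $\Lie(H)$ is generated in degree $1$ the bracket $H\otimes\Lie_3(H)\to\Lie_4(H)$ is onto, so $\operatorname{rk}\DD_2(H)=\tfrac1{12}(2g)^2((2g)^2-1)$, and a short computation gives $\operatorname{rk}\big((\Lambda^2 H\otimes\Lambda^2 H)^{\mathfrak{S}_2}/\Lambda^4 H\big)=\binom{\binom{2g}{2}+1}{2}-\binom{2g}{4}$, the same integer. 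Moreover this quotient is free abelian: in the standard basis each basis $4$-vector maps to a $\pm1$-combination of pairwise distinct off-diagonal basis vectors, and distinct $4$-vectors use disjoint such vectors, so $\Lambda^4 H$ is a direct summand. Therefore $\bar\varphi$ is a map between free abelian groups of equal finite rank, and it suffices to prove surjectivity. I expect this to be the main obstacle. The clean route is to prove surjectivity after tensoring with $\Q$, where both sides are $\Sp(2g;\Q)$-modules, $\bar\varphi_\Q$ is $\Sp$-equivariant, and comparing the known decompositions into irreducibles together with Schur's lemma forces $\bar\varphi_\Q$ to be an isomorphism (this is Morita's argument); then one upgrades to $\Z$ by checking that $\bar\varphi\otimes\mathbb{F}_p$ is an isomorphism for every prime $p$, equivalently that $\operatorname{coker}\bar\varphi$ has no $p$-torsion. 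The prime $p=2$ is the delicate one, and handling it requires either a hands-on description of $\DD_2(H)\otimes\mathbb{F}_2$ or the exhibition of explicit integral preimages of a basis of $\DD_2(H)$, following Levine.
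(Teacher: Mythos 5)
Your construction and well-definedness arguments are correct, and your uniqueness argument (the $\leftrightarrow$-elements span a subgroup of $2$-power index, and $\DD_2(H)$ is torsion-free as a subgroup of $H\otimes\Lie_3(H)$) is a clean, slightly more elementary variant of what the paper does (the paper instead passes to $\Q$-coefficients and checks that the rational extension of $\eta$ restricted back to the integral symmetric part still lands in $\DD_2(H)$). One small point: your verification that the image lies in $\Ker[\cdot,\cdot]$ is carried out only on the elements $(a\wedge b)\leftrightarrow(c\wedge d)$; since these do not generate the whole symmetric part, you must also check the diagonal generators $x\otimes x$ (the bracket of $\psi(x\otimes x)$ is $[x,x]=0$ term by term, so this is immediate, but it needs saying — note that $\psi$ itself does \emph{not} land in $\DD_2(H)$ on the full tensor square).

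The genuine gap is in the bijectivity step, and you have in effect flagged it yourself. Your rank count is correct and reduces the problem to integral surjectivity, and the rational statement does follow from $\Sp$-equivariance and the decomposition into irreducibles; but the entire nontrivial content of the ``Levine'' half of the proposition is precisely the prime $2$, which your proposal leaves as ``requires either a hands-on description of $\DD_2(H)\otimes\mathbb{F}_2$ or explicit integral preimages.'' As written, this is not a proof. The paper closes exactly this gap by a structural device rather than a computation: Levine's quasi-Lie ring $\Lie'(H)$ gives a short exact sequence $0 \to \DD'_2(H) \to \DD_2(H) \to \Lambda^2 H\otimes\Z_2 \to 0$, which is matched against the exact sequence $0 \to S^2\Lambda^2 H \to (\Lambda^2 H\otimes\Lambda^2 H)^{\mathfrak{S}_2} \to \Lambda^2 H/2\Lambda^2 H \to 0$ of the paper; Levine's theorem that $\eta'\colon S^2\Lambda^2 H/\Lambda^4 H \to \DD'_2(H)$ is an isomorphism, together with the identity on the $\Z_2$-quotients, then yields bijectivity of the middle map by the five lemma. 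So the ladder of short exact sequences is the missing ingredient: it packages the mod-$2$ information you would otherwise have to compute by hand, and your argument becomes complete once you substitute it for the unexecuted mod-$p$ step.
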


\begin{proof}[Sketch of the proof]
The map (\ref{eq:D_2}) is defined by Morita in \cite{Morita_Casson1,Morita_Casson2}, 
where the abelian group ${\left(\Lambda^2 H \otimes \Lambda^2H\right)^{\mathfrak{S}_2}}/{\Lambda^4H}$ is denoted by $\overline{T}$.  
There, Morita states that the map is injective,
and the bijectivity of  (\ref{eq:D_2}) is essentially proved by Levine in \cite{Levine_quasi-Lie}.
To show that the map (\ref{eq:D_2}) is uniquely defined, we consider the map
$$
\eta: (\Lambda^2 H \leftrightarrow \Lambda^2H) \longrightarrow \DD_{2}(H)
$$
defined by formula (\ref{eq:D_2_formula}). Taking rational coefficients, we get a homomorphism $\eta \otimes \Q$
from $\left(\Lambda^2 H \otimes \Lambda^2H\right)^{\mathfrak{S}_2} \otimes \Q$ 
$= (\Lambda^2 H \leftrightarrow \Lambda^2H)\otimes \Q$ 
to $\DD_{2}(H)\otimes \Q$. For all $a,b\in H$,
\begin{eqnarray*}
(\eta \otimes \Q)\big((a\wedge b)\otimes (a\wedge b)\big)
&= &\frac{1}{2} (\eta \otimes \Q)\big((a\wedge b)\leftrightarrow (a\wedge b)\big)\\
&=& \frac{1}{2} \left(2 \cdot a \otimes [b,[a,b]] + 2 \cdot b \otimes [[a,b],a]\right)
\end{eqnarray*}
belongs to $\DD_2(H) \subset \DD_2(H)\otimes \Q$.
Thus, the restriction of $\eta \otimes \Q$ to $\left(\Lambda^2 H \otimes \Lambda^2H\right)^{\mathfrak{S}_2}$
takes values in $\DD_2(H)$, and a simple computation shows that it vanishes
on the image of $\Lambda^4 H$. This discussion shows that the homomorphism (\ref{eq:D_2})
is well-defined and is determined by the formula (\ref{eq:D_2_formula}).

Following Levine \cite{Levine_addendum,Levine_quasi-Lie}, 
we consider the  \emph{quasi-Lie ring} $\Lie'(H)$ freely generated by $H$ and,
similarly to $\DD_k(H)$, we define
$$
\DD'_{k}(H) := 
\Ker\left([\cdot,\cdot]: H \otimes \Lie_{k+1}'(H) \longrightarrow \Lie_{k+2}'(H)\right).
$$
The natural group homomorphism $\Lie'(H) \to \Lie(H)$
induces a group homomorphism $\DD'(H) \to \DD(H)$ which, in degree $2$,
happens to be injective but not surjective \cite{Levine_addendum}:
\begin{equation}
\label{eq:D_to_D}
\xymatrix{
0 \ar[r] & \DD'_2(H) \ar[r] & \DD_2(H) \ar[r]^-L & (\Lambda^2 H)\otimes \Z_2 \ar[r] & 0.
} 
\end{equation}
Here the map $L$ is defined by an application of the ``snake lemma''. 
Levine also considers the map
$\eta': \frac{S^2 \Lambda^2H}{\Lambda^4H} \longrightarrow D'_2(H)$
defined by
$$
\left\{ (a\wedge b) \cdot (c\wedge d)\right\}  \stackrel{\eta'}{\longmapsto}
a \otimes [b,[c,d]] + b \otimes [[c,d],a]+ c \otimes [d,[a,b]] + d \otimes [[a,b],c].
$$
(This map $\eta'$ is actually the degree $2$ case of a more general construction,
which transforms tree Jacobi diagrams to elements of $\DD'(H)$: see \S \ref{sec:diagrams} in this connection.)
From the definition of Levine's map $L$, we see that the following diagram is commutative:
$$
\xymatrix{
0 \ar[r] & \frac{S^2 \Lambda^2H}{\Lambda^4H} \ar[r]^-{\leftrightarrow} \ar[d]^-{\eta'} & 
\frac{\left(\Lambda^2 H \otimes \Lambda^2H\right)^{\mathfrak{S}_2}}{\Lambda^4H} \ar[r] \ar[d]^-{\eta\otimes \Q}
& \frac{\Lambda^2 H}{2\cdot\Lambda^2 H }  \ar[r] \ar[d]^-\simeq & 0\\
0 \ar[r] & \DD'_2(H) \ar[r] & \DD_2(H) \ar[r]^-L & (\Lambda^2 H)\otimes \Z_2 \ar[r] & 0
} 
$$
The map $\eta'$ is bijective in degree $2$ \cite{Levine_quasi-Lie}.
We conclude that (\ref{eq:D_2}) is an isomorphism.
\end{proof}

In the sequel, the identifications (\ref{eq:D_3})  and (\ref{eq:D_2}) will be implicit.
A formula for the variation of $\tau_1$ under surgery along a $Y$-graph  is given in \cite{MM}.  
Strictly similar arguments give the following formula for $\tau_2$ and graph claspers of degree $2$. 

\begin{lemma}\label{lem:tau2}
Let $H$ be a degree $2$ graph clasper in a homology cylinder $M$
with $4$ leaves $f_1, \dots, f_4$ which are oriented as shown below:\\[-0.4cm]
$$
\labellist
\small\hair 2pt
 \pinlabel {$f_4$} [l] at 70 9
 \pinlabel {$f_3$} [l] at 69 43
 \pinlabel {$f_2$} [r] at 0 41
 \pinlabel {$f_1$} [r] at 0 9
\endlabellist
\includegraphics[scale=1.3]{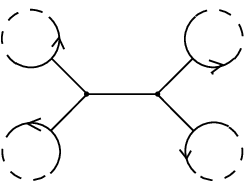}\\[-0.3cm]
$$
Then we have
$$
\tau_2\left( M_H\right)-\tau_2(M) = \left\{ (h_1\wedge h_2) \leftrightarrow (h_3 \wedge h_4) \right\}
$$
where $h_1,\dots,h_4\in H$ denote the homology classes of $f_1,\dots,f_4$ respectively.   
\end{lemma}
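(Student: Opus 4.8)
Since $\tau_2$ is defined only on $\Jcob$, we tacitly assume $M\in\Jcob$; then $M_H\in\Jcob$ as well, because surgery along a degree $2$ graph clasper produces a $Y_2$-equivalent manifold (Proposition~\ref{prop:Y_clasper}) while $\rho_2$ is $Y_2$-invariant. The plan is to first show that the difference $\delta:=\tau_2(M_H)-\tau_2(M)\in\DD_2(H)$ depends only on the homology classes $h_1,\dots,h_4$ of the leaves and is $\Z$-multilinear in them, and then to compute it on a standard model.

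The multilinearity comes from the calculus of claspers recalled in Appendix~\ref{sec:calc_clasper}: if one leaf of $H$ is null-homologous, then $M_H$ is $Y_3$-equivalent to $M$, hence $\delta=0$ since $\tau_2$ is $Y_3$-invariant; band-summing a leaf along a loop splits the surgery, modulo $Y_3$, into the two corresponding surgeries, which gives additivity; reversing a leaf changes the sign; and the framings, knotting and linking of the leaves are immaterial modulo $Y_3$. One checks similarly, using the locality of clasper surgery and the functoriality of $\rho_3$, that $\delta$ does not depend on the ambient manifold $M$. Fixing a symplectic basis of $H$, it therefore suffices to evaluate $\delta$ when $H$ is an elementary H-graph whose four leaves are disjoint unknotted parallel copies of standard simple closed curves of $\Sigma\subset M$.

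For such a model, surgery along $H$ is surgery along the explicit framed link furnished by the recipe of \S\ref{subsec:clasper} (each of the two nodes becomes a Borromean triple of leaves, giving five basic claspers, each of which in turn becomes a two-component $(\pm1)$-framed link). A van Kampen/Wirtinger computation then presents $\pi_1(M_H)$ and shows that a generator $x$ of $\pi$ with $\omega(x,h_j)=1$ and $\omega(x,h_i)=0$ for $i\neq j$ is carried by $\rho_3(M_H)\circ\rho_3(M)^{-1}$ to $x$ times a conjugate of the iterated commutator, in loops $\ell_i$ representing the $h_i$, that is read off the H-tree rooted at the $j$-th leaf, all remaining corrections lying in $\Gamma_4\pi$. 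Under the identifications $\Gamma_3\pi/\Gamma_4\pi\cong\Lie_3(H)$ and (\ref{eq:D_2}), the four rootings reproduce precisely the four terms of the defining formula (\ref{eq:D_2_formula}), so that $\delta$ equals $\{(h_1\wedge h_2)\leftrightarrow(h_3\wedge h_4)\}$. This is the degree $2$, single-clasper case of the general formula for the variation of $\tau_k$ along clasper surgeries \cite{GL_tree,Habegger}, and the proof of the degree $1$ statement in \cite{MM} follows the same scheme with one fewer level of commutators.

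The delicate point is the bookkeeping in this last step: controlling the conjugating elements and the signs, verifying that only the four relevant triple commutators survive modulo $\Gamma_4\pi$, and checking that the symmetry of the H-clasper under exchanging its two nodes — respectively the two leaves at a node — reflects the relations defining $(\Lambda^2H\otimes\Lambda^2H)^{\mathfrak{S}_2}/\Lambda^4H$, so that $\delta$ is exactly the symmetrised double wedge and nothing more.
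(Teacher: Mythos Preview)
Your sketch is correct and matches what the paper does: the paper gives no proof at all for this lemma, merely stating that ``strictly similar arguments'' to those in \cite{MM} (for the $\tau_1$/$Y$-graph case) yield the formula. Your expansion --- reduction to multilinearity in the $h_i$ via clasper calculus modulo $Y_3$, followed by an explicit $\pi_1/\Gamma_4\pi$ computation on a standard H-graph --- is precisely that argument carried one degree higher, and you correctly flag the tacit hypothesis $M\in\Jcob$ needed for $\tau_2(M)$ to be defined.
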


\noindent 
By clasper calculus, 
we can always assume that a degree $2$ graph clasper  has four leaves.  
In particular, this lemma implies that surgery along a looped graph clasper $L$ of degree $2$ does not modify $\tau_2$.
Combining this with Lemma \ref{lem:doubling} also gives  the following.

\begin{lemma}\label{lem:tau2_2}
Let $Y$ be a $Y$-graph in a homology cylinder $M$ with one special leaf and two oriented leaves $f,f'$ as shown below:\\[-0.4cm]
$$
\labellist
\small\hair 2pt
 \pinlabel {$f$} [r] at 0 71
 \pinlabel {$f'$} [l] at 78 72
\endlabellist
\includegraphics[scale=0.8]{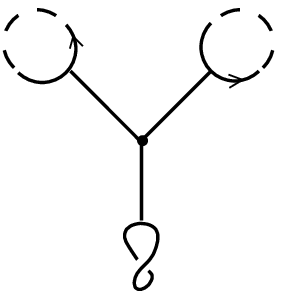}\\[-0.3cm]
$$
Then we have
$$
\tau_2\left(M_Y\right) -\tau_2(M) = \left\{(h \wedge h') \otimes (h\wedge h')   \right\}
$$
where $h,h'\in H$ denote the homology classes of $f,f'$ respectively.
\end{lemma}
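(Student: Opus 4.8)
The plan is to rewrite the surgery along the $Y$-graph $Y$ with one special leaf as a surgery along a graph clasper of degree $2$, and then to quote Lemma \ref{lem:tau2}. First I would record that the statement makes sense, \ie that $M_Y$ again lies in $\Jcob$: the three leaves of $Y$ have homology classes $h$, $h'$ and $0$, so the variation formula for $\tau_1$ under $Y$-graph surgery from \cite{MM} (which involves the exterior product of these three classes) shows $\tau_1(M_Y)=\tau_1(M)=0$; since $M_Y$ is again a homology cylinder, it then lies in $\cob[2]=\Jcob$ and $\tau_2(M_Y)$ is defined.

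The heart of the argument is an application of the doubling relation of Lemma \ref{lem:doubling}. For a $Y$-graph carrying a special leaf --- that is, a $(-1)$-framed leaf bounding a disk disjoint from $Y$ --- this relation lets one absorb the special leaf and trade $Y$, up to $Y_3$-equivalence, for a degree $2$ graph clasper $H$ whose four leaves are ``parallel'' copies of the two ordinary leaves $f$ and $f'$, possibly together with looped degree $2$ graph claspers, which by Lemma \ref{lem:tau2} leave $\tau_2$ unchanged. The delicate and conceptually essential feature is that the $(-1)$-framing on the special leaf forces the resulting contribution to be the ``odd'' symmetric tensor $(h\wedge h')\otimes(h\wedge h')$ rather than the ``even'' one $(h\wedge h')\leftrightarrow(h\wedge h')=2\,(h\wedge h')\otimes(h\wedge h')$; morally this is where the $2$-torsion summand $\Lambda^2H/2\cdot\Lambda^2H$ of the exact sequence (\ref{eq:sym_to_sym}) enters, and it is precisely why a naive ``linearity in the leaves'' argument --- valid only modulo $Y_2$, hence blind to $\tau_2$ --- is not enough and the refined special-leaf calculus of Appendix \ref{sec:calc_clasper} is needed.

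Once $H$ has been produced, Lemma \ref{lem:tau2} evaluates $\tau_2(M_H)-\tau_2(M)$ from the homology classes of its four leaves, and since $\tau_2$ is $Y_3$-invariant one has $\tau_2(M_Y)-\tau_2(M)=\tau_2(M_H)-\tau_2(M)$; matching orientations then gives $\{(h\wedge h')\otimes(h\wedge h')\}$, as asserted. I expect the main obstacle to be exactly this middle step: carrying out the clasper moves carefully enough to pin down the orientations of the four leaves of $H$ and to confirm that one lands on the correct generator and not twice it. This is a self-contained diagrammatic computation, but it has to be done with the special-leaf relations of Appendix \ref{sec:calc_clasper} and with care about signs, not by hand-waving.
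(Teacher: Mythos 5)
You have correctly identified the two ingredients the paper combines here (Lemma \ref{lem:doubling} and Lemma \ref{lem:tau2}), but the way you deploy the doubling lemma is exactly where the argument breaks. Lemma \ref{lem:doubling} does not trade a single copy of $Y$ for an H-graph: it says that the \emph{square} $\left((\Sigma\times I)_Y\right)^2$ is $Y_3$-equivalent to $(\Sigma\times I)_H\circ(\Sigma\times I)_P$ with $P$ looped. Your displayed identity $\tau_2(M_Y)-\tau_2(M)=\tau_2(M_H)-\tau_2(M)$ is therefore off by a factor of $2$ and would yield $\{(h\wedge h')\leftrightarrow(h\wedge h')\}=\{2\,(h\wedge h')\otimes(h\wedge h')\}$, twice the asserted answer. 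This is not an orientation or sign issue that more careful clasper moves can repair: by Lemma \ref{lem:tau2}, \emph{every} degree $2$ graph clasper surgery changes $\tau_2$ by an element of the ``even'' sublattice $\Lambda^2H\leftrightarrow\Lambda^2H$, so no single H-graph can produce the odd generator $(h\wedge h')\otimes(h\wedge h')$. You sense this when you flag ``landing on the correct generator and not twice it'' as the main obstacle, but the mechanism you propose cannot resolve it, and the claim that the $(-1)$-framing ``forces'' the odd tensor is not an argument.

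The correct use of the doubling lemma is: since $\tau_2$ is a $Y_3$-invariant monoid homomorphism, $2\bigl(\tau_2(M_Y)-\tau_2(M)\bigr)$ equals the $\tau_2$-variation of $H$ plus that of the looped clasper $P$, namely $\{(h\wedge h')\leftrightarrow(h\wedge h')\}+0=2\{(h\wedge h')\otimes(h\wedge h')\}$; one then divides by $2$, which is legitimate because $\DD_2(H)$ is a subgroup of the free abelian group $H\otimes\Lie_3(H)$ and hence torsion-free. It is precisely this halving that lets the answer escape the even sublattice $\Lambda^2H\leftrightarrow\Lambda^2H$ and land on the odd generator --- your intuition about the quotient $\Lambda^2H/2\cdot\Lambda^2H$ in (\ref{eq:sym_to_sym}) is the right one, but it has to be cashed out via the division-by-two step, which is missing from your plan. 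Your preliminary observation that $M_Y$ lies in $\Jcob$, and your remark that the looped pieces do not affect $\tau_2$, are both fine.
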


\subsection{The Alexander polynomial and the Reidemeister--Turaev torsion}\label{subsec:Alexander}

There is a relative version of the Alexander polynomial for homology cylinders \cite{Sakasai}.
The \emph{relative Alexander polynomial} of an $M\in \cyl$ is
the order of the relative homology group $H_1(M, \partial_-M;  \Z[H])$
whose coefficients are twisted by the ring homomorphism
$$
\xymatrix{
\Z[\pi_1(M)] \ar@{->>}[r] & \Z[H_1(M)] \ar[rr]^-{(m_{\pm,*})^{-1}}_-\simeq & &  \Z[H].
}
$$
This order is defined up to multiplication by a unit of the ring $\Z[H]$, \ie an element of the form $\pm h$ for some $h\in H$.
We denote it by
$$
\Delta(M,\partial_-M) := \ord H_1(M, \partial_-M;  \Z[H]) \ \in \Z[H]/\pm H.
$$

\begin{lemma}
For all $M \in \cyl$, we have $\Delta(M,\partial_-M) \neq 0$.
\end{lemma}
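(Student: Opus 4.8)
The plan is to show that the relative homology group $H_1(M,\partial_-M;\Z[H])$ is a torsion $\Z[H]$-module, so that its order is a nonzero element of $\Z[H]$. The standard criterion is the following: if $N$ is a finitely generated module over the Noetherian UFD $\Z[H]$, then $\ord N \neq 0$ if and only if $N\otimes_{\Z[H]}Q = 0$, where $Q = \Q(H)$ is the field of fractions of $\Z[H]$. So it suffices to prove that $H_1(M,\partial_-M;\Z[H])\otimes Q = H_1(M,\partial_-M;Q) = 0$, i.e.\ that the twisted homology of the pair $(M,\partial_-M)$ with coefficients in the field $Q$ vanishes in degree $1$.

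The key computational step is to show that $H_*(M;Q) = 0$ and $H_*(\partial_-M;Q) = 0$ in all degrees, and then feed this into the long exact sequence of the pair. Both vanishing statements follow from the standard fact that for a finite CW-complex $X$ with a homomorphism $\varphi:\pi_1(X)\to H$, the twisted homology $H_*(X;Q)$ vanishes as soon as $H_*(X;\Q)$ is that of a point \emph{and} the cover is non-trivial in the appropriate sense; more precisely, one uses that for $\Sigma$ (a once-punctured surface, homotopy equivalent to a wedge of $2g$ circles) the map $\varphi = m_{\pm,*}^{-1}\circ(\text{ab}):\pi_1(\Sigma)\to H$ is the abelianization, so $H_*(\Sigma;\Z[H])$ is computed from the chain complex of the universal abelian cover, and one checks directly (Euler characteristic count plus the fact that a free group surjects onto $H$) that $H_*(\Sigma;Q)=0$. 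The same argument applies to $M$: since $M$ is a homology cylinder, $H_*(M;\Z)\cong H_*(\Sigma;\Z)$, the composite $\pi_1(M)\twoheadrightarrow H_1(M)\xrightarrow{\sim} H$ plays the role of $\varphi$, and $H_*(M;Q)=0$ by the same reasoning (one can also invoke that $M$ is homotopy equivalent to a $2$-complex with the homology of $\Sigma$ and that the relevant cover is infinite cyclic-by-free, hence has no $Q$-homology). The boundary piece $\partial_-M = m_-(\Sigma)$ is literally a copy of $\Sigma$, with the coefficient system restricting to the abelianization, so $H_*(\partial_-M;Q)=0$ as well.

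With $H_*(M;Q)=0$ and $H_*(\partial_-M;Q)=0$, the long exact sequence
\[
\cdots \to H_1(\partial_-M;Q)\to H_1(M;Q)\to H_1(M,\partial_-M;Q)\to H_0(\partial_-M;Q)\to H_0(M;Q)\to\cdots
\]
immediately gives $H_1(M,\partial_-M;Q)=0$, which is exactly what we need. The main obstacle, and the only place where one must be slightly careful, is verifying the vanishing $H_*(\,\cdot\,;Q)=0$ for $\Sigma$ and for $M$: one must make sure the coefficient homomorphism $\pi_1\to H$ is genuinely surjective (true here, since it factors through an isomorphism onto $H=H_1(\Sigma;\Z)$) and that a rank count of the chain complex forces acyclicity over the field $Q$. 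For $\Sigma\simeq \vee_{2g} S^1$ this is the classical computation that the Alexander module of a free group is $Q$-torsion with no zeroth homology; for $M$ one reduces to a $2$-dimensional CW model and runs the identical argument, or alternatively notes that $M$ deformation retracts onto a spine whose chain complex over $\Z[H]$ has the same elementary structure. Everything else is the formal homological algebra just sketched.
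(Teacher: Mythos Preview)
Your overall strategy---reduce to showing $H_1(M,\partial_-M;Q)=0$ for $Q=Q(\Z[H])$, then conclude that the module is torsion---is exactly the paper's strategy. But the way you try to establish this vanishing has a genuine error.

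You claim that $H_*(\Sigma;Q)=0$ and $H_*(M;Q)=0$, and then invoke the long exact sequence of the pair. The first claim is false. Since $\Sigma\simeq \bigvee_{2g}S^1$, the chain complex of the universal abelian cover is $\Z[H]^{2g}\to\Z[H]$ with $\partial e_i=h_i-1$. Over $Q$ (and assuming $g\geq 1$) the map $Q^{2g}\to Q$ is a nonzero linear map, so its kernel has dimension $2g-1$. Thus $H_1(\Sigma;Q)\cong Q^{2g-1}\neq 0$. Equivalently, $\chi(\Sigma)=1-2g\neq 0$, so the Euler-characteristic count you invoke actually \emph{forbids} $H_*(\Sigma;Q)=0$. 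The same reasoning shows $H_*(M;Q)\neq 0$, since $M$ has the integral homology of $\Sigma$.

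What \emph{is} true---and what the paper uses---is that the inclusion $\partial_-M\hookrightarrow M$ induces an isomorphism $H_*(\partial_-M;Q)\to H_*(M;Q)$, so that the \emph{relative} groups $H_*(M,\partial_-M;Q)$ vanish. This is the content of the general fact (from Kirk--Livingston--Wang) that if $(X,Y)$ is a CW pair with $Y\hookrightarrow X$ a homology isomorphism, then $H_*(X,Y;\mathbb{F})=0$ for any field $\mathbb{F}$ receiving an injection from $\Z[H_1(X)]$. The point is that the nontrivial $Q$-homology of $\partial_-M$ and of $M$ cancel against each other in the long exact sequence; they do not vanish separately. Your proof can be repaired by proving this relative statement directly (or by citing it), rather than by the incorrect absolute vanishing.
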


\begin{proof}
We have the following general fact, essentially proved in \cite[Proposition 2.1]{KLW}.\\[-0.3cm]

\begin{quote}
\textbf{Fact.}
\emph{Let $(X,Y)$ be a connected
CW pair such that the inclusion $Y\subset X$  induces an isomorphism in homology. 
Then, for all injective ring homomorphism $\varphi: \Z[H_1(X)] \to \mathbb{F}$ with values in a commutative field $\mathbb{F}$, 
the homology group $H_*(X,Y;\mathbb{F})$ with coefficients twisted by $\varphi$ is trivial.}\\[-0.3cm]
\end{quote}

\noindent
Thus, an application of the universal coefficient theorem gives
$$
H_1(M, \partial_-M;  \Z[H]) \otimes_{\Z[H]} Q(\Z[H])
\simeq H_1\big(M, \partial_-M;  Q(\Z[H])\big) =0
$$
where $Q(\Z[H])$ denotes the fraction field of the domain $\Z[H]$.
We deduce that the $\Z[H]$-module $H_1(M, \partial_-M;  \Z[H])$ is torsion.
\end{proof}

\noindent

It has been shown by Milnor for link complements \cite{Milnor_duality} and by Turaev for closed manifolds \cite{Turaev_Alexander}
that the Alexander polynomial can be interpreted in dimension $3$ as a kind of Reidemeister torsion.
The reader is referred to Turaev's book \cite{Turaev_book} for an introduction to the theory of Reidemeister torsions.
The same interpretation holds for the relative Alexander polynomial of homology cylinders.

\begin{proposition}\label{prop:Alexander_torsion}
Let $M\in \cyl$ and let $(X,Y)$ be a cell decomposition of $(M,\partial_-M)$. 
We denote by $\mu: \Z[\pi_1(X)] \to Q(\Z[H])$ the ring map induced by the isomorphism
$(m_{\pm,*})^{-1}: H_1(X)\simeq H_1(M) \to H$ and we denote by $\tau^\mu(X,Y)\in Q(\Z[H])/\pm H$
the relative Reidemeister torsion with abelian coefficients given by $\mu$.
Then we have
$$
\tau^\mu (X,Y) = \Delta(M,\partial_-M) \ \in \Z[H]/\pm H.
$$
\end{proposition}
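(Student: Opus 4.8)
The plan is to deduce Proposition~\ref{prop:Alexander_torsion} from the classical relationship, due to Milnor and Turaev, between the Reidemeister torsion with abelian coefficients and the orders of the twisted homology modules. First, the torsion $\tau^\mu(X,Y)$ makes sense: applying the Fact from the proof of the previous lemma to the injective ring homomorphism $\mu\colon \Z[\pi_1(X)]\to Q(\Z[H])$ shows that $C_*(X,Y;Q(\Z[H]))$ is acyclic, so $\tau^\mu(X,Y)$ is a well-defined element of $Q(\Z[H])^\times/\pm H$, independent of the chosen cell decomposition.

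Next I would invoke the following algebraic statement (see Turaev's book \cite{Turaev_book}): if $R$ is a Noetherian factorial domain with field of fractions $Q$ and $C_*$ is a bounded complex of finitely generated free $R$-modules with $Q\otimes_R C_*$ acyclic, then
\[
\tau(Q\otimes_R C_*) \doteq \prod_{i}\big(\ord H_i(C_*)\big)^{(-1)^{i+1}} \quad\text{in } Q^\times/R^\times.
\]
Here $R=\Z[H]\cong \Z[\Z^{2g}]$ is a Laurent polynomial ring, hence Noetherian and factorial, with unit group $\pm H$, and $C_*=C_*(X,Y;\Z[H])$ is free of finite rank. It therefore remains only to check that $H_i(M,\partial_-M;\Z[H])$ has trivial order for every $i\neq 1$, since then the right-hand side collapses to $\ord H_1(M,\partial_-M;\Z[H])=\Delta(M,\partial_-M)$, which is exactly the asserted equality (and in particular shows $\tau^\mu(X,Y)\in \Z[H]/\pm H$).

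For this I would argue degree by degree. In degree $3$, $H_3(C_*)=\ker(\partial_3)$ is a submodule of the free module $C_3$, hence torsion-free; but by the previous lemma and flatness of $Q(\Z[H])$ over $\Z[H]$ every $H_i$ is $\Z[H]$-torsion, so $H_3=0$. In degree $0$, the isomorphism $m_{\pm,*}$ forces the composite $\pi_1(\partial_-M)\to H_1(M)\cong H$ to be onto, and a direct computation of coinvariants gives $H_0(M,\partial_-M;\Z[H])=0$. In degree $2$, Poincaré--Lefschetz duality for the compact manifold $M$ (after absorbing the vertical annulus $m(\partial\Sigma\times I)$ by a collar retraction) yields $H_2(M,\partial_-M;\Z[H])\cong H^1(M,\partial_+M;\Z[H])$; the universal-coefficient spectral sequence over $\Z[H]$ then identifies this group with $\Hom_{\Z[H]}\!\big(H_1(M,\partial_+M;\Z[H]),\Z[H]\big)\oplus\operatorname{Ext}^1_{\Z[H]}\!\big(H_0(M,\partial_+M;\Z[H]),\Z[H]\big)$, and both summands vanish since $H_1(M,\partial_+M;\Z[H])$ is torsion and $H_0(M,\partial_+M;\Z[H])=0$ by the same coinvariants argument applied to $\partial_+M$.

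The step I expect to be the main obstacle is the degree-$2$ vanishing: one must set up Poincaré--Lefschetz duality carefully for the compact manifold-with-corners $M$, whose boundary splits as $\partial_-M$, the vertical annulus $m(\partial\Sigma\times I)$, and $\partial_+M$, and then run a universal-coefficient argument over the non-principal ring $\Z[H]$, being attentive to higher $\operatorname{Ext}$ terms. All other ingredients are either the cited general machinery or routine homological bookkeeping.
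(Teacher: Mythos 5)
Your argument is correct, but it takes a genuinely different route from the paper's. The paper's proof is a two-line reduction: using a Heegaard splitting of $M$, the pair $(M,\partial_-M)$ collapses onto a relative complex having only $1$-cells and $2$-cells in equal number, so both $\tau^\mu(X,Y)$ and $\Delta(M,\partial_-M)$ are computed by the determinant of the single boundary map $\partial_2$, and the equality (together with the integrality $\tau^\mu(X,Y)\in\Z[H]/\pm H$) is immediate. You instead invoke Turaev's extension of Milnor's theorem expressing the torsion as the alternating product $\prod_i(\ord H_i)^{(-1)^{i+1}}$ over the Noetherian UFD $\Z[H]$ (whose unit group is indeed $\pm H$), and then kill the unwanted factors: $H_3$ because it is a torsion submodule of a free module, $H_0$ by the coinvariants computation using surjectivity of $\pi_1(\partial_-M)\to H$, and $H_2$ by Poincar\'e--Lefschetz duality plus the universal-coefficient spectral sequence. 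This is heavier machinery but entirely standard and correct; it has the advantage of not depending on a particular cell structure, at the cost of the duality step you rightly identify as the delicate point. Two small imprecisions that do not affect the outcome: duality with local coefficients identifies $H_2(M,\partial_-M;\Z[H])$ with $H^1(M,\partial_+M;\overline{\Z[H]})$ for the \emph{conjugate} module structure (harmless here, since the vanishing argument is insensitive to replacing the $H$-action by its inverse), and in total degree $1$ the universal-coefficient spectral sequence yields an extension of subquotients of the $\Hom$ and $\operatorname{Ext}^1$ terms rather than a direct sum --- again harmless, since both terms vanish.
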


\noindent
This follows from \cite[Lemma 3.6]{FJR} for instance.
The main argument is that $M$ collapses relatively to $\partial_- M$ onto a cell complex having only $1$-cells and $2$-cells in an equal number.
(This fact follows from the existence of a Heegaard splitting for $M$, as discussed in \S \ref{subsec:definition_relations}.)
Thus, the computations of $\tau^\mu (X,Y)$ and $\Delta(M,\partial_-M)$ reduces to a single determinant.

Thanks to Proposition \ref{prop:Alexander_torsion}, one can use Turaev's refinement of the Reidemeister torsion \cite{Turaev_Euler}
to fix the ambiguity in $\pm H$ in the definition of the relative Alexander polynomial.
Let $M\in \cyl$ and let $(X,Y)$ be a cell decomposition of $(M,\partial_-M)$.
An \emph{Euler chain}  in $X$ relative to $Y$ is a singular $1$-chain $c$ in $X$ with boundary
$$
\partial c = \sum_{\sigma} (-1)^{\dim(\sigma)} \cdot (\hbox{center of } \sigma)
$$
where the sum is indexed by the cells $\sigma$ of $X \setminus Y$.
Such chains exist since the relative Euler characteristic of the pair $(X,Y)$ is zero.
Two Euler chains $c$ and $c'$ are \emph{homologous}
if the $1$-cycle $c-c'$ is null-homologous.
An \emph{Euler structure} on $X$ \emph{relative} to $Y$ is a homology class of Euler chains. The set
$$
\cEul(X,Y)
$$
of Euler structures on $X$ relative to $Y$ is an $H_1(X)$-affine space.
Turaev associates in \cite{Turaev_Euler} to each $\theta \in \cEul(X,Y)$ a representative 
$$
\tau^\mu(X,Y;\theta) \in Q(\Z[H])
$$
of the relative Reidemeister torsion $\tau^\mu(X,Y)$ in such a way that
\begin{equation}
\label{eq:torsion_affine}
\forall h \in H \simeq H_1(X), \quad \tau^\mu\left(X,Y;\theta + \overrightarrow{h}\right) = h \cdot \tau^\mu(X,Y;\theta).
\end{equation}
We call $\tau^\mu(X,Y;\theta)$ the \emph{Reidemeister--Turaev torsion} (or, in short, \emph{RT torsion}) 
of the CW pair $(X,Y)$ equipped with $\theta$.
The ambiguity in $H$ is fixed by lifting an Euler chain in the class $\theta$ to the maximal abelian cover of $X$,
which gives a preferred lift for each cell of $X\setminus Y$. 
The sign ambiguity is fixed thanks to a correcting multiplicative term: in general, one has to choose
an orientation of the $\R$-vector space $H_*(X,Y;\R)$ but, in the situation of homology cylinders, this space is trivial.
Observe also that, in this situation,  $\tau^\mu(X,Y;\theta)$ belongs to $\Z[H]$ by Proposition \ref{prop:Alexander_torsion}.

The Euler structures that are defined in the previous paragraph are called \emph{combinatorial}
since they are defined for pairs of CW complexes $(X,Y)$. 
There is also a \emph{geometric} version of Euler structures 
which are defined  in \cite{Turaev_Euler} for pairs of smooth manifolds $(U,V)$:
the submanifold $V$ is then assumed to be a union of connected components of $\partial U$.
Turaev's correspondence between the two notions of Euler structures  involves  smooth triangulations.
If the manifold $U$ is three-dimensional, Benedetti and Petronio define in \cite{BP} a relative version
of the Reidemeister--Turaev torsion for quite general submanifolds $V$ of $\partial U$.
This invariant was rediscovered by Friedl, Juh\'asz and Rasmussen in the context of sutured Heegaard--Floer homology \cite{FJR}.
The correspondence between combinatorial Euler structures and geometric Euler structures is proved
in \cite{BP} using the theory of branched standard spines and in \cite{FJR} using Morse theory.
In the sequel, we adopt the latter viewpoint which is better suited to our purposes.

The constructions of \cite{FJR} apply to any homology cylinder $M$ over $\Sigma$ in the following way.
In order to consider homology cylinders in the smooth category, we smooth the corners of $(\Sigma \times I)$
and we denote the smooth trivial cylinder by $(\Sigma \times I)^{\operatorname{sc}}$:
$$
\labellist
\small\hair 2pt
 \pinlabel {$\Sigma \times I$}  at  77 31
  \pinlabel {$\leadsto$}  at  212 31
 \pinlabel {$\left(\Sigma \times I\right)^{\operatorname{sc}}$}  at  335 31
\endlabellist
\includegraphics[scale=0.5]{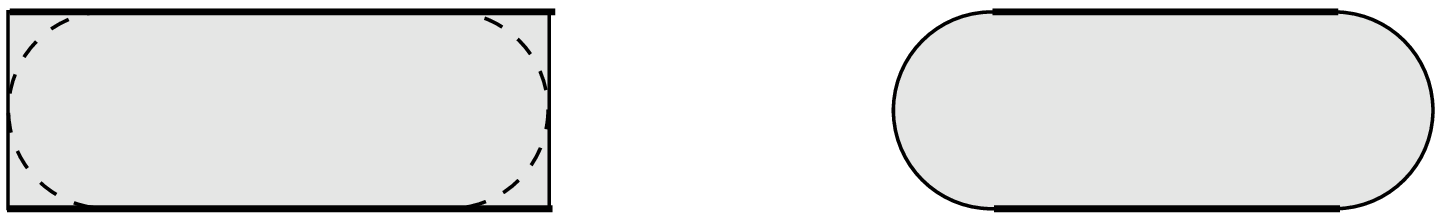}
$$
The inclusion $(\Sigma \times I)^{\operatorname{sc}} \subset (\Sigma \times I)$
identifies the tangent bundle $T(\Sigma \times I)^{\operatorname{sc}}$
with the restriction of $T \Sigma \times TI$ to $(\Sigma \times I)^{\operatorname{sc}}$.
We give the $3$-manifold  $M$ a smooth structure 
and we assume that its boundary is parametrized by a diffeomorphism
$m:\partial\left(\Sigma \times I\right)^{\operatorname{sc}} \to \partial M$.
This diffeomorphism induces  an isomorphism of  vector bundles
\begin{equation} \label{eq:m*}
\R\oplus m_*: T\left(\Sigma \times I\right)^{\operatorname{sc}}|_{\partial \left(\Sigma \times I\right)^{\operatorname{sc}}}
\simeq \R \oplus T\partial \left(\Sigma \times I\right)^{\operatorname{sc}}
\longrightarrow \R \oplus T\partial M \simeq TM|_{\partial M}
\end{equation}
between the tangent bundles restricted to the boundaries.
We denote by $t$ the coordinate along $I$ in $(\Sigma\times I)$ 
and by $\frac{\partial}{\partial t}$ the corresponding vector field,
which is a non-singular section of $T \Sigma \times TI$.
The image by (\ref{eq:m*}) of its restriction to $(\Sigma \times I)^{\operatorname{sc}}$ 
defines on $\partial M$ a non-singular vector field $v_0$ of $M$ 
which points outside $M$ on $\partial_+M$, points inside $M$ on $\partial_-M$ 
and is tangent to $\partial M$ along the circle $m(\partial \Sigma \times \{0\})$:
see Figure \ref{fig:v_0}.
An \emph{Euler structure} on $M$ \emph{relative} to $\partial_-M$ 
is an equivalence class of non-singular vectors fields $v$ on $M$ such that $v|_{\partial M}=v_0$.
Here two such vector fields $v,v'$ are considered \emph{equivalent} if there is an open $3$-ball $B\subset \interior(M)$
such that $v|_{M\setminus B}$ and $v'|_{M\setminus B}$ are homotopic relatively to $\partial M$.
Obstruction theory tells us that the set of Euler structures on $M$ relative to $\partial_- M$
$$
\gEul(M,\partial_-M)
$$
is an $H_1(M)$-affine space since we have $H_1(M)\simeq H^2(M,\partial M)$ by Poincar\'e duality.
Let $(X,Y)$ be a cell decomposition of $(M,\partial_- M)$
arising from a handle decomposition of $M$ relative to $\partial_- M$.
Then, an $H_1(M)$-equivariant correspondence between combinatorial and geometric Euler structures
\begin{equation}
\label{eq:comb_to_geom}
\xymatrix{
\cEul(X,Y) \ar[r]^-\simeq & \gEul(M,\partial_-M)
}
\end{equation}
is defined in \cite[\S 3]{FJR} by desingularizing a gradient-like vector field of a Morse function that induces the given handle decomposition.
This bijection is similar to the formulation in the closed case of Turaev's correspondence  \cite{Turaev_Euler} 
in terms of Morse theory \cite{HL,Massuyeau_torsion}.
Therefore, the \emph{relative RT torsion} of $M$ equipped with $\xi\in \gEul(M,\partial_-M)$ is defined as
$$
\tau(M,\partial_-M;\xi) := \tau^\mu(X,Y;\theta) \in Q(\Z[H])
$$
where $\theta\in \cEul(X,Y)$ corresponds to $\xi$ by the correspondence (\ref{eq:comb_to_geom}).
\begin{figure}[h]
\begin{center}
\labellist
\small\hair 2pt
 \pinlabel {$M$}  at  109 35
 \pinlabel {$\partial_+M$} [r] at 0 59
 \pinlabel {$\partial_-M$} [r] at 0 2
 \pinlabel {$m\left((\partial \Sigma \times I)^{\operatorname{sc}}\right)$} [l] at 208 30
 \pinlabel {$v_0$} [bl] at 168 75
\endlabellist
\includegraphics[scale=0.8]{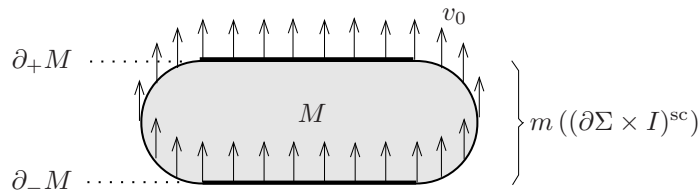}
\end{center}
\caption{The non-singular vector field $v_0$ of $M$ defined on $\partial M$.}
\label{fig:v_0}
\end{figure}

We shall need further constructions for relative Euler structures,
which are already available in the literature for closed oriented $3$-manifolds.
Thus an efficient way to extend them to the case of homology cylinders is to ``close'' any $M \in \cyl$ as follows.
First of all, we  add a $2$-handle $D$ to the surface $\Sigma$  to obtain a closed connected oriented surface $\clocase{\Sigma}$ of genus $g$.
Next we  glue a $2$-handle $D \times I$ along $M$ to obtain a cobordism $\clocase{M}$ between two copies of $\clocase{\Sigma}$.
Finally we obtain a closed connected oriented $3$-manifold $M^\sharp$
by identifying the bottom surface of $\clocase{M}$ with its top surface using the homeomorphism
$$ 
\xymatrix{
\partial_-  \clocase{M} & &  \ar[ll]_-{m_-\cup \Id_D}^-\cong  \clocase{\Sigma} \ar[rr]^-{m_+\cup \Id_D}_-\cong & & \partial_+  \clocase{M}.
} 
$$
Observe that we have $M  \subset M^\sharp$ and there is an isomorphism 
$$
H_1(M) \oplus \Z \overset{\simeq}{\longrightarrow} H_1(M^\sharp), \ 
(h,x)\longmapsto \incl_*(h) + x\cdot \left[0\times S^1\right], 
$$
where the circle $0\times S^1$ denotes the co-core $0\times I$ of the $2$-handle $D\times I$
with the two ends identified.

A non-singular vector field $v$ on $M$ which coincides with $v_0$ on $\partial M$
can be extended to a non-singular vector field $v^\sharp$ on $M^\sharp$ 
by gluing it to the vector field $\frac{\partial}{\partial t}$ on $D\times I$.
Let $\gEul(M^\sharp)$ be the space of  geometric Euler structures on the closed $3$-manifold $M^\sharp$,
\ie the space of non-singular vector fields on $M^\sharp$ up to homotopy on $M^\sharp$ deprived of an open $3$-ball \cite{Turaev_Euler}.
Then a \emph{closure} map for  Euler structures  
\begin{equation}
\label{eq:Euler_closure}
\xymatrix{
\gEul(M,\partial_-M) \ar[r] & \gEul(M^\sharp), \quad \xi \ar@{|->}[r] & \xi^\sharp
}
\end{equation}
is defined by  $\xi^\sharp:= [v^\sharp]$ if $\xi=[v]$. This map is affine over $\incl_*:H_1(M) \to H_1(M^\sharp)$.

Recall that a ``Chern class'' map $c:\gEul(M^\sharp) \to H_1(M^\sharp)$ 
is defined in \cite{Turaev_Euler} for the closed oriented $3$-manifold $M^\sharp$.
The Chern class $c([u]) \in H^2(M^\sharp) \simeq H_1(M^\sharp)$ of a $[u]\in \gEul(M^\sharp)$ 
is the obstruction to find a non-singular vector field on $M^\sharp$ linearly independent with $u$.
In other words, it is defined as the Euler class (or, equivalently, first Chern class) 
of a $2$-dimensional oriented vector bundle:
$$
c([u]) =e\left(TM^\sharp/\langle u\rangle\right) \ \in H^2(M^\sharp).
$$
We define the \emph{relative Chern class} map for the homology cylinder $M$ by the  diagram
$$
\xymatrix{
    \gEul(M,\partial_- M) \ar@{.>}[d]_c \ar[r]^-{(\ref{eq:Euler_closure})} & \gEul(M^\sharp) \ar[d]^{c}\\
     H_1(M) & \ar@{->>}[l]^-p  H_1(M)\oplus \Z \simeq H_1(M^\sharp) \quad \quad \quad \quad \quad
    }
$$
where the map $p$ denotes the natural projection.
The relative Chern class $c(\xi)\in H_1(M) \simeq H^2(M,\partial M)$ of a $\xi \in \gEul(M,\partial_- M)$ 
can be described without making reference to $M^\sharp$ as follows.
Let $w$ be a non-singular vector field on the surface $\Sigma$,
and let $w'$ be the image by the isomorphism (\ref{eq:m*})
of the vector field $w\times I$ on $(\Sigma\times I)^{\operatorname{sc}} \subset \Sigma \times I$:
thus, $w'$ is a non-singular vector field of $M$ defined on $\partial M$ and linearly independent with $v_0$.
Then, for any non-singular vector field $v$ on $M$ representing $\xi$,
$c(\xi)$ is the obstruction to extend  $w'$ to a non-singular vector field on $M$ linearly independent with $v$:
\begin{equation}
\label{eq:Chern_as_obstruction}
c(\xi) =e(TM/\langle v\rangle, w') \ \in H^2(M,\partial M).
\end{equation}

\begin{example} \label{ex:Chillingworth}
For the mapping cylinder $\mcyl(h)$ of an element $h$ of the Torelli group $\Torelli$,
 the relative Chern class map also has the following description.
Recall that the \emph{Chillingworth homomorphism} \cite{Chillingworth,Johnson_abelianization}
$$
t: \Torelli\longrightarrow H^1(\Sigma)
$$
maps $h\in \Torelli$ to the obstruction $t(h)$ to find a homotopy 
between a non-singular vector field $w$ on $\Sigma$ and its image under $h^{-1}$.  
Under the isomorphisms $H^1(\Sigma)\simeq H_1(\Sigma,\partial \Sigma)\simeq H_1(\Sigma)$, 
the Chillingworth homomorphism takes values in $H$. 
Let $\left[\frac{\partial}{\partial t}\right]\in \gEul(\mcyl(h),\partial_-\mcyl(h))$ 
be the  Euler structure represented by the ``upward'' vector field.  
Then we deduce from (\ref{eq:Chern_as_obstruction}) that
$c\left(\left[\frac{\partial}{\partial t}\right]\right)$ is equal to $- t(h)\in H_1(\Sigma\times I)\simeq H.$
\end{example}

We now give some basic properties of the relative Chern class map.

\begin{lemma}\label{lem:c}
Let $M\in \cyl$.
The relative Chern class map $c: \gEul(M,\partial_- M) \to H_1(M)$ is affine 
over the multiplication by $2$ map $H_1(M) \to H_1(M)$,
and its image is $2H_1(M)$.
\end{lemma}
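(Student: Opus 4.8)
The plan is to reduce the statement to Turaev's computation of the Chern class map for the closed $3$-manifold $M^\sharp$. Recall from the construction preceding the lemma that the relative Chern class factors as $c=p\circ c_{M^\sharp}\circ\sigma$, where $\sigma\colon\gEul(M,\partial_-M)\to\gEul(M^\sharp)$ is the closure map (\ref{eq:Euler_closure}), $c_{M^\sharp}\colon\gEul(M^\sharp)\to H_1(M^\sharp)$ is Turaev's Chern class, and $p\colon H_1(M^\sharp)=H_1(M)\oplus\Z\twoheadrightarrow H_1(M)$ is the projection. The two structural facts I would use about the closed case are: (i) $c_{M^\sharp}$ is affine over the doubling map $H_1(M^\sharp)\to H_1(M^\sharp)$, which is \cite{Turaev_Euler}; and (ii) the image of $c_{M^\sharp}$ is exactly $2H_1(M^\sharp)$. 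For (ii) I would argue that for any non-singular vector field $u$ on $M^\sharp$ the class $c_{M^\sharp}([u])=e(TM^\sharp/\langle u\rangle)$ reduces modulo $2$ to $w_2(TM^\sharp/\langle u\rangle)=w_2(TM^\sharp)=0$, since every closed orientable $3$-manifold is parallelizable; hence $c_{M^\sharp}([u])\in\ker\big(H^2(M^\sharp)\to H^2(M^\sharp;\Z_2)\big)=2H^2(M^\sharp)$, and combined with (i) and the torsor structure this forces the image, being a coset of $2H_1(M^\sharp)$, to be all of $2H_1(M^\sharp)$.

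Granting this, the proof is short and formal. For the affineness statement, recall that $\sigma$ is affine over $\incl_*\colon H_1(M)\hookrightarrow H_1(M^\sharp)$ and that $p\circ\incl_*=\Id_{H_1(M)}$. Hence, for $\xi\in\gEul(M,\partial_-M)$ and $h\in H_1(M)$,
\[
c(\xi+h)=p\,c_{M^\sharp}\big(\sigma(\xi)+\incl_*(h)\big)=p\big(c_{M^\sharp}(\sigma(\xi))+2\incl_*(h)\big)=c(\xi)+2h,
\]
so $c$ is affine over multiplication by $2$. In particular, fixing any $\xi_0$, the image of $c$ is the coset $c(\xi_0)+2H_1(M)$. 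By fact (ii), $c_{M^\sharp}(\sigma(\xi_0))\in 2H_1(M^\sharp)=2H_1(M)\oplus 2\Z$, whence $c(\xi_0)=p\big(c_{M^\sharp}(\sigma(\xi_0))\big)\in 2H_1(M)$, and therefore the image of $c$ equals $2H_1(M)$.

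I expect the only real content to lie in fact (ii) for $M^\sharp$, i.e.\ in the realization (surjectivity) part of Turaev's theory of Euler structures --- equivalently, the identification $\gEul(M^\sharp)\cong\Spin^{c}(M^\sharp)$ with the Chern class hitting every class congruent to $w_2$ mod $2$ --- applied to an orientable $3$-manifold for which $w_2$ vanishes; everything else is bookkeeping with the affine structures via $\sigma$ and $p$. One could alternatively avoid the closure and argue directly from the obstruction-theoretic description (\ref{eq:Chern_as_obstruction}) of $c(\xi)$ as the relative Euler class $e(TM/\langle v\rangle,w')$, using triviality of $TM$ to kill its mod-$2$ reduction; but passing through $M^\sharp$ keeps the treatment of the boundary condition cleanest and lets one quote the closed case verbatim.
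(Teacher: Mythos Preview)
Your proof is correct. The affineness argument is identical to the paper's. For the image statement, however, you take a genuinely different route: you pass to the closed manifold $M^\sharp$ and invoke the classical fact that closed orientable $3$-manifolds are parallelizable (so $w_2(TM^\sharp)=0$), then project back via $p$. The paper instead stays in the relative setting: it identifies the mod-$2$ reduction of $c(\xi)$ with a relative Stiefel--Whitney class $w_2(M,\sigma)$ for a spin structure $\sigma$ on $\partial M$ determined by $(v_0,w',v_0\wedge w')$, and then kills this class by finding, for each generator of $H_2(M,\partial M)$, a properly embedded surface $R_i$ with $\partial R_i = m_+(\rho_i)-m_-(\rho_i)$ and arguing via the $1$-dimensional spin cobordism group $\Omega_1^{\Spin}\cong\Z_2$. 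Your approach is shorter and reuses the closure machinery already in place; the paper's approach is more intrinsic to $M$, does not require the detour through $M^\sharp$, and makes visible where the homology-cylinder hypothesis enters (namely, in the existence of the surfaces $R_i$). Interestingly, you anticipate the paper's method in your final remark about working directly from (\ref{eq:Chern_as_obstruction}).
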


\begin{proof}
In the closed case, the Chern class map $\gEul(M^\sharp) \to H_1(M^\sharp)$ is known to be affine over the multiplication by $2$ \cite{Turaev_Euler}.
Moreover, the closure map (\ref{eq:Euler_closure}) is affine over $\incl_*: H_1(M) \to H_1(M^\sharp)$.
We deduce that, for any $\xi \in \gEul(M,\partial_-M)$ and $h\in H_1(M)$,
$$
c\left(\xi + \overrightarrow{h}\right) = pc\left(\left(\xi + \overrightarrow{h}\right)^\sharp\right) =
pc(\xi^\sharp) + \overrightarrow{2 p \incl_*(h)}
= c(\xi) + 2 \overrightarrow{h}
$$
which proves the first statement.

To prove the second statement, it is enough to show that for any $\xi \in \gEul(M,\partial_-M)$ the mod $2$ reduction of $c(\xi)$ is trivial.
Let $w$  be a non-singular vector field on $\Sigma$,  
and let $w'$ be the image of $w\times I$ by (\ref{eq:m*}).
We deduce from (\ref{eq:Chern_as_obstruction}) that the mod $2$ reduction of $c(\xi)$ is the (primary) obstruction
to extend the parallelization $(v_0,w',v_0\wedge w')$ of $M$ on $\partial M$ to the whole of $M$.
Since $TM|_{\partial M}$ is isomorphic to $\R \oplus T\partial M$ (using the normal vector field),
$(v_0,w',v_0\wedge w')$ defines a spin structure $\sigma$ on $\partial M$,
and the latter obstruction is a relative second Stiefel--Whitney class:
$$
c(\xi) \! \! \! \mod 2 = w_2(M,\sigma) \in H^2(M,\partial M;\Z_2).  
$$
Let $(\rho_1,\dots,\rho_{2g})$ be a system of simple oriented closed curves on the surface $\Sigma$ which generates $H$.
Since $M$ is a homology cylinder over $\Sigma$, we can find for every $i=1,\dots,2g$
a compact connected oriented surface $R_i$ properly embedded in $M$ 
such that  $\partial R_i= m_+(\rho_i)- m_-(\rho_i)$. Then the quantity
$$
\left \langle w_2(M,\sigma), [R_i] \right\rangle \in \Z_2
$$
is the obstruction to extend the spin structure $\sigma|_{\partial R_i}$ to $R_i$ 
and, so, it vanishes since the latter restricts to the same spin structure of $\rho_i$ on each connected component of $\partial R_i$.
(Here we are using the fact that the $1$-dimensional spin cobordism group is  $\Z_2$.)
Since $[R_1],\dots, [R_{2g}]$ generate $H_2(M,\partial M)$, we conclude that  $w_2(M,\sigma)$ is trivial.
\end{proof}

The following is justified by  Lemma \ref{lem:c}.
\begin{definition}
Let $M\in \cyl$.
The \emph{preferred} relative Euler structure of $M$ 
is the unique $\xi_0\in \gEul(M,\partial_-M)$ satisfying $c(\xi_0)=0$. 
\end{definition}
\noindent
Thus the polynomial 
$$
 \tau(M,\partial_-M;\xi_0) \in \Z[H] \subset Q(\Z[H])
$$
is a topological invariant of homology cylinders which, by Proposition \ref{prop:Alexander_torsion},
can be regarded as a \emph{normalized} version of $\Delta(M,\partial_-M)\in \Z[H]/\!\pm H$.

\begin{example}
If $M$ is the mapping cylinder of an  $h \in \Torelli$,
we deduce from Lemma \ref{lem:c} and Example \ref{ex:Chillingworth} that
$\xi_0$ is $\left[\frac{\partial}{\partial t}\right] + \overrightarrow{t(h)/2}$. 
So formula (\ref{eq:torsion_affine}) gives
\begin{equation}
\label{eq:torsion_mapping_cylinder}
\tau\big(\mcyl(h),\partial_-\mcyl(h);\xi_0\big)= t(h)^{\nicefrac{1}{2}} \ \in H \subset \Z[H].
\end{equation}
(Recall that $H$ inside $\Z[H]$ is denoted multiplicatively.) 
This example shows that
$\tau(M,\partial_-M;\xi_0)$ \emph{does} depend on the boundary parametrization $m$ of $M$
although the class $\Delta(M,\partial_-M)$ only depends on $m$ through the isomorphism $m_{\pm,*}:   H \to H_1(M)$.
\end{example}

We shall now prove several  properties for the relative RT torsion of homology cylinders.
A first property is its invariance under stabilization. 
Indeed, if $\Sigma^s$ is a stabilization of the surface $\Sigma$ as shown in Figure \ref{fig:stabilization}
so that $H$ embeds into $H^s := H_1(\Sigma^s)$, then the following diagram is commutative:
\begin{equation}
\label{eq:stabilization_RT}
\xymatrix{
\cyl(\Sigma) \ar[r] \ar[d]_-{\tau(\, \cdot\, ,\partial_-\, \cdot\, ;\xi_0)}
& \cyl(\Sigma^s) \ar[d]^-{\tau(\, \cdot\, ,\partial_-\, \cdot\, ;\xi_0)}\\
\Z[H] \ar[r] & \Z[H^s].
}
\end{equation}
Next, the relative RT torsion is a limit of infinitely many finite-type invariants.
To show this, we need beforehand to understand how Torelli surgeries ``transport'' Euler structures.

\begin{lemma}\label{lem:Euler_surgery}
Let $M\in \cyl$, let $S \subset \interior(M)$ be a compact connected oriented surface with one boundary component and let $s\in \Torelli(S)$.
Then the Torelli surgery $M \leadsto M_s$ induces a  canonical bijection $\Omega_s: \gEul(M,\partial_- M)\rightarrow \gEul(M_s,\partial_- M_s)$, 
which is affine over the isomorphism $\Phi_s$ defined at (\ref{eq:iso_homology}) and which fits into the following commutative diagram:
$$ 
\xymatrix{
\gEul(M,\partial_-M) \ar[d]_c \ar[r]^{\Omega_s}_-\simeq & \gEul(M_s,\partial_- M_s) \ar[d]^{c}\\
H_1(M) \ar[r]^{\simeq}_-{\Phi_s} & H_1(M_s).
}
$$
\end{lemma}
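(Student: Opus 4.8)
The plan is to build the bijection $\Omega_s$ directly from vector fields, mimicking the construction of the isomorphism $\Phi_s$ in~(\ref{eq:iso_homology}). Write $N := M \setminus \interior(S\times[-1,1])$, so that $M = N \cup (S\times[-1,1])$ and $M_s = N \cup \mcyl(s)$, the gluing being along $\partial(S\times[-1,1]) \setminus \partial M$ in both cases. First I would fix, once and for all, a non-singular vector field $w$ on $N$ that on the gluing region $S\times\{\pm1\}$ (and on $\partial S\times[-1,1]$) agrees with the ``upward'' field $\partial/\partial t$ pulled back from $S\times[-1,1]\cong\mcyl(s)$; since $s$ fixes $\partial S$ pointwise, the collar description of $\mcyl(s)$ near its boundary is the same as that of $S\times[-1,1]$, so such a $w$ on the collar of $\partial N$ extends compatibly over both $M$ and $M_s$. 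Given $\xi = [v] \in \gEul(M,\partial_-M)$, I may, after a homotopy supported away from $\partial M$, assume $v|_{S\times[-1,1]} = \partial/\partial t$ and $v|_N$ differs from the fixed reference only inside a ball in $\interior(N)$; this uses the fact that $S\times[-1,1]$ is a product and that vector fields on a $3$-manifold can be normalized on any codimension-zero submanifold up to a ball, which is exactly the local obstruction-theoretic statement underlying the $H_1$-affineness of $\gEul$. Then I define $\Omega_s(\xi) := [v']$ where $v'$ agrees with $v|_N$ on $N$ and with $\partial/\partial t$ on $\mcyl(s)$; this is visibly non-singular and has the correct boundary behaviour $v'|_{\partial M_s} = v_0$.

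The key steps are then: (i) \textbf{well-definedness} — show $\Omega_s(\xi)$ is independent of the choice of representative $v$ and of the normalizing homotopy. This follows because two such normalized representatives of $\xi$ differ only inside a ball in $\interior(N)$ (one absorbs all the ambiguity there, by the same ball-sliding argument), and that ball survives untouched into $M_s$; (ii) \textbf{affineness over $\Phi_s$} — translating $\xi$ by $\overrightarrow{h}$ for $h \in H_1(N)$ corresponds, under the normalization, to modifying $v$ along a loop representing $h$ inside $N$, and the identical modification inside $N\subset M_s$ translates $\Omega_s(\xi)$ by $\overrightarrow{\incl_*(h)} = \overrightarrow{\Phi_s(\incl_*(h))}$; since $\incl_*: H_1(N)\to H_1(M)$ is surjective (Mayer--Vietoris, as in~(\ref{eq:iso_homology})) this pins down affineness over all of $H_1(M)$, and in particular $\Omega_s$ is a bijection with inverse $\Omega_{s^{-1}}$; (iii) \textbf{compatibility with the relative Chern class} — here I would use the description~(\ref{eq:Chern_as_obstruction}): $c(\xi) = e(TM/\langle v\rangle, w')$ is the obstruction to extending a fixed field $w'$ on $\partial M$, transverse to $v_0$, to a field on $M$ linearly independent with $v$. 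Choose the auxiliary field $w'$ to extend over the collar in the product form coming from a fixed non-singular field on $\Sigma$ (as in the proof of Lemma~\ref{lem:c}); since $v = \partial/\partial t$ and $w' = w\times I$ on $S\times[-1,1]$, a linearly independent extension automatically exists over the product piece $S\times[-1,1]$, so the obstruction cocycle is supported in $N$. The same cocycle computes $c(\Omega_s(\xi))$ inside $M_s$ because $v' = v$ on $N$ and, again, a linearly independent extension exists over the product piece $\mcyl(s)$. Finally, $\Phi_s$ is by definition induced by the two inclusions $N \hookrightarrow M$, $N\hookrightarrow M_s$, and Poincaré--Lefschetz duality is natural, so the class represented by this $N$-supported cocycle in $H^2(M,\partial M)$ maps to the corresponding class in $H^2(M_s,\partial M_s)$ exactly via $\Phi_s$; this gives the commuting square.

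The main obstacle I anticipate is step~(i)/(ii): making precise the claim that a relative Euler structure on $M$ always has a representative that is ``standard'' on the product block $S\times[-1,1]$ and concentrated in a single ball in $\interior(N)$, and that the residual ambiguity is genuinely confined to that ball. This is where one must be careful that the ball used to define the equivalence of vector fields can be slid into $\interior(N)$ and that homotopies relative to $\partial M$ can be arranged to be supported off the product block — in other words, that the natural map $\gEul(N, \partial_-M \cup (\partial N \cap \partial(S\times[-1,1])))$-style ``cutting'' is compatible on both sides. Everything else is routine obstruction theory once this normalization is in place; the Chern-class square in step~(iii) is then essentially bookkeeping with the support of obstruction cocycles and the naturality of duality.
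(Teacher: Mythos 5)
Your route differs from the paper's: you construct $\Omega_s$ directly in the relative setting by normalizing a representative field to be $\partial/\partial t$ on the product block and regluing, whereas the paper closes $M$ up to $M^\sharp$ and imports the bijection from the closed case of \cite{Massuyeau_torsion}. The normalization in your steps (i)--(ii) is fine (the paper performs the same normalization: a representative normal to $S$ and positive with respect to it). The genuine gap is in step (iii), and it infects your very definition of $\Omega_s$: the naive reglued field $v'$ does \emph{not} represent the canonical Euler structure. The correct formula, quoted in the paper from \cite[Lemma 3.5]{Massuyeau_torsion}, is
$$
\Omega_s(\xi) \;=\; [v'] \,-\, \overrightarrow{\incl_*\big(t(s)/2\big)},
$$
where $t:\Torelli(S)\to H_1(S)$ is the Chillingworth homomorphism; your definition omits this translation.

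Concretely, the false step is the assertion that ``the same cocycle computes $c(\Omega_s(\xi))$ inside $M_s$'' because a linearly independent extension of the auxiliary field exists over either product piece. Existence is not the point: to compare $c([v])$ with $c([v'])$ one must extend a section of $TM/\langle v\rangle$ over the product block with \emph{prescribed values on $\partial(S\times[-1,1])$ coming from the common exterior $N$}, and the relative obstructions for $S\times[-1,1]$ and for $\mcyl(s)$ differ by exactly the Chillingworth obstruction $\pm t(s)$ (the obstruction to homotoping a non-singular field on $S$ to its image under $s^{\mp1}$). This is the same phenomenon as Example \ref{ex:Chillingworth}: the ``upward'' structure on $\mcyl(h)$ has relative Chern class $-t(h)\neq 0$, even though the upward structure on $\Sigma\times I$ has Chern class $0$. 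Since a general $s\in\Torelli(S)$ does not lie in $\ker t$, your square with $c$ does not commute for $[v]\mapsto[v']$. Translating by $\incl_*(t(s)/2)$ — an integral class, since $t=\operatorname{cont}\circ\tau_1$ takes values in $2H_1(S)$ — shifts $c$ by $\incl_*(t(s))$ (Lemma \ref{lem:c}) and restores commutativity. Without this correction your map is still a bijection affine over $\Phi_s$, but it is not the canonical one, and the consequence used later (Theorem \ref{thm:torsion_finiteness}: Torelli surgery preserves the preferred Euler structure $\xi_0$) would fail.
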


\begin{proof}
We know from \cite{Massuyeau_torsion}, which deals with the closed case,
 that the Torelli surgery $M^\sharp \leadsto {M_s}^\sharp$ induces a canonical bijection 
$$
\Omega_s: \gEul(M^\sharp) \stackrel{\simeq}{\longrightarrow} \gEul\left({M_s}^\sharp\right).
$$
(The notion of ``Torelli surgery'' is defined  in \cite{Massuyeau_torsion} along the boundary 
of an embedded handlebody instead of an embedded surface $S$ with one boundary component. 
But the two notions are equivalent since a regular neighborhood of $S$ is a handlebody.)
The Euler structure $\Omega_s(\rho)$ associated to a  $\rho \in \gEul(M^\sharp)$ can be described as follows.
Let $u\in \rho$ be a non-singular vector field on $M^\sharp$ which is normal to $S$ and is positive with respect to $S$.
Then there is a regular neigborhood $S \times [-1,1]$ of $S$ in $\interior(M) \subset M^\sharp$ 
such that $u$ is  the ``upward'' vector field $\frac{\partial}{\partial t}$ on it.
Provided the smooth structure on ${M_s}^\sharp$ is appropriately chosen with respect to that of $M^\sharp$,
there is a unique vector field $u_s$ on ${M_s}^\sharp$ which coincides with $u$ on $M^\sharp\setminus \interior(S \times [-1,1])$ 
and with $\frac{\partial}{\partial t}$ on $\mcyl(s)$. Then, \cite[Lemma 3.5]{Massuyeau_torsion} tells us that
\begin{equation}
\label{eq:concrete_description}
\Omega_s(\rho)  = [u_s] -  \overrightarrow{\incl_*(t(s)/2)}
\end{equation}
where $t: \Torelli(S) \to H^1(S) \simeq H_1(S,\partial S)\simeq H_1(S)$ is the Chillingworth homomorphism.
In particular, $\Omega_s(\rho)$ is obtained by modifying the vector field $u_s$ 
by some ``Reeb turbulentization'' \cite{Turaev_Euler} which is supported in a neighborhood of $S$.
This fact implies that, for all $\xi \in \gEul(M)$, $\Omega_s(\xi^\sharp)$ belongs to the image of $\gEul(M_s)$. 
So we can define the map $\Omega_s$ in the case of homology cylinders by the following commutative diagram:
$$
\xymatrix{
\gEul(M,\partial_-M) \ar@{>->}[r]\ar@{-->}[d]_{\exists! \Omega_s}\ & \gEul(M^\sharp) \ar[d]_-\simeq^-{\Omega_s}\\
\gEul(M_s,\partial_-M_s) \ar@{>->}[r] & \gEul({M_s}^\sharp)
}
$$
In the closed case, the map  $\Omega_s$ is affine over the Mayer--Vietoris isomorphism $\Phi_s$
and it is compatible with the Chern class map \cite{Massuyeau_torsion}.
We easily deduce from the definitions that the map $\Omega_s$ has the same properties in the case of homology cylinders.
\end{proof}

In the sequel we denote by $I$ the augmentation ideal of $\Z[H]$.
 
\begin{theorem}\label{thm:torsion_finiteness}
Let $d\ge 1$. Then the $d$-th $I$-adic reduction of the relative Reidemeister--Turaev torsion of homology cylinders
$$
\tau(M,\partial_-M;\xi_0) \in \Z[H]/I^d
$$
is a finite-type invariant of degree at most $d-1$. 
\end{theorem}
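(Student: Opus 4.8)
The plan is to show that $\tau(M,\partial_-M;\xi_0) \bmod I^d$ behaves polynomially of degree $\leq d-1$ under Torelli surgery, using the multiplicativity of Reidemeister torsion under gluing together with the concrete description of how Euler structures are transported by Torelli surgery (Lemma \ref{lem:Euler_surgery}). The model for the argument is the closed case treated in \cite{Massuyeau_torsion}, so the first step is to reduce the relative statement to that setting. Concretely, I would pass from $M$ to its closure $M^\sharp$: since $\tau(M,\partial_-M;\xi_0)$ and $\tau(M^\sharp;\xi_0^\sharp)$ differ only by the torsion of the glued-in $2$-handle piece $D\times I$ (which is a product, hence contributes only a unit depending on the Euler structure), and since the Torelli surgeries defining the $J_1$-class of $M$ can be taken in $\interior(M)\subset M^\sharp$, the finiteness statement for $M$ follows from the corresponding statement for $M^\sharp$. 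One must be slightly careful that the preferred Euler structure $\xi_0$ (defined by $c(\xi_0)=0$) is compatible under the closure map with the chosen normalization in the closed case, but by Lemma \ref{lem:c} and the fact that the closure map is affine over $\incl_*$ this is a bookkeeping matter.

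Next, given a Torelli surgery $M\leadsto M_s$ along a surface $S\subset\interior(M)$ with $s\in\Torelli(S)$, I want a gluing formula for torsion. Write $M = N \cup_{\partial(S\times I)} (S\times I)$ where $N = M\setminus\interior(S\times I)$, and $M_s = N\cup_{\partial(S\times I)} \mcyl(s)$. Both $(S\times I)$ and $\mcyl(s)$ have the same torsion relative to $\partial_-(S\times I)$ with the coefficient system pulled back from $\Z[H]$ — because the Torelli condition on $s$ forces the twisting to factor through a map on which $s$ acts trivially on $H_1(S)$, so $\mcyl(s)$ looks like a product at the level of abelian coefficients up to the torsion contribution measured by the Chillingworth term in \eqref{eq:concrete_description}. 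Using Mayer--Vietoris and the multiplicativity of Reidemeister torsion, one gets $\tau(M_s,\partial_-M_s;\Omega_s(\xi_0))$ in terms of $\tau(M,\partial_-M;\xi_0)$ multiplied by a correction that depends only on $s$ through its image in the group ring and through $t(s)$. The cleanest route is probably to express everything directly in terms of the surgery presentation by claspers: by Proposition \ref{prop:Y_clasper} the $Y_{d}$-equivalence is generated by degree-$d$ clasper surgery, and one shows that the alternating sum over subsets of $d$ disjoint surgeries of the normalized torsion lands in $I^d$. Because each elementary Torelli surgery changes $H_1$ only through the Mayer--Vietoris isomorphism $\Phi_s$, which is compatible with the coefficient identification, the change in torsion is governed by an element of $1+I$ in $\Z[H]$, and $d$ independent such changes multiply to something in $1+I^d$ after alternating-sum expansion.

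More precisely, the key algebraic fact I would isolate is: if $f:\mathcal Y\to \Z[H]$ is an invariant such that for every Torelli surgery $M\leadsto M_s$ we can write $f(M_s) = u_s(M)\cdot f(M)$ with $u_s(M)\in 1+I$ depending on $s$ only through $s-1\in I[\Torelli(S)]$ in a suitably multiplicative way, then $f \bmod I^d$ is finite-type of degree $\leq d-1$. This is essentially the observation in the proof of Lemma \ref{lem:Y_FTI} transported from an additive to a multiplicative setting: one extends $f$ to $\Z\cdot\mathcal Y$, considers the map $\zeta:\Z[\Torelli(S)]\to\Z\cdot\mathcal Y$, and checks that $f\circ\zeta$ kills $I^{d}$ modulo $I^{d}\subset\Z[H]$ because each factor $(s_i - 1)$ contributes a factor in $I\subset\Z[H]$. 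I would make this precise by induction on $d$, the base case $d=1$ being the statement that $\tau\bmod I$ is just the augmentation, which is $1$ for all homology cylinders (the torsion of an acyclic-over-$\Z$ complex augments to $\pm1$, and the sign is fixed).

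The main obstacle I anticipate is the bookkeeping of the Euler structure and the sign: the torsion $\tau(M,\partial_-M;\xi_0)$ is only well-defined once $\xi_0$ is pinned down, and under a Torelli surgery $\xi_0$ for $M$ does \emph{not} in general map to $\xi_0$ for $M_s$ under $\Omega_s$ — by Lemma \ref{lem:Euler_surgery} it maps to some Euler structure differing by an element of $H_1(M_s)$, and by \eqref{eq:concrete_description} the discrepancy involves $\incl_*(t(s)/2)$. So the correction factor $u_s(M)$ is not literally in $1+I$ but is $h_s\cdot(\text{something in }1+I)$ for an explicit $h_s\in H$, and one must check that after re-normalizing to $\xi_0$ on both sides — i.e. using Lemma \ref{lem:c} to recenter — the net group-like factor $h_s$ is itself a polynomial (indeed linear) function of $s$ in the finite-type sense, which reduces to the fact that $t$ is a crossed homomorphism and $c$ is affine over multiplication by $2$. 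Once the correction factor is shown to be of the form $\exp$ of an element of $I$ (in the completed sense), modulo $I^d$ it is polynomial of degree $\leq d-1$ in the surgeries, and the theorem follows. I expect the verification that the recentered factor is genuinely multiplicative (so that $d$ disjoint surgeries produce a product of $d$ such factors, with no cross terms outside $I^d$) to be where the clasper calculus of Appendix \ref{sec:calc_clasper}, rather than bare Mayer--Vietoris, does the real work.
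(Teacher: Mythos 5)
Your overall strategy --- reduce to a polynomiality statement for the torsion under Torelli surgery, with the algebraic mechanism that $d$ independent ``first-order'' changes produce an alternating sum in $I^d$ --- is the right one, and that mechanism is indeed what drives the paper's proof. But two of your intermediate steps do not hold as stated. First, the reduction to the closed case via $M^\sharp$ is not mere bookkeeping: the closure does not only glue in $D\times I$, it also identifies $\partial_+\clocase{M}$ with $\partial_-\clocase{M}$, so that $H_1(M^\sharp)\simeq H_1(M)\oplus\Z$ and $\tau(M^\sharp;\xi_0^\sharp)$ lives in $Q(\Z[H\oplus\Z])$, a ring with an extra variable. The relation between $\tau(M^\sharp)$ and $\tau(M,\partial_-M)$ is of ``Alexander polynomial of a fibered manifold'' type (a determinant mixing $m_{+,*}$, $m_{-,*}$ and the new variable), not multiplication by a unit, and an $I$-adic finiteness statement over $\Z[H\oplus\Z]$ does not specialize to the one you want over $\Z[H]$ without substantial extra work. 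The paper does not close up: it adapts the closed-case proof of \cite{Massuyeau_torsion} directly to the relative setting, using the relative Heegaard splittings of \S \ref{subsec:definition_relations} together with the two ingredients supplied by \cite{FJR} (a Morse-theoretic description of the combinatorial/geometric Euler-structure correspondence, and a Fox-derivative formula computing the relative torsion from a Heegaard splitting).

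Second, and more seriously, your key claim that $\tau(M_s,\partial_-M_s;\Omega_s(\xi_0))=u_s(M)\cdot\tau(M,\partial_-M;\xi_0)$ with $u_s(M)\in 1+I$ is unjustified, and I do not believe it holds for a general Torelli surgery. The gluing formula for Reidemeister torsion of $M=N\cup(S\times I)$ carries a correction equal to the torsion of the Mayer--Vietoris sequence over $Q(\Z[H])$; since neither $N$ nor $S\times I$ is acyclic with these coefficients, that correction depends jointly on $N$ and $s$ and does not visibly factor. The clean multiplicative formulas available in the paper (Lemma \ref{lem:multiplicativity}, Lemma \ref{lem:loop}) concern stacking and looped clasper surgeries, not arbitrary Torelli surgeries. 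The mechanism actually used in \cite{Massuyeau_torsion} and \cite{FJR} is additive rather than multiplicative: the torsion is a single determinant of Fox derivatives read off a Heegaard splitting, a Torelli gluing changes each relevant matrix entry by an element of $I$ (precisely because $s$ acts trivially on homology), and multilinearity of the determinant forces the $d$-fold alternating sum into $I^d$. Finally, your worry about recentering the Euler structure dissolves: by Lemma \ref{lem:Euler_surgery} the bijection $\Omega_s$ intertwines the Chern class maps with $\Phi_s$, so $c(\Omega_s(\xi_0))=\Phi_s(c(\xi_0))=0$ and $\Omega_s(\xi_0)$ \emph{is} the preferred Euler structure of $M_s$; there is no residual group-like factor $h_s$ to control, and the term $t(s)/2$ appearing in (\ref{eq:concrete_description}) is precisely the correction that makes this exact preservation true.
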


\begin{proof}[Sketch of the proof]
The analogous statement for closed oriented $3$-manifolds $N$ has been proved in \cite{Massuyeau_torsion} using Heegaard splittings. 
To understand how the RT torsion changes $\tau(N,\rho) \leadsto \tau(N_s,\Omega_s(\rho))$
when a Torelli surgery $N \leadsto N_s$ is performed, one needs two technical ingredients: 
\begin{itemize}
\item[(i)]  a description following  \cite{HL} of Turaev's correspondence between combinatorial 
and geometric Euler structures in terms of Morse theory \cite[\S 2.3]{Massuyeau_torsion},
\item[(ii)] an explicit formula following  \cite{Turaev_spinc} which computes the RT torsion of $N$ 
from a Heegaard splitting by means of Fox's free derivatives \cite[\S 4.1]{Massuyeau_torsion}.
\end{itemize}
This proof can be adapted  in a straightforward way to the case of homology cylinders 
using the notion of Heegaard splitting defined in \S \ref{subsec:definition_relations} 
and some technical results from \cite{FJR}.
To be more specific,
the analogue of (i) can be found in \cite[\S 3.5]{FJR} and the analogue of (ii) is done in \cite[\S 4]{FJR}.
The final observation is that a Torelli surgery $M \leadsto M_s$ between homology cylinders 
preserves the preferred Euler structure $\xi_0$, as follows from Lemma \ref{lem:Euler_surgery}.
\end{proof}

We shall now identify the ``leading term'' of the relative RT torsion with respect to the $I$-adic filtration of $\Z[H]$.
According to Johnson \cite{Johnson_abelianization}, 
the Chillingworth homomorphism can be recovered from the first Johnson homomorphism by the formula
$t(h)= \operatorname{cont} \circ\, \tau_1 (h)$, for all $h\in \Torelli$,
where ``$\operatorname{cont}$'' is the contraction homomorphism $\Lambda^3H \to H$ defined by 
$$
\operatorname{cont} (a\wedge b\wedge c):=2\cdot \left(\omega(a,b)c+\omega(b,c)a+\omega(c,a)b\right).
$$
Thus  the map $t$  can be extended to a monoid homomorphism
$$
t: \cyl \longrightarrow H
$$ 
simply by setting $t:= \operatorname{cont} \circ\, \tau_1$.

\begin{lemma} \label{lem:I2}
For all $M\in \cyl$, we have $\tau(M,\partial_-M;\xi_0) = t(M)^{\nicefrac{1}{2}} \mod I^2$.  
\end{lemma}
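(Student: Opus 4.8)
The plan is to reduce the statement to the two structural facts already at our disposal: the finiteness property of the torsion (Theorem~\ref{thm:torsion_finiteness}) and the computation of the abelianized Goussarov--Habiro group $\cyl/Y_2$ from \cite{MM}. First I would observe that, by Theorem~\ref{thm:torsion_finiteness} with $d=2$, the reduction
$$
M \longmapsto \tau(M,\partial_-M;\xi_0) \bmod I^2 \ \in \Z[H]/I^2
$$
is a finite-type invariant of degree at most $1$; and similarly $t(M)^{\nicefrac12}\bmod I^2$ is finite-type of degree $\leq 1$, since $t=\operatorname{cont}\circ\tau_1$ and $\tau_1$ is a (degree $1$) finite-type invariant. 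Hence the difference (or rather the quotient, written multiplicatively) $\tau(M,\partial_-M;\xi_0)\cdot t(M)^{-\nicefrac12}\bmod I^2$ is a finite-type invariant of degree $\leq 1$ on $\cyl$, and to prove it is trivial it suffices to check that it vanishes on the trivial cylinder and that its first variation along any $Y_1$-surgery is zero. Concretely, since $I/I^2\simeq H$, it is enough to compute the $\tau_1$-graded piece of $\tau(M,\partial_-M;\xi_0)$ and match it with that of $t(M)^{\nicefrac12}$.

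The main computation, then, is to determine $\tau(M_Y,\partial_-M_Y;\xi_0) - \tau(M,\partial_-M;\xi_0) \bmod I^2$ when $M_Y$ is obtained from $M$ by surgery along a single $Y$-graph, and to see that it equals the corresponding variation of $t(M)^{\nicefrac12}=(\operatorname{cont}\circ\tau_1(M))^{\nicefrac12}$. For this I would use the two ingredients cited in the proof of Theorem~\ref{thm:torsion_finiteness}: the Morse-theoretic description of the combinatorial/geometric Euler-structure correspondence from \cite{FJR} and the Fox-calculus formula for the RT torsion from a Heegaard splitting (\cite{Turaev_spinc}, \cite{Massuyeau_torsion}). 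Surgery along a $Y$-graph can be realized, via Lemma~\ref{lem:Y_clasper}-style moves, by modifying a Heegaard splitting by an element of $\Gamma_1\Torelli(S)$; the lowest-order term of the change in the Alexander polynomial / torsion is governed by the first Johnson homomorphism of that element, which is exactly how $\operatorname{cont}\circ\tau_1$ enters. The bookkeeping of $\xi_0$ is handled by the last line of the proof of Theorem~\ref{thm:torsion_finiteness}: a Torelli surgery preserves the preferred Euler structure, so no correction term in $H$ is introduced beyond the ones already recorded in $\Omega_s$ via $t(s)/2$ (Lemma~\ref{lem:Euler_surgery}, formula~(\ref{eq:concrete_description})).

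Alternatively — and this is probably the cleaner route to write up — I would bypass the clasper computation and argue directly on the level of $\cyl/Y_2$. By \cite{MM} the group $\cyl/Y_2$ is classified by $\rho_3$ (equivalently $\tau_1$, together with lower-order data) and the Birman--Craggs homomorphism; in particular any degree-$\leq 1$ finite-type invariant with values in a torsion-free group (here $I/I^2\simeq H$) factors through $\tau_1$ composed with a linear map $\DD_1(H)=\Lambda^3H\to H$. So the leading term of $\tau(M,\partial_-M;\xi_0)$ in $I/I^2\simeq H$ is $\varphi\circ\tau_1(M)$ for some fixed homomorphism $\varphi:\Lambda^3H\to H$ depending only on $\Sigma$, and it remains to identify $\varphi$. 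This I would pin down by evaluating on mapping cylinders: for $h\in\Torelli$, formula~(\ref{eq:torsion_mapping_cylinder}) gives $\tau(\mcyl(h),\partial_-\mcyl(h);\xi_0)=t(h)^{\nicefrac12}$, so $\varphi\circ\tau_1(h)$ is the $I/I^2$-reduction of $t(h)^{\nicefrac12}$, i.e. $\tfrac12 t(h)=\tfrac12\operatorname{cont}\circ\tau_1(h)$ under $I/I^2\simeq H$; since $\tau_1:\Torelli\to\Lambda^3H$ is onto (after tensoring with $\Q$, which is harmless as $H$ is torsion-free), this forces $\varphi=\tfrac12\operatorname{cont}$, and hence $\tau(M,\partial_-M;\xi_0)\equiv t(M)^{\nicefrac12}\bmod I^2$ for all $M\in\cyl$.

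The hard part will be the surgery-formula computation in the first approach — tracking exactly which term of Fox's free derivatives survives modulo $I^2$ and checking the Euler-structure normalization is correct — but the second approach sidesteps this by leveraging the already-established classification of $\cyl/Y_2$ and the finiteness theorem, reducing everything to the single explicit evaluation~(\ref{eq:torsion_mapping_cylinder}) on mapping cylinders together with surjectivity of $\tau_1$.
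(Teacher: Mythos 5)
Your second (preferred) route is correct and is essentially the paper's own argument: both rest on Theorem~\ref{thm:torsion_finiteness}, Lemma~\ref{lem:Y_FTI}, the classification of $\cyl/Y_2$ from \cite{MM}, and the evaluation (\ref{eq:torsion_mapping_cylinder}) on mapping cylinders --- the paper merely packages this more directly by stabilizing to $g\geq 3$ and replacing $M$ by a $Y_2$-equivalent mapping cylinder $\mcyl(h)$, rather than factoring the $I/I^2$-part through $\tau_1$ and identifying the resulting linear map $\Lambda^3H\to H$. Do remember to stabilize the surface first (as the paper does, using (\ref{eq:stabilization_RT})), so that every homology cylinder is $Y_2$-equivalent to a mapping cylinder and $\tau_1(\Torelli)$ spans $\Lambda^3H$; with that in place the heavier Fox-calculus computation of your first route is unnecessary.
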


\begin{proof}
The relative RT torsion is invariant by stabilization in the sense of (\ref{eq:stabilization_RT}).
Therefore we can assume that $g\geq 3$, 
and $M$ is then $Y_2$-equivalent to a mapping cylinder $\mcyl(h)$ for some $h\in \Torelli$ \cite{MM}.
By Theorem \ref{thm:torsion_finiteness} and Lemma \ref{lem:Y_FTI},
we have
$$
\tau(M,\partial_-M;\xi_0) = \tau(\mcyl(h),\partial_-\mcyl(h);\xi_0) \mod I^2.
$$
We conclude thanks to (\ref{eq:torsion_mapping_cylinder})
and the fact that $\tau_1$ is invariant under $Y_2$-equivalence.
\end{proof}

By Lemma \ref{lem:I2},  we can associate to any $M \in \cyl$ the symmetric tensor
\begin{equation}\label{def:a}
  \alpha(M):= \left\{ \tau(M,\partial_-M;\xi_0) - t(M)^{\nicefrac{1}{2}}\right\}  \ \in \frac{I^2}{I^3}\simeq S^2H.
\end{equation}
(Here $S^2H$ is identified with $I^2/I^3$ in the usual way, namely $h\cdot h' \mapsto \left\{(h-1)(h'-1)\right\}$.)
We shall refer to $\alpha(M)$ as the \emph{quadratic part} of the relative RT torsion. It has the following properties.

\begin{proposition}\label{prop:alpha_properties}
The map $\alpha: \cyl \to S^2H$ is an additive finite-type invariant of degree $2$, which satisfies 
\begin{equation}\label{eq:alexcyl}
\forall f \in \Torelli, \quad \alpha(\mcyl(f))=0.
\end{equation}
Next, if $G$ is a looped graph clasper of degree $2$ in a homology cylinder $M$
whose leaves $f$ and $f'$ are oriented as follows
$$
\labellist
\small\hair 2pt
 \pinlabel {$f$} [r] at 0 16
 \pinlabel {$f'$} [l] at 155 16
\endlabellist
\includegraphics[scale=0.8]{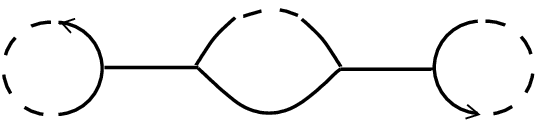}
$$
and have homology classes $h\in H$ and $h'\in H$ respectively,
then we have
$$
\alpha(M_G) - \alpha(M) = -2 \cdot h h'.
$$
Finally, if $Y$ is a $Y$-graph in a homology cylinder $M$ with two special leaves 
and one arbitrary leaf $f$  which is oriented in an arbitrary way
$$
\labellist
\small\hair 2pt
 \pinlabel {$f$} [r] at 22 20
\endlabellist
\includegraphics[scale=0.8]{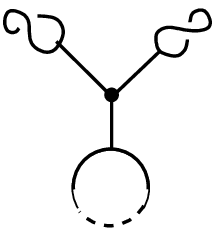}\\[-0.3cm]
$$
and has homology class $h\in H$, then we have
$$
\alpha(M_Y)-\alpha(M) =  h^2.
$$
\end{proposition}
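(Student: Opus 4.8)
The plan is to extract the four assertions from Lemma~\ref{lem:I2}, Theorem~\ref{thm:torsion_finiteness}, formula~(\ref{eq:torsion_mapping_cylinder}) and clasper calculus.

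\emph{Finite-type property, additivity, and vanishing on mapping cylinders.} By Lemma~\ref{lem:I2} the element $\tau(M,\partial_-M;\xi_0)-t(M)^{\nicefrac{1}{2}}$ lies in $I^2$, so $\alpha$ is well defined with values in $I^2/I^3\simeq S^2H$. Theorem~\ref{thm:torsion_finiteness} says that $\tau(\,\cdot\,,\partial_-\,\cdot\,;\xi_0)\bmod I^3$ is a finite-type invariant of degree at most $2$. Since $t=\operatorname{cont}\circ\tau_1$ is a finite-type invariant of degree $1$, it is affine in Torelli surgeries, so $t(M_P)^{\nicefrac{1}{2}}=t(M)^{\nicefrac{1}{2}}\cdot\prod_{p\in P}\mu_p$ for suitable $\mu_p\in H$; hence $\sum_{P\subset\{0,1,2\}}(-1)^{|P|}t(M_P)^{\nicefrac{1}{2}}=t(M)^{\nicefrac{1}{2}}\prod_p(1-\mu_p)\in I^3$, so $M\mapsto t(M)^{\nicefrac{1}{2}}\bmod I^3$ is also a finite-type invariant of degree at most $2$, and therefore so is $\alpha$; it is of degree exactly $2$ because the surgery formulas below show it is not $Y_2$-invariant. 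For additivity I would use that the relative RT torsion is multiplicative under composition of cobordisms, $\tau(M\circ M',\partial_-(M\circ M');\xi_0)=\tau(M,\partial_-M;\xi_0)\cdot\tau(M',\partial_-M';\xi_0)$ --- which follows from multiplicativity of torsion in the handle chain complex associated to a Morse function of $M\circ M'$ having the middle surface as a regular level, the homology term being trivial since these complexes are acyclic over $Q(\Z[H])$, and the preferred Euler structures and sign normalisations being compatible ($c(\xi_0)=0$ glues and $H_*(\,\cdot\,;\R)$ is trivial) --- together with additivity of $t$. Writing $\tau(M,\partial_-M;\xi_0)=t(M)^{\nicefrac{1}{2}}+a$ with $a\in I^2$ representing $\alpha(M)$, and similarly for $M'$, one finds that all terms of the product beyond $a+a'$ lie in $I^3$, whence $\alpha(M\circ M')=\alpha(M)+\alpha(M')$. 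Finally $\alpha(\mcyl(f))=0$ is immediate from~(\ref{eq:torsion_mapping_cylinder}) and $t(\mcyl(f))=t(f)$.

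\emph{The two surgery formulas.} Both are instances of computing the top-degree variation of a finite-type invariant. As $\alpha$ is a degree-$2$, $Y_3$-invariant finite-type invariant (the $Y_3$-invariance coming from Theorem~\ref{thm:torsion_finiteness} and Lemma~\ref{lem:Y_FTI}), the variation $\alpha(M_G)-\alpha(M)$ along a degree-$2$ graph clasper $G$ is, by the standard locality and linearity of clasper calculus, independent of the ambient cylinder and additive under splitting a leaf of $G$, hence a universal expression: bilinear in the leaf classes $h,h'$ for the looped $\Phi$-shape and quadratic in $h$ for the two-special-leaf $Y$-shape. By $\mcg$-equivariance and the irreducibility of $S^2H$ this forces $\alpha(M_G)-\alpha(M)=c\cdot hh'$ in the first case and $c'\cdot h^2$ in the second, for universal integers $c,c'$. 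It remains to show $c=-2$ and $c'=1$, which I would do by one explicit computation each: take $M=\Sigma\times I$, put the clasper in standard position with its leaves carrying a fixed symplectic basis element, write down a handle decomposition of the surgered homology cylinder relative to its bottom (available by the Heegaard-splitting discussion of \S\ref{subsec:definition_relations}), compute $\tau(\,\cdot\,,\partial_-\,\cdot\,;\xi_0)$ via Proposition~\ref{prop:Alexander_torsion} as a single determinant of a matrix of Fox free derivatives --- the monomial ambiguity being fixed by $c(\xi_0)=0$ --- and read off the $I^2/I^3$-component. In the two-special-leaf case, the relations for special (disk-bounding, $(-1)$-framed) leaves collected in Appendix~\ref{sec:calc_clasper}, together with Lemma~\ref{lem:doubling}, reduce the surgery presentation to a particularly simple link and shorten the determinant computation.

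\emph{Main obstacle.} Everything except the last point is formal once Lemma~\ref{lem:I2}, Theorem~\ref{thm:torsion_finiteness} and the clasper-calculus toolkit are granted. The main obstacle is the evaluation of the two universal constants: one must exhibit minimal explicit models of the two clasper surgeries on $\Sigma\times I$ and carry out the Fox-calculus torsion computation with the correct signs and the correct preferred Euler structure, so as to obtain precisely $c=-2$ and $c'=1$.
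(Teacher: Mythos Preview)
Your treatment of additivity, the finite-type property, and vanishing on mapping cylinders follows the paper's argument closely. One remark: for the multiplicativity of $\tau(\,\cdot\,,\partial_-\,\cdot\,;\xi_0)$ you assert that the preferred Euler structures glue and the signs match. The paper (Lemma~\ref{lem:multiplicativity}) does not argue this directly; instead it gets multiplicativity up to a unit $\varepsilon h\in\pm H$ from the multiplicativity of Reidemeister torsion, and then kills the unit by reducing modulo $I$ and $I^2$ and invoking Lemma~\ref{lem:I2}. Your geometric claim is plausible but would need its own verification, whereas the $I$-adic trick is cheap.

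The genuine divergence is in the looped-clasper surgery formula. You reduce to a universal constant by $\Sp$-equivariance and irreducibility of $S^2H_\Q$, and then propose to pin down $c=-2$ by an explicit Fox-calculus determinant in one model. This is correct in principle, but the paper avoids the ad hoc computation entirely: Lemma~\ref{lem:loop} gives a closed formula
\[
\tau(M_G,\partial_-M_G;\xi_0)=P_{d,\varepsilon}(b^{-1},h_1,\dots,h_d)\,P_{d,\varepsilon}(b,h_1^{-1},\dots,h_d^{-1})\cdot\tau(M,\partial_-M;\xi_0)
\]
for surgery along a looped graph clasper of \emph{any} degree $d$, adapted from the Garoufalidis--Levine computation for the Alexander polynomial, with the $\pm H$ ambiguity again fixed by reducing modulo $I^2$. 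Specialising to $d=2$ and reducing modulo $I^3$ yields $-2hh'$ immediately. So what your approach buys is conceptual clarity (the answer must be a multiple of $hh'$), while the paper's buys a direct and more general result with no constant left to determine. For the $Y$-graph with two special leaves, your use of Lemma~\ref{lem:doubling} (i.e.\ Corollary~\ref{cor:2spe}) is exactly what the paper does: it reduces to the looped $\Phi$-case via $((\Sigma\times I)_Y)^2\stackrel{Y_3}{\sim}(\Sigma\times I)_\Phi$, additivity, and $Y_3$-invariance of $\alpha$, so no separate determinant computation is needed there either.
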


In order to prove this proposition,
we shall need two other properties of the relative RT torsion of homology cylinders.

\begin{lemma}\label{lem:multiplicativity}
For all $M,M' \in \cyl$, we have
$$
\tau\big(M\circ M', \partial_- (M\circ M'); \xi_0\big) 
=  \tau(M,\partial_-M;\xi_0) \cdot \tau(M',\partial_-M';\xi_0) \in \Z[H].
$$
\end{lemma}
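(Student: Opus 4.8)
The plan is to first establish multiplicativity of the underlying (unrefined) Reidemeister torsion up to the $\pm H$–indeterminacy by a Mayer--Vietoris/Milnor argument, and then pin down the indeterminacy by showing that the preferred relative Euler structures behave additively under composition. For Step~1, set $S:=\partial_+M=\partial_-M'\subset M\circ M'$, a copy of $\Sigma$, so that $M\circ M'=M\cup_S M'$ and $\partial_-(M\circ M')=\partial_-M$. Using the identifications $H_1(M)\cong H\cong H_1(M')\cong H_1(M\circ M')$ coming from the homology cylinder condition, one checks that the coefficient system $\mu$ on $\pi_1(M\circ M')$ restricts along the inclusions to the coefficient systems used on $M$ and on $M'$. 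Choosing compatible cell decompositions, I would consider the short exact sequence of based twisted chain complexes
\[
0\to C_*(M,\partial_-M)\to C_*(M\circ M',\partial_-M)\to C_*(M\circ M',M)\to 0,
\]
note that excision gives $C_*(M\circ M',M)\simeq C_*(M',\partial_-M')$, and recall (cf.\ the proof of the lemma preceding Proposition~\ref{prop:Alexander_torsion}, or \cite[Proposition~2.1]{KLW}) that all three complexes become acyclic after tensoring with $Q(\Z[H])$. Hence the homology long exact sequence of the triple is the zero complex, and Milnor's multiplicativity theorem for torsion (see \cite{Turaev_book}) yields $\tau^\mu(M\circ M',\partial_-M)=\tau^\mu(M,\partial_-M)\cdot\tau^\mu(M',\partial_-M')$ in $Q(\Z[H])/\!\pm H$, all three factors lying in $\Z[H]/\!\pm H$ by Proposition~\ref{prop:Alexander_torsion}.

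For Step~2 I would introduce a gluing operation $\boxplus:\gEul(M,\partial_-M)\times\gEul(M',\partial_-M')\to\gEul(M\circ M',\partial_-(M\circ M'))$: given representatives $v$ on $M$ and $v'$ on $M'$ with $v|_{\partial M}=v_0$ and $v'|_{\partial M'}=v_0'$, one has $v|_S=v_0|_S=v_0'|_S=v'|_S$ (both sides are the image of $\partial/\partial t$, which points out of $M$ and into $M'$), so $v$ and $v'$ glue to a non-singular field on $M\circ M'$ restricting to $v_0$ on the boundary; this is easily seen to be well defined on equivalence classes. Under the correspondence (\ref{eq:comb_to_geom}) the operation $\boxplus$ is realized by forming the union of an Euler chain of $(M,\partial_-M)$ with one of $(M',\partial_-M')$, translated so as to connect up, and it is affine over the surjection $H_1(M)\oplus H_1(M')\to H_1(M\circ M')$, which under the identifications above is the addition map $H\oplus H\to H$. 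The refined form of Milnor's theorem used in Step~1, tracking the preferred lifts of cells to the maximal abelian cover (and the sign, which is unambiguous since the relevant real homology vanishes), then upgrades the conclusion to
\[
\tau\big(M\circ M',\partial_-(M\circ M');\theta\boxplus\theta'\big)=\tau(M,\partial_-M;\theta)\cdot\tau(M',\partial_-M';\theta')\ \in\Z[H]
\]
for all $\theta,\theta'$; this is the relative analogue of the gluing formula for the RT torsion of closed $3$–manifolds split along a surface (cf.\ \cite{Turaev_book,Massuyeau_torsion}).

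For Step~3 it remains to prove $\xi_0(M)\boxplus\xi_0(M')=\xi_0(M\circ M')$. Since by Lemma~\ref{lem:c} the preferred Euler structure is characterized by vanishing relative Chern class, it suffices to show that $c$ is additive under $\boxplus$, i.e.\ $c(\xi\boxplus\xi')=c(\xi)+c(\xi')$ under the addition map $H\oplus H\to H$. Fixing a non-singular field $w$ on $\Sigma$ and letting $w'$ be the associated boundary field of (\ref{eq:Chern_as_obstruction}), its values on $S=\partial_+M=\partial_-M'$ agree from the $M$– and $M'$–sides; extending $w'$ over $M$ linearly independently of $v$ produces an obstruction cocycle representing $c(\xi)\in H^2(M,\partial M)$, and similarly $c(\xi')\in H^2(M',\partial M')$, and after arranging the two partial sections to coincide along $S$ they glue to a partial section over $M\circ M'$ whose obstruction class is the image of $(c(\xi),c(\xi'))$ in $H^2(M\circ M',\partial(M\circ M'))\cong H_1(M\circ M')$, namely $c(\xi)+c(\xi')$. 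Taking $\xi=\xi_0(M)$, $\xi'=\xi_0(M')$ gives $c(\xi_0(M)\boxplus\xi_0(M'))=0$, hence $\xi_0(M)\boxplus\xi_0(M')=\xi_0(M\circ M')$; combining this with the displayed equality of Step~2 (for $\theta=\xi_0(M)$, $\theta'=\xi_0(M')$) completes the proof.

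The main obstacle I expect is the bookkeeping underlying Steps~2 and~3: one must check that the gluing $\boxplus$ is compatible with the maximal-abelian-cover lifts that rigidify the RT torsion (so that no stray unit in $H$ is introduced when passing from $\tau^\mu$ to $\tau(\cdot\,;\theta)$), and that the relative obstruction classes of Step~3 genuinely add across $S$ with the correct identifications and orientations. Both points are local near $S$ and formally parallel to the known closed-manifold gluing formulas, but they need to be carried out carefully in the relative setting of \cite{FJR,Massuyeau_torsion}.
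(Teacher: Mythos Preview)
Your approach is correct in outline but takes a genuinely different route from the paper. Your Step~1 matches the paper's opening move exactly: multiplicativity holds in $Q(\Z[H])/\!\pm H$ by Milnor's theorem, so there exist unique $\varepsilon\in\{\pm1\}$ and $h\in H$ with
\[
\tau\big(M\circ M',\partial_-(M\circ M');\xi_0\big)=\varepsilon h\cdot \tau(M,\partial_-M;\xi_0)\cdot\tau(M',\partial_-M';\xi_0).
\]
Where you diverge is in fixing $\varepsilon$ and $h$. You propose to build a gluing operation $\boxplus$ on relative Euler structures, prove a refined multiplicativity formula with respect to $\boxplus$, and then show $\xi_0\boxplus\xi_0=\xi_0$ via additivity of the relative Chern class. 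This is the ``honest'' geometric route and is essentially the relative version of the closed-manifold gluing theory; it should go through, though as you note the bookkeeping (compatibility with maximal abelian covers, sign conventions, matching the obstruction classes across $S$) is where the work lies.

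The paper instead bypasses Euler-structure gluing entirely by an algebraic trick. Having already established Lemma~\ref{lem:I2}, namely $\tau(N,\partial_-N;\xi_0)\equiv t(N)^{1/2}\pmod{I^2}$ for every $N\in\cyl$, one writes $x(N):=\tau(N,\partial_-N;\xi_0)-t(N)^{1/2}\in I^2$ and reduces the displayed identity first modulo $I$ (giving $1=\varepsilon$) and then modulo $I^2$ (giving $t(M\circ M')^{1/2}=h\cdot t(M)^{1/2}t(M')^{1/2}$ in $I/I^2\simeq H$, hence $h=1$ since $t$ is a monoid homomorphism). This is much shorter and avoids all the technical verifications you flag; on the other hand, it relies on Lemma~\ref{lem:I2}, which in turn uses Theorem~\ref{thm:torsion_finiteness} and the characterization of $Y_2$-equivalence, so it is less self-contained than your direct argument. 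Your approach would give the lemma independently of those inputs, at the cost of the gluing analysis.
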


\begin{lemma}\label{lem:loop}
Let $G$ be a looped graph clasper of degree $d\geq 1$ in a homology cylinder $M$,
whose leaves $f_1,\dots,f_d$ and loop $\ell$ of edges are oriented as shown below:\\[0.1cm]
$$
\labellist
\small\hair 2pt
\pinlabel {$\varepsilon_1$}  at 42.5 37
\pinlabel {$\varepsilon_2$}  at 73 37 
\pinlabel {$\varepsilon_d$}  at 138.5 37
\pinlabel {$f_1$} [r] at 16 13
\pinlabel {$f_2$} [r] at 50 13
\pinlabel {$f_{d}$} [r] at 115 13
\pinlabel {$\ell$}  [b] at 2 75
\pinlabel {where} [l] at 196 49
\pinlabel {$0$} at 288 65
\pinlabel {$1$} at 288 33
\pinlabel {$=$} at 308 64
\pinlabel {$=$} at 308 32
\pinlabel {\scriptsize\!\!\! (a half-twist)} [l] at 318 21
\endlabellist
\!\!\! \includegraphics[scale=1.1]{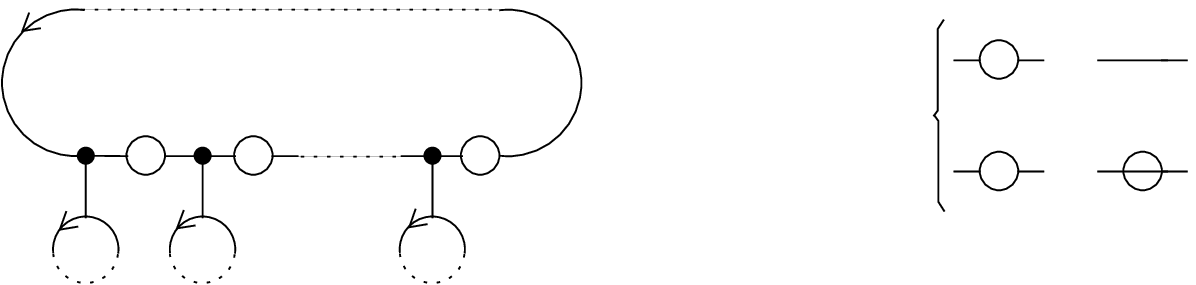}
$$
We denote by $h_1,\dots,h_d\in H$ and $b\in H$ the homology classes of $f_1,\dots,f_d$ and $\ell$, respectively,
and we set $\varepsilon:= \varepsilon_1 + \cdots + \varepsilon_d\in \Z_2$. Then we have
$$
\tau(M_G,\partial_-M_G;\xi_0) = P_{d,\varepsilon}(b^{-1},h_1,\dots,h_d) 
\cdot  P_{d,\varepsilon}(b,h_1^{-1},\dots,h_d^{-1}) \cdot \tau(M,\partial_-M;\xi_0)
$$
where we denote 
$\displaystyle{P_{d,\varepsilon}(Y,X_1,\dots,X_d) := Y + (-1)^{\varepsilon+1} \prod_{i=1}^d (1-X_i)} 
\in\Z[Y,X_1,\dots,X_d]$.
\end{lemma}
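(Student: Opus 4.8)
The plan is to realize the surgery $M \leadsto M_G$ along an explicit framed link and then to apply a surgery formula for the Reidemeister--Turaev torsion. First observe that the ratio $\tau(M_G,\partial_-M_G;\xi_0)/\tau(M,\partial_-M;\xi_0)$ makes sense as an element of $\Z[H]$: a clasper surgery of degree $\ge 1$ is, up to the reformulation of Section~\ref{subsec:definition_relations}, a Torelli surgery, and it preserves $H_1$, so that by Lemma~\ref{lem:Euler_surgery} together with $c(\xi_0)=0$ the preferred Euler structure of $M$ is transported to that of $M_G$ --- exactly as in the last step of the proof of Theorem~\ref{thm:torsion_finiteness}. Hence no Euler-structure indeterminacy will intervene. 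Using clasper calculus (the moves collected in Appendix~\ref{sec:calc_clasper}), and if necessary stabilization together with the multiplicativity of Lemma~\ref{lem:multiplicativity}, I would then bring $G$ into a standard position, reducing the computation to that of $\tau$ for a convenient model homology cylinder carrying a standard looped clasper whose leaves realize the prescribed classes $h_1,\dots,h_d$ and whose loop realizes $b$.

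Next I would convert $G$ into a framed link $L$ by the standard procedure of \cite{Habiro,GGP} --- blow up each of the $d$ nodes into a Borromean triple, then replace each resulting basic clasper by a $2$-component framed link --- and simplify $L$ by Kirby and clasper calculus to a transparent form: the $d$ leaf curves $f_1,\dots,f_d$, a single curve $\ell$ running around the loop, and a family of auxiliary $(\pm1)$-framed meridional circles, among which the $(-1)$-framed special leaves of the picture are the ones accounting for the factors $1-X_i$. I would then express $\tau(M_L,\partial_-M_L;\xi_0)$ in terms of $\tau(M,\partial_-M;\xi_0)$ via a surgery formula for the RT torsion in the Fox-calculus form already exploited in \cite{Massuyeau_torsion,FJR}: the ratio equals the order over $\Z[H]$ of the $\mu$-twisted first homology of the exterior of $L$, that is, a Jacobian determinant of a Wirtinger-type presentation of $\pi_1$ of that exterior, evaluated through the abelianization $\mu$ sending the meridian of $f_i$ to $h_i$ and the loop generator to $b$.

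The heart of the argument is the evaluation of this determinant, which I would carry out by induction on $d$, removing one node together with its leaf at each step (the inductive step being itself an instance of clasper calculus: a degree-$d$ looped clasper with leaf $f_d$ is obtained from a degree-$(d-1)$ one by inserting a node). At each node the Borromean/commutator pattern contributes the factor $1-X_i$ carried by the $i$-th leaf, the loop contributes the monomial $b^{\pm1}$, and the two remaining generators are Poincar\'e-dual to one another; collecting terms yields $P_{d,\varepsilon}(b^{-1},h_1,\dots,h_d)$ from one system of generators and, from the dual system, its conjugate $P_{d,\varepsilon}(b,h_1^{-1},\dots,h_d^{-1})$ --- which is why the answer is the norm of a single polynomial. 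That only $\varepsilon=\varepsilon_1+\cdots+\varepsilon_d$ mod $2$ matters, and that it enters through the sign $(-1)^{\varepsilon+1}$, comes from the fact that a full twist on an edge is removable by clasper calculus, whereas a single half-twist reverses the orientation with which a leaf-curve is band-summed into the loop and thereby flips the sign of the product term. The main obstacle I anticipate is not conceptual but one of bookkeeping: pinning down $L$ with all its framings, orientations and twist conventions correct, so that the $(-1)$-framed special leaves really contribute $1-X_i$ (and not $X_i-1$ or $1-X_i^{-1}$), and organizing the inductive determinant so that the two conjugate factors separate cleanly; once the geometric input is fixed, the remaining algebra is routine.
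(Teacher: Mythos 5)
Your plan is essentially the route the paper takes: realize the looped clasper surgery as surgery along an explicit framed link and compute the resulting twisted order as an Alexander-polynomial/Fox-calculus determinant in the style of Garoufalidis--Levine \cite{GL_loop}, the passage from Alexander polynomial to torsion being Proposition \ref{prop:Alexander_torsion}. The one place where you diverge --- and where your version is needlessly harder --- is the normalization. You propose to eliminate the unit ambiguity by tracking the preferred Euler structure $\xi_0$ through the Wirtinger/Fox computation itself; as you anticipate, choosing the cell lifts dictated by the Euler chain inside a determinant of free derivatives is exactly the bookkeeping that is painful to get right. The paper instead accepts that the determinant computation only yields the formula up to a unit $\eta k$ with $\eta=\pm1$, $k\in H$, and then kills this ambiguity by a soft argument: reducing the identity modulo $I^2$ and invoking Lemma \ref{lem:I2}, which gives $\tau(\cdot,\partial_-\cdot\,;\xi_0)\equiv t(\cdot)^{\nicefrac{1}{2}} \bmod I^2$; since the product of the two $P_{d,\varepsilon}$ factors is $\equiv 1 \bmod I^2$ and $t(M_G)=t(M)$, one reads off $\eta=+1$ and $k=1$. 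I would recommend adopting that finish: it lets you run the entire link computation in $\Z[H]/\!\pm\! H$, where conventions on framings and half-twists only need to be correct up to sign and a monomial, and it disposes of the very obstacle you identify as the main risk. (Your preliminary reductions --- transport of $\xi_0$ via Lemma \ref{lem:Euler_surgery}, standard position via clasper calculus and Lemma \ref{lem:multiplicativity} --- are fine but not needed once the mod-$I^2$ trick is used.)
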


The proofs are as follows.

\begin{proof}[Proof of Lemma \ref{lem:multiplicativity}]
The identity in $Q(\Z[H])/\pm H$ easily follows from the multiplicativity of the Reidemeister torsion of acyclic chain complexes
with respect to direct sums \cite{Turaev_book}. Thus there are some unique $\varepsilon \in \{+1,-1\}$ and $h \in H$ such that
$$
\tau\big(M\circ M', \partial_- (M\circ M'); \xi_0\big) 
=\varepsilon h \cdot \tau(M,\partial_-M;\xi_0) \cdot \tau(M',\partial_-M';\xi_0).
$$
Setting $x(M) := \tau(M,\partial_-M;\xi_0)-t(M)^{\nicefrac{1}{2}}$, 
which belongs to  $I^2$ by Lemma \ref{lem:I2}, this identity reads
\begin{equation}
\label{eq:multiplicativity}
t(M\circ M')^{\nicefrac{1}{2}} + x(M\circ M')= 
\varepsilon h \cdot \left(t(M)^{\nicefrac{1}{2}}+x(M) \right) \cdot \left(t(M')^{\nicefrac{1}{2}}+x(M') \right).
\end{equation}
By reducing (\ref{eq:multiplicativity}) modulo $I$, we obtain $\varepsilon =1$.   
By reducing (\ref{eq:multiplicativity}) modulo $I^2$, we get 
$$
\left\{t(M\circ M')^{\nicefrac{1}{2}} - 1 \right\}= \left\{ h\cdot t(M)^{\nicefrac{1}{2}} \cdot t(M')^{\nicefrac{1}{2}} - 1 \right\} \ \in I/I^2 \simeq H
$$
which implies $h=1\in H$ (written multiplicatively).
\end{proof}

\begin{proof}[Proof of Lemma \ref{lem:loop}]
Analogues of this formula have already been proved in two contexts:
for the Alexander polynomial of knots in \cite{GL_loop}
and for the RT torsion of closed $3$-manifolds with Euler structure in \cite{Massuyeau_these}.
(It should be observed that, in both situations, the orientation conventions for claspers differ from ours.)
Since in the situation of homology cylinders, the Reidemeister torsion can be interpreted
as an Alexander polynomial (Proposition \ref{prop:Alexander_torsion}), the proof given by Garoufalidis and Levine
in \cite{GL_loop} can be adapted in a straightforward way to obtain that
\begin{equation}
\label{eq:looped_graph}
\tau(M_G,\partial_-M_G;\xi_0) =  \eta \cdot k \cdot P_{d,\varepsilon}(b^{-1},h_1,\dots,h_d) 
\cdot  P_{d,\varepsilon}(b,h_1^{-1},\dots,h_d^{-1}) \cdot \tau(M,\partial_-M;\xi_0)
\end{equation}
where $k\in H$ and $\eta=\pm1$ are unknown.
(We leave the details of the computations to the interested reader.)
To fix the indeterminacy in $\pm H$, it is enough to reduce (\ref{eq:looped_graph}) modulo $I^2$. 
We deduce from Lemma \ref{lem:I2} that
$$
t(M_G)^{\nicefrac{1}{2}} = \eta k \cdot  t(M)^{\nicefrac{1}{2}} \in \Z[H]/I^2.
$$
We conclude that $\eta=+1$ and $k=1\in H$ (written multiplicatively).
\end{proof}

\begin{proof}[Proof of Proposition \ref{prop:alpha_properties}]
Assertion (\ref{eq:alexcyl}) follows from (\ref{eq:torsion_mapping_cylinder}).
To show the additivity of $\alpha$, consider some $M,M'\in \cyl$. We abbreviate
$$
x := \tau(M,\partial_-M;\xi_0)-t(M)^{\nicefrac{1}{2}} \quad \hbox{and} \quad x' := \tau(M',\partial_-M';\xi_0)-t(M')^{\nicefrac{1}{2}}.
$$
By Proposition \ref{lem:multiplicativity}, 
$\tau\big(M\circ M', \partial_- (M\circ M'); \xi_0\big)$ is equal to
$$
\underbrace{t(M)^{\nicefrac{1}{2}}\cdot t(M')^{\nicefrac{1}{2}}}_{= t(M\circ M')^{\nicefrac{1}{2}}} + (x + x') + 
\underbrace{\left(t(M)^{\nicefrac{1}{2}}-1\right)x'+ \left(t(M')^{\nicefrac{1}{2}}-1\right)x+xx'}_{\in I^3}
$$
and we deduce that  $\alpha(M\circ M') =\alpha(M) + \alpha(M')$.

Thanks to Theorem \ref{thm:torsion_finiteness}, showing that $\alpha$ is a finite-type invariant of degree $\leq 2$ is equivalent to proving that
$\iota \circ t$ has the same property, where $\iota: H \to \Z[H]/I^3$ is the canonical map.
Let $M\in \cyl$ and let $S_0\sqcup S_1 \sqcup S_2$ be three pairwise-disjoint surfaces in $\interior(M)$,
together with some elements $s_0\in \Torelli(S_0), s_1\in \Torelli(S_1), s_2\in \Torelli(S_2)$. 
For $i=0,1,2$ we also set $t_i := t\left(M_{s_i}\right) -t(M) \in H$ (written additively).
For each $P\subset \{0,1,2\}$, let $M_P$ be the result of the Torelli surgeries along the surfaces $S_p$ for which $p \in P$.
Since $\tau_1$ is a finite-type invariant of degree $1$,  
$t\left(M_P\right)-t(M)$ is the sum of the $t_p$ for which $p\in P$. Therefore the alternate sum in $\Z[H]$
$$
\sum_{P\subset \{0,1,2\}} \!\! (-1)^{|P|} t\left(M_P\right)
= t(M) \!\!  \sum_{P\subset \{0,1,2\}} \!\! (-1)^{|P|} t\left(M_P\right)t(M)^{-1}
=  t(M) \!\!  \sum_{P\subset \{0,1,2\}} \!\! (-1)^{|P|} \prod_{p\in P} t_p
$$
is equal to $t(M)\cdot(1-t_0)(1-t_1)(1-t_2) \in I^3$. 
This shows that $\iota \circ t$, and consequently $\alpha$,  are finite-type invariants of degree at most $2$.

We now prove the surgery formulas.
In the case of the looped graph clasper $G$ of degree $2$,
 we deduce from Lemma \ref{lem:loop} that
\begin{eqnarray*}
&&\tau(M_G,\partial_-M_G;\xi_0) \\
&=&\left(1 - (1-h) (1-h') \cdot b -b^{-1} \cdot (1-h^{-1}) (1-h'^{-1}) \right) 
 \cdot \tau(M,\partial_-M;\xi_0) \mod I^3\\
&=& \tau(M,\partial_-M;\xi_0)  - (1-h) (1-h') \cdot b -b^{-1} \cdot (1-h^{-1}) (1-h'^{-1})  \mod I^3\\
&=& \tau(M,\partial_-M;\xi_0)  - (h-1) (h'-1) -  (1-h^{-1}) (1-h'^{-1})  \mod I^3.
\end{eqnarray*}
Since we have $t(M)=t(M_G)$, we conclude that $\alpha(M_G)=\alpha(M)- 2 hh'$.
(It also follows that the degree of  the finite-type invariant $\alpha$ is precisely $2$.)
The case of the $Y$-graph with two special leaves is deduced from Corollary \ref{cor:2spe}
and the fact that $\alpha$ is invariant under $Y_3$-equivalence (by Lemma \ref{lem:Y_FTI}).
\end{proof}

\subsection{The Casson invariant}\label{subsec:casson}

\label{subsec:Casson}

One can produce from the Casson invariant of homology $3$-spheres
an invariant of homology cylinders over $\Sigma$.
For this it is necessary to \emph{choose} an embedding of the surface $\Sigma$ in $S^3$ as follows.
Let $F_g\subset S^3$ be the surface obtained from the genus $g$ Heegaard surface of $S^3$
by removing a small open disk. We fix an orientation on $F_g$: 
the handlebody of the genus $g$ Heegaard splitting  that induces this orientation on $F_g$
is called \emph{lower} while the other one is called \emph{upper}.
An embedding $j:\Sigma \to S^3$ is called a \emph{Heegaard embedding} 
if its image is $F_g$ and if $j: \Sigma \to F_g$ is orientation-preserving.

Let $M$ be a homology cylinder over $\Sigma$.  
Denote by $S^3(M,j)$ the homology $3$-sphere obtained 
by ``cutting'' $S^3$ along $j(\Sigma)=F_g$  and by ``inserting'' $M$ at this place:
more precisely we define
\begin{equation}\label{eq:twisting_S3}
S^3(M,j) := \left(S^3 \setminus (j(\Sigma) \times [-1,1])\right) \cup_{j' \circ m^{-1}}  M
\end{equation}
where $j(\Sigma) \times [-1,1]$ denotes a closed regular neighborhood of $j(\Sigma)$ in $S^3$
and $j'$ is the restriction to the boundary of the homeomorphism 
$j \times \Id: \Sigma  \times [-1,1] \to j(\Sigma) \times [-1,1]$.
Evaluating the Casson invariant $\lambda$ on this homology $3$-sphere yields a map
$$
\lambda_j: \cyl \longrightarrow \Z, \ M \longmapsto \lambda\left(S^3(M,j)\right)
$$
which strongly depends on the embedding $j$.  
We sometimes abbreviate  $\lambda := \lambda_j$ and the dependence on $j$ is discussed in \S \ref{sec:core_Casson}.
 
It has been proved by Ohtsuki that the Casson invariant of homology $3$-spheres is a finite-type invariant \cite{Ohtsuki}.
More generally, the ``sum formulas'' of Morita \cite{Morita_Casson2} or Lescop \cite{Lescop} for the Casson invariant  
imply  that $\lambda_j: \cyl \rightarrow \Z$ is a finite-type invariant of degree 2.
The same formulas show that $\lambda_j: \cyl \rightarrow \Z$ is \emph{not} additive:
for any $M,M'\in \cyl$, we actually have 
\begin{equation}
\label{eq:sum_formula}
\lambda_j(M\circ M')= \lambda_j(M) + \lambda_j(M') +2 \cdot  \tau_1(M) \star_j  \tau_1(M')
\end{equation}
where $\star_j: \Lambda^3 H \times \Lambda^3 H \to \Z$ 
is a certain non-trivial bilinear pairing whose definition depends on $j$ \cite{Morita_Casson2,Lescop,CHM}.

Finally, let us observe that the function $\lambda_j$ is preserved by stabilization.
More precisely, if the surface $\Sigma$ is stabilized to a surface $\Sigma^s$ of genus $g^s$ as shown in Figure \ref{fig:stabilization}
and if the Heegaard embedding $j^s:  \Sigma^s \to F_{g^s}$ extends $j: \Sigma \to F_g$, 
then we have $\lambda_{j^s}(M^s) = \lambda_j(M)$ for all $M \in \cyl$.

\subsection{The Birman--Craggs homomorphism} \label{subsec:Birman-Craggs}

The Birman--Craggs homomorphism is a representation of the Torelli group derived 
from the Rochlin invariant of spin closed $3$-manifolds \cite{BC,Johnson_BC}.
This representation extends in a direct way to the monoid of homology cylinders \cite{Levine_enlargement,MM}.

To briefly recall its definition, we need the set $\Spin(\Sigma)$ of spin structures on $\Sigma$,
which is an affine space over the $\Z_2$-vector space $H^1(\Sigma;\Z_2)$.
As shown in  \cite{Johnson_quadratic},
the space of spin structures on $\Sigma$ can be identified with the space
$$
\left\{H\otimes \Z_2 \stackrel{q}{\longrightarrow} \Z_2: \forall x,y \in H\otimes \Z_2, \ 
q(x+y) - q(x) - q(y) = \omega(x, y)\  \operatorname{mod}\ 2 \right\}
$$
of \emph{quadratic forms} whose polar form is the intersection pairing  mod $2$,
and this identification will be tacit in the sequel.
We shall denote by
$$
B:= \operatorname{Map}(\Spin(\Sigma), \Z_2)
$$
the space of boolean functions on $\Spin(\Sigma)$.
For every $n\geq 0$, let $B_{\leq n}$ denote the subspace of $B$ consisting
of polynomial functions of degree at most $n$, \ie sums of products of $n$ affine functions.
In particular, $B_{\leq 1}$ is the space of affine functions  and includes the following:
\begin{equation}
\label{eq:affine_functions}
\left\{\begin{array}{rcl}
\Spin(\Sigma) & \overset{\overline{1}}{\longrightarrow} & \Z_2\\
q & \longmapsto & 1
\end{array}\right.
\quad \quad \quad
\left\{\begin{array}{rcl}
\Spin(\Sigma) & \overset{\overline{h}}{\longrightarrow} & \Z_2\\
q & \longmapsto & q(h)
\end{array}\right. \quad \hbox{where $h\in H$.}
\end{equation}
The \emph{$n$-th derivative} of  a boolean function  $f: \Spin(\Sigma) \to \Z_2$ at $\sigma \in \Spin(\Sigma)$
is the  map $\dd_\sigma^{n}f: H^1(\Sigma;\Z_2)^n \to \Z_2$ defined by
\begin{equation}
\label{eq:derivative}
\dd_\sigma^{n}f(y_1,\dots,y_n) := \sum_{P \subset \{1,\dots,n\}}(-1)^{|P|}\cdot f\left(\sigma + \overrightarrow{y_P}\right)
\end{equation}
where $y_P$ is the sum of the $y_p$'s for which $p\in P$. 
As a general fact, a map $f$ is polynomial of degree $\leq n$ if and only if $\dd_\sigma^{n+1}\! f$ vanishes at some point $\sigma$
and, in this case, $\dd_\sigma^{n}f$ does not depend on $\sigma$ and is multilinear. 
Since the ground field is here $\Z_2$, the signs do not count in (\ref{eq:derivative})
and the $n$-th derivative $\dd^{n}_\sigma f$ of a  function $f$ vanishes when two arguments are repeated.
Thus, we have a canonical isomorphism
\begin{equation}\label{eq:iso_Boolean}
\dd^{n}:B_{\leq n} /B_{\leq n-1} \stackrel{\simeq}{\longrightarrow} 
\Hom\left(\Lambda^n H^1(\Sigma;\Z_2), \Z_2 \right) \simeq \Lambda^n H_{(2)}
\end{equation}
where $H_{(2)}:= H_1(\Sigma;\Z_2) = H \otimes \Z_2$ 
is identified with $\Hom(H^1(\Sigma;\Z_2),\Z_2)$.

Now, let $j:\Sigma \hookrightarrow S^3$ be any embedding.  
Pulling back the (unique) spin structure $\sigma_0$ of $S^3$ by $j$ gives an element $j^\ast\sigma_0\in \Spin(\Sigma)$,
and any element of $\Spin(\Sigma)$ can be realized in this way.  
For a homology cylinder $M$ over $\Sigma$, denote by $S^3(M,j)$ the  homology $3$-sphere defined at (\ref{eq:twisting_S3}).
Evaluating  Rochlin's  $\mu$ invariant on this homology sphere yields a monoid homomorphism
$ \cyl \to \Z_{2}$ which only depends on $j^*\sigma_0\in \Spin(\Sigma)$.  
By making the spin structure vary on $\Sigma$, ones gets the \emph{Birman--Craggs homomorphism}
$$ 
\beta: \cyl \longrightarrow  B_{\leq 3}, \
M\longmapsto \left(  j^\ast\sigma_0\mapsto \mu\left(S^3(M,j)\right)\right).
$$
The map $\beta$ is an additive finite-type invariant of degree $1$,
which is preserved by stabilization of the surface $\Sigma$.
Let us recall how $\beta$ changes under surgery along a $Y$-graph.

\begin{lemma}\label{lem:Y_beta}
Let $Y$ be a $Y$-graph in a homology cylinder $M$, whose leaves are ordered and oriented in an arbitrary way.
We denote by $h_1,h_2,h_3\in H$ their homology classes and 
by $f_1,f_2,f_3 \in \Z$ their framing numbers in $M$
(as defined in  Appendix \ref{subsec:framing_numbers}).
Then, using the notation (\ref{eq:affine_functions}), we have
$$
\beta\left(M_Y\right) - \beta(M) = \prod_{i=1}^3 \left(\overline{h_i}+ f_i \cdot \overline{1} \right) \ \in B_{\leq 3}.
$$
\end{lemma}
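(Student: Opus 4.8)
The plan is to read the defect $\beta(M_Y)-\beta(M)$ directly off the definition of the Birman--Craggs homomorphism, thereby reducing it to the variation of the Rochlin invariant under $Y$-graph surgery in an integral homology sphere, and then to compute that variation in a standard model.

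First I would unwind the definition. Fix an embedding $j:\Sigma\hookrightarrow S^3$ and set $q:=j^\ast\sigma_0\in\Spin(\Sigma)$, viewed as a quadratic form on $H\otimes\Z_2$; recall that every such $q$ is realised this way. By construction (\ref{eq:twisting_S3}) the $Y$-graph $Y$ sits inside the copy of $M$ glued into $S^3$ to form $N:=S^3(M,j)$, so that $S^3(M_Y,j)=N_Y$. Hence $\big(\beta(M_Y)-\beta(M)\big)(q)=\mu(N_Y)-\mu(N)$, and, reinterpreting the right-hand side of the claim through (\ref{eq:affine_functions}), it suffices to prove $\mu(N_Y)-\mu(N)=\prod_{i=1}^{3}\big(q(h_i)+f_i\big)\in\Z_2$. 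The framing data enters through the classical relation between $j^\ast\sigma_0$ and self-linking: for a simple closed curve $c\subset j(\Sigma)$ the self-linking in $S^3$ of the surface-framed pushoff of $c$ is congruent to $q([c])$ modulo $2$. Combined with the definition of the framing numbers in Appendix \ref{subsec:framing_numbers}, this shows that a suitable parallel of the $i$-th leaf of $Y$, viewed inside $N$, has mod $2$ self-linking equal to $q(h_i)+f_i$.

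Next I would reduce to a standard model. Since $\beta$ is an additive finite-type invariant of degree $1$, it is $Y_2$-invariant by Lemma \ref{lem:Y_FTI}, so $\beta(M_Y)$ depends only on the class of $M_Y$ in $\cyl/Y_2$. Using the clasper calculus of Appendix \ref{sec:calc_clasper} one may alter $Y$ without changing this class as long as the classes $h_i$ and the framing numbers $f_i$ of the leaves are preserved; in particular the leaves may be taken pairwise unlinked, and, after pushing the clasper into a collar of $\partial_+M$ and splitting that collar off as a composition factor, one finds $M_Y\overset{Y_2}{\sim}M\circ(\Sigma\times I)_{Y_0}$ for a standard $Y$-graph $Y_0$ in the trivial cylinder carrying the same leaf data (for $g=0$ all $h_i$ vanish and one is in Habiro's situation). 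By additivity of $\beta$, and since $\beta(\Sigma\times I)=0$, it then suffices to compute $\beta\big((\Sigma\times I)_{Y_0}\big)=\prod_{i=1}^{3}\big(\overline{h_i}+f_i\cdot\overline{1}\big)$.

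For this model computation I would replace $Y_0$ by the framed link produced in \S\ref{subsec:clasper} --- blow up the node into a Borromean triple, then turn the three resulting basic claspers into a $6$-component framed link $L$ --- and apply the classical surgery formula for the Rochlin invariant, $\mu(N_L)-\mu(N)\equiv\frac{1}{8}\big(\operatorname{sign}(\Lambda)-w^{\top}\Lambda w\big)\pmod 2$, where $\Lambda$ is the integral linking matrix of $L$ and $w$ is any characteristic vector ($\Lambda w\equiv\operatorname{diag}(\Lambda)\bmod 2$): with the leaves unlinked $\Lambda$ splits into one $2\times2$ block per leaf whose relevant diagonal entry is $\equiv q(h_i)+f_i$, while the Borromean linking of the node accounts for the coupling between the blocks, and a block-by-block evaluation of the correction term yields the \emph{product} $\prod_{i=1}^3(q(h_i)+f_i)$ rather than the sum. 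Alternatively one identifies $(\Sigma\times I)_{Y_0}$ with the mapping cylinder of an explicit product of commutators of bounding-pair maps and invokes Johnson's formula for the Birman--Craggs homomorphism on $\Torelli$, reading off the dependence on the $f_i$ from the relations for claspers with special leaves in Appendix \ref{sec:calc_clasper}. The step I expect to be the main obstacle is precisely this last computation: fixing the sign and orientation conventions for claspers, pinning down the exact integral linking matrix (and especially its diagonal) produced by \S\ref{subsec:clasper}, and checking that the coupling induced by the Borromean node turns the three contributions into a product. Routing the model case through Johnson's formula avoids the clasper sign bookkeeping but then demands matching two normalisations of $\beta$; in either approach the conceptual point is that $\beta(M_Y)-\beta(M)$ is a boolean polynomial of degree $\le 3$ whose successive derivatives, in the sense of (\ref{eq:iso_Boolean}), are governed by the leaf data $(h_i,f_i)$.
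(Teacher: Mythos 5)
Your overall strategy --- evaluate $\beta$ via $\mu(S^3(M_Y,j))-\mu(S^3(M,j))$, reduce to a standard $Y$-graph in a collar, and compute the model case from the surgery description of clasper surgery --- is a legitimate route and is quite different from the paper's, which simply quotes \cite[Lemma 3.14]{MM} for the product formula $\prod_i\langle\sigma,t_{L_i}\rangle$ in terms of a spin structure on the frame bundle, and then devotes all its effort to the Claim identifying $\langle\sigma,t_K\rangle+\Fr(K)$ with $q([K])$ for an \emph{arbitrary} framed knot $K$ in $M$ (by showing that this expression is a quadratic function of $[K]$ with polar form $\omega\otimes\Z_2$ and agrees with $q$ on surface-framed pushoffs). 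Your first paragraph only treats this identification for simple closed curves on $j(\Sigma)$; in the paper that extension to arbitrary framed knots is the actual new content of the proof, though your reduction to a collar partially sidesteps it.

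The genuine gap is in the model computation. The surgery formula you invoke, $\mu(N_L)-\mu(N)\equiv\tfrac18\bigl(\operatorname{sign}(\Lambda)-w^{\top}\Lambda w\bigr)$, is misstated: the correct formula (Kaplan, Freedman--Kirby, Guillou--Marin) carries an additional term $\operatorname{Arf}(C)$, where $C$ is the characteristic sublink of $L$. This omission is fatal here, because the signature-defect part is (roughly) a quadratic expression in the diagonal of $\Lambda$, i.e.\ in the quantities $q(h_i)+f_i$, and no such expression can equal the \emph{cubic} function $\prod_{i=1}^{3}\bigl(q(h_i)+f_i\bigr)$; your hope that "a block-by-block evaluation of the correction term yields the product" cannot work. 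The product structure in fact lives entirely in the missing Arf term: the characteristic sublink of the six-component link of \S\ref{subsec:clasper} contains the full Borromean core exactly when all three numbers $q(h_i)+f_i$ are odd, and in that case $\operatorname{Arf}(C)=1$, which is precisely the statement $\prod_i\bigl(q(h_i)+f_i\bigr)=1$. So the approach can be repaired, but the decisive computation is the Arf-invariant/characteristic-sublink analysis you left out, not the linking-matrix bookkeeping you identified as the obstacle. (The alternative route through Johnson's formula for $\beta$ on $\Torelli$ is left entirely schematic and would additionally require $g\geq 3$ and a matching of normalisations, so it does not close the gap either.)
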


\begin{proof}
This formula is essentially contained in \cite{MM}. 
Let $q\in \Spin(\Sigma)$ (which we think of as a quadratic form $H_{(2)} \to \Z_2$ with polar form $\omega\otimes \Z_2$)
and let $\sigma$ be the unique spin structure on $M$  
whose pull-back by $m_+:\Sigma \to M$ (or, equivalently, by $m_-$) gives $q$.
We denote by $FM$ the bundle of oriented frames of $M$ with fiber $\operatorname{GL}_+(3;\R)$,
and we think of $\sigma$ as an element of $H^1(FM;\Z_2)$ which is not zero on the fiber.
We deduce from \cite[Lemma 3.14]{MM}  that
$$
\langle \beta\left(M_Y\right) - \beta(M), q \rangle = \prod_{i=1}^3 \langle \sigma, t_{L_i}  \rangle \ \in \Z_2
$$
where $L_1, L_2,L_3$ denote the leaves of $Y$
and, for any framed oriented knot $K$ in $M$, $t_K \in H_1(FM)$ denotes the homology class
of the oriented curve in $FM$ obtained by lifting $K$ with an extra $(+1)$-twist  to $FM$.
Thus it is enough to check the following.

\begin{claim}
For any oriented framed knot $K$ in $M$, we have $q([K]) = \langle \sigma, t_K \rangle + \Fr(K)$
where $[K]\in H$ denotes the homology class of $K$ and $\Fr(K) \in \Z$ its framing number.
\end{claim}

\noindent
We set $q'(K) :=  \langle \sigma, t_K \rangle + \Fr(K) \in \Z_2$ and we first check that $q'(K)$ only depends on the homology class of $K$.
For this, let $K_1$ and $K_2$ be two oriented framed knots such that $[K_1]=[K_2] \in H$. 
Then we can find a compact oriented surface $S \subset M$ such that $\partial S = K_1 \sqcup (-K_2)$
and  $K_1$ is $0$-framed with respect to this surface $S$. Let $n$ be the framing of $K_2$ with respect to $S$,
and let $K_2'$ be the oriented framed knot obtained from $K_2$ 
by  an extra $(-n)$-twist so that $K'_2$ is $0$-framed with respect to $S$.
It follows from \cite[Lemma 2.7]{MM} that $t_{K_1}=t_{K_2'}$
and, using Lemma \ref{lem:framed_connect}, we obtain that $\Fr(K_1)=\Fr(K_2')$.
We  deduce that
$$
\langle \sigma, t_{K_1} \rangle + \Fr(K_1) = \langle \sigma, t_{K_2'} \rangle + \Fr(K_2') = 
\langle \sigma, t_{K_2}  \rangle +n +  \Fr(K'_2) = \langle \sigma, t_{K_2} \rangle + \Fr(K_2) \in \Z_2. 
$$
Thus, we get a map $q':H \to \Z_2$. Moreover this map is quadratic with polar form $\omega\otimes \Z_2$ since,
for any oriented framed knots $K$ and $L$ in $M$, we have
\begin{eqnarray*}
q'([K]+[L]) &= &q'([K \sharp L]) \\
 &= &\langle \sigma, t_{K} + t_{L} \rangle + \Fr(K) + \Fr(L) + 2 \Lk(K,L)\\
 & =& q'([K])+ q'([L]) + \omega([K],[L]).
\end{eqnarray*}
(Here we have used  \cite[Lemma 2.7]{MM} and Appendix B again.) 
In particular, it follows that $q'$ factorizes to a quadratic form $q':H_{(2)} \to \Z_2$. 
To conclude that $q=q'$, it is enough to check 
that  $q([\alpha])=q'([\alpha])$ for any oriented simple closed curve $\alpha$ on $\Sigma$.
Let $\alpha_+$  be the oriented framed knot obtained by pushing the curve $m_+(\alpha)$,
framed along $m_+(\Sigma)$, in the interior of $M$. 
The way a spin structure on $\Sigma$ is identified with a quadratic form in \cite{Johnson_quadratic} implies that
$q([\alpha])= \langle \sigma, t_{\alpha_+}\rangle$ 
and, since we have $\Fr(\alpha_+)=0$ in this case, we conclude that $q([\alpha])=q'([\alpha])$.
\end{proof}

An  important property of $\beta$ is that, for any $M \in \cyl$, 
the third derivative of the cubic function $\beta(M)$ is the mod $2$ reduction of $\tau_1(M)$:
\begin{equation}
\label{eq:d^3beta}
\xymatrix{
\cyl \ar[r]^-\beta \ar[d]_-{\tau_1} & B_{\leq 3} \ar[r]^-{\dd^3} & \Lambda^3 H_{(2)} \ar@{=}[d]\\
\Lambda^3 H \ar@{->>}[rr]_-{\mod  2 } && \Lambda^3 H_{(2)}
}
\end{equation}
This relation is due to Johnson \cite{Johnson_abelianization} (in the case of the Torelli group), 
and it can be proved  by comparing how $\beta$ and $\tau_1$ change under surgery along a $Y$-graph \cite{MM}.
The following lemmas give the next derivatives of $\beta$.

\begin{lemma}\label{lem:d^2beta}
The following diagram is commutative:
$$
\xymatrix{
\Jcob \ar[d]_{\tau_2} \ar[r]^{\beta} & B_{\leq 2} \ar[r]^-{\dd^2} & \Lambda^2H_{(2)} \\
\frac{\left(\Lambda^2 H \otimes \Lambda^2 H\right)^{\mathfrak{S}_2}}{\Lambda^4H} 
\ar@{->>}[rr]_-{L} && \frac{\Lambda^2H}{2 \cdot \Lambda^2H} \ar[u]_-\simeq
} 
$$
Here $\Jcob = \cob[2]$ denotes the second term of the Johnson filtration and
$L$ is the homomorphism appearing in the short exact sequence (\ref{eq:sym_to_sym}).
\end{lemma}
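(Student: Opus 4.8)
The plan is to realize both maps in the diagram as $Y_2$-invariant monoid homomorphisms $\Jcob\to\Lambda^2H_{(2)}$ and then to compare them on a small explicit family of surgeries. First note that $\beta$ takes values in $B_{\le 2}$ on $\Jcob$: since $\tau_1$ vanishes on $\Jcob=\cob[2]$, relation \eqref{eq:d^3beta} gives $\dd^3\beta(M)=0$ for every $M\in\Jcob$, so $\beta(M)$ is polynomial of degree at most $2$. Hence $\dd^2\circ\beta$ restricts to a monoid homomorphism on $\Jcob$, and it is $Y_2$-invariant because $\beta$ is a finite-type invariant of degree $1$ (Lemma~\ref{lem:Y_FTI}). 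The map $L\circ\tau_2$ is $Y_2$-invariant as well: by Lemma~\ref{lem:tau2} together with clasper calculus, surgery along any degree-$2$ graph clasper changes $\tau_2$ by an element of the form $\{(a\wedge b)\leftrightarrow(c\wedge d)\}$ (and by nothing at all when the clasper is looped), and such an element lies in $\Ker L$ by the very definition of $L$ through \eqref{eq:sym_to_sym} (see the commutative square in the proof of Proposition~\ref{prop:Morita-Levine}). Consequently the difference $\gamma:=\dd^2\circ\beta-L\circ\tau_2$ is a $Y_2$-invariant monoid homomorphism $\Jcob\to\Lambda^2H_{(2)}$; it vanishes on $Y_2\cyl$ and descends to $\Jcob/Y_2\cyl$, so it remains to prove $\gamma\equiv0$.

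Next I would describe $\Jcob/Y_2\cyl$. By the classification of the $Y_2$-equivalence in \cite{MM}, two homology cylinders are $Y_2$-equivalent if and only if they have the same image under $\rho_2$ and the same Birman--Craggs homomorphism; since $\rho_2$ is trivial on $\Jcob$, the homomorphism $\beta$ induces an injection $\Jcob/Y_2\cyl\hookrightarrow B_{\le 2}$. Using the surgery formula of Lemma~\ref{lem:Y_beta} for $Y$-graphs carrying special leaves (which are null-homologous and $(-1)$-framed, so that surgery along them preserves $\tau_1$, hence membership in $\Jcob$) one checks that this injection is onto and, more precisely, that $\Jcob/Y_2\cyl$ is generated as a group by the classes of the cylinders $(\Sigma\times I)_Y$ for $Y$ a $Y$-graph with at least one special leaf: the cases of one, two and three special leaves realize under $\beta$ the boolean functions $\overline{h}\cdot\overline{h'}$, $\overline{h}$ and $\overline{1}$ respectively (modulo lower-degree terms), and these generate $B_{\le 2}$. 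Since $\beta$, $\tau_2$ (and hence $\gamma$) are invariant under stabilization of $\Sigma$, one may also assume $g$ large enough that all the homology classes involved are realized by embedded leaves. It therefore suffices to evaluate $\gamma(M_Y)-\gamma(M)$ on such surgeries.

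Finally I would carry out this evaluation. If $Y$ has exactly one special leaf and its other two leaves $f,f'$ have homology classes $h,h'$ and framing numbers $n,n'$, then Lemma~\ref{lem:Y_beta} gives $\beta(M_Y)-\beta(M)=(\overline{h}+n\,\overline{1})(\overline{h'}+n'\,\overline{1})$, whose second derivative is $\{h\wedge h'\}$ by \eqref{eq:iso_Boolean}, while Lemma~\ref{lem:tau2_2} gives $\tau_2(M_Y)-\tau_2(M)=\{(h\wedge h')\otimes(h\wedge h')\}$, whose image under $L$ is $\{h\wedge h'\}$ by the right-hand map of \eqref{eq:sym_to_sym}; the two contributions coincide, so $\gamma$ is unchanged. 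If $Y$ has two or three special leaves, then $\beta(M_Y)-\beta(M)$ is affine (resp.\ constant), so its second derivative vanishes; and using Corollary~\ref{cor:2spe} and the clasper-calculus identities for special leaves (Lemma~\ref{lem:doubling}) one checks that in these cases $\tau_2$ changes only within $\Ker L$, so $L\circ\tau_2$ is unchanged too. Hence $\gamma$ vanishes on a generating set of $\Jcob/Y_2\cyl$, so $\gamma\equiv0$, which is exactly the commutativity of the diagram.

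The step I expect to be the main obstacle is the reduction in the second paragraph: proving that $\Jcob/Y_2\cyl$ is generated by the classes of special-leaf $Y$-graph surgeries—so that only the elementary surgery formulas of \S\ref{sec:invariants} are needed—rests on combining the $\beta$-classification of \cite{MM} with the clasper-calculus lemmas for special leaves collected in the appendix, and one must in particular be careful that the reductions used in the two- and three-special-leaf cases do not secretly alter $\tau_2$ outside of $\Ker L$.
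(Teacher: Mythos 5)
Your proof is correct and follows essentially the same route as the paper's: both rest on the fact that $\dd^3\beta$ is $\tau_1 \bmod 2$, on the $Y_2$-classification by $(\tau_1,\beta)$ from \cite{MM} to reduce to surgeries along $Y$-graphs with special leaves, and on Lemmas \ref{lem:Y_beta}, \ref{lem:tau2} and \ref{lem:tau2_2} to match the two sides on those generators. The only difference is organizational: where you package the key observation as the $Y_2$-invariance of $L\circ\tau_2$ (degree-$2$ clasper surgeries move $\tau_2$ only within $\Ker L$), the paper decomposes a given $M$ up to $Y_3$-equivalence into degree-$2$ clasper surgeries and special-leaf $Y$-graph surgeries and invokes the $Y_3$-invariance of $\tau_2$ --- the same fact in different clothing.
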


\begin{proof}
It is a consequence of (\ref{eq:d^3beta}) that the restriction of $\beta$ to $\Jcob$ takes its values in $B_{\leq 2}$.  
Now let $M\in \Jcob$. 
Using Lemma \ref{lem:Y_beta}, we  can find some $Y$-graphs with one special leaf 
$G_1, \dots, G_m$ in $(\Sigma \times I)$ such that 
$$
\beta(M) = \sum_{i=1}^m \beta \left( (\Sigma\times I)_{G_i} \right) \ \in B_{\leq 2}.
$$
Since the $Y_2$-equivalence is classified by the pair $(\tau_1,\beta)$ \cite{MM}, we deduce that
$$
M\stackrel{Y_2}{\sim}  \prod_{i=1}^m (\Sigma\times I)_{G_i}.
$$
Therefore, by clasper calculus, we can find 
graph claspers $H_1,\dots, H_n$ 
of degree $2$ in $(\Sigma\times I)$ such that
$$
M\stackrel{Y_3}{\sim}  \prod_{j=1}^n (\Sigma \times I)_{H_j} \circ \prod_{i=1}^m (\Sigma\times I)_{G_i}.
$$ 
Since $\tau_2$ is invariant under $Y_3$-equivalence, we have  
\begin{equation}
\tau_2(M) = \sum_{j=1}^n \tau_2\left((\Sigma\times I)_{H_j} \right) +  \sum_{i=1}^m \tau_2\left((\Sigma\times I)_{G_i} \right). 
\end{equation}
We deduce from Lemma \ref{lem:tau2} and Lemma \ref{lem:tau2_2} that 
$$
L\tau_2(M) = 0 + \sum_{i=1}^m l_i \wedge r_i \ \in \Lambda^2 H_{(2)}
$$
where $l_i,r_i \in H_{(2)}$ denote the mod $2$ homology classes of the non-special leaves of $G_i$.
We conclude thanks to Lemma \ref{lem:Y_beta}.
\end{proof}

\begin{rem}
The fact that the second Johnson homomorphism $\tau_2$ is related to the Birman--Craggs homomorphism $\beta$
has already been observed by Yokomizo for  the Johnson subgroup of the Torelli group \cite{yokomizo}.  
\end{rem}

\begin{lemma}\label{lem:d^1beta}
The following diagram is commutative:
$$
\xymatrix{
\cob[3] \ar[d]_{\alpha} \ar[r]^{\beta} &   B_{\leq 1} \ar[r]^{\dd^1} & H_{(2)} \ar@{>->}[d]^-s\\
S^2 H  \ar@{->>}[rr]_-{\mod 2} & &S^2H_{(2)} }
$$
Here $s$ is the square map defined by $x \mapsto x^2$.  
\end{lemma}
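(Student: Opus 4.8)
The plan is to imitate the proof of Lemma \ref{lem:d^2beta}, using $Y$-graphs with \emph{two} special leaves in place of those with one, and the surgery formulas of Proposition \ref{prop:alpha_properties} in place of Lemmas \ref{lem:tau2}--\ref{lem:tau2_2}. First one should note that the diagram makes sense: if $M\in\cob[3]$ then $\tau_2(M)=0$, so $\dd^2\beta(M)=0$ by Lemma \ref{lem:d^2beta}, whence $\beta(M)\in B_{\le 1}$ and $\dd^1\beta(M)\in H_{(2)}$ is well defined.

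Since the functions $\overline{h}+\epsilon\cdot\overline{1}$, with $h\in H$ and $\epsilon\in\Z_2$, span $B_{\le 1}$, Lemma \ref{lem:Y_beta} allows one to choose $Y$-graphs $G_1,\dots,G_m$ in $(\Sigma\times I)$, each with two special leaves and with third leaf of homology class $h_i\in H$ and framing number $f_i$, so that $\beta(M)=\sum_{i=1}^m\beta\bigl((\Sigma\times I)_{G_i}\bigr)$, where $\beta\bigl((\Sigma\times I)_{G_i}\bigr)=\overline{h_i}+f_i\cdot\overline{1}$. The null-homologous special leaves force $\tau_1\bigl((\Sigma\times I)_{G_i}\bigr)=0$, and $\tau_1(M)=0$ since $M\in\cob[2]$; as the $Y_2$-equivalence is classified by $(\tau_1,\beta)$ \cite{MM}, it follows that $M\overset{Y_2}{\sim}\prod_i(\Sigma\times I)_{G_i}$. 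Exactly as in Lemma \ref{lem:d^2beta}, clasper calculus then yields degree $2$ graph claspers $H_1,\dots,H_n$ in $(\Sigma\times I)$ with
$$
M\ \overset{Y_3}{\sim}\ \prod_{j=1}^n(\Sigma\times I)_{H_j}\ \circ\ \prod_{i=1}^m(\Sigma\times I)_{G_i}.
$$

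Both $\alpha$ and $\beta$ are additive finite-type invariants of degree at most $2$, hence $Y_3$-invariant by Lemma \ref{lem:Y_FTI}, and so is $\tau_2$ (additive on $\cob[2]$); thus all three are additive along this product. Since $\beta$ has degree $1$ it is unchanged by the degree $2$ surgeries along the $H_j$, so $\beta(M)=\sum_i(\overline{h_i}+f_i\overline{1})$; therefore $\dd^1\beta(M)$ is the class of $\sum_i h_i$ in $H_{(2)}$ and, $s$ being additive over $\Z_2$, $s\bigl(\dd^1\beta(M)\bigr)=\sum_i h_i^2$ in $S^2H_{(2)}$. On the other hand $\alpha\bigl((\Sigma\times I)_{G_i}\bigr)=h_i^2$ by Proposition \ref{prop:alpha_properties}, so it remains to check $\sum_j\alpha\bigl((\Sigma\times I)_{H_j}\bigr)\equiv 0\pmod 2$. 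One knows $\sum_j\tau_2\bigl((\Sigma\times I)_{H_j}\bigr)=\tau_2(M)-\sum_i\tau_2\bigl((\Sigma\times I)_{G_i}\bigr)=0$, the latter sum vanishing by Lemma \ref{lem:tau2_2} applied with a null-homologous leaf.

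The crux is therefore the assertion that the $\bmod\,2$ reduction of $\alpha$, restricted to homology cylinders $Y_2$-equivalent to $(\Sigma\times I)$, factors through $\tau_2$. I would prove this by clasper calculus: up to $Y_3$-equivalence each $H_j$ may be taken to be an H-graph (four leaves), a looped clasper with two leaves, or a theta-clasper (no leaves). A theta-clasper carries no homological data and, the relative Reidemeister--Turaev torsion being an abelian invariant, leaves it unchanged modulo $I^3$, so contributes $0$ to $\alpha$; a looped clasper changes $\alpha$ by $-2hh'\equiv 0\pmod 2$ by Proposition \ref{prop:alpha_properties}; and for an H-graph one checks that $\alpha\bigl((\Sigma\times I)_H\bigr)$ depends only on the homology classes $h_1,\dots,h_4$ of the leaves and respects the antisymmetry and IHX relations among degree $2$ claspers, so that $H\mapsto\alpha\bigl((\Sigma\times I)_H\bigr)$ factors as a homomorphism through $\tau_2\in\DD_2(H)$. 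Granting this, $\sum_j\tau_2\bigl((\Sigma\times I)_{H_j}\bigr)=0$ gives $\sum_j\alpha\bigl((\Sigma\times I)_{H_j}\bigr)\equiv 0$, and hence $\alpha(M)\equiv\sum_i h_i^2=s\bigl(\dd^1\beta(M)\bigr)\pmod 2$, which is the claimed commutativity. The expected main obstacle is precisely this last point: showing that tree claspers of degree $2$ do not affect $\alpha$ modulo $2$ (ideally, that surgery along such a clasper does not change $\tau(\,\cdot\,,\partial_-\,\cdot\,;\xi_0)$ modulo $I^3$ at all); once that is in hand, the rest is bookkeeping with surgery formulas already established.
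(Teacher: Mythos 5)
Your reduction of the problem is exactly the paper's: check $\beta(\cob[3])\subset B_{\leq 1}$ via Lemma \ref{lem:d^2beta}, realize $\beta(M)$ by $Y$-graphs $G_i$ with two special leaves, use the classification of $Y_2$-equivalence to write $M\stackrel{Y_3}{\sim}\prod_j(\Sigma\times I)_{H_j}\circ\prod_i(\Sigma\times I)_{G_i}$ with the $H_j$ of degree $2$, and observe that everything comes down to showing $\sum_j\alpha\bigl((\Sigma\times I)_{H_j}\bigr)\in 2\cdot S^2H$ given $\sum_j\tau_2\bigl((\Sigma\times I)_{H_j}\bigr)=0$. You have correctly isolated the crux, but your proposed resolution of it does not work as stated. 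The ``ideal'' scenario you hope for --- that surgery along a degree $2$ tree clasper leaves $\tau(\,\cdot\,,\partial_-\,\cdot\,;\xi_0)$ unchanged modulo $I^3$ --- is false: by Proposition \ref{prop:Z2_H} (equivalently Lemma \ref{lem:alpha_LMO} applied to $\chi^{-1}$ of an H-diagram), surgery along an H-graph changes $\alpha$ by $\pm 2\Lk(L_i,L_j)\,l_k l_l$-type terms. Since $2\Lk(L_i,L_j)=\Lk_+ +\Lk_-\equiv\omega(l_i,l_j)\pmod 2$ by Lemma \ref{lem:linking_numbers}, the change of $\alpha$ modulo $2$ is $\omega(l_1,l_3)l_2l_4+\omega(l_2,l_4)l_1l_3+\omega(l_1,l_4)l_2l_3+\omega(l_2,l_3)l_1l_4$, which is nonzero in general (take leaves representing $\alpha_1,\alpha_2,\beta_1,\beta_2$). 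This also shows that, integrally, $\alpha$ of an H-graph surgery depends on linking numbers and not only on the homology classes of the leaves, so your fallback plan (``depends only on homology classes, check AS and IHX'') needs the very surgery formula you were trying to avoid; moreover AS and IHX are not the only relations to check, since the STU-like relation mixes tree diagrams with looped ones, so ``factoring through $\tau_2$'' forces you to control the $\Phi$- and $\Theta$-contributions as well.

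What is true --- and what the paper proves instead --- is that the hypothesis $\sum_j\tau_2\bigl((\Sigma\times I)_{H_j}\bigr)=0$ lets one dispense with tree claspers altogether. This is Lemma \ref{lem:tau2_loop}: by surjectivity of the surgery map $\psi_2$ and the commutative square identifying $\tau_2\circ\psi_2$ with $-p_{2,0}$ on $\jacobi^c_2(L^\pm)$, an element of $Y_2\cyl/Y_3$ with vanishing $\tau_2$ is represented by a diagram with no tree part, hence by a disjoint union of \emph{looped} degree $2$ claspers (up to degree $3$ claspers). One may therefore assume from the start that every $H_j$ is looped, and Proposition \ref{prop:alpha_properties} then gives $\alpha\bigl((\Sigma\times I)_{H_j}\bigr)=-2hh'\in 2\cdot S^2H$ (and $0$ for a $\Theta$), with no computation of $\alpha$ on trees required. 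If you want to complete your own route, you would essentially have to carry out the mod $2$ computation above for H-graphs and then verify its compatibility with all the relations in $\jacobi^{<,c}_2(H)$ --- which amounts to reproving Lemma \ref{lem:tau2_loop} in a harder form.
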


\begin{proof}
It is a consequence of Lemma \ref{lem:d^2beta} that the restriction of $\beta$ to $\cob[3]$ takes its values in $B_{\leq 1}$.
Let  now $M\in \cob[3]$.
Using the same arguments as in the proof of  Lemma \ref{lem:d^2beta},
we can find some some $Y$-graphs $G_1,\dots ,G_m$ in $(\Sigma \times I)$ with \emph{two} special leaves
and graph claspers $H_1,\dots,H_n$ 
of degree $2$ in $(\Sigma \times I)$ such that
$$
M\stackrel{Y_3}{\sim}  \prod_{j=1}^n (\Sigma \times I)_{H_j} \circ \prod_{i=1}^m (\Sigma\times I)_{G_i}.
$$
Since we have $\tau_2(M)=0$ by assumption
and $\tau_2\left((\Sigma\times I)_{G_i}\right)=0$ by Lemma \ref{lem:tau2_2},
we deduce that 
$\sum_j \tau_2\left((\Sigma \times I)_{H_j}\right)=0$.
Then a more delicate use  of clasper calculus 
shows that we can assume each graph clasper 
$H_j$ to be looped.
(This will be proved in Lemma \ref{lem:tau2_loop} below.)
We deduce from Proposition \ref{prop:alpha_properties} that
$$
\alpha(M) = \sum_{j=1}^n\underbrace{\alpha\left((\Sigma \times I)_{H_j} \right)}_{\in 2\cdot S^2H}
+ \sum_{i=1}^m \underbrace{\alpha\left((\Sigma\times I)_{G_i}\right)}_{=  g_i^2}
$$
where $g_i \in H$ is the homology class of the non-special leaf of $G_i$ (which is oriented in an arbitrary way). 
Therefore, we have
$$
\alpha(M)\!\! \mod 2 = \sum_{i=1}^m  g_i^2 \in S^2 H_{(2)}.
$$
Again, we conclude thanks to Lemma \ref{lem:Y_beta}. 
\end{proof}

\begin{remark}
Since $\alpha$ is trivial on the Torelli group, 
we deduce from the previous lemmas that the  function $\beta(f):\Spin(\Sigma)\to \Z_2$ is constant for any $f\in \mcg[3]$.
This phenomenon has already been observed by Johnson in  \cite[p.178]{Johnson_survey}.
\end{remark}

The next statement is deduced from the previous lemmas, and it will be used in the proof of Theorem A.

\begin{lemma}\label{lem:beta}
If two homology cylinders $M$ and $M'$ over $\Sigma$ have the same invariants 
$\rho_3$, $\alpha$ and $\lambda_j$ (for some Heegaard embedding $j$ of $\Sigma$ in $S^3$),
then the  Birman--Craggs homomorphism $\beta$ does not distinguish $M$ from $M'$.  
\end{lemma}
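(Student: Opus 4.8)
The plan is to show that $\delta := \beta(M)-\beta(M')$, viewed in the $\Z_2$-vector space $B$, is zero, by descending through the filtration $B_{\leq 3}\supset B_{\leq 2}\supset B_{\leq 1}\supset B_{\leq 0}$. I would use throughout that $\beta$ is additive and that, by \eqref{eq:iso_Boolean}, the operators $\dd^3,\dd^2,\dd^1$ are additive with kernels $B_{\leq 2},B_{\leq 1},B_{\leq 0}$ respectively. First I would observe that $\rho_3(M)=\rho_3(M')$ forces $\rho_2(M)=\rho_2(M')$, hence $\tau_1(M)=\tau_1(M')$ since $\tau_1$ is the restriction of $\rho_2$; so by diagram \eqref{eq:d^3beta} we get $\dd^3\delta=\bigl(\tau_1(M)-\tau_1(M')\bigr)\bmod 2=0$, i.e.\ $\delta\in B_{\leq 2}$.

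To control $\dd^2\delta$ via Lemma \ref{lem:d^2beta} I would push $M$ and $M'$ into $\Jcob$. Choose any $P\in\cyl$ with $\tau_1(P)=-\tau_1(M)$ (possible since $\tau_1:\cyl\to\DD_1(H)$ is surjective) and set $M_1:=M\circ P$, $M_1':=M'\circ P$. Then $\tau_1(M_1)=\tau_1(M_1')=0$, so $M_1,M_1'\in\Jcob$; additivity of $\beta$ gives $\beta(M_1)-\beta(M_1')=\delta$; and $\rho_3(M_1)=\rho_3(M)\rho_3(P)=\rho_3(M')\rho_3(P)=\rho_3(M_1')$, so $\tau_2(M_1)=\tau_2(M_1')$. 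Since $\beta$ maps $\Jcob$ into $B_{\leq 2}$, Lemma \ref{lem:d^2beta} yields $\dd^2\beta(M_1)=\dd^2\beta(M_1')$, hence $\dd^2\delta=0$ and $\delta\in B_{\leq 1}$. I would then repeat the trick one level lower, using the surjectivity of $\tau_2:\Jcob\to\DD_2(H)$ to pick $N\in\Jcob$ with $\tau_2(N)=-\tau_2(M_1)$ and setting $M_2:=M_1\circ N$, $M_2':=M_1'\circ N\in\cob[3]$. Again $\beta(M_2)-\beta(M_2')=\delta$, and by additivity of $\alpha$ (Proposition \ref{prop:alpha_properties}) together with $\alpha(M)=\alpha(M')$ one gets $\alpha(M_2)=\alpha(M)+\alpha(P)+\alpha(N)=\alpha(M_2')$. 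Since $\beta$ maps $\cob[3]$ into $B_{\leq 1}$, Lemma \ref{lem:d^1beta} gives $\dd^1\beta(M_2)=\dd^1\beta(M_2')$, so $\dd^1\delta=0$ and $\delta$ is a constant boolean function.

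It remains to show this constant vanishes. Its value at $j^*\sigma_0$, where $j$ is the Heegaard embedding from the hypothesis, equals $\beta(M_2)(j^*\sigma_0)-\beta(M_2')(j^*\sigma_0)=\mu\bigl(S^3(M_2,j)\bigr)-\mu\bigl(S^3(M_2',j)\bigr)$ by the very definition of $\beta$; and since the Rochlin invariant of a homology $3$-sphere is the reduction modulo $2$ of its Casson invariant, this is $\bigl(\lambda_j(M_2)-\lambda_j(M_2')\bigr)\bmod 2$. I would then finish by applying the sum formula \eqref{eq:sum_formula} repeatedly: since $\tau_1$ vanishes on $M_1,M_1',M_2,M_2'$ and the difference $\tau_1(M)-\tau_1(M')$ vanishes, all the $\star_j$-correction terms cancel, whence $\lambda_j(M_2)-\lambda_j(M_2')=\lambda_j(M_1)-\lambda_j(M_1')=\lambda_j(M)-\lambda_j(M')$, which is $0$ by hypothesis. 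Therefore the constant is $0$, so $\delta=0$, i.e.\ $\beta(M)=\beta(M')$.

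The step I expect to be the main obstacle is organizing this ``descent'' cleanly: Lemmas \ref{lem:d^2beta}--\ref{lem:d^1beta} only describe $\dd^2\beta$ and $\dd^1\beta$ on the submonoids $\Jcob$ and $\cob[3]$, so one must first move $M$ and $M'$ there by composing with auxiliary homology cylinders $P,N$ chosen only through their Johnson homomorphisms, and then verify that the relevant differences $\beta(M_i)-\beta(M_i')$, $\alpha(M_i)-\alpha(M_i')$ and $\lambda_j(M_i)-\lambda_j(M_i')$ are preserved at each step. This works precisely because $\beta$ and $\alpha$ are additive while the failure of additivity of $\lambda_j$ is exactly the bilinear $\star_j$-term, which is annihilated by the invariant $\tau_1$ on which $M$ and $M'$ already agree and which moreover vanishes outright on $\Jcob$.
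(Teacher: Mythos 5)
Your proof is correct. It rests on the same three ingredients as the paper's argument -- diagram \eqref{eq:d^3beta}, Lemma \ref{lem:d^2beta}, Lemma \ref{lem:d^1beta}, and the endgame via $\mu = \lambda \bmod 2$ -- but it reaches them by a genuinely different route. The paper invokes the Goussarov--Habiro theorem that $\cyl/Y_3$ is a group to produce a single $N$ with $M\circ N \stackrel{Y_3}{\sim} M'$, then uses the $Y_3$-invariance of $\rho_3$, $\alpha$, $\lambda_j$ and $\beta$ to conclude that $N\in\cob[3]$, $\alpha(N)=0$, $\lambda_j(N)=0$ and $\beta(M')-\beta(M)=\beta(N)$, after which the three lemmas apply directly to $N$. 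You instead never leave the level of actual cylinders: you track the difference $\delta=\beta(M)-\beta(M')$ through the filtration $B_{\leq 3}\supset\cdots\supset B_{\leq 0}$, using linearity of the derivative operators $\dd^n$, and you enter the submonoids $\Jcob$ and $\cob[3]$ (where Lemmas \ref{lem:d^2beta} and \ref{lem:d^1beta} live) by composing both $M$ and $M'$ with auxiliary cylinders $P$, $N$ chosen via the surjectivity of $\tau_1$ and $\tau_2$. What your version buys is independence from the group structure of $\cyl/Y_3$ and from the $Y_3$-invariance of the invariants; what it costs is the extra bookkeeping you correctly carry out -- additivity of $\beta$ and $\alpha$, and the cancellation of the $\star_j$-correction terms in \eqref{eq:sum_formula}, which works because $\tau_1(M)=\tau_1(M')$ (and is in any case irrelevant mod $2$ since the correction is even). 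Both proofs are sound; the paper's is shorter given the machinery already established, yours is more self-contained.
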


\begin{proof} 
Since the quotient monoid $\cyl/Y_3$ is a group according to Goussarov and Habiro \cite{Goussarov,Habiro}, 
there exists an $N \in \cyl$ such that $M\circ N$ is $Y_3$-equivalent to $M'$.
The fact that $\rho_3(M)=\rho_3(M')$ implies that $\rho_3(N)=1$, so that $N$ belongs to $\cob[3]$.
By (\ref{eq:d^3beta}) and Lemma \ref{lem:d^2beta} we obtain that $\beta(N)\in B_{\leq 1}$.  
Next,  the fact that $\alpha(M)=\alpha(M')$ implies that $\alpha(N)=0$, and  
we deduce from Lemma \ref{lem:d^1beta} that the boolean function $\beta(N): \Spin(\Sigma) \to \Z_2$ is constant.  
Finally, $M$ and $M'$ having the same Casson invariant $\lambda_j$, 
we deduce from formula (\ref{eq:sum_formula}) that $\lambda_j(N)=0$. 
Therefore, we have
$$
\beta(N)(j^*\sigma_0) = \mu\left(S^3(N,j)\right)
= \lambda\left(S^3(N,j)\right) \! \! \!\! \mod 2 = \lambda_j(N) \! \! \!\! \mod 2 = 0
$$
which shows that $\beta(M')-\beta(M) = \beta(N)$ is the trivial map.
\end{proof}

\section{Some diagrammatic invariants of homology cylinders} \label{sec:diagrams}

In this section, we briefly review the LMO homomorphism 
(which is a diagrammatic representation of the monoid $\cyl$ introduced in \cite{CHM,HM_SJD})
and its connection with clasper surgery.
We recall how the invariants $\tau_1$, $\tau_2$ and $\lambda$ introduced in \S \ref{sec:invariants}
can be  extracted from the LMO homomorphism \cite{CHM}, 
and we give a similar result for the quadratic part $\alpha$ of the relative RT torsion.

\subsection{Jacobi diagrams} \label{subsec:Jacobi}

We start by defining the diagrammatic spaces that we shall need.
A \emph{Jacobi diagram} is a finite graph whose vertices  have valency $1$ or $3$:
univalent vertices are called \emph{external} vertices,
while trivalent vertices are called \emph{internal} vertices and are assumed to be oriented. 
(An \emph{orientation} of a vertex is a cyclic ordering of its  incident half-edges.)
The \emph{internal degree} (or simply \emph{degree}) of a Jacobi diagram is its number of internal vertices,
and the \emph{loop degree}  is its first Betti number.
We call a connected Jacobi diagram of internal degree $0$ a {\it strut},
and we call a connected Jacobi diagram of internal degree $1$ and loop degree $1$ a {\it lasso}.
A Jacobi diagram is \emph{colored} by a set $S$ if a map from the set of its external vertices to $S$ is specified.
As usual \cite{BN} for figures, we use dashed lines to depict Jacobi diagrams, 
and we take the cyclic ordering at a trivalent vertex given by the counter-clockwise orientation: 
see Figure \ref{fig:diagrams_examples} for some examples.

\begin{figure}[h]
\begin{center}
\includegraphics[scale=0.45]{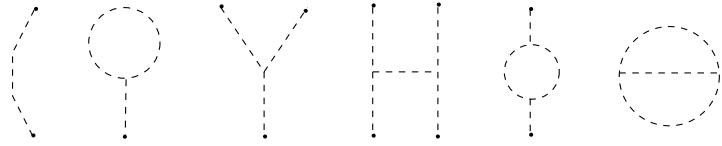}
\end{center}
\caption{Some examples of Jacobi diagrams: the strut, the lasso,
the Y graph, the H graph, the $\Phi$ graph and the $\Theta$ graph.}
\label{fig:diagrams_examples}
\end{figure}

As in the previous sections, we denote $H:=H_1(\Sigma)$ and $H_\Q := H\otimes \Q$.
We consider the following abelian group:
$$
\jacobi(H) := 
\frac{\Z\cdot \left\{ \begin{array}{c} \hbox{Jacobi diagrams without strut component}\\
\hbox{and with external vertices colored by } H  \end{array} \right\}}
{\hbox{AS, IHX, loop, multilinearity}}.
$$
The ``AS'' and ``IHX'' relations are diagrammatic analogues of the antisymmetry and Jacobi identities  in Lie algebras:\\

\begin{center}
\labellist \small \hair 2pt
\pinlabel {AS} [t] at 102 -5
\pinlabel {IHX} [t] at 543 -5
\pinlabel {$= \ -$}  at 102 46
\pinlabel {$-$} at 484 46
\pinlabel {$+$} at 606 46
\pinlabel {$=0$} at 721 46 
\endlabellist
\centering
\includegraphics[scale=0.4]{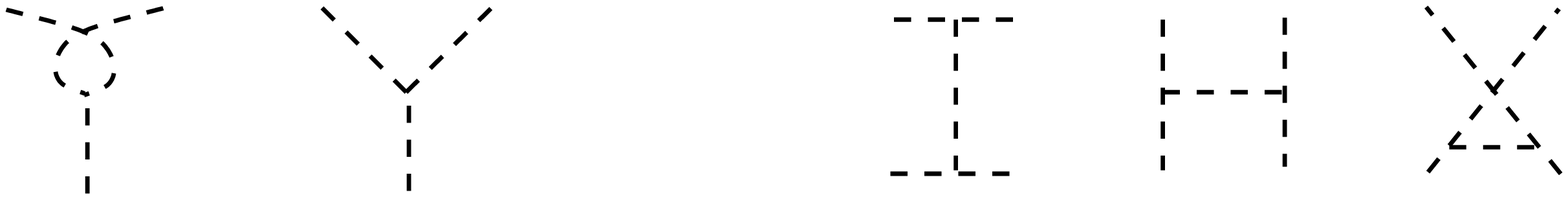}
\end{center}
\vspace{0.5cm}

\noindent
The ``loop'' relation says that a Jacobi diagram with a looped edge (e.g$.$ a lasso) is trivial,
and follows from the IHX relation in internal degree $\geq 2$.
The ``multilinearity'' relation states that a Jacobi diagram $D$ having one external vertex $v$ colored
by $n_1\cdot h_1 + n_2\cdot h_2$ (with $n_1,n_2 \in \Z$ and $h_1,h_2 \in H$)
is equivalent to the  linear combination $n_1\cdot D_1 + n_2 \cdot D_2$
where $D_i$ is the Jacobi diagram $D$ with the vertex $v$ colored by $h_i$. 
The abelian group $\jacobi(H)$ is graded by the internal degree:
$$
\jacobi(H) = \bigoplus_{i=0}^\infty \jacobi_i(H),
$$
the degree $0$ part being spanned by the empty diagram $\varnothing$.
We denote by $\jacobi^c(H)$ the subgroup of $\jacobi(H)$ spanned by connected Jacobi diagrams.
Its internal degree $i$ part $\jacobi^c_{i}(H)$ is in turn graded by the loop degree: 
$$ 
\jacobi^c_{i}(H)=\bigoplus_{l=0}^{d_i} \jacobi^c_{i,l}(H), 
$$ 
where $d_i=(i+2)/2$ if $i$ is even, and $d_i=(i-1)/2$ if $i$ is odd.
The projection onto the loop-degree $l$ part of $\jacobi^c_{i}(H)$ is denoted by
$$ 
p_{i,l}:\jacobi^c_{i}(H)\longrightarrow \jacobi^c_{i,l}(H).
$$ 
There is also a version of  $\jacobi(H)$ with rational coefficients: 
this $\Q$-vector space is denoted by $\jacobi(H_\Q)$ 
and is canonically isomorphic to $\jacobi(H)\otimes \Q$.

The space $\jacobi(H)$ is well suited for computations which, for instance,
involve the representation theory of the symplectic group $\Sp(H)$.
However, from a topological point of view, 
the following variant of $\jacobi(H)$, which has been introduced by Habiro in \cite{Habiro}, is more convenient to use with clasper calculus:  
see \S \ref{subsec:surgery_map} in this connection.
$$
\jacobi^{<}(H) := 
\frac{\Z\cdot \left\{ \begin{array}{c} \hbox{Jacobi diagrams without strut component and}\\
\hbox{with external vertices colored by } H \hbox{ and totally ordered}  \end{array} \right\}}
{\hbox{AS, IHX, loop, multilinearity, STU-like}}.
$$
The AS, IHX, loop and multilinearity relations are  as before,
while the ``STU-like'' relation is defined as follows:
$$
\begin{array}{c}
\labellist \small \hair 2pt
\pinlabel {$=\quad \quad \omega(x,y)\cdot$} at 395 50
\pinlabel {$x$} [t] at 2 0
\pinlabel {$y$} [t] at 58 0
\pinlabel {$y$} [t] at 203 0
\pinlabel {$x$} [t] at 256 0
\pinlabel {$-$} at 130 45
\pinlabel {$<$} at 30 4
\pinlabel {$<$} at 229 4
\pinlabel {$\cdots<$} [r] at 0 4
\pinlabel {$\cdots<$} [r] at 198 4
\pinlabel {$< \cdots$} [l] at 61 4
\pinlabel {$< \cdots$} [l] at 261 4
\endlabellist
\centering
\includegraphics[scale=0.6]{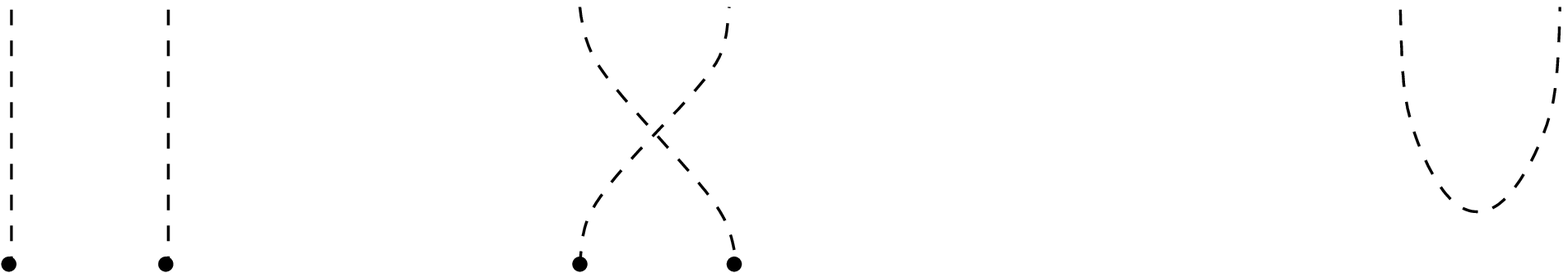}
\end{array}
$$
(Recall that $\omega:H \times H \to \Z$ denotes the intersection pairing.) 
Again, there is a rational version of $\jacobi^{<}(H)$, 
which is denoted by $\jacobi^{<}(H_\Q)$ and which is canonically isomorphic to $\jacobi(H)\otimes \Q$.
With rational coefficients, there is an isomorphism
$$ 
\chi: \jacobi(H_\Q) \stackrel{\simeq}{\longrightarrow} \jacobi^{<}(H_\Q)
$$
defined, for all Jacobi diagram $D \in \jacobi(H_\Q)$ with $e$ external vertices, by
$$
\chi(D) := \frac{1}{e!} \cdot \left(\hbox{sum of all ways of ordering the $e$ external vertices of $D$} \right)
$$
(See  \cite[Proposition 3.1]{HM_SJD}.) 
Besides there is another version of the abelian group $\jacobi^{<}(H)$,
which is denoted by  $\jacobi^{<}(-H)$ and is defined as $\jacobi^{<}(H)$, 
except that one uses the symplectic form $-\omega$ in the STU-like relation instead of $\omega$.
These two spaces are canonically isomorphic, via the map  
 $$ 
 s: \jacobi^{<}(-H) \stackrel{\simeq}{\longrightarrow} \jacobi^{<}(H) 
$$
defined by $s(D) := (-1)^{w(D)}D$ for any Jacobi diagram $D$, 
where $w(D)$ denotes the Euler characteristic of $D$ modulo $2$.

Finally, we shall need a third space of Jacobi diagrams.
We denote by $L^\pm$ the abelian group freely generated by the set
$$
\{1^+,\dots,g^+\} \cup \{1^-,\dots,g^-\},
$$
which consists of two copies of the finite set  $\{1,\dots,g\}$,  labeled by ``$+$'' and ``$-$'' respectively.
Then, we consider the abelian group
$$
\jacobi(L^\pm) := 
\frac{\Z\cdot \left\{ \begin{array}{c} \hbox{Jacobi diagrams without strut component}\\
\hbox{and with external vertices colored by } L^\pm \end{array} \right\}}
{\hbox{AS, IHX, loop, multilinearity}}
$$
and its version $\jacobi(L^\pm_\Q)$ with rational coefficients.
We also denote by $\jacobi^c(L^\pm)$ the subgroup of $\jacobi(L^\pm)$ spanned by connected Jacobi diagrams.
There is a projection of $\jacobi^c_i(L^\pm)$ 
onto its loop-degree $l$ part which we denote by
$$ 
p_{i,l}:\jacobi^c_{i}(L^\pm)\longrightarrow \jacobi^c_{i,l}(L^\pm).
$$
Of course, if we choose a system of meridians and parallels $(\alpha_i,\beta_i)_{i=1}^g$ on the surface $\Sigma$
as depicted in Figure \ref{fig:basis}, then we have an ``obvious'' isomorphism between $\jacobi(L^\pm)$ and $\jacobi(H)$
which is induced by the group isomorphism 
\begin{equation}\label{eq:L+-_H}
L^\pm \stackrel{\simeq}{\longrightarrow} H,\quad \ i^- \longmapsto [\alpha_i], \ j^+ \longmapsto [\beta_j].
\end{equation}
Observe that the subgroup generated by $\{1^-,\dots,g^-\}$ (respectively $\{1^+,\dots,g^+\}$) then corresponds
to the Lagrangian subgroup $\langle \alpha_1,\dots, \alpha_g \rangle$ of $H$ (respectively $\langle \beta_1,\dots, \beta_g \rangle$).
But there is also a ``non-obvious'' isomorphism between $\jacobi(L^\pm_\Q)$ and $\jacobi(H_\Q)$, 
namely the map $\kappa$ defined by the following composition:
\begin{equation}
\label{eq:kappa}
\xymatrix{
\jacobi(L^\pm_\Q) \ar[r]^-\varphi_-\simeq \ar@{-->}@/_1.8pc/[rrr]_-\kappa
&  \jacobi^{<}(-H_\Q) \ar[r]^-s_-\simeq & \jacobi^<(H_\Q) \ar[r]^-{\chi^{-1}}_-\simeq & \jacobi(H_\Q)
}
\end{equation}
Here the isomorphism $\varphi$ is  defined by declaring 
that ``each $i^-$-colored vertex should be lower than any $i^+$-colored vertex''
and by changing the colors of external vertices according to (\ref{eq:L+-_H}).
(See \cite[Lemma 8.4]{CHM}.)  Note that $\kappa$ is explicitly given by the formula 
\begin{equation}
\label{eq:kappa_formula}
\kappa(D) = (-1)^{w(D)}\cdot 
\left( \! \! \!  \left. \begin{array}{c} \hbox{sum of all ways of $(\times 1/2)$-gluing \emph{some} $i^-$-colored}\\
\hbox{vertices of $D$ with \emph{some} of its $i^+$-colored vertices} \end{array}
\! \right| \! \begin{array}{l} j^+ \mapsto [\beta_j] \\ j^- \mapsto [\alpha_j] \end{array}  \! \! \! 
\right)
\end{equation}
for any Jacobi diagram $D$, 
where $w(D)$ denotes the Euler characteristic of $D$ modulo $2$, 
and where a ``$(\times 1/2)$-gluing'' means a gluing together with a multiplication by $1/2$.
The reasons to be interested in this more sophisticated isomorphism $\kappa$ will be apparent in the next subsection.

\begin{figure}
\begin{center}
{\labellist \small \hair 0pt 
\pinlabel {$\alpha_1$} [l] at 310 105
\pinlabel {$\alpha_g$} [l] at 568 65
\pinlabel {$\beta_1$} [b] at 216 80
\pinlabel {$\beta_g$} [b] at 472 69
\endlabellist}
\includegraphics[scale=0.52]{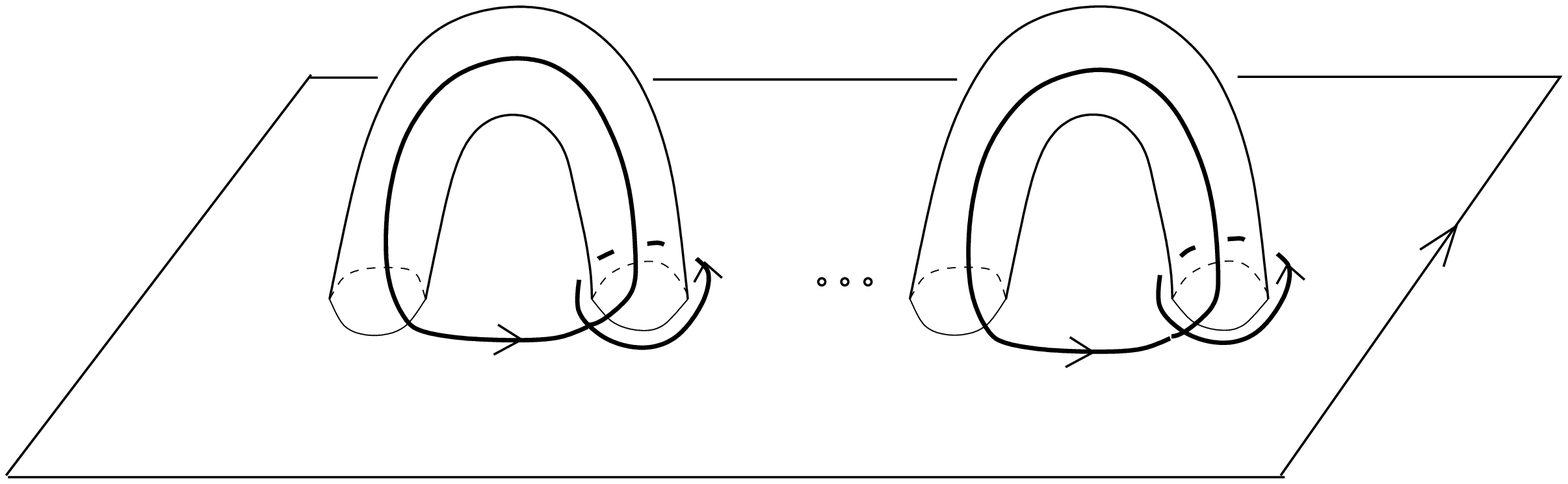}
\end{center}
\caption{The surface $\Sigma$ and a system of meridians and parallels $(\alpha,\beta)$.}
\label{fig:basis}
\end{figure}

We conclude this review of diagrammatic spaces by the following technical fact.

\begin{lemma}\label{lem:no_torsion}
The abelian groups $\jacobi_2(H)$ and  $\jacobi^{<}_2(H)$ are torsion-free. 
\end{lemma}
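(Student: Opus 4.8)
The plan is to compute $\jacobi_2(H)$ and $\jacobi^{<}_2(H)$ explicitly as abelian groups by listing the connected Jacobi diagrams of internal degree $2$, describing the linear span of the AS and IHX relations among them, and checking that the quotient is free. Since $\jacobi(H)$ splits as a direct sum over the number of connected components and over the internal degree, and struts are excluded, the degree-$2$ part $\jacobi_2(H)$ is a sum of contributions: the span of connected diagrams of internal degree $2$ (these are the $H$- and $\Phi$-graphs, of loop degrees $0$ and $1$ respectively, since $\Theta$ has no external vertex to be colored and the lasso is killed by the loop relation) and the span of products of two connected diagrams each of internal degree $1$ (i.e. products of two $Y$-graphs). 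For $\jacobi^{<}_2(H)$ one does the same bookkeeping but must also take the STU-like relation into account, which relates ordered diagrams of different loop degrees; the cleanest route is to use the isomorphism $\chi\colon\jacobi(H_\Q)\to\jacobi^{<}(H_\Q)$ together with an integral refinement, or simply to re-run the diagram census directly in the ordered setting.

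First I would handle $\jacobi_2(H)$. The connected part $\jacobi^c_2(H)=\jacobi^c_{2,0}(H)\oplus\jacobi^c_{2,1}(H)$: the tree part is spanned by $H$-graphs with the four external legs colored by elements of $H$, modulo AS at the two internal vertices and the IHX relation, and this is a classical computation identifying $\jacobi^c_{2,0}(H)$ with a quotient of $\Lambda^2H\otimes\Lambda^2H$ — in fact exactly with the group $(\Lambda^2H\otimes\Lambda^2H)^{\mathfrak{S}_2}/\Lambda^4H\cong\DD_2(H)$ that already appears in the excerpt via Proposition~\ref{prop:Morita-Levine}, which is free; the one-loop part $\jacobi^c_{2,1}(H)$ is spanned by $\Phi$-graphs with two external legs colored by $H$, modulo AS and IHX, and a short computation identifies it with $S^2H$ (the two legs are symmetric), which is free. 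The disconnected part is spanned by unordered products $Y(a,b,c)\cdot Y(a',b',c')$ of two $Y$-graphs; since $\jacobi^c_1(H)\cong\Lambda^3H$ by the AS and IHX relations in internal degree $1$, this contributes $S^2(\Lambda^3H)$, again free. So $\jacobi_2(H)$ is a finite direct sum of free abelian groups, hence free.

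For $\jacobi^{<}_2(H)$ the only new ingredient is the STU-like relation, which when applied in internal degree $2$ expresses the difference of two ordered $H$-graphs (differing by a transposition of two adjacent external legs) as $\omega(x,y)$ times an ordered $Y$-graph, i.e. it relates the degree-$2$ connected generators to the degree-$2$ disconnected generators (products of a $Y$-graph with a degree-$0$ piece — but degree-$0$ connected diagrams are struts, which are excluded, so in the strut-free quotient STU-like reduces the ordered $H$-graphs and ordered $\Phi$-graphs in a controlled way). Concretely, I would show that the obvious map $\jacobi^{<}_2(H)\to\jacobi_2(H)$ obtained by forgetting the order (which is well-defined since every relation among ordered diagrams maps to a relation among unordered ones) is injective — equivalently that $\jacobi^{<}_2(H)$ has no more torsion than $\jacobi_2(H)$ — and combine this with the rational isomorphism $\chi$ to conclude $\jacobi^{<}_2(H)\cong\jacobi_2(H)$, hence torsion-free. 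Alternatively, and perhaps more transparently, one simply repeats the diagram census in the ordered category: the ordered connected degree-$2$ generators modulo AS, IHX, loop and STU-like again form a free group (isomorphic to $\DD_2(H)\oplus S^2H$), and the ordered products of two $Y$-graphs contribute a free summand too.

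The main obstacle is the bookkeeping in the one-loop and disconnected sectors: one must be careful that no unexpected $2$-torsion is created, and in particular that the isomorphism $\jacobi^c_{2,1}(H)\cong S^2H$ (rather than $S^2H$ with some extra $\Z_2$) holds integrally — this is exactly the kind of subtlety that distinguishes $\DD_2(H)$ from $\DD'_2(H)$ in the excerpt, and the analogous short exact sequence \eqref{eq:D_to_D} shows that torsion phenomena in degree $2$ are real and must be ruled out by hand. I expect the verification to rest on writing down generators and relations as matrices over $\Z$ and computing Smith normal forms in each sector, checking that all invariant factors are $1$; this is routine but needs care.
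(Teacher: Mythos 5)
Your strategy for $\jacobi_2(H)$ — split off the disconnected summand $S^2\Lambda^3H$ and decompose $\jacobi^c_2(H)$ by loop degree — is the one the paper uses, but the one genuinely hard step is not carried out, and the identification you assert for it is wrong. The tree part $\jacobi^c_{2,0}(H)$ is \emph{not} isomorphic to $(\Lambda^2H\otimes\Lambda^2H)^{\mathfrak{S}_2}/\Lambda^4H\simeq\DD_2(H)$: the H-graph with legs $a,b,c,d$ maps to $(a\wedge b)\leftrightarrow(c\wedge d)$, so the natural target is $S^2\Lambda^2H/\Lambda^4H$, which by the exact sequence \eqref{eq:sym_to_sym} is a \emph{proper} subgroup of $\DD_2(H)$ with quotient $\Lambda^2H/2\Lambda^2H$ (an element like $(a\wedge b)\otimes(a\wedge b)$ is not hit by any H-graph, only its double is). Proving that $\jacobi^c_{2,0}(H)\to S^2\Lambda^2H/\Lambda^4H$ is injective, i.e.\ that AS and IHX create no integral torsion, is exactly the crux; the paper gets it from Levine's theorem that $\eta'\colon S^2\Lambda^2H/\Lambda^4H\to\DD'_2(H)$ is an isomorphism, combined with the injection $\DD'_2(H)\hookrightarrow\DD_2(H)$ of \eqref{eq:D_to_D} into a manifestly free group. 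You flag this as "exactly the kind of subtlety that distinguishes $\DD_2(H)$ from $\DD'_2(H)$" but then defer it to a "routine" Smith-normal-form computation while asserting the wrong answer, so the main half of the claim for $\jacobi_2(H)$ remains unproved. (A smaller census error: the $\Theta$ graph is a legitimate connected Jacobi diagram of internal degree $2$ — external vertices are not required — so $\jacobi^c_{2,2}(H)\simeq\Z$ must appear; this happens not to affect torsion-freeness.)

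The treatment of $\jacobi^{<}_2(H)$ has a more basic problem: the "obvious map $\jacobi^{<}_2(H)\to\jacobi_2(H)$ obtained by forgetting the order" is not well defined. The STU-like relation equates the difference of two ordered diagrams that become \emph{identical} once the order is forgotten with $\omega(x,y)$ times the diagram obtained by gluing the two legs; its image under your forgetful map would read $0=\omega(x,y)\cdot(\hbox{nonzero class})$, which fails in $\jacobi_2(H)$. Your fallback of redoing the census in the ordered category is where all the work would be, since STU-like mixes the loop-degree strata. The paper avoids this entirely: the isomorphism $s\circ\varphi\colon\jacobi(L^\pm)\to\jacobi^{<}(H)$ of \cite[Lemma 8.4]{CHM} holds with integral coefficients, so $\jacobi^{<}_2(H)\simeq\jacobi_2(L^\pm)\simeq\jacobi_2(H)$ and only the unordered group needs to be analyzed. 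I would adopt that reduction rather than attempt to repair the forgetful map.
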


\begin{proof}
The isomorphism $\varphi$, whose definition is recalled in the previous paragraph, exists with integral coefficients.
(Indeed, the proof given in  \cite[Lemma 8.4]{CHM} works with coefficients in $\Z$ as well.)
Therefore, the abelian group $\jacobi^{<}(H)$ is isomorphic to $\jacobi(L^\pm) \simeq \jacobi(H)$,
so that it is enough to prove the lemma for $\jacobi_2(H)$.

The abelian group $\jacobi_2(H)$ is the direct sum of $\jacobi_2^c(H)$ and $S^2\jacobi_1^c(H) \simeq S^2 \Lambda^3H$.
To see that $\jacobi_2^c(H)$ has no torsion, we use the loop degree:
$$  
\jacobi^c_2(H) =  \jacobi^c_{2,0}(H)\oplus \jacobi^c_{2,1}(H)\oplus \jacobi^c_{2,2}(H).
$$
By the multilinearity relation, we have 
$$ 
\jacobi^c_{2,1}(H)=\left\langle \left. \phin{h}{h'} \right\vert h,h'\in H \right\rangle\simeq S^2H 
\quad \textrm{and}\quad \jacobi^c_{2,2}(H)= \left\langle \thetagraph \right\rangle\simeq \Z. 
$$
Thus it remains to prove that the group $\jacobi^c_{2,0}(H)$ is torsion-free.  
Note that we have an isomorphism $\jacobi^c_{2,0}(H) \simeq \frac{S^2 \Lambda^2 H}{\Lambda^4 H}$ 
defined by $\hn{a}{b}{c}{d} \longmapsto \left((a\wedge b) \leftrightarrow (c\wedge d)\right)$.
Now, recall from the proof of Proposition \ref{prop:Morita-Levine} the homomorphism
$$
\jacobi^c_{2,0}(H) \simeq \frac{S^2 \Lambda^2 H}{\Lambda^4 H} \stackrel{\eta'}{\longrightarrow} \DD'_2(H)
$$
defined by
$$
\hn{a}{b}{c}{d} \longmapsto a \otimes [b,[c,d]] + b \otimes [[c,d],a]+ c \otimes [d,[a,b]] + d \otimes [[a,b],c].
$$
As shown by Levine \cite{Levine_addendum}, this map is an isomorphism.
Since $\DD'_2(H)$ is a subgroup of $\DD_2(H)$ which has no torsion, 
we deduce that $\jacobi^c_{2,0}(H)$ is torsion-free.
\end{proof}

\subsection{The surgery map}\label{subsec:surgery_map}

For each integer $k\geq 2$, Habiro has defined in \cite{Habiro} a surjective homomorphism
$$ 
\psi_k: \jacobi^{<,c}_{k}(H)\longrightarrow \frac{Y_k\cyl}{Y_{k+1}},
$$
which sends each connected Jacobi diagram $D\in \jacobi^{<,c}_k(H)$ 
to the homology cylinder obtained from $(\Sigma \times I)$ by surgery 
along a graph clasper $C(D)$ of degree $k$ with the same shape as $D$.
This ``topological realization'' $C(D)$ of the diagram $D$ is defined in the following way:

\begin{enumerate}
\item Thicken $D$ to a compact oriented surface using the vertex-orientation of $D$, 
so that vertices are thickened to disks, and edges to bands.  
For each disk coming in this way from an external vertex, cut a smaller disk in the interior, 
so as to produce an oriented compact surface $S(D)$, decomposed into disks, bands and annuli.
The orientation of $S(D)$ induces an orientation on the core of each annulus 
as shown on Figure \ref{fig:leaf_orientation}:
\begin{figure}[h!]
\labellist
\small\hair 2pt
\pinlabel {{\tiny $+$}} [ c] at 395 113
\endlabellist
\includegraphics[scale=0.3]{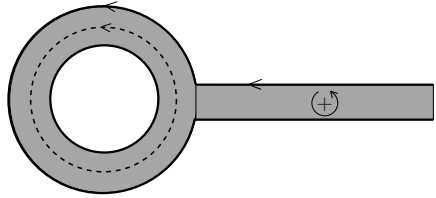}
\caption{How to orient a leaf.}
\label{fig:leaf_orientation}
\end{figure}
\item Embed $S(D)$ into the interior of $(\Sigma \times I)$ in such a way that each annulus of $S(D)$ 
represents in $H\simeq H_1(\Sigma \times I)$ the color  of the corresponding external vertex of $D$. 
The embedded annuli should be in disjoint ``horizontal layers'' of $(\Sigma \times I)$, 
and their ``vertical height'' along $I$ should respect the total ordering of the external vertices of $D$.  
The result is a graph clasper $C(D)$ in $(\Sigma \times I)$.
\end{enumerate}

\noindent
Actually, we will only need the surgery map $\psi_k$ in degree $k=2$.
The degree $1$ case is special.  
We have $\jacobi^{<,c}_1(H)=\jacobi^{<}_1(H)\simeq \jacobi_1(H)$,
and the surgery map $\psi_1$ is still well-defined on that group (and it is surjective) 
\emph{provided} the torsion of the abelian group $\cyl/Y_2$ is ignored:
$$
\psi_1: \jacobi_{1}(H)\longrightarrow \frac{\cyl/Y_2}{\Tors(\cyl/Y_2)}.
$$
(The group $\cyl/Y_2$ contains $2$-torsion \cite{Habiro}:
its explicit surgery description involves spin structures of $\Sigma$ \cite{MM}.) 

The following lemma is needed in the proof of Lemma \ref{lem:d^1beta}.
We shall prove it by means of the surgery map $\psi_2$.

\begin{lemma}\label{lem:tau2_loop}
Let $N \in \cyl$ be such that $N \stackrel{Y_2}{\sim} \Sigma \times I$ and $\tau_2(N)=0$.
Then there exists a disjoint union $L$ of looped graph claspers of degree $2$
and graph claspers of degree $3$
in $(\Sigma \times I)$ such that surgery along $L$ yields $N$.
\end{lemma}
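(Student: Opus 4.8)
The plan is to exploit Habiro's surgery map $\psi_2\colon\jacobi^{<,c}_2(H)\to Y_2\cyl/Y_3$ together with the loop-degree splitting of its source recalled in the proof of Lemma~\ref{lem:no_torsion}, namely
$$
\jacobi^{<,c}_2(H)=\jacobi^{<,c}_{2,0}(H)\oplus\jacobi^{<,c}_{2,1}(H)\oplus\jacobi^{<,c}_{2,2}(H),
$$
where $\jacobi^{<,c}_{2,0}(H)\simeq S^2\Lambda^2H/\Lambda^4H$ is spanned by $H$-shaped diagrams, $\jacobi^{<,c}_{2,1}(H)$ by $\Phi$-shaped diagrams, and $\jacobi^{<,c}_{2,2}(H)=\langle\Theta\rangle$. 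Since $N$ is $Y_2$-equivalent to $\Sigma\times I$, its class lies in $Y_2\cyl/Y_3$, so surjectivity of $\psi_2$ provides some $x\in\jacobi^{<,c}_2(H)$ with $\psi_2(x)=[N]$; I would write $x=x_0+x_1+x_2$ according to the splitting above.

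The main computation is to determine the composite homomorphism $\tau_2\circ\psi_2\colon\jacobi^{<,c}_2(H)\to\DD_2(H)$, which makes sense because $Y_2\cyl\subseteq\cob[2]$ and $\tau_2$ is invariant under $Y_3$-equivalence. On the summands $\jacobi^{<,c}_{2,1}(H)$ and $\jacobi^{<,c}_{2,2}(H)$ it vanishes, because $\psi_2$ realizes $\Phi$- and $\Theta$-shaped diagrams by looped graph claspers of degree $2$, and surgery along such a clasper leaves $\tau_2$ unchanged (Lemma~\ref{lem:tau2}). On the tree summand, Lemma~\ref{lem:tau2} shows that $\tau_2\circ\psi_2$ sends the $H$-shaped diagram with legs coloured $h_1,\dots,h_4$ to $\{(h_1\wedge h_2)\leftrightarrow(h_3\wedge h_4)\}$, so that under the identifications $\jacobi^{<,c}_{2,0}(H)\simeq S^2\Lambda^2H/\Lambda^4H$ and $\DD_2(H)\simeq(\Lambda^2H\otimes\Lambda^2H)^{\mathfrak{S}_2}/\Lambda^4H$ of Proposition~\ref{prop:Morita-Levine} it becomes the map $\eta'$ appearing there, which Levine proved to be injective. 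Consequently $0=\tau_2(N)=(\tau_2\circ\psi_2)(x)=(\tau_2\circ\psi_2)(x_0)$ forces $x_0=0$, so that $x=x_1+x_2$ is a combination of looped diagrams only.

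It then remains to translate this back to claspers. Realizing $x$ through $\psi_2$ yields a disjoint union $L_0\subset\Sigma\times I$ of looped graph claspers of degree $2$ with $(\Sigma\times I)_{L_0}$ being $Y_3$-equivalent to $N$. By Proposition~\ref{prop:Y_clasper} and clasper calculus (Appendix~\ref{sec:calc_clasper}), $N$ is obtained from $(\Sigma\times I)_{L_0}$ by surgery along a disjoint union of connected graph claspers of degree $3$ (indeed of degree $\geq3$, which is all that will be used in the proof of Lemma~\ref{lem:d^1beta}); pushing these claspers across the surgery along $L_0$, at the cost of extra claspers of degree $\geq5$, exhibits $N$ as $(\Sigma\times I)_L$ for a single disjoint union $L=L_0\sqcup L_1$ inside $\Sigma\times I$ whose degree-$2$ part $L_0$ is looped. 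The substantive input is the injectivity of $\eta'$, which pins down $\Ker(\tau_2\circ\psi_2)$; I expect the only delicate point to be this last bookkeeping step, namely assembling a two-stage surgery description into a single disjoint clasper inside $\Sigma\times I$, which is handled by the clasper calculus collected in Appendix~\ref{sec:calc_clasper}.
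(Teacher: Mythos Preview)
Your overall strategy matches the paper's, but there is a gap in the first step. The space $\jacobi^{<,c}_2(H)$ does \emph{not} split as a direct sum by loop degree: the STU-like relation mixes loop degrees (swapping two adjacent legs of an $H$-graph produces a multiple of a $\Phi$-graph, and swapping those of a $\Phi$-graph produces a multiple of $\Theta$). The decomposition you cite from the proof of Lemma~\ref{lem:no_torsion} is for the \emph{unordered} space $\jacobi^c_2(H)$, which has no STU-like relation; so writing $x=x_0+x_1+x_2$ with $x_i\in\jacobi^{<,c}_{2,i}(H)$ is not well-defined as stated.

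The paper avoids this by pulling the lift back through the isomorphism $s\varphi:\jacobi^c_2(L^\pm)\to\jacobi^{<,c}_2(H)$ to the unordered space $\jacobi^c_2(L^\pm)$, where the loop-degree grading genuinely exists; there one checks (via Lemma~\ref{lem:tau2}) that $\tau_2\circ\psi_2\circ s\varphi$ factors through the tree projection $-p_{2,0}$, so $\tau_2(N)=0$ forces $p_{2,0}(x)=0$ and hence $y=s\varphi(x)$ is a combination of looped diagrams. Your argument is easily repaired this way, or equivalently by replacing the non-existent splitting with the \emph{filtration} of $\jacobi^{<,c}_2(H)$ by the subgroup generated by $\Phi$- and $\Theta$-graphs (which \emph{is} well-defined, since STU-like only raises loop degree): $\tau_2\circ\psi_2$ kills this subgroup and is injective on the quotient $\jacobi^c_{2,0}(H)\simeq S^2\Lambda^2H/\Lambda^4H$ via $\eta'$, forcing $x$ into the subgroup. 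After this fix the remainder of your argument, including the final bookkeeping, is fine and essentially what the paper does.
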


\begin{proof}
By surjectivity of $\psi_2$,
we can find $y \in \jacobi^{<,c}_2(H)$ such that $\psi_2(y) = \{N\} \in Y_2\cyl/Y_3$.
We set $x:=\varphi^{-1} s^{-1}(y) \in \jacobi^{c}_2(L^\pm)$,
where the isomorphisms $s$ and $\varphi$ are recalled in \S \ref{subsec:Jacobi}.
Lemma \ref{lem:tau2} implies that the following diagram is commutative:
$$
\xymatrix{
\jacobi^{<,c}_2(H) \ar[rr]^-{\psi_2} & &  \frac{Y_2 \cyl}{Y_3} \ar@{->>}[d]^-{\tau_2}\\
\jacobi_2^c(L^\pm) \ar[u]^-{s \varphi}_-\simeq \ar@{->>}[r]_-{-p_{2,0}} & 
\jacobi_{2,0}^c(L^\pm) \ar[r]_-\simeq & \frac{S^2 \Lambda^2 H}{\Lambda^4H}
}
$$
(Here, the bottom isomorphism is defined by $\hn{a}{b}{c}{d} \longmapsto \left((a\wedge b) \leftrightarrow (c\wedge d)\right)$
where $a,b,c,d \in L^\pm$ are considered as elements of $H$ by (\ref{eq:L+-_H}).)
By assumption, we have $\tau_2 \psi_2(y)=0$  and we deduce that $p_{2,0}(x)=0$,
\ie $x$ only consists of looped Jacobi diagrams. 
The same  deduction applies to $y=s \varphi(x)$ and the conclusion follows.
\end{proof}

\subsection{The LMO homomorphism}\label{subsec:LMO}

We briefly review the LMO homomorphism, its construction and main properties.
For this purpose, we fix a system of meridians and parallels  $(\alpha_i,\beta_i)_{i=1}^g$  
on the surface $\Sigma$   as shown in Figure \ref{fig:basis}.  
Then one can turn any homology cylinder $M$ over $\Sigma$ into a homology $3$-ball $B$
by gluing, for each $i\in \{1,\dots,g\}$, a $2$-handle to the surface $\partial_-M$ along the curve $m_-(\alpha_i)$, 
and a $2$-handle to the surface $\partial_+M$ along the curve $m_+(\beta_i)$.  
The cores of these $2$-handles define a $(2g)$-component framed tangle $\gamma$ in the homology $3$-ball $B$.
By taking the Kontsevich--LMO invariant of the pair $(B,\gamma)$  and after an appropriate normalization, 
one can thus associate to $M\in \cyl$ an element $\widetilde{Z}^Y(M)$ of (the degree completion of) $\jacobi(L^\pm_\Q)$.
See \cite{CHM} where the target is denoted by $\jacobi^Y\left(\set{g}^+ \cup \set{g}^-\right)$.
The important point is that the colors $1^-,\dots,g^-$ refer to the curves $\alpha_1,\dots,\alpha_g$
while $1^+,\dots,g^+$ refer to  $\beta_1,\dots,\beta_g$, 
so that the definition of $\widetilde{Z}^Y$ depends on the choice of $(\alpha,\beta)$.
(The definition is also dependent on the choice of an associator for the Kontsevich integral.)
The space  $\jacobi(L^\pm_\Q)$, equipped with the multiplication
\begin{equation}\label{eq:star_product}
D \star E :=
\left(\! \!  \begin{array}{c}
\hbox{sum of all ways of gluing \emph{some} of the $i^+$-colored vertices of $D$}\\
\hbox{to \emph{some} of the $i^-$-colored vertices of $E$, for all $i=1,\dots,g$}
\end{array}\! \!  \right) 
\end{equation}
of $L^\pm_\Q$-colored Jacobi diagrams $D$ and $E$, is an associative $\Q$-algebra.
The aforementioned normalization is done in such a way that
$$
\widetilde{Z}^Y: \cyl \longrightarrow \jacobi(L^\pm_\Q)
$$
is a monoid homomorphism. 

The push-out of the multiplication $\star$ by the isomorphism $\kappa$ defined at (\ref{eq:kappa}) is still denoted by $\star$.
For any $H_\Q$-colored Jacobi diagrams $D$ and $E$, the multiplication $\star$ on $\jacobi(H_\Q)$
is  explicitly given by the formula
$$
D \star E := \sum_{\substack{V' \subset V,\ W' \subset W\\ 
\beta\ :\ V' \stackrel{\simeq}{\longrightarrow} W'}}\
\frac{1}{2^{|V'|}} \cdot \prod_{v\in V'} \omega\big(\hbox{color}(v),\hbox{color}(\beta(v))\big)\ \cdot (D \cup_\beta E)
$$
where $V$ and $W$ denote the sets of external vertices of $D$ and $E$ respectively,
and where the sum is taken over all ways $\beta$ of identifying a part $V'$ of $V$ with a part $W'$ of $W$.
Then  the \emph{LMO homomorphism} is defined in  \cite{HM_SJD} as the composition
$$
\xymatrix{
 \cyl\ar@{-->}@/_1.2pc/[rr]_-Z \ar[r]^-{\widetilde{Z}^Y} &\jacobi(L^\pm_\Q) \ar[r]^-\kappa_-\simeq &\jacobi(H_\Q). 
}
$$

The LMO homomorphism is universal  
among rational-valued finite-type invariants of homology cylinders \cite{CHM,HM_SJD}.
In terms of the surgery map introduced in \S \ref{subsec:surgery_map},
this universal property of $Z$ amounts to the following commutative diagram: 
\begin{equation}
\label{eq:univLMO}
\xymatrix{
\jacobi^{<,c}(H_\Q) \ar@{->>}[rr]^-{\psi\otimes \Q}\ar[rrd]_-{\chi^{-1}}^-\simeq
&& \left(\Gr^Y \cyl\right)\otimes \Q \ar[d]^-{\Gr Z} \\
&& \jacobi^{c}(H_\Q).
}
\end{equation}
It follows that the maps $\psi \otimes \Q$ and $\Gr Z$ are isomorphisms.

The LMO homomorphism is compatible with stabilizations of the surface $\Sigma$.
More precisely, assume that the surface $\Sigma$ has been stabilized to a surface $\Sigma^s$ of genus $g^s$
(as shown on Figure \ref{fig:stabilization})
and that the system of meridians and parallels  $(\alpha_i,\beta_i)_{i=1}^{g^s}$ chosen on $\Sigma^s$
extends that of $\Sigma$. We denote $H^s:=H_1(\Sigma^s)$ which contains a copy of $H$.
Then, the following diagram is commutative:
$$
\xymatrix{
\cyl(\Sigma) \ar[r]  \ar[d]_-Z & \cyl(\Sigma^s) \ar[d]^-Z \\
\jacobi(H_\Q) \ar[r] & \jacobi(H_\Q^s)
}
$$

\subsection{Johnson, Alexander and Casson from LMO}\label{subsec:invariantsLMO}

The degree $1$ part of the LMO homomorphism is equivalent to the first Johnson homomorphism \cite{CHM}. 
More precisely, we have the commutative diagram\\[-0.5cm]
\begin{equation}
\label{eq:tau1_LMO}  
\xymatrix{
\cyl/Y_2 \ar[d]_{\tau_1} \ar[r]^{Z_1} & \jacobi^{c}_1(H_{\Q}) \ar[d]^-\simeq \\
\Lambda^3 H  \ar@{>->}[r] &  \Lambda^3 H_\Q
}
\end{equation}
where the isomorphism on the right side is given by $\yn{a}{c}{b} \longmapsto a \wedge b \wedge c$.

Similarly, the second Johnson homomorphism corresponds to the ``tree-reduction'' 
of the degree $2$ part of the LMO homomorphism.

\begin{lemma}[See \cite{CHM}] \label{lem:tau2_LMO}
There is a commutative diagram
\[  
\xymatrix{
\Jcob/Y_{3} \ar[d]_{\tau_2} \ar[r]^{Z_2} & \jacobi^{c}_2(H_{\Q}) \ar@{->>}[d]^{p_{2,0}}\\
\frac{\left(\Lambda^2 H \otimes \Lambda^2H\right)^{\mathfrak{S}_2}}{\Lambda^4H}  \ar@{>->}[r] & \jacobi^c_{2,0}(H_\Q)
} 
\]
where the bottom monomorphism is given by 
$\left((a\wedge b) \leftrightarrow (c\wedge d)\right) \mapsto \hn{a}{b}{c}{d}$.
\end{lemma}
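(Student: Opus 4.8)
The plan is to reduce the commutativity of the diagram to a computation on a generating set of the group $\Jcob/Y_3$, after first checking that both vertical composites are additive and $Y_3$-invariant. The $Y_3$-invariance is clear: $\tau_2$ is $Y_3$-invariant by the properties of the Johnson homomorphisms recalled in \S\ref{subsec:Johnson}, and $p_{2,0}\circ Z_2$ is (the loop-degree-$0$ part of) a degree-$2$ finite-type invariant, hence $Y_3$-invariant by Lemma~\ref{lem:Y_FTI}. For additivity the key observation is that on $\Jcob=\cob[2]$ we have $\tau_1=0$, so by (\ref{eq:tau1_LMO}) also $Z_1=0$; since $Z$ is a monoid homomorphism for the $\star$-product on $\jacobi(H_\Q)$, since $\star$ preserves the internal degree, and since every term of $D\star E$ beyond the disjoint union $D\cup E$ involves gluing at least one external vertex of $D$ to one of $E$, the vanishing of the degree-$1$ parts forces $Z_2(M\circ M')=Z_2(M)+Z_2(M')$ for $M,M'\in\Jcob$, and in particular the connected loop-degree-$0$ part is additive. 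Thus $\tau_2$ and $p_{2,0}\circ Z_2$ are both homomorphisms on $\Jcob/Y_3$, and it is enough to verify the diagram on generators.

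Second, I would use the generating set produced by clasper calculus in the proof of Lemma~\ref{lem:d^2beta}: $\Jcob/Y_3$ is generated by the classes $[(\Sigma\times I)_H]$ of surgeries along degree-$2$ graph claspers $H$ together with the classes $[(\Sigma\times I)_G]$ of surgeries along $Y$-graphs $G$ with one special leaf. On the first type, Lemma~\ref{lem:tau2} computes $\tau_2\big((\Sigma\times I)_H\big)=\{(h_1\wedge h_2)\leftrightarrow(h_3\wedge h_4)\}$ in terms of the leaf classes, so the bottom monomorphism sends it to the $\mathsf{H}$-shaped diagram with those colours. On the other hand, writing $(\Sigma\times I)_H=\psi_2(D)$ for the ordered diagram $D\in\jacobi^{<,c}_2(H)$ realising $H$ (Habiro's surgery map being surjective onto $Y_2\cyl/Y_3$, and $(\Sigma\times I)_H\in Y_2\cyl\subset\Jcob$), the universality diagram (\ref{eq:univLMO}) gives $Z_2(\psi_2(D))=\chi^{-1}(D)$. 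Now $\chi^{-1}$ differs from ``forgetting the total order'' only by STU-like correction terms, each of which replaces a consecutive pair of external vertices by an edge joining their attaching points and hence strictly raises the loop degree of a connected degree-$2$ diagram; therefore $p_{2,0}$ annihilates all of them and $p_{2,0}(Z_2(\psi_2(D)))$ is exactly the loop-degree-$0$ part of the unordered $\mathsf{H}$-graph with colours $h_1,\dots,h_4$, which matches. On the second type, $(\Sigma\times I)_G$ is, up to $Y_3$-equivalence, a product of surgeries along degree-$2$ graph claspers by Lemma~\ref{lem:doubling} --- this is the same reduction used to derive Lemma~\ref{lem:tau2_2} from Lemma~\ref{lem:tau2} --- so this case follows from the first one together with the additivity established above. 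Assembling the two cases finishes the argument; the target being the rational space $\jacobi^c_{2,0}(H_\Q)$, the equality is checked there, with one side visibly in the integral lattice $\jacobi^c_{2,0}(H)$ of Lemma~\ref{lem:no_torsion}.

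The main obstacle is the bookkeeping of the several diagrammatic identifications in play --- the passage from $\jacobi(L^\pm_\Q)$ through $\kappa=\chi^{-1}\circ s\circ\varphi$ to $\jacobi(H_\Q)$, the symmetrisation $\chi$ relating $\jacobi^{<}$ to $\jacobi$, the STU-like relation, and the $\star$-product --- and specifically the uniform statement that \emph{every} correction term appearing in these identifications and in $Z_2(M\circ M')$ carries strictly positive loop degree, so that it is killed by $p_{2,0}$. Equivalently, one must verify that the tree-level part of the LMO invariant of a degree-$2$ clasper surgery is literally the associated $\mathsf{H}$-graph with no lower-order residue; this is the point at which Habiro's surgery map $\psi_2$ and the construction of $Z$ must be matched term by term, and it is why the lemma is quoted from \cite{CHM}. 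Here it is recovered by combining (\ref{eq:univLMO}) with the explicit surgery formula of Lemma~\ref{lem:tau2}.
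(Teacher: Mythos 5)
Your argument is correct; note first, though, that the paper offers no proof of this statement at all --- it is tagged ``[See \cite{CHM}]'' and imported from the computation of the LMO functor there --- so what you have produced is a genuinely self-contained derivation from material internal to the paper. It is, moreover, consistent with the computations the authors do carry out when they need instances of the lemma elsewhere: the commutative square involving $-p_{2,0}$ in the proof of Lemma \ref{lem:tau2_loop} is your first case (the sign being absorbed by the isomorphism $s=(-1)^{w}$, which equals $-1$ on H-graphs), and the explicit values of $Z_2\circ\psi_2=\chi^{-1}$ on $\Theta$-, $\Phi$- and H-generators in the proof of Proposition \ref{prop:values_d} confirm your key observation that every $\chi^{-1}$-correction term on a connected degree-$2$ diagram has loop degree at least $1$ and is therefore killed by $p_{2,0}$. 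Your treatment of the $Y$-graphs with one special leaf by squaring (Lemma \ref{lem:doubling} plus divisibility in the rational target) is the same device the paper uses to derive Lemma \ref{lem:tau2_2} from Lemma \ref{lem:tau2} and to prove Lemma \ref{lem:alpha_LMO}; just note that your sentence asserting that $(\Sigma\times I)_G$ itself is $Y_3$-equivalent to a product of degree-$2$ clasper surgeries is literally true only of its square. Two small presentational points: it is cleaner to verify commutativity on $Y_2\cyl/Y_3=\Img\psi_2$ by evaluating both composites on the generators $D$ of $\jacobi^{<,c}_2(H)$ (Lemma \ref{lem:tau2} applied to the topological realisation $C(D)$), rather than to ``write $(\Sigma\times I)_H=\psi_2(D)$'', which by surjectivity alone only yields \emph{some} $D$ rather than the H-diagram carrying the leaf classes; and your additivity argument for $Z_2$ on $\Jcob$ via $Z_1=0$ and degree preservation of $\star$ is exactly the one used for $d''=p_{2,2}\circ Z_2$ in Lemma \ref{lem:d''}. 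What your route buys is independence from \cite{CHM} beyond the universality diagram (\ref{eq:univLMO}); what the citation buys is the precise normalisation of signs and labels without having to match the orientation conventions of Lemma \ref{lem:tau2} against those of the surgery map.
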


Next, the quadratic part of the relative RT torsion can be extracted 
from the degree $2$ part of the LMO homomorphism in the following manner.

\begin{lemma}\label{lem:alpha_LMO}
There is a commutative diagram 
\[  
\xymatrix{
\Jcob/Y_{3} \ar[d]_{-\frac{\alpha}{2}} \ar[r]^{Z_2} & \jacobi^{c}_2(H_{\Q}) \ar@{->>}[d]^{p_{2,1}}\\
\frac{1}{2}S^2H \ar@{>->}[r] & \jacobi^c_{2,1}(H_ \Q),
}
\]
where the monomorphism on the bottom is defined by $a\cdot b  \mapsto \phin{a}{b}$. 
\end{lemma}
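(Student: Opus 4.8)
The plan is to prove that the two composites in the square agree as maps on the \emph{group} $\Jcob/Y_3$, first on the subgroup $Y_2\cyl/Y_3$ by a computation on clasper generators, and then on all of $\Jcob/Y_3$ by a torsion argument.

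First I would check that both maps descend to additive homomorphisms on $\Jcob/Y_3$. For $\alpha$ this is Proposition~\ref{prop:alpha_properties} together with Lemma~\ref{lem:Y_FTI}. For $p_{2,1}\circ Z_2$: since $Z$ is a monoid homomorphism into $\jacobi(H_\Q)$ equipped with the product $\star$ and $Z_1$ vanishes on $\Jcob$ by \eqref{eq:tau1_LMO}, one gets $Z_2(M\circ M')=Z_2(M)+Z_2(M')$ for $M,M'\in\Jcob$; moreover $Z_2(M)$ is then connected, so $p_{2,1}Z_2\colon\Jcob\to\jacobi^c_{2,1}(H_\Q)$ is defined and is an additive $Y_3$-invariant. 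Using the identification $\jacobi^c_{2,1}(H_\Q)\simeq S^2 H_\Q$, $\phin a b\mapsto a\cdot b$ (Lemma~\ref{lem:no_torsion}), I set
$$
\delta:=p_{2,1}\circ Z_2+\tfrac{1}{2}\,\alpha\ \colon\ \Jcob/Y_3\longrightarrow \jacobi^c_{2,1}(H_\Q),
$$
a group homomorphism; the lemma is the statement that $\delta=0$.

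Next I would show that $\delta$ vanishes on $Y_2\cyl/Y_3$. This subgroup is the image of the surjection $\psi_2\colon\jacobi^{<,c}_2(H)\to Y_2\cyl/Y_3$, and since a connected Jacobi diagram of internal degree $2$ with $e$ external vertices has loop degree $2-e/2$, the group $\jacobi^{<,c}_2(H)$ is spanned by diagrams of the three shapes $\Theta$ ($e=0$), $\Phi$ ($e=2$) and $\mathsf H$ ($e=4$); hence $Y_2\cyl/Y_3$ is generated by the classes $(\Sigma\times I)_C$ with $C$ a degree-$2$ graph clasper of one of these shapes. On a $\Phi$-type generator with leaves of homology classes $h,h'$, Proposition~\ref{prop:alpha_properties} gives $\alpha((\Sigma\times I)_C)=-2hh'$, while \eqref{eq:univLMO} gives $Z_2((\Sigma\times I)_C)=\chi^{-1}(D_C)$ for the corresponding ordered $\Phi$-diagram $D_C$, whose loop-degree-$1$ component is $\phin{h}{h'}$ (the loop-degree-$2$ correction produced by the STU-like relation is killed by $p_{2,1}$); so the two summands of $\delta$ cancel. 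On an $\mathsf H$-type generator, $(\Sigma\times I)_C$ is $Y_3$-equivalent to the mapping cylinder of a commutator of two Torelli elements, so $\alpha$ vanishes by \eqref{eq:alexcyl} (using the $Y_3$-invariance of $\tau\bmod I^3$, Theorem~\ref{thm:torsion_finiteness}), and $p_{2,1}Z_2$ vanishes because the degree-$2$ part of the $\star$-commutator of two degree-$1$ diagrams has no loop-degree-$1$ term, the two $\omega$-factors appearing in the relevant gluings changing sign together. On a $\Theta$-type generator, $C$ may be isotoped into a ball, so $(\Sigma\times I)_C$ is a connected sum of $\Sigma\times I$ with an integral homology sphere; this changes neither the relative Alexander polynomial nor the connected loop-degree-$\le1$ part of $Z$. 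Thus $\delta$ vanishes on all generators, hence on $Y_2\cyl/Y_3$.

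Finally I would extend this to all of $\Jcob/Y_3$. Since $\beta$ is a finite-type invariant of degree $1$ it is $Y_2$-invariant (Lemma~\ref{lem:Y_FTI}), so $Y_2\cyl\subset\Jcob$; combined with $\tau_1|_{\Jcob}=0$, the classification of the $Y_2$-equivalence by the pair $(\tau_1,\beta)$ from \cite{MM} identifies the quotient $(\Jcob/Y_3)\big/(Y_2\cyl/Y_3)$ with a subgroup of $B_{\le3}$, which is $2$-torsion because $B$ is a $\Z_2$-vector space. As $\delta$ kills $Y_2\cyl/Y_3$, it factors through this $2$-torsion group, and since its target $\jacobi^c_{2,1}(H_\Q)\simeq S^2 H_\Q$ is torsion-free, $\delta=0$. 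The main obstacle is the middle step: one must track the normalizations along the chain $\psi_2$, $\chi$, $\kappa$ of \S\ref{subsec:surgery_map}--\S\ref{subsec:LMO} so that the $\Phi$-graph computation reproduces the surgery formula $\alpha=-2hh'$ on the nose (signs included), and one must establish the two auxiliary vanishings invoked for the $\mathsf H$- and $\Theta$-generators.
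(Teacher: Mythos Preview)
Your overall strategy—reduce to the subgroup $Y_2\cyl/Y_3$, check on clasper generators of the three shapes $\Theta$, $\Phi$ and H, then extend to $\Jcob/Y_3$ by a torsion argument—is exactly the route the paper takes, and your treatment of the $\Theta$- and $\Phi$-cases and your final torsion step are correct.

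The gap is in the H-case. It is \emph{not} true that surgery on $(\Sigma\times I)$ along an H-graph is always $Y_3$-equivalent to the mapping cylinder of an element of $\Gamma_2\Torelli$; equivalently, $\psi_2$ of an H-diagram need not lie in the commutator subgroup of $\cyl/Y_3$ (compare Corollary~\ref{cor:prop_modY3}(iii), where that subgroup is shown to be \emph{strictly} smaller than $Y_2\cyl/Y_3$, and $\alpha$ is what separates them). Concretely, neither $\alpha$ nor $p_{2,1}Z_2$ vanishes on H-graph generators: for the ordered H-diagram with colors $h,i,j,k$ one has
\[
\alpha\,\psi_2(\hbox{H}_{h,i,j,k})=\omega(h,j)\,ik-\omega(h,k)\,ij-\omega(i,j)\,hk+\omega(i,k)\,hj\ \in S^2H,
\]
and $p_{2,1}Z_2\psi_2$ of the same diagram equals exactly $-\tfrac12$ of this under $\jacobi^c_{2,1}(H_\Q)\simeq S^2H_\Q$. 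So the two terms of your $\delta$ do cancel on H-graphs, but not because each is zero; your commutator argument proves the wrong statement.

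The paper obtains these two formulas by different mechanisms. For $p_{2,1}Z_2\psi_2$ one uses the universality \eqref{eq:univLMO} to identify it with $p_{2,1}\chi^{-1}$ and then applies the explicit formula for $\chi^{-1}$ from \cite{HM_SJD}. For $\alpha\psi_2$ on an H-diagram there is no direct surgery formula available (Proposition~\ref{prop:alpha_properties} only treats looped degree-$2$ claspers and $Y$-graphs with two special leaves); instead one observes that the composite $\alpha\,\psi_2\,\chi:\jacobi^c_2(H_\Q)\to S^2H_\Q\simeq\Gamma_{2\omega_1}$ is $\Sp(H_\Q)$-equivariant, hence kills the irreducible summands of $\jacobi^c_{2,0}(H_\Q)\oplus\jacobi^c_{2,2}(H_\Q)$ (none of which is $\Gamma_{2\omega_1}$). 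This reduces the value of $\alpha\psi_2$ on an ordered H-diagram to its value on the $\Phi$-part of $\chi^{-1}$ of that diagram, which is the $\Phi$-case you already handled.
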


\begin{proof}
The quotient group  $\frac{\Jcob/Y_{3}}{Y_2\cyl/Y_{3}} \simeq \Jcob/Y_{2}$ 
consists only of order $2$ elements, as follows from \cite{MM}.  
So, for any element $\{M\}$ of $\Jcob/Y_{3}$, we have that $\{M\circ M\}=\{M\}^2$ lies in $Y_2\cyl/Y_{3}$.
We have  that $\alpha(M^2)=2\alpha(M)$ (by Proposition \ref{prop:alpha_properties})
and $Z_2(M^2)=2Z_2(M)$  (since we have $Z(M^2)= Z(M) \star Z(M)$ and $Z_1(M)=0$).
Therefore it suffices to establish the commutativity of the diagram on the subgroup $Y_2\cyl/Y_{3}$ of $\Jcob/Y_{3}$.  

Since the map $\psi_2:\jacobi^{<,c}_2(H) \to Y_2\cyl/Y_{3}$ is surjective, it is enough to check that
$\alpha \psi_2(D)= -2 p_{2,1} Z_2 \psi_2(D)$ for any generator $D$ of $\jacobi^{<,c}_{2}(H)$.  
The generators can be of three types, namely H graphs, $\Phi$ graphs or $\Theta$ graphs.
Let us first compute the composition $\alpha \circ \psi_2$ on these three types of generators.  
By Proposition \ref{prop:alpha_properties}, we have 
\begin{equation} \label{eq:phi}
\alpha  \psi_2\left(\phio{h}{h'}\right)= -2h  h'\in S^2H. 
\end{equation}
The same proposition also implies that 
\begin{equation} \label{eq:theta}
\alpha \psi_2\left(\thetagraph\right)=0\in S^2H. 
\end{equation}
\begin{claim} For any $h,i,j,k \in H$, we have
\begin{equation}\label{eq:H}
\alpha  \psi_2\big(\ho{h}{i}{j}{k}\big)= 
\omega(h,j) \cdot  ik- \omega(h,k) \cdot  ij- \omega(i,j) \cdot  hk+ \omega(i,k) \cdot  hj\in S^2H. 
\end{equation}
\end{claim}

\noindent
In order to prove this claim, we use the decomposition of $\jacobi^{c}_{2}(H_\Q)$ 
into irreducible $\Sp(H_\Q)$-modules. 
This can be found in \cite[\S 5]{HM_SJD}, whose notation we follow:
\begin{eqnarray*}
\jacobi^{c}_{2}(H_\Q) & = & \jacobi^c_{2,0}(H)\oplus \jacobi^c_{2,1}(H)\oplus \jacobi^c_{2,2}(H)\\
 & = & \left(\Gamma_0\oplus \Gamma_{\omega_2}\oplus 
 \Gamma_{2\omega_2}\right)\oplus \left(\Gamma_{2\omega_1}\right)\oplus \left( \Gamma_0\right).
\end{eqnarray*}
(Note that this formula was obtained in \cite{HM_SJD} for $g\ge 3$ only.
But, since both invariants $\alpha$ and $Z$ are well-behaved with respect to stabilization,
we can assume without loss of generality that the surface $\Sigma$ has arbitrary high genus $g$.) 
Since the composition 
\[ 
\xymatrix{
\jacobi^{c}_{2}(H_\Q)\ar[r]_{\simeq}^{\chi} & \jacobi^{<,c}_{2}(H_\Q)\ar[r]_{\simeq}^{\psi_2} 
& \frac{Y_2\cyl}{Y_{3}}\otimes \Q \ar@{->}[r]^-\alpha & S^2 H_\Q\simeq \Gamma_{2\omega_1} 
}
\]
is $\Sp(H_\Q)$-equivariant, we deduce that it vanishes on the subspace  $\jacobi^c_{2,0}(H) \oplus \jacobi^c_{2,2}(H)$.
Thus, using the explicit formula for $\chi^{-1}$  given in \cite[Proposition 3.1]{HM_SJD},
we obtain that
\begin{eqnarray*}
&&\alpha  \psi_2\big(\ho{h}{i}{j}{k}\big)\\
& = & \alpha \psi_2 \chi\left( \chi^{-1} \big(\ho{h}{i}{j}{k}\big)\right) \\
& = & \alpha  \psi_2 \chi\left( p_{2,1} \chi^{-1} \big(\ho{h}{i}{j}{k}\big)\right) \\
 & = & \frac{1}{2} \alpha  \psi_2  \chi
 \left( -\omega(h,j) \phin{i}{k}+\omega(h,k) \phin{i}{j}+\omega(i,j) \phin{h}{k}-\omega(i,k) \phin{h}{j} \right).
\end{eqnarray*}
Besides we deduce from (\ref{eq:phi}) that
$$ 
\alpha \psi_2 \chi \left(\phin{h}{h'}\right)
= \alpha  \psi_2\left(\frac{1}{2}\phio{h}{h'}+\frac{1}{2}\phio{h'}{h}\right)= -2 h h'. 
$$
and the claim follows.

Now, in order to complete the proof of the lemma, we use (\ref{eq:univLMO})
to deduce that the composition $p_{2,1} Z_2 \psi_2: \jacobi^{<,c}_2(H) \to \jacobi_{2,1}^c(H_\Q)$ coincides with $p_{2,1} \chi^{-1}$.
Therefore, the formula for $\chi^{-1}$  in \cite[Proposition 3.1]{HM_SJD} gives
$$
p_{2,1} Z_2  \psi_2\left( \thetagraph \right) = 0, \quad
p_{2,1} Z_2  \psi_2\big( \phio{h}{h'} \big) = \phin{h}{h'}\vspace{-0.2cm}
$$
and  
$$
p_{2,1} Z_2  \psi_2\big(\ho{h}{i}{j}{k}\big) =
\frac{1}{2}\big(  -\omega(h,j) \phin{i}{k}+\omega(h,k) \phin{i}{j}+\omega(i,j) \phin{h}{k}-\omega(i,k) \phin{h}{j}\big).
$$
We conclude thanks to (\ref{eq:phi}),  (\ref{eq:theta}) and (\ref{eq:H}) .
\end{proof}

The relationship between the LMO homomorphism $Z$ and the Casson invariant $\lambda_j$ is more subtle.
First of all, the Heegaard embedding $j:\Sigma \to S^3$ necessary to the definition of $\lambda_j$ (see \S \ref{subsec:Casson})
is chosen compatibly with the system of meridians and parallels $(\alpha,\beta)$ 
on which the construction of $Z$ depends (see \S \ref{subsec:LMO}).
More precisely, we assume that the curves $j(\alpha_i)\subset F_g$ bound disks
in the lower handlebody of the Heegaard splitting of $S^3$, 
while the curves $j(\beta_i) \subset F_g$ bound disks in the upper handlebody.
The following is observed in \cite{CHM} from the relation 
between the Casson invariant of homology $3$-spheres and the LMO invariant \cite{LMO}:
\begin{equation}\label{eq:cassonLMO}
\xymatrix{
\Jcob/Y_{3} \ar[d]_{\lambda_j} \ar[r]^{\widetilde{Z}^Y_2} & \jacobi^{c}_2(L^\pm_\Q) \ar@{->>}[d]^{p_{2,2}}\\
\Z \ar[r]_(.35){\cdot\frac{1}{2}\thetagraph} &\jacobi^{c}_{2,2}(H_\Q)  
} 
\end{equation}
Since $\widetilde{Z}^Y_2$ is obtained from $Z_2$  by post-composing 
with the map $\kappa^{-1}=\varphi^{-1} s \chi$, 
we see that the $\Theta$ part of $\widetilde{Z}^Y_2$ 
is equal to minus the $\Theta$ part of $Z$ plus something derived from its H part and its $\Phi$ part.
In particular, we deduce the following from Lemma \ref{lem:tau2_LMO} and Lemma \ref{lem:alpha_LMO}.

\begin{lemma}\label{lem:Casson_LMO}
Let $M\in \Jcob$ be such that $\tau_2(M)=0$ and $\alpha(M)=0$.
Then, we have
$$
p_{2,2} Z_2(M) = -\frac{\lambda_j(M)}{2} \cdot \thetagraph.
$$
\end{lemma}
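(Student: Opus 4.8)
The plan is to combine the diagram (\ref{eq:cassonLMO}) with the explicit relation between $\widetilde{Z}^Y_2$ and $Z_2$. First I would recall that by definition of the LMO homomorphism, $Z = \kappa \circ \widetilde{Z}^Y$, hence $\widetilde{Z}^Y_2 = \kappa^{-1}\circ Z_2 = \varphi^{-1}\circ s\circ \chi \circ (\kappa^{-1}Z)_2$; more usefully, since $\kappa = \chi^{-1}\circ s\circ \varphi$, we have $\widetilde{Z}^Y_2 = \varphi^{-1}s\chi(Z_2)$ (using that $s$ and $\varphi$ are graded and that $s$ is multiplication by $(-1)^{w(D)}$, which on internal-degree-$2$, loop-degree-$2$ diagrams — the $\Theta$ graph has Euler characteristic $-1$, so $w=1$ — contributes a sign). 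The point is that the maps $\varphi$, $s$ and $\chi$ all preserve the loop degree and act on $\jacobi^c_{2,2}$ essentially by a scalar (the only generator there is the $\Theta$ graph, up to the obvious identification $\jacobi^c_{2,2}(H_\Q)\simeq \jacobi^c_{2,2}(L^\pm_\Q)\simeq \Q$), so that the loop-degree-$2$ part of $\widetilde{Z}^Y_2(M)$ is, up to an explicit nonzero scalar, the loop-degree-$2$ part of $Z_2(M)$. The sign $(-1)$ coming from $s$ is exactly what will turn the $+\frac12\thetagraph$ in (\ref{eq:cassonLMO}) into the $-\frac{\lambda_j(M)}{2}\cdot\thetagraph$ in the statement.

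Next I would bring in the hypotheses $\tau_2(M)=0$ and $\alpha(M)=0$. By Lemma \ref{lem:tau2_LMO}, $\tau_2(M)=0$ forces $p_{2,0}Z_2(M)=0$, i.e.\ the tree part (loop degree $0$) of $Z_2(M)$ vanishes. By Lemma \ref{lem:alpha_LMO}, $\alpha(M)=0$ forces $p_{2,1}Z_2(M)=0$, i.e.\ the loop-degree-$1$ part vanishes. Hence under these two assumptions $Z_2(M)$ is concentrated in loop degree $2$: $Z_2(M)=p_{2,2}Z_2(M)\in\jacobi^c_{2,2}(H_\Q)$, plus possibly a disconnected term in $S^2\jacobi^c_1(H_\Q)$, which however vanishes because $Z_1(M)=0$ (as $\tau_1(M)=0$, using (\ref{eq:tau1_LMO}) and that $M\in\Jcob$ means $\rho_2(M)=1$, hence $\tau_1(M)=0$). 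So $Z_2(M)$ is a rational multiple of the $\Theta$ graph, and correspondingly $\widetilde{Z}^Y_2(M)$ is a rational multiple of the $\Theta$ graph with the \emph{same} absolute value and opposite sign (by the paragraph above). The diagram (\ref{eq:cassonLMO}) then reads $\widetilde{Z}^Y_2(M)=\frac{\lambda_j(M)}{2}\cdot\thetagraph$ (here I must be careful that when the tree and one-loop parts vanish, the right-hand side of (\ref{eq:cassonLMO}) has no correction terms and is literally $\frac{\lambda_j(M)}{2}\cdot\thetagraph$). Feeding back the sign relation yields $p_{2,2}Z_2(M)=-\frac{\lambda_j(M)}{2}\cdot\thetagraph$.

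The step I expect to be the main obstacle is pinning down the precise scalar and sign by which $\kappa^{-1}$ (equivalently the composite $\varphi^{-1}s\chi$) acts on the loop-degree-$2$ piece, and making sure that the $\Theta$-graph normalizations used in (\ref{eq:cassonLMO}) (where the target is $\jacobi^c_{2,2}(L^\pm_\Q)$, then identified with $\jacobi^c_{2,2}(H_\Q)$ via the ``obvious'' isomorphism, not $\kappa$) and in the statement (where the target is $\jacobi^c_{2,2}(H_\Q)$ reached through $\kappa$) are matched correctly. Concretely: $\kappa$ on a diagram $D$ is $(-1)^{w(D)}$ times a sum over $(\times 1/2)$-gluings of $i^-$ with $i^+$ vertices; the $\Theta$ graph has \emph{no} external vertices, so no gluings are possible and $\kappa(\thetagraph) = (-1)^{w}\thetagraph = -\thetagraph$ since $w(\Theta)=\chi(\Theta)\bmod 2 = 1$. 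Thus $\kappa^{-1}(\thetagraph)=-\thetagraph$ as well, which gives exactly the sign change claimed. The remaining bookkeeping — that no H-graph or $\Phi$-graph contribution to $\widetilde{Z}^Y_2(M)$ survives once $\tau_2(M)=\alpha(M)=0$, so that the ``something derived from the H part and $\Phi$ part'' mentioned after (\ref{eq:cassonLMO}) is genuinely zero here — follows from Lemmas \ref{lem:tau2_LMO} and \ref{lem:alpha_LMO} as above, but deserves to be spelled out carefully to be sure that the loop-degree decomposition is respected by $\kappa$ on all three types of generators, not just on the $\Theta$ graph.
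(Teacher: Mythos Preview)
Your argument is correct and follows essentially the same route as the paper: the paper's ``proof'' is the sentence preceding the lemma, which observes that $\widetilde{Z}^Y_2=\kappa^{-1}\circ Z_2$ so that the $\Theta$-part of $\widetilde{Z}^Y_2$ equals minus the $\Theta$-part of $Z_2$ plus corrections from the H- and $\Phi$-parts, and then invokes Lemmas~\ref{lem:tau2_LMO} and~\ref{lem:alpha_LMO} to kill those corrections under the hypotheses. Your version simply unpacks this with the explicit computation $\kappa(\thetagraph)=-\thetagraph$ (no external vertices, $w(\Theta)=1$) and the observation that $\kappa^{-1}$ is upper-triangular in loop degree; the only slightly overcautious point is your closing worry about ``loop-degree preservation on all three generators,'' which is unnecessary once you know $Z_2(M)$ is already a pure $\Theta$-multiple.
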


\noindent
The  connection between $Z_2$ and $\lambda_j$ is further investigated in \S \ref{sec:core_Casson}, 
where we extend the \emph{core} of the Casson invariant to homology cylinders.

\section{Characterization of the $Y_3$-equivalence for homology cylinders}\label{sec:Y3}

In this section we prove the characterization of the $Y_3$-equivalence relation on $\cyl$
(Theorem~A), we give a diagrammatic description of the group $\cyl/Y_3$ 
and we deduce certain properties of this group. 

\subsection{Proof of Theorem~A}

We now show that, given two homology cylinders $M$ and $M'$ over $\Sigma$, the following assertions are equivalent:
\begin{enumerate}
\item[(a)] $M$ and $M'$ are $Y_3$-equivalent;
\item[(b)] $M$ and $M'$ are not distinguished by any Goussarov--Habiro finite-type invariants of degree at most $2$;
\item[(c)] $M$ and $M'$ share the same invariants $\rho_3, \alpha$ and $\lambda$;
\item[(d)] The LMO homomorphism $Z$ agrees on $M$ and $M'$ up to degree $2$.
\end{enumerate} 
The implication (a)$\Rightarrow$(b) follows from Lemma \ref{lem:Y_FTI}.
The implication (b)$\Rightarrow$(d) is guaranteed by the fact that
the  degree $i$ part of the LMO homomorphism is a finite-type invariant of degree $i$ \cite{CHM}.
Thus it remains to prove that  (a), (c) and (d) are equivalent.

The implication (a)$\Rightarrow$(c) says
that $\rho_3$, $\lambda$ and $\alpha$ are invariant under $Y_3$-equivalence.
We have seen in  \S \ref{subsec:Alexander} and \S \ref{subsec:Casson}
that $\alpha$ and $\lambda$ are finite-type invariants of degree $2$:
we deduce from Lemma \ref{lem:Y_FTI} that $\alpha$ and $\lambda$ are $Y_3$-invariant.
The map $\rho_3: \cyl \to \Aut(\pi/\Gamma_4 \pi)$ is $J_3$-invariant (by Lemma \ref{lem:rho_k} below) so that it is also $Y_3$-invariant.

We now prove (c)$\Rightarrow$(d).
Two homology cylinders $M$ and $M'$ satisfying (c) 
cannot be distinguished by the first Johnson homomorphism, nor by the Birman--Craggs homomorphism according to  Lemma \ref{lem:beta}.
We deduce from the characterization of the $Y_2$-equivalence given in \cite{MM} that $M\stackrel{Y_2}{\sim}  M'$.   
The monoid $\cyl/Y_3$ being a group \cite{Goussarov,Habiro}, 
one can find a  $D\in Y_2\cyl$ such that  
\begin{equation}
\label{eq:D}
M\stackrel{Y_3}{\sim} D\circ M'.
\end{equation}
Since $Z$ is a monoid homomorphism,
we deduce that $Z_{\leq 2} (M)$ is the degree $\leq 2$ truncation of $Z_{\leq 2}(D) \star  Z_{\leq 2}(M')$, 
so it suffices to show that $Z_{\leq 2}(D)= \varnothing$ (the empty diagram).
But this is equivalent to $Z_2(D)=0$ given that $D \stackrel{Y_2}{\sim} \Sigma \times I$.
The decomposition (\ref{eq:D}) and the assumption (c) have three consequences:
\begin{enumerate}
\item $\rho_3(D)=1$,  which implies that $\tau_2(D)=0$,
\item $\alpha(D)=0$,
\item and $\lambda(D)=0$ by formula (\ref{eq:sum_formula}).
\end{enumerate}
Thanks to Lemma \ref{lem:tau2_LMO}, Lemma \ref{lem:alpha_LMO} and Lemma \ref{lem:Casson_LMO}, 
we deduce that $Z_2(D)=0$  as desired.

We now prove (d)$\Rightarrow$(a). Let $M,M' \in \cyl$ be such that $Z_{\leq 2}(M)=Z_{\leq 2}(M')$.
Again, since $\cyl/Y_3$ is a group, one can find a $D\in \cyl$ such that we have the decomposition (\ref{eq:D}). 
We deduce from (\ref{eq:D}) 
and the assumption (d) that $Z_1(D)=0$ 
which, according to  (\ref{eq:tau1_LMO}), is equivalent to $\tau_1(D)=0$.
Next, we deduce from (\ref{eq:D}) 
and the assumption (d)
that $Z_2(D)=0$ which, by Lemma \ref{lem:tau2_LMO}, Lemma \ref{lem:alpha_LMO} and Lemma \ref{lem:Casson_LMO},
imply that $\tau_2(D)=0$, $\alpha(D)=0$ and $\lambda(D)=0$. 
So we have $\beta(D)=0$ by Lemma \ref{lem:beta} and, since the $Y_2$-equivalence is classified by $(\tau_1,\beta)$,
we obtain that $D \stackrel{Y_2}{\sim} \Sigma \times I$.
The universal property of the LMO homomorphism (\ref{eq:univLMO}) gives, in degree $2$,
the following commutative diagram\\[-0.5cm]
$$
\xymatrix{
\jacobi^{<,c}_2(H) \ar@{->>}[r]^{\psi_2} \ar[d]_{- \otimes \Q} &  Y_2\cyl/Y_{3} \ar[d]^{Z_{2}}\\
\jacobi^{<,c}_{2}(H_\Q) &  \ar[l]^{\chi}_-\simeq \jacobi^{c}_{2}(H_\Q).
}
$$
The left vertical arrow is injective by Lemma \ref{lem:no_torsion}, which proves that $Z_{2}$ is injective.  
Hence $Z_{2}(D)=0$ implies that $D\stackrel{Y_3}{\sim} (\Sigma\times I)$ 
which, combined with the decomposition (\ref{eq:D}), 
show that $M$ and $M'$ satisfy (a).  

This concludes the proof of Theorem A and shows the following. 

\begin{corollary}\label{cor:varphi2}
The abelian group $Y_2\cyl/Y_{3}$ is torsion-free, and the surgery map 
$$
\psi_2: \jacobi^{<,c}_2(H) \rightarrow  Y_2\cyl/Y_{3} 
$$ 
is an isomorphism.   
\end{corollary}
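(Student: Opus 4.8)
The plan is to extract both assertions directly from the argument already given for the implication (d)$\Rightarrow$(a) in the proof of Theorem~A, so that the corollary becomes a matter of bookkeeping. Recall first that, by Habiro's construction of the surgery map recalled in \S\ref{subsec:surgery_map}, the homomorphism $\psi_2\colon \jacobi^{<,c}_2(H)\to Y_2\cyl/Y_3$ is surjective; it therefore suffices to prove that $\psi_2$ is injective and that $Y_2\cyl/Y_3$ is torsion-free.

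First I would invoke the degree~$2$ part of the universal-property diagram (\ref{eq:univLMO}), which, as displayed in the proof of (d)$\Rightarrow$(a), fits into the commutative square
$$
\xymatrix{
\jacobi^{<,c}_2(H) \ar@{->>}[r]^{\psi_2} \ar[d]_{- \otimes \Q} &  Y_2\cyl/Y_{3} \ar[d]^{Z_{2}}\\
\jacobi^{<,c}_{2}(H_\Q) &  \ar[l]^{\chi}_-\simeq \jacobi^{c}_{2}(H_\Q).
}
$$
Commutativity reads $-\otimes\Q = \chi\circ Z_2\circ\psi_2$. By Lemma~\ref{lem:no_torsion} the abelian group $\jacobi^{<}_2(H)$ is torsion-free, hence the rationalization map $-\otimes\Q$ is injective; since $\chi$ is an isomorphism, it follows that $Z_2\circ\psi_2$ is injective, and therefore so is $\psi_2$. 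Combined with surjectivity, $\psi_2$ is an isomorphism.

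For the torsion-freeness I would then transport the property along this isomorphism: $Y_2\cyl/Y_3 \simeq \jacobi^{<,c}_2(H)$, and the latter is torsion-free by Lemma~\ref{lem:no_torsion}.

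I do not expect any genuine obstacle here, since all the weight of the argument has already been carried inside the proof of Theorem~A, where the injectivity of $Z_2$ on $Y_2\cyl/Y_3$---the one truly nontrivial input, relying on the full clasper and LMO apparatus---was established. The only points to keep straight are that ``$-\otimes\Q$ injective'' is exactly the content of the torsion-freeness of $\jacobi^{<}_2(H)$ provided by Lemma~\ref{lem:no_torsion}, and that the surjectivity of $\psi_2$ is cited from Habiro \cite{Habiro} rather than reproved.
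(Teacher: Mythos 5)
Your argument is correct and is essentially identical to the paper's own: the corollary is derived there from exactly the commutative square you display at the end of the (d)$\Rightarrow$(a) step of Theorem~A, using Lemma~\ref{lem:no_torsion} for the injectivity of $-\otimes\Q$ and Habiro's surjectivity of $\psi_2$. Nothing further is needed.
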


\subsection{Diagrammatic description of $\Jcob/Y_{3}$} \label{subsec:description1} 

We aim at giving in this section a diagrammatic description of the group $\cyl/Y_{3}$ using the previous results.
One way to do that would be to start from the central extension
\begin{equation}
\label{eq:not_the_best}  
\xymatrix{0 \ar[r] & Y_2\cyl/Y_3 \ar[r] & \cyl/Y_3 \ar[r] & \cyl/Y_2 \ar[r] & 1} 
\end{equation}
and to use the diagrammatic descriptions of $\cyl/Y_2$ and $Y_2\cyl/Y_3$ 
which are given by \cite{MM} and Corollary \ref{cor:varphi2} respectively.
However this method is not the most convenient one since $\cyl/Y_2$ is \emph{not} torsion-free.
Instead we shall proceed in the following way.

\begin{lemma}\label{lem:Habiro_sec}
We have a central extension of groups
\begin{equation}
\label{eq:Habiro_sec}
 \xymatrix{0 \ar[r] & \Jcob/Y_3 \ar[r] & \cyl/Y_3 \ar[r]^{\tau_1} & \Lambda^3 H \ar[r] & 1} 
\end{equation}
where $\Jcob = \cob[2]$ denotes the second term of the Johnson filtration of $\cob$.
\end{lemma}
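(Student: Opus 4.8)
The plan is to read the sequence off from the first Johnson homomorphism and then to prove centrality by a torsion argument, using the results already established in the paper.

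First I would recall that $\cyl/Y_3$ is a group by Goussarov and Habiro, and that $\tau_1 \colon \cyl \to \Lambda^3 H$ (with the identification (\ref{eq:D_3}) implicit) is a monoid homomorphism which is invariant under $Y_2$-equivalence, as noted in \S\ref{subsec:Johnson}; in particular it is invariant under $Y_3$-equivalence and hence descends to a group homomorphism $\cyl/Y_3 \to \Lambda^3 H$, still denoted $\tau_1$. This map is surjective because $\tau_1 \colon \cob[1] \to \DD_1(H)$ is already onto \cite{GL_tree,Habegger}. For the kernel I would observe that an $M\in\cyl$ satisfies $\tau_1(M)=0$ if and only if $\rho_2(M)=\Id$, i.e. $M\in\cob[2]=\Jcob$; since $\rho_2$ is $Y_2$-invariant (hence $Y_3$-invariant), the submonoid $\Jcob$ is a union of $Y_3$-equivalence classes, so $\Jcob/Y_3$ is a well-defined submonoid of $\cyl/Y_3$ which coincides with $\Ker\tau_1$ and is therefore a subgroup. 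This gives the short exact sequence.

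Next I would prove that the extension is central, i.e. that $[M,N]=1$ in $\cyl/Y_3$ whenever $M\in\Jcob$ and $N\in\cyl$. The key inputs are: (i) $\Gr^Y\cyl$ is a graded Lie ring (Goussarov--Habiro), so $[Y_i\cyl,Y_j\cyl]\subseteq Y_{i+j}\cyl$; in particular $Y_2\cyl/Y_3$ is central in $\cyl/Y_3$, and since moreover $[Y_1\cyl,Y_2\cyl]\subseteq Y_3\cyl$ the group commutator induces a well-defined $\Z$-bilinear map $b \colon \cyl/Y_2 \times \cyl/Y_2 \to Y_2\cyl/Y_3$ (bilinearity follows from the identity $[xy,z]=x[y,z]x^{-1}\cdot[x,z]$ together with the centrality of $Y_2\cyl/Y_3$, which kills the conjugation); (ii) the group $\Jcob/Y_2$ consists only of elements of order dividing $2$ \cite{MM} (a fact also invoked in the proof of Lemma \ref{lem:alpha_LMO}); and (iii) $Y_2\cyl/Y_3$ is torsion-free by Corollary \ref{cor:varphi2}. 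Then for $M\in\Jcob$, $N\in\cyl$ we have $[M,N]=b(\{M\},\{N\})$ with $\{M\}\in\Jcob/Y_2$, whence $2\,b(\{M\},\{N\})=b(2\{M\},\{N\})=0$; thus $[M,N]$ is a $2$-torsion element of the torsion-free group $Y_2\cyl/Y_3$, so $[M,N]=1$. This shows $\Jcob/Y_3$ is central and completes the proof.

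The step I expect to require the most care is input (i): making precise that the group commutator on $\cyl/Y_3$ is bilinear and factors through $\cyl/Y_2$. This uses only the formal properties of the $Y$-filtration (the commutator estimates $[Y_i\cyl,Y_j\cyl]\subseteq Y_{i+j}\cyl$ and the resulting centrality of $Y_2\cyl/Y_3$), but it is the one place where a short manipulation with commutator identities is needed rather than a direct citation. Everything else — surjectivity of $\tau_1$, the identification of $\Ker\tau_1$ with $\Jcob/Y_3$, the $2$-torsion of $\Jcob/Y_2$ and the torsion-freeness of $Y_2\cyl/Y_3$ — is available from earlier in the paper or from \cite{MM,GL_tree,Habegger}.
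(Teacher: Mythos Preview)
Your argument is correct, and it takes a genuinely different route from the paper. The paper proves centrality constructively: it recalls (from the proof of Lemma~\ref{lem:d^2beta}) that $\Jcob/Y_3$ is generated by classes $(\Sigma\times I)_G$ with $G$ either a degree~$2$ graph clasper or a $Y$-graph with a special leaf, and then checks via clasper calculus (Lemma~\ref{lem:crossingchange} and Lemma~\ref{lem:special}) that each such generator is central in $\cyl/Y_3$. Your proof instead avoids any explicit description of generators: you combine the $2$-torsion of $\Jcob/Y_2$ with the torsion-freeness of $Y_2\cyl/Y_3$ (Corollary~\ref{cor:varphi2}) and the bilinearity of the induced commutator pairing. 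This is cleaner algebraically, but note that it leans on Corollary~\ref{cor:varphi2}, which in turn depends on the LMO machinery and the proof of Theorem~A; the paper's clasper argument is logically lighter in that respect, requiring only the elementary clasper lemmas of Appendix~\ref{sec:calc_clasper}. Both approaches are valid and the exact-sequence part of your write-up (which the paper leaves implicit) is fine.
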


\begin{proof}
As observed in the proof of Lemma \ref{lem:d^2beta},
it follows from \cite{MM} that $\Jcob/Y_3$ is generated by elements of the form $(\Sigma\times I)_G$, 
where $G$ is either a degree $2$ graph clasper or a $Y$-graph with (at least) one special leaf.  
In the first case, $(\Sigma\times I)_G$ is a central element in $\cyl/Y_3$ by Lemma \ref{lem:crossingchange}. 
The same holds in the second case by Lemma \ref{lem:crossingchange} and Lemma \ref{lem:special}. 
\end{proof}

The central extension (\ref{eq:Habiro_sec}) is, up to equivalence,
uniquely determined by its characteristic class in $H^2(\Lambda^3H;\Jcob/Y_3)$.  
Since $\Lambda^3H$ \emph{is} torsion-free, we have by the universal coefficient theorem that 
\begin{equation}\label{eq:iso2cocycle}
H^2(\Lambda^3H;\Jcob/Y_3)\simeq \textrm{Hom}(H_2(\Lambda^3H),\Jcob/Y_3)\simeq  \textrm{Hom}(\Lambda^2\Lambda^3H,\Jcob/Y_3). 
\end{equation}
Thus, in order to describe the (isomorphism type of) the group $\cyl/Y_3$, we shall proceed in two steps:
first, we shall give in this subsection a diagrammatic description of the  group $\Jcob/Y_3$
and, second, we shall give in the next subsection a diagrammatic description 
of the characteristic class of (\ref{eq:Habiro_sec}) in  $\textrm{Hom}(\Lambda^2\Lambda^3H,\Jcob/Y_3)$.  

Our diagrammatic description of $\Jcob/Y_{3}$ is derived from a group homomorphism 
$$ 
\psi_{[2]}: \jacobi^{<,c}_2(H)\oplus \Z\!\cdot\!(H\times H) \oplus \Z\!\cdot\! H\oplus \Z\longrightarrow \Jcob/Y_{3},
$$
where $\Z\!\cdot\!(H\times H)$ and $\Z\! \cdot\! H$ denote the free abelian groups generated 
by the sets $H\times H$ and $H$ respectively. We define $\psi_{[2]}$ in the following way:
\begin{itemize}
 \item For all $D\in \jacobi^{<,c}_2(H)$,  we set $\psi_{[2]}(D):=\psi_2(D)$ where $\psi_2$ is 
       Habiro's map as defined in \S \ref{subsec:surgery_map}.
 \item For all $(h,h')\in H\times H$, we set $\psi_{[2]}(h,h')$ to be the $Y_3$-equivalence class of 
       $(\Sigma \times I)_{Y_{h,h'}}$
       where $Y_{h,h'}$ is a $Y$-graph  obtained as follows. 
       Consider the oriented surface $S$ consisting of a disk, connected by three bands to three annuli,
       whose cores are oriented as in Figure \ref{fig:leaf_orientation}.
       Embed $S$ into the interior of $(\Sigma \times I)$ so as to obtain a $Y$-graph with one special leaf 
       and two  other leaves satisfying the following: they should  have framing number zero, 
       one should represent $h\in H$ and lie in $\Sigma \times [-1,0]$
       while the other one should represent $h'\in H$ and lie in $\Sigma \times [0,1]$.  
 \item For all $h\in H$, we set $\psi_{[2]}(h)$ to be the $Y_3$-equivalence class of $(\Sigma \times I)_{Y_h}$
       where $Y_h$ is a $Y$-graph obtained as follows. We take the same surface $S$ as before
       but we embed it into the interior of $(\Sigma \times I)$ so as to obtain a $Y$-graph with two special leaves, 	
	   and one leaf which is required to represent $h\in H$ and to have framing number zero. 
 \item We set $\psi_{[2]}(1)$ to be the $Y_3$-equivalence class of $(\Sigma \times I)_{Y_s}$ 
       where $Y_{s}$ is a $Y$-graph with three special leaves.   
\end{itemize}
\noindent 
(We refer the reader to Appendix \ref{subsec:framing_numbers} for 
the definition of framing numbers in $\Sigma\times I$.)

\begin{lemma}
 The map $\psi_{[2]}$ is a well-defined homomorphism, and it is surjective.  
\end{lemma}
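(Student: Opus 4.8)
The plan is to prove well-definedness and surjectivity separately, in both cases by clasper calculus, relying on the technical lemmas gathered in Appendix~\ref{sec:calc_clasper}.

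\textbf{Well-definedness.} First I would note that on the summand $\jacobi^{<,c}_2(H)$ the map $\psi_{[2]}$ is, by definition, Habiro's surgery map $\psi_2$ from \S\ref{subsec:surgery_map}, which is already a well-defined group homomorphism; and that the remaining three summands $\Z\!\cdot\!(H\times H)$, $\Z\!\cdot\! H$ and $\Z$ are free abelian on sets, so any choice of images of their generators extends uniquely to a homomorphism, and the four restrictions assemble to a homomorphism on the direct sum since the target is abelian. Hence the only real content is that each of $(\Sigma\times I)_{Y_{h,h'}}$, $(\Sigma\times I)_{Y_h}$ and $(\Sigma\times I)_{Y_s}$ is an \emph{unambiguous} element of $\Jcob/Y_3$. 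Two admissible embeddings of the auxiliary surface $S$ produce $Y$-graphs whose spines differ by an ambient isotopy and a finite sequence of crossing changes, while the $(-1)$-framing of the special leaves, the framing number $0$ of the other leaves, and their prescribed ``vertical layers'' are imposed in the construction and so are common to both. By Lemma~\ref{lem:crossingchange} a crossing change inside a graph clasper of degree $k$ changes the resulting manifold only by surgery along graph claspers of degree $\geq k+1$; then, using Lemma~\ref{lem:special} and Lemma~\ref{lem:doubling} to carry a special leaf through these correction terms, one checks that each correction either has degree $\geq 3$, hence is trivial modulo $Y_3$, or carries a special leaf and is absorbed into the already-defined $\psi_2$-summand. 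One should also record that every generator lands in $\Jcob=\cob[2]$: a $Y$-graph with a special leaf has a null-homologous leaf, so its first Johnson homomorphism vanishes, and a degree-$2$ graph clasper gives an element of $Y_2\cyl\subset\cob[2]$.

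\textbf{Surjectivity.} I would start from the description of $\Jcob/Y_3$ already used in the proof of Lemma~\ref{lem:d^2beta}: by the computation of $\cyl/Y_2$ in \cite{MM} together with the surgery formulas of \S\ref{sec:invariants} (in particular Lemma~\ref{lem:Y_beta}), $\Jcob/Y_3$ is generated by the classes $(\Sigma\times I)_G$ with $G$ either a graph clasper of degree $2$ or a $Y$-graph having at least one special leaf. A degree-$2$ graph clasper lies in $Y_2\cyl$, and $\psi_2$ is onto $Y_2\cyl/Y_3$ by Corollary~\ref{cor:varphi2}, so such a class is in the image of the first summand. For a $Y$-graph $G$ with exactly one special leaf, whose non-special leaves have homology classes $h,h'$ and framing numbers $n,n'$, I would use clasper calculus to isotope the special leaf out of the way and slide the two remaining leaves into the lower and upper layers, then invoke Lemma~\ref{lem:Y_beta} together with the fact that $(\tau_1,\beta)$ classifies the $Y_2$-equivalence to see that $(\Sigma\times I)_G$ is $Y_2$-equivalent, hence $Y_3$-equivalent up to a product of degree-$2$ clasper surgeries, to a suitable product of the generators $\psi_{[2]}(h,h')$, $\psi_{[2]}(h)$, $\psi_{[2]}(h')$ and $\psi_{[2]}(1)$ determined by $h,h',n,n'$; the residual degree-$2$ corrections again lie in the image of $\psi_2$. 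The cases of two and of three special leaves reduce in the same way to $\psi_{[2]}(h)$ and to $\psi_{[2]}(1)$.

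\textbf{Main obstacle.} The hard part will be the clasper bookkeeping common to both halves: one must keep precise track of which higher-degree — and which special-leaved — graph claspers are created by crossing changes, framing changes and leaf reorderings, and then verify that modulo $Y_3$ they are all either trivial or already in the span of the degree-$2$ diagrams. This is precisely where the appendix lemmas on special leaves and crossing changes (Lemma~\ref{lem:crossingchange}, Lemma~\ref{lem:special}, Lemma~\ref{lem:doubling}) have to be applied with care, and where one checks that the four summands are mutually consistent and consistent with Habiro's $\psi_2$.
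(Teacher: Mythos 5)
Your surjectivity argument is essentially the paper's: both rest on Lemma \ref{lem:Y_beta}, the classification of the $Y_2$-equivalence by $(\tau_1,\beta)$, and the surjectivity of $\psi_2$ onto $Y_2\cyl/Y_3$ (the paper phrases it by decomposing $\beta(M)$ for an arbitrary $M\in\Jcob$ rather than by listing generators, but the content is the same). The problem is in the well-definedness half. Your key claim --- that two admissible choices of a leaf ``differ by an ambient isotopy and a finite sequence of crossing changes'' --- is false. A sequence of crossing changes only moves a knot within its free homotopy class, and in $\Sigma\times I$ two oriented framed knots representing the same class of $H$ with framing number zero need not be freely homotopic (free homotopy classes are conjugacy classes in the free group $\pi_1(\Sigma)$, which are far from being detected by homology). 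So Lemma \ref{lem:crossingchange} cannot bridge two legitimate choices of $Y_{h,h'}$, and the corrections you propose to absorb are not the ones that actually arise.

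What is needed instead is the following geometric input, which is how the paper argues: two such leaves $K,K'$ cobound an embedded oriented surface $F$ in $\Sigma\times I$, and the equality of framing numbers lets one choose $F$ so that $K$ and $K'$ are $0$-framed with respect to it (Lemma \ref{lem:framed_connect}); after pushing $F$ off the edges by Lemma \ref{lem:slide_special}, one writes $K$ as a framed connected sum of $K'$ with $g(F)$ knots each bounding a genus-one surface disjoint from the graph, and these extra pieces are killed modulo $Y_3$ by Corollary \ref{cor:surf} (via Lemma \ref{lem:split_special}). That corollary --- not Lemma \ref{lem:special} or Lemma \ref{lem:doubling}, which do not apply here (Lemma \ref{lem:doubling} computes the square of a $Y$-surgery) --- is the indispensable ingredient you are missing. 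Likewise, the independence of the choice of the disk and of the bands is not automatic: it requires the Symmetry and Edge Sliding lemmas for $Y$-graphs with a special leaf (Lemma \ref{lem:as_special} and Lemma \ref{lem:slide_special}), which you do not invoke.
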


\begin{proof}
By \S \ref{subsec:surgery_map},
the fact that $\psi_{[2]}$ is well-defined needs only to be checked on the summand
$ \Z\!\cdot\!(H\times H)\oplus \Z\!\cdot\! H\oplus \Z$.
The independence on the choice of the disk and bands follows from 
Lemma \ref{lem:as_special} and Lemma \ref{lem:slide_special}, respectively.
We now check the independence on the choice of the leaves. 
Let $K$ and $K'$ be two possible choices for an oriented 
leaf of a $Y$-graph, representing the same element in $H$.  
Then $K$ and $K'$ cobound an embedded oriented 
surface $F$ of genus $g(F)$ in $(\Sigma\times I)$. 
Furthermore, since the framing numbers of $K$ and $K'$ in $(\Sigma \times I)$ are equal, 
we can find such a surface $F$ such that $K$ and $K'$ are $0$-framed with respect to $F$.
(This follows, for instance, from Lemma \ref{lem:framed_connect}.)
By Lemma \ref{lem:slide_special}, we can assume that $F$ does not intersect any edge of the $Y$-graph. 
We can also freely assume that $F$ does not intersect any other leaf of the $Y$-graph, 
since they are either special leaves, 
or lying in a different ``horizontal layer'' of $(\Sigma\times I)$.  
So, if we have $g(F)=0$, then $K$ and $K'$ are isotopic as  framed knots and we are done. 
Otherwise, we can decompose $K$ as a framed connected sum of $K'$ and $g(F)$ framed knots,
each bounding a genus $1$ surface disjoint from the $Y$-graph and 
being $0$-framed with respect to it.  
The result then follows from Corollary \ref{cor:surf}.  

The surjectivity of $\psi_{[2]}$ is proved by refining 
the argument used at the beginning of the proof of Lemma \ref{lem:d^2beta}.
Let $M\in \Jcob$. Using the notation (\ref{eq:affine_functions}), the quadratic function $\beta(M)\in B_{\leq 2}$ can be decomposed as
$$
\beta(M) = \varepsilon \cdot \overline{1} + \sum_{i=1}^m \overline{g_i} + \sum_{j=1}^n \overline{h_j} \cdot  \overline{h'_j}
$$
where $\varepsilon \in \{0,1\}$, $g_1,\dots, g_m \in H$ and $h_1,h'_1,\dots, h_n,h'_n \in H$ 
for some positive integers $m,n$.
Let $Y_s$, $Y_g$ (for $g\in H$) and $Y_{h,h'}$ (for $h,h'\in H$) be the $Y$-graphs 
with special leaves described in the definition of $\psi_{[2]}$. Then, using Lemma \ref{lem:Y_beta}, we see that
$$
\beta(M) = \varepsilon\cdot \beta\left((\Sigma \times I)_{Y_s}\right) +  
\sum_{i=1}^m \beta\left((\Sigma \times I)_{Y_{g_i}} \right)  + \sum_{j=1}^n   \beta\left((\Sigma \times I)_{Y_{h_j,h'_j}}  \right).
$$
Since the $Y_2$-equivalence is classified by the couple $(\tau_1,\beta)$, we deduce that 
$$
M \overset{Y_2}{\sim} {(\Sigma \times I)_{Y_s}}^\varepsilon  \circ \prod_{i=1}^m (\Sigma \times I)_{Y_{g_i}} 
\circ  \prod_{j=1}^n (\Sigma \times I)_{Y_{h_j,h'_j}}. 
$$
Therefore, by clasper calculus, there exists a $D \in \cyl$ such that $D\overset{Y_2}{\sim} (\Sigma \times I)$ and 
$$
M \overset{Y_3}{\sim} D \circ  {(\Sigma \times I)_{Y_s}}^\varepsilon  \circ \prod_{i=1}^m (\Sigma \times I)_{Y_{g_i}} 
\circ  \prod_{j=1}^n (\Sigma \times I)_{Y_{h_j,h'_j}}.
$$
We conclude using the fact that the restriction of $\psi_{[2]}$ to the summand $\jacobi^{<,c}_2(H)$,
namely $\psi_2$, is surjective onto the group $Y_2\cyl/Y_3$.
\end{proof}

Next we set
$$
\jacobi_{[2]}^{<,c}(H) := 
\frac{\jacobi^{<,c}_2(H)\oplus  \Z\!\cdot\!(H\times H)\oplus \Z\!\cdot\!H \oplus \Z}{(G_0,G_1,G_2,G_3, D_1,D_2,D_3)}, 
$$
where the relations $(G_0,G_1,G_2,G_3)$ and  $(D_1,D_2,D_3)$ are defined as follows:
\begin{enumerate}
 \item[($G_0$)]\quad $(h,h) - (h)$ \ for all $h\in H$,
 \item[($G_1$)]\quad $2\cdot (h,k) +\hob{h}{h}{k}{k} + \phio{h}{k}$ \ for all $h,k\in H$,
 \item[($G_2$)]\quad $2\cdot (h) + \phio{h}{h}$ \ for all $h\in H$,
 \item[($G_3$)]\quad $2\cdot 1+\thetagraph$,
 \item[($D_1$)]\quad $(h+h',k) - (h,k) - (h',k)+ \omega(h,h')\cdot (k) + \hob{h}{h'}{k}{k}$ \ for all $h,h',k\in H$, 
 \item[($D_2$)]\quad $(h,k+k') - (h,k) - (h,k') + \omega(k,k')\cdot(h) + \hob{h}{h}{k}{k'}$ \ for all $h,k,k'\in H$,
 \item[($D_3$)]\quad $(h+h')  -  (h) - (h') + \omega(h,h') \cdot $1$ + \phio{h}{h'}$ \ for all $h,h' \in H$.
\end{enumerate}
Here  the generator of the summand $\Z$ is denoted by $1$,
the generators of the summand $ \Z\!\cdot\! H$ are denoted by $(h)$ with $h\in H$, 
and the generators of the summand $ \Z\! \cdot\!(H\times H)$ are denoted by $(h,k)$ with $h,k\in H$.
Observe that, thanks to the relation ($G_0$), we could get rid of the summand $ \Z\!\cdot\! H$.
Besides, ($G_2$) is a consequence of ($G_0$) and $(G_1)$. 
Here is yet another relation in $\jacobi_{[2]}^{<,c}(H)$:

\begin{lemma}
For all $h,h'\in H$, we have
\begin{equation}\label{eq:symmetry_defect}
(h,h') - (h',h) - \omega(h,h') \cdot \phio{h'}{h} - \frac{\omega(h,h')(\omega(h,h')-1)}{2}\cdot \thetagraph = 0 \in \jacobi_{[2]}^{<,c}(H).
\end{equation}
\end{lemma}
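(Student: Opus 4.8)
The statement (\ref{eq:symmetry_defect}) is an identity in the \emph{presented} abelian group $\jacobi_{[2]}^{<,c}(H)$, so the plan is to derive it formally from the defining relations $(G_0)$--$(G_3)$, $(D_1)$--$(D_3)$ together with the relations (AS, IHX, loop, multilinearity and, crucially, STU-like) internal to the summand $\jacobi^{<,c}_2(H)$. A preliminary remark organizes the computation: the bi-additivity relations $(D_1)$ and $(D_2)$ are ``blind'' to the asymmetry of $(h,h')$ under $h\leftrightarrow h'$ --- expanding $(h+h',h+h')$ by $(D_1)$ then $(D_2)$, or in the opposite order, and comparing both with the value $(h+h')$ given by $(G_0)$ and $(D_3)$, only yields (after using that an H-graph two of whose legs meet the same node with equal colour is a multiple of $h\wedge h=0$, hence vanishes) an identity for the \emph{symmetric} combination $(h,h')+(h',h)$ in terms of diagrams. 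So the relation carrying the asymmetry is the squaring relation $(G_1)$.

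First I would apply $(G_1)$ to $(h,h')$ and to $(h',h)$ and subtract, which expresses $2\big((h,h')-(h',h)\big)$ as a $\Z$-combination, inside $\jacobi^{<,c}_2(H)$, of the two orderings $\phio{h}{h'}$ and $\phio{h'}{h}$ of the $\Phi$-graph and of the two H-graphs whose colours are the multiset $\{h,h,h',h'\}$ (in the two orders corresponding to $(h,h')$ and $(h',h)$). The second step is to evaluate this using the STU-like relation. Transposing the two adjacent external vertices of a $\Phi$-graph fuses its legs into a single edge and converts it into a $\Theta$-graph, so that $\phio{h}{h'}-\phio{h'}{h}=\omega(h,h')\cdot\thetagraph$ up to the sign imposed by the STU-like picture; and transposing the appropriate pair of external vertices of each of the two H-graphs brings two equal-colour legs onto a common node, so that that H-graph vanishes and one is left with a term $\omega(h,h')\cdot\phio{h'}{h}$ plus a $\Theta$-correction whose coefficient is polynomial of degree at most $2$ in $\omega(h,h')$. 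Collecting the contributions, and using that $\omega(h,h')\big(\omega(h,h')-1\big)$ is always even, rewrites the identity for $2\big((h,h')-(h',h)\big)$ as twice the asserted identity.

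It remains --- and this is the delicate point --- to cancel the factor $2$. The relevant elements all live in the torsion-free part of $\jacobi_{[2]}^{<,c}(H)$: indeed $(G_1)$ exhibits $2\big((h,h')-(h',h)\big)$ inside the subgroup $D$ generated by the image of $\jacobi^{<,c}_2(H)$, a further short manipulation of $(D_1)$, $(D_2)$, $(D_3)$ and $(G_2)$, $(G_3)$ moves $(h,h')-(h',h)$ itself into $D$, and $D$ is torsion-free because no defining relation of $\jacobi_{[2]}^{<,c}(H)$ is supported on diagrams alone --- so $\jacobi^{<,c}_2(H)$ injects into $\jacobi_{[2]}^{<,c}(H)$ and Corollary \ref{cor:varphi2} applies. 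Dividing by $2$ then finishes the proof. I expect the main obstacle to be precisely this bookkeeping: pinning down all the signs produced by the STU-like relation and by the orientation conventions for the $Y$-, H-, $\Phi$- and $\Theta$-graphs (and by the framing-zero normalization in the definition of $\psi_{[2]}$), and checking that the accumulated $\Theta$-contributions assemble exactly into $\frac{\omega(h,h')(\omega(h,h')-1)}{2}\cdot\thetagraph$ while the $\Phi$-contributions assemble into $\omega(h,h')\cdot\phio{h'}{h}$.
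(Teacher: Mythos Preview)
Your approach has a genuine gap in the final ``divide by $2$'' step. The claim that $\jacobi^{<,c}_2(H)$ injects into $\jacobi_{[2]}^{<,c}(H)$ because ``no defining relation is supported on diagrams alone'' is a fallacy: in a presentation $\langle a,b \mid a+b,\ a-b\rangle$ no relation is supported on $a$ alone, yet $2a=0$. Injectivity here is true, but it is established in the paper only later (via the map $\digamma$ in the proof of Theorem~\ref{thm:iso_KC/Y3}, or equivalently via $\chi\circ Z_2\circ\psi_{[2]}$), and that theorem in turn \emph{uses} the present lemma to show surjectivity of $\Upsilon$. So as written your argument is either incomplete or forward-referencing; it could be repaired by first checking, independently of this lemma, that $\psi_{[2]}$ (or $\digamma$) kills the relations $(G_i),(D_j)$, but you would then be reorganising the paper rather than proving the lemma in place.

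The paper avoids any factor of $2$ altogether, and your preliminary remark dismisses too quickly the very route it takes. Setting $k:=h+h'$ and expanding $(k)=(k,k)$ by $(G_0)$, then $(D_1)$ and $(D_2)$, does produce the \emph{symmetric} combination $(h,h')+(h',h)$ together with several H-graph corrections. The key algebraic identity (from IHX + STU-like) is
\[
\hob{a}{b}{b}{c}=0 \quad \text{in }\jacobi^{<,c}_2(H),
\]
which collapses these corrections to a single term $\hob{h}{h}{h'}{h'}$. Now a \emph{single} application of $(G_1)$, namely $2(h,h')+\hob{h}{h}{h'}{h'}+\phio{h}{h'}=0$, converts
\[
(h,h')+(h',h)+\hob{h}{h}{h'}{h'} \;=\; (h',h)-(h,h')-\phio{h}{h'},
\]
so the asymmetry appears without doubling. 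A final use of $(D_3)$ to rewrite $(k)-(h)-(h')$, followed by $(G_3)$ and one STU-like swap on $\phio{h}{h'}$, yields the statement exactly. No torsion argument is needed.
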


\begin{proof}
We set $k := h +h'$. Using $(G_0)$, $(D_1)$ and $(D_2)$, we get
\begin{eqnarray*}
(k) \quad = \quad (k,k) &= &  (h,k) + (h',k) - \omega(h,h')\cdot (k) - \hob{h}{h'}{k}{k}\\
&=&  \left((h,h') + (h,h) - \omega(h',h)\cdot(h) -  \hob{h}{h}{h'}{h}\right) \\
&& + \left(   (h',h') + (h',h) - \omega(h',h)\cdot(h') - \hob{h'}{h'}{h'}{h} \right)\\
&& - \omega(h,h')\cdot (k) - \hob{h}{h'}{k}{k}.
\end{eqnarray*}
It follows from the IHX and STU-like relations that, for all $a,b,c\in H$,  
$$
\hob{a}{b}{b}{c}  = 0 \ \in \jacobi_{[2]}^{<,c}(H).
$$
We deduce that
\begin{eqnarray*}
&&(1+\omega(h,h'))\cdot (k) - (1+\omega(h,h')) \cdot (h)  - (1+\omega(h,h')) \cdot  (h') \\
 &=& (h,h') + (h',h)   -  \hob{h}{h}{h'}{h}  - \hob{h}{h'}{k}{k}\\
 &=& (h,h') + (h',h)   +   \hob{h}{h}{h'}{h'} \\
 &=& (h',h)-(h,h') - \phio{h}{h'},
\end{eqnarray*}
where the last equality follows from relation $(G_1)$.  
Using now $(D_3)$, we get
$$
 (h',h) - (h,h') - \phio{h}{h'} = -(1+\omega(h,h')) \omega(h,h') \cdot 1 - (1+\omega(h,h') ) \cdot \phio{h}{h'},
$$
and, using $(G_3)$, we obtain
$$
(h',h) - (h,h') = 
\frac{(1+\omega(h,h')) \omega(h,h')}{2} \cdot \thetagraph - \omega(h,h')\cdot \phio{h}{h'}.                
$$
The STU-like relation allows us to conclude. 
\end{proof}

\begin{theorem}\label{thm:iso_KC/Y3}
The map $\psi_{[2]}$ factorizes to an isomorphism 
$$ 
\psi_{[2]}: \jacobi_{[2]}^{<,c}(H) \overset{\simeq}{\longrightarrow} \Jcob/Y_{3} 
$$ 
and the group $\jacobi_{[2]}^{<,c}(H)$ is a free abelian group with the same rank as $\jacobi^{<,c}_2(H)$.
\end{theorem}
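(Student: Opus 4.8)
The plan is to identify $\jacobi_{[2]}^{<,c}(H)$ with $\Jcob/Y_3$ by comparing it, through a five lemma, with the short exact sequence $0\to Y_2\cyl/Y_3\to\Jcob/Y_3\to\Jcob/Y_2\to 0$, using Corollary~\ref{cor:varphi2} and the classification of the $Y_2$-equivalence of \cite{MM} as the two inputs, and then to read off freeness and the rank from Theorem~A. \emph{First}, I would check that $\psi_{[2]}$ vanishes on each of $(G_0),\dots,(G_3),(D_1),(D_2),(D_3)$, so that it descends to a still-surjective homomorphism $\bar\psi_{[2]}\colon\jacobi_{[2]}^{<,c}(H)\to\Jcob/Y_3$. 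What makes this routine is that every generator in the source is sent into $\Jcob=\cob[2]$ — all the $Y$-graphs occurring have at least one special leaf, hence do not change $\tau_1$ — so on this submonoid $\rho_3$ is equivalent to $\tau_2$, and the sum formula (\ref{eq:sum_formula}) shows that $\lambda_j$ is \emph{additive} on $\Jcob$ (since $\tau_1$ vanishes there). Thus $\tau_2$, $\alpha$ and $\lambda_j$ are group homomorphisms on $\Jcob$, and by Theorem~A it suffices to verify that each annihilates the image of every relation; these are linear identities, which follow from the surgery formulas of Lemma~\ref{lem:tau2}, Lemma~\ref{lem:tau2_2} and Proposition~\ref{prop:alpha_properties} for $\tau_2$ and $\alpha$, and from the values of $\lambda_j$ on the generators extracted from the LMO homomorphism via Lemma~\ref{lem:tau2_LMO}, Lemma~\ref{lem:alpha_LMO} and Lemma~\ref{lem:Casson_LMO}. (Alternatively the relations can be verified directly by clasper calculus with the moves of Appendix~\ref{sec:calc_clasper} — splitting a leaf as a band sum, sliding and absorbing special leaves, controlling framing numbers — which is how they were produced.)

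\emph{Next}, let $\overline{Q}$ denote the cokernel of the canonical map $\jacobi^{<,c}_2(H)\to\jacobi_{[2]}^{<,c}(H)$ induced by the summand inclusion. Since $\psi_{[2]}$ restricts to Habiro's map $\psi_2$ on this summand and $\psi_2$ is an isomorphism onto $Y_2\cyl/Y_3$ by Corollary~\ref{cor:varphi2}, that canonical map is injective and $\bar\psi_{[2]}$ carries it isomorphically onto $Y_2\cyl/Y_3\subset\Jcob/Y_3$; hence there is a morphism of short exact sequences
$$
\xymatrix{
0 \ar[r] & \jacobi^{<,c}_2(H) \ar[r] \ar[d]^-{\psi_2} & \jacobi_{[2]}^{<,c}(H) \ar[r] \ar[d]^-{\bar\psi_{[2]}} & \overline{Q} \ar[r] \ar[d]^-{\bar q} & 0 \\
0 \ar[r] & Y_2\cyl/Y_{3} \ar[r] & \Jcob/Y_{3} \ar[r] & \Jcob/Y_{2} \ar[r] & 0
}
$$
in which the left vertical map is an isomorphism. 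By the five lemma it is enough to show $\bar q$ is an isomorphism; it is onto because $\bar\psi_{[2]}$ is, and it is injective for dimensional reasons. Indeed $(G_1),(G_2),(G_3)$ show $\overline{Q}$ is a $\Z_2$-vector space, and $(G_0)$, $(D_1)$, $(D_2)$ together with the symmetry relation (\ref{eq:symmetry_defect}) — whose defect terms lie in $\jacobi^{<,c}_2(H)$ and so vanish in $\overline{Q}$ — reduce a general generator $(h,k)$ to a combination of $1$ and the $(e_i,e_j)$ ($i\le j$) for a fixed basis $(e_i)$ of $H$, giving $\dim_{\Z_2}\overline{Q}\le 1+2g+\binom{2g}{2}$. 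On the other hand, since the $Y_2$-equivalence is classified by $(\tau_1,\beta)$ \cite{MM} and $\tau_1$ vanishes on $\Jcob$, the Birman--Craggs homomorphism embeds $\Jcob/Y_2$ into $B_{\le2}$, and $\beta|_{\Jcob}$ is onto $B_{\le2}$ because, by Lemma~\ref{lem:Y_beta}, surgeries along $Y$-graphs with special leaves and $0$-framed ordinary leaves realise $\overline 1$, all $\overline h$, and all $\overline h\,\overline{h'}$, which generate $B_{\le2}$; thus $\Jcob/Y_2\cong B_{\le2}$ has dimension exactly $1+2g+\binom{2g}{2}$. A surjection onto it from a $\Z_2$-space of at most that dimension is an isomorphism, so $\bar q$, hence $\bar\psi_{[2]}$, is an isomorphism.

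\emph{Finally}, $\Jcob/Y_3$ is torsion-free: the homomorphism $(\tau_2,\alpha,\lambda_j)\colon\Jcob/Y_3\to\DD_2(H)\oplus S^2H\oplus\Z$ (well-defined, and additive on $\Jcob$ as above) is injective, since if all three vanish on $M\in\Jcob$ then $\rho_3(M)=1$ and Theorem~A gives $M\stackrel{Y_3}{\sim}\Sigma\times I$, and its target is torsion-free. Being an extension of the finite group $B_{\le2}$ by the finitely generated free group $Y_2\cyl/Y_3$, the group $\Jcob/Y_3$ is finitely generated; being torsion-free, it is free abelian, of rank $\operatorname{rk} Y_2\cyl/Y_3=\operatorname{rk}\jacobi^{<,c}_2(H)$. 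Transporting along $\bar\psi_{[2]}$ proves the theorem. I expect the main obstacle to be the first step — checking that $\psi_{[2]}$ respects every relation — which requires either careful clasper calculus with special leaves and framings, or the explicit evaluation of $\lambda_j$ on the special-leaf generators through the LMO homomorphism; once that and the two external inputs (Corollary~\ref{cor:varphi2} and the $Y_2$-classification of \cite{MM}) are in hand, the remainder is a diagram chase.
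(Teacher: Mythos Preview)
Your overall strategy is correct and takes a genuinely different route from the paper. The paper verifies the relations $(G_0)$--$(D_3)$ by direct clasper calculus (your alternative (b)), and then proves injectivity of $\psi_{[2]}$ by constructing an explicit homomorphism $\digamma\colon\jacobi_{[2]}^{<,c}(H)\to{``\tfrac12"}\jacobi^{<,c}_2(H)$ into a certain lattice of $\jacobi^{<,c}_2(H_\Q)$, produces a surjective inverse $\Upsilon$, and concludes via the factorization $\digamma=\chi\circ Z_2\circ\psi_{[2]}$. Your route---the five-lemma comparison with $0\to Y_2\cyl/Y_3\to\Jcob/Y_3\to\Jcob/Y_2\to 0$, the $\Z_2$-dimension count identifying $\overline Q$ with $B_{\le2}$, and torsion-freeness via the embedding $(\tau_2,\alpha,\lambda_j)$ into $\DD_2(H)\oplus S^2H\oplus\Z$ supplied by Theorem~A---is more structural and bypasses the explicit lattice entirely. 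What the paper's approach buys in return is a concrete identification $\Jcob/Y_3\cong{``\tfrac12"}\jacobi^{<,c}_2(H)$ inside $\jacobi^{<,c}_2(H_\Q)$ via $Z_2$, which is convenient for the later computations in \S\ref{sec:core_Casson}.

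One caveat on Step~1 via Theorem~A: the lemmas you cite do not quite supply all the needed values. Proposition~\ref{prop:alpha_properties} computes $\alpha$ only for looped degree-$2$ claspers and for $Y$-graphs with \emph{two} special leaves, so $\alpha(\psi_{[2]}(h,k))$ for a $Y$-graph with \emph{one} special leaf is not directly available there; and Lemma~\ref{lem:Casson_LMO} determines $\lambda_j$ only after $\tau_2$ and $\alpha$ already vanish, so it does not give $\lambda_j$ on the individual special-leaf generators either. These missing values can certainly be extracted from the LMO homomorphism (this is essentially the content of Appendix~\ref{subsec:LMO_deg2}), but as written approach~(a) has a gap. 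Your alternative~(b), clasper calculus via the moves of Appendix~\ref{sec:calc_clasper}, is exactly what the paper does for this step and closes the gap.
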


Before proving this theorem, we shall draw two of its consequences.
First, after one has chosen a basis of the free abelian group $H$, 
one can derive from this diagrammatic description a presentation of the abelian group $\Jcob/Y_{3}$. 
Second, Theorem \ref{thm:iso_KC/Y3} implies the following.

\begin{corollary}
We have the following commutative diagram of abelian groups, whose rows are short exact sequences:
$$
\xymatrix{
0 \ar[r] & Y_2 \cyl/Y_3 \ar[r] & \Jcob/Y_3 \ar[r]^\beta & B_{\leq 2} \ar[r] & 0\\
0 \ar[r] & \jacobi^{<,c}_2(H) \ar[r] \ar[u]^-{\psi_2}_-\simeq & \jacobi_{[2]}^{<,c}(H) \ar[u]^-{\psi_{[2]}}_-\simeq  \ar[r]^b
& B_{\leq 2} \ar@{=}[u] \ar[r] & 0.
}
$$
Here $B_{\leq 2}$ is the space of polynomial functions $\Spin(\Sigma)\to \Z_2$ of degree $\leq 2$
and, using the notation (\ref{eq:affine_functions}), we define the homomorphism $b$ as follows:
$b$ is trivial on $\jacobi^{<,c}_2(H)$, $b$ sends $1\in \Z$ to the constant function $\overline{1}$,
$(h)\in \Z\!\cdot\! H$ to the affine function $\overline{h}$,
and  $(h,k)\in  \Z\!\cdot\!(H\times H)$ to the quadratic function $ \overline{h} \cdot \overline{k}$.
\end{corollary}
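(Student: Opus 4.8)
The plan is to establish exactness of the two rows separately and then to check that the two squares commute, the three vertical maps being already known to be isomorphisms (the identity on $B_{\leq 2}$, and $\psi_2$, $\psi_{[2]}$ by Corollary~\ref{cor:varphi2} and Theorem~\ref{thm:iso_KC/Y3}). I would begin with the top row. Since the $Y_2$-equivalence implies the $J_2$-equivalence we have $Y_2\cyl \subset \Jcob$, and the induced map $Y_2\cyl/Y_3 \to \Jcob/Y_3$ is injective because an element of $Y_2\cyl$ that is $Y_3$-equivalent to $\Sigma \times I$ lies in $Y_3\cyl$ by definition. Being a finite-type invariant of degree $1$, the homomorphism $\beta$ is $Y_3$-invariant by Lemma~\ref{lem:Y_FTI}, hence descends to $\cyl/Y_3$; its restriction to $\Jcob/Y_3$ takes values in $B_{\leq 2}$ by the diagram~(\ref{eq:d^3beta}), since $\tau_1$ vanishes on $\Jcob$. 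Surjectivity of $\beta : \Jcob/Y_3 \to B_{\leq 2}$ follows from Lemma~\ref{lem:Y_beta}: a special leaf, being null-homologous and $(-1)$-framed, contributes a factor $\overline{1}$, so the $Y$-graphs $Y_s$, $Y_h$, $Y_{h,k}$ used in the definition of $\psi_{[2]}$ have $\beta$-variations $\overline{1}$, $\overline{h}$ and $\overline{h}\cdot\overline{k}$, and these functions span $B_{\leq 2}$. Finally, $\beta$ is additive with $\beta(\Sigma\times I)=0$ and is $Y_2$-invariant, hence vanishes on $Y_2\cyl$; conversely, if $M\in\Jcob$ satisfies $\beta(M)=0$, then also $\tau_1(M)=0$, so $M$ is $Y_2$-equivalent to $\Sigma\times I$ because the $Y_2$-equivalence is classified by the pair $(\tau_1,\beta)$ in \cite{MM}. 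Thus $\ker\beta = Y_2\cyl/Y_3$ and the top row is a short exact sequence.

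Next I would check that $b$ is well-defined, i.e. that it annihilates the relations $(G_0)$--$(G_3)$ and $(D_1)$--$(D_3)$. As $b$ is trivial on $\jacobi^{<,c}_2(H)$, all diagrammatic terms disappear and the verification reduces to three identities in the ring $B$ of boolean functions: $\overline{h}^2=\overline{h}$, which handles $(G_0)$; $2\cdot f=0$ for $f\in B$, which handles $(G_1)$, $(G_2)$, $(G_3)$; and $\overline{h+h'}=\overline{h}+\overline{h'}+\omega(h,h')\cdot\overline{1}$, valid because $\Spin(\Sigma)$ is identified with the set of quadratic forms whose polar form is $\omega \bmod 2$, which handles $(D_1)$, $(D_2)$, $(D_3)$. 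The map $b$ is visibly surjective. The left square commutes by construction, since the restriction of $\psi_{[2]}$ to the summand $\jacobi^{<,c}_2(H)$ is $\psi_2$, which lands in $Y_2\cyl/Y_3$ where $\beta$ vanishes, matching $b=0$ there. For the right square, the surgery formula of Lemma~\ref{lem:Y_beta} recalled above gives $\beta\psi_{[2]}(1)=\overline{1}=b(1)$, $\beta\psi_{[2]}(h)=\overline{h}=b(h)$ and $\beta\psi_{[2]}(h,k)=\overline{h}\cdot\overline{k}=b(h,k)$, so that $b=\beta\circ\psi_{[2]}$ on all of $\jacobi_{[2]}^{<,c}(H)$.

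The exactness of the bottom row is now a short diagram chase. Since $\psi_{[2]}$ is an isomorphism and $b=\beta\circ\psi_{[2]}$, we have $\ker b=\psi_{[2]}^{-1}(\ker\beta)=\psi_{[2]}^{-1}(Y_2\cyl/Y_3)$; and since the left square commutes and $\psi_2 : \jacobi^{<,c}_2(H)\to Y_2\cyl/Y_3$ is an isomorphism, this preimage equals the image of $\jacobi^{<,c}_2(H)$ in $\jacobi_{[2]}^{<,c}(H)$ (which is therefore injective). Together with the surjectivity of $b$, this gives exactness of the bottom row, so the diagram is complete with the asserted properties. The single step requiring real input is the exactness of the top row at $\Jcob/Y_3$, which rests on the classification of the $Y_2$-equivalence by the pair $(\tau_1,\beta)$ from \cite{MM}; everything else is a direct verification.
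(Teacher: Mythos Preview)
Your proof is correct and follows the same approach as the paper: check that $b$ is well-defined via the identity $\overline{h+h'}=\overline{h}+\overline{h'}+\omega(h,h')\cdot\overline{1}$, verify commutativity using Lemma~\ref{lem:Y_beta}, and deduce exactness of one row from the other via the vertical isomorphisms. The only difference is that where the paper simply cites \cite{MM} for the exactness of the top row, you unpack that citation explicitly (surjectivity of $\beta$ onto $B_{\leq 2}$ and $\ker\beta = Y_2\cyl/Y_3$ via the $(\tau_1,\beta)$-classification of $Y_2$-equivalence), which is a welcome clarification.
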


\begin{proof}
The fact that $b$ is well-defined is easily checked using the following formula:
$$
\forall h,k \in H, \quad
\overline{h+k} = \overline{h} + \overline{k} + \omega(h,k) \cdot \overline{1} \ \in B_{\leq 1}.
$$
The commutativity of  the diagram follows from the definition of $\psi_{[2]}$  and from Lemma \ref{lem:Y_beta}.
The top sequence is exact according to \cite{MM}.
Since the vertical maps are isomorphisms (by Corollary \ref{cor:varphi2} and Theorem  \ref{thm:iso_KC/Y3}),
the bottom sequence is exact too.
\end{proof}

\begin{proof}[Proof of Theorem \ref{thm:iso_KC/Y3}]
We start by proving that $\psi_{[2]}$ vanishes on $(G_0),(G_1), (G_3)$, 
and we recall that $(G_2)$ is a consequence of $(G_0)$ and $(G_1)$. First of all, let us prove that
\begin{equation}
\label{eq:G0}
\forall h\in H, \
\psi_{[2]}(h) = \psi_{[2]}(h,h).
\end{equation}
Let $Y_h$ be a $Y$-graph as described in the definition of $\psi_{[2]}$:
its ``non-special'' leaf is denoted by $L$.
By Lemma \ref{lem:slide}, the $Y$-graph $Y_h$ is equivalent to a $Y$-graph $Y_h'$ with one special leaf
and two other leaves given by $L$ and its parallel $L^\parallel$. Now, using the ``framing number zero'' assumption on $L$,
we can (up to $Y_3$-equivalence) put  $L$ and $L^\parallel$ in two  disjoint ``horizontal layers''.
More precisely, since we have $\Lk_+(L,L^\parallel) = \Fr(L)=0$,
one can find a framed oriented knot $K$ in a neighborhood of the top surface $\Sigma\times \{+1\}$ and a compact oriented
surface $S$ disjoint from $L$ such that $\partial S =  L^\parallel \sqcup (-K)$, and both knots are $0$-framed with respect to $S$. 
By Lemma \ref{lem:slide_special}, 
we can assume that the edges of $Y_h'$ do not intersect $S$.
Next, using Corollary \ref{cor:surf}, we can find a $Y$-graph $Y_h''$ with one special leaf, one leaf given by $L$ and another leaf given by $K$
such that $(\Sigma \times I)_{Y_{h}''} \overset{Y_3}{\sim} (\Sigma \times I)_{Y_h'}$.
Since we have $\Fr(K)=\Fr(L^\parallel)=\Fr(L)=0$, 
the graph $Y_h''$ can play the role of $Y_{h,h}$  in the definition of $\psi_{[2]}$. We conclude that
$$
\psi_{[2]}(h)= \left\{ (\Sigma \times I)_{Y_{h}} \right\}  = \left\{ (\Sigma \times I)_{Y_{h}''}\right\} = \psi_{[2]}(h,h).
$$
Next, the  relation
\begin{equation}\label{eq:G3}
 2\psi_{[2]}(1)=-\psi_{[2]}(\thetagraph)
\end{equation}
is an immediate consequence of Corollary \ref{cor:2spe}(2) and Lemma \ref{lem:twist_as}.
To check the  relation 
\begin{equation}\label{eq:G1}
\forall h,k\in H, \
2\psi_{[2]}(h,k)= - \psi_{[2]}\big(\hob{h}{h}{k}{k}\big) - \psi_{[2]}\big(\phio{h}{k}\big),
\end{equation}
we consider a $Y$-graph $Y_{h,k}$ as described in the definition of $\psi_{[2]}$.
Then  Lemma \ref{lem:doubling} tells that, up to $Y_3$-equivalence, 
we can replace two copies of $Y_{h,k}$ 
by a $\Phi$-graph and an H-graph, which has two pairs of parallel leaves. 
Then, using the ``framing number zero'' assumption on the non-special leaves of $Y_{h,k}$
and an argument similar to the previous lines,
the two leaves of each pair can be put  in two  disjoint ``horizontal layers''. Relation (\ref{eq:G1}) follows.

We now show that $\psi_{[2]}$ vanishes on $(D_3)$. 
More precisely, we show that Corollary \ref{cor:split_2spe} implies that, for all $h,h'\in H$, 
\begin{equation}\label{eq:D3}
 \psi_{[2]}(h+h') = \psi_{[2]}(h) + \psi_{[2]}(h') 
  - \omega(h,h')\cdot \psi_{[2]}(1) - \psi_{[2]}(\phio{h}{h'}).
\end{equation} 
Let $K_h$ (respectively $K_{h'}$) be an oriented framed knot in $\Sigma\times [-1,0]$ 
(respectively in $\Sigma\times [0,1]$) with framing number zero and 
representing $h\in H$ (respectively $h'\in H$). 
Let $K_h\sharp K_{h'}$ denote a framed connected sum of $K_h$ and $K_{h'}$.  
By Corollary \ref{cor:split_2spe} and Lemma \ref{lem:slide}, we have 
$$
(\Sigma\times I)_{Y} 
   \stackrel{Y_3}{\sim} \psi_{[2]}(h) + \psi_{[2]}(h')- \psi_{[2]}(\phio{h}{h'}), 
$$
where $Y$ denotes a $Y$-graph with two special leaves, the 
third leaf being a copy of $K_h\sharp K_{h'}$.  
On the other hand, by Lemma \ref{lem:linking_numbers} and Lemma \ref{lem:framed_connect}, 
the framing number of $K_h\sharp K_{h'}$ is equal to $-\omega(h,h')$.  
Hence, we can reduce the framing number of $K_h\sharp K_{h'}$ to zero by 
adding $|\omega(h,h')|$ isolated $(+1)$-twists or $(-1)$-twists, 
depending on whether $\omega(h,h')$ is positive or negative respectively.  
Suppose that $\omega(h,h')\leq 0$. 
Then by Corollary \ref{cor:split_2spe} and Lemma \ref{lem:special}, we have
$$ 
(\Sigma\times I)_{Y} + (-\omega(h,h'))\cdot (\Sigma\times I)_{Y_{s}} 
\stackrel{Y_3}{\sim} \psi_{[2]}(h+h') 
$$
where $Y_{s}$ is a $Y$-graph with three special leaves, and equation (\ref{eq:D3}) follows.
The case $\omega(h,h')\geq 0$ is shown similarly.

By the exact same arguments, one can use Lemma \ref{lem:split_special} to check that 
\begin{equation}\label{eq:D1}
 \psi_{[2]}(h+h',k) = \psi_{[2]}(h,k) + \psi_{[2]}(h',k) 
  - \omega(h,h')\cdot \psi_{[2]}(k) - \psi_{[2]}(\hob{h}{h'}{k}{k})
\end{equation}
for all $h,h',k \in H$.
Here again, we need the ``framing number zero'' assumption for the ``non-special'' leaves
in the definition of $\psi_{[2]}$.
Similarly, we have
\begin{equation}\label{eq:D2}
\psi_{[2]}(h,k+k') = \psi_{[2]}(h,k) + \psi_{[2]}(h,k') 
 - \omega(k,k')\cdot \psi_{[2]}(h) - \psi_{[2]}(\hob{h}{h}{k}{k'}).  
\end{equation}
for all $h,k,k'\in H$.
We thus have that $\psi_{[2]}$ vanishes on $(D_1)$ and $(D_2)$. 

So far, we have shown that $\psi_{[2]}$ factorizes to a surjective map $ \psi_{[2]}: \jacobi_{[2]}^{<,c}(H) \to \Jcob/Y_{3}$.
To prove that it is actually an isomorphism, we consider the subgroup
$``\frac{1}{2}" \jacobi^{<,c}_2(H)$
of $\jacobi^{<,c}_2(H)\otimes \Q$ generated by $\jacobi^{<,c}_2(H)$ and the following elements:
$$
\frac{1}{2}\thetagraph
\quad \hbox{and} \quad
\frac{1}{2}\big(\hob{h}{h}{k}{k} + \phio{h}{k}\big)   \ \hbox{(for all $h,k\in H$)}.
$$
Thus we have the inclusions
$$
\jacobi^{<,c}_2(H) \subset ``\frac{1}{2}" \jacobi^{<,c}_2(H) 
\subset \frac{1}{2} \jacobi^{<,c}_2(H)  \subset \jacobi^{<,c}_2(H)\otimes \Q.
$$
We also consider the homomorphism of abelian groups
$$
\digamma: \jacobi_{[2]}^{<,c}(H) \longrightarrow ``\frac{1}{2}" \jacobi^{<,c}_2(H) , 
$$ 
which is  the identity on $\jacobi^{<,c}_2(H)$ and is defined as follows
on  $ \Z\!\cdot\!(H\times H)\oplus \Z\!\cdot\! H \oplus \Z$:
$$
\digamma(h,k) := -\frac{1}{2}\big(\hob{h}{h}{k}{k} + \phio{h}{k}\big), \
\digamma(h) := - \frac{1}{2} \phio{h}{h}, \
\digamma(1) := -\frac{1}{2}\thetagraph.
$$ 
A straightforward computation, based on the multilinearity and STU-like relations in $\jacobi^{<,c}_2(H)\otimes \Q$,
shows that $\digamma$ is well-defined. We shall prove the following.

\begin{claim}\label{claim:iso}
The map $\digamma$ is an isomorphism and $``\frac{1}{2}" \jacobi^{<,c}_2(H)$ is a lattice of  $\jacobi^{<,c}_2(H)\otimes \Q$.
\end{claim}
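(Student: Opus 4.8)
The plan is to prove the two assertions separately. The lattice statement is essentially formal, so I would dispose of it first; the bijectivity of $\digamma$ is the substantive part, and there the only real difficulty is a torsion computation. Since $H$ has finite rank, there are only finitely many ordered degree $2$ Jacobi diagrams whose external vertices are colored by a fixed basis of $H$, so $\jacobi^{<,c}_2(H)$ is finitely generated; by Lemma~\ref{lem:no_torsion} it is torsion-free, hence a lattice in $\jacobi^{<,c}_2(H)\otimes\Q$. By its very definition $``\frac{1}{2}" \jacobi^{<,c}_2(H)$ is squeezed between the two lattices $\jacobi^{<,c}_2(H)$ and $\tfrac{1}{2}\jacobi^{<,c}_2(H)$: as a subgroup of the latter it is torsion-free and finitely generated, and as it contains the former it has full rank, so it is a lattice, which settles the second assertion. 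The surjectivity of $\digamma$ is immediate too: $\digamma$ is the identity on $\jacobi^{<,c}_2(H)$, while $\tfrac12\thetagraph=-\digamma(1)$ and $\tfrac12\bigl(\hob{h}{h}{k}{k}+\phio{h}{k}\bigr)=-\digamma(h,k)$ for all $h,k\in H$, and these elements generate $``\frac{1}{2}" \jacobi^{<,c}_2(H)$.

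For injectivity, first I would check that $\digamma\otimes\Q$ is an isomorphism. Over $\Q$ the relations $(G_1)$, $(G_2)$, $(G_3)$ let one eliminate the generators $(h,k)$, $(h)$ and $1$ in favour of $\jacobi^{<,c}_2(H)\otimes\Q$, so $\jacobi_{[2]}^{<,c}(H)\otimes\Q$ is a quotient of $\jacobi^{<,c}_2(H)\otimes\Q$, hence has dimension at most $\operatorname{rank}\jacobi^{<,c}_2(H)$; since $\digamma\otimes\Q$ maps it onto $``\frac{1}{2}" \jacobi^{<,c}_2(H)\otimes\Q=\jacobi^{<,c}_2(H)\otimes\Q$, it must be an isomorphism. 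Consequently $\ker\digamma$ is torsion, and it remains to show that $\jacobi_{[2]}^{<,c}(H)$ is torsion-free. Here I would argue that, using $(D_1)$, $(D_2)$, $(D_3)$, $(G_0)$ and the identity (\ref{eq:symmetry_defect}) together with a basis $(e_i)$ of $H$, every element of $\jacobi_{[2]}^{<,c}(H)$ can be written modulo $\jacobi^{<,c}_2(H)$ (which injects, because $\digamma$ is the identity on it) as a combination of the finitely many classes of $(e_i,e_j)$ with $i\le j$ and of $1$; since by $(G_1)$ and $(G_3)$ twice each of these lies in $\jacobi^{<,c}_2(H)$, the quotient $\jacobi_{[2]}^{<,c}(H)/\jacobi^{<,c}_2(H)$ is a finite $\Z_2$-vector space. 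A standard extension argument then shows that $\jacobi_{[2]}^{<,c}(H)$ is torsion-free if and only if there is no nontrivial relation
$$
\sum_{i\le j} a_{ij}\cdot\overline{\hob{e_i}{e_i}{e_j}{e_j}+\phio{e_i}{e_j}}\;+\;c\cdot\overline{\thetagraph}\;=\;0\;\in\;\jacobi^{<,c}_2(H)\otimes\Z_2 ,
$$
with $a_{ij},c\in\Z_2$. Ruling this out gives torsion-freeness, hence (with the surjectivity already shown) that $\digamma$ is an isomorphism; the rank assertion of Theorem~\ref{thm:iso_KC/Y3} then follows from the isomorphism $\digamma\otimes\Q$.

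The hard part is exactly this last linear independence in $\jacobi^{<,c}_2(H)\otimes\Z_2$. I would attack it via the structural description of $\jacobi^{<,c}_2(H)$ recalled in, and established inside the proof of, Lemma~\ref{lem:no_torsion}: through the isomorphism $\jacobi^{<,c}_2(H)\simeq\jacobi_2(H)$ and the splitting $\jacobi_2(H)=\bigl(\jacobi^c_{2,0}(H)\oplus\jacobi^c_{2,1}(H)\oplus\jacobi^c_{2,2}(H)\bigr)\oplus S^2\jacobi^c_1(H)$, the $\thetagraph$-contribution lies in $\jacobi^c_{2,2}(H)\simeq\Z$, while the $\hob{}{}{}{}$- and $\phio{}{}$-contributions live in $\jacobi^c_{2,0}(H)\simeq S^2\Lambda^2H/\Lambda^4H$ and $\jacobi^c_{2,1}(H)\simeq S^2H$ respectively, where independence over $\Z_2$ for distinct index pairs can be read off directly. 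Carrying this out cleanly is the delicate point, because it requires tracking precisely how the \emph{ordered} graphs $\hob{}{}{}{}$ and $\phio{}{}$ sit inside these graded pieces through the STU-like relation and the isomorphism $\chi$ (in particular one needs the identity $\hob{a}{b}{b}{c}=0$ used in the proof of (\ref{eq:symmetry_defect}), as well as the effect of permuting colors). Everything else is routine bookkeeping with the relations $(G_0)$–$(D_3)$.
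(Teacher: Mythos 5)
Your reduction is sound and takes a genuinely different route from the paper: you prove surjectivity of $\digamma$ directly, get injectivity rationally by a rank count using $(G_1)$--$(G_3)$, and then reduce integral injectivity to torsion-freeness of $\jacobi_{[2]}^{<,c}(H)$, which via the extension $0 \to \jacobi^{<,c}_2(H) \to \jacobi_{[2]}^{<,c}(H) \to Q \to 0$ (with $Q$ an elementary abelian $2$-group by $(D_1)$--$(D_3)$, $(G_0)$, $(G_1)$, $(G_3)$ and (\ref{eq:symmetry_defect})) comes down to the mod~$2$ linear independence of the elements $\hob{e_i}{e_i}{e_j}{e_j}+\phio{e_i}{e_j}$ ($i\le j$) and $\thetagraph$ in $\jacobi^{<,c}_2(H)\otimes\Z_2$. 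All of that is correct, and your lattice argument (squeezed between $\jacobi^{<,c}_2(H)$ and $\tfrac12\jacobi^{<,c}_2(H)$, torsion-free by Lemma \ref{lem:no_torsion}) is fine. The paper instead sidesteps torsion entirely by exhibiting an explicit lattice $``\frac{1}{2}"\jacobi^{c}_2(L^\pm)=A\oplus\frac12 B'\oplus\frac12 C$ upstairs and a surjection $\Upsilon$ onto $\jacobi_{[2]}^{<,c}(H)$ with $\digamma\circ\Upsilon=s\circ\varphi$; the surjectivity of $\Upsilon$ is proved with the same relations you use to bound $Q$, so the two arguments are parallel in content.

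The genuine gap is that the decisive independence statement is only gestured at, and the sketch you offer for it would fail as written. You propose to read off independence from the loop-degree splitting of $\jacobi_2(H)$ ``through the STU-like relation and the isomorphism $\chi$'': but $\chi$ is only defined over $\Q$ and $\chi^{-1}$ of an H-graph carries denominators $\tfrac12$ and $\tfrac14$ on its $\Phi$- and $\Theta$-components, so nothing can be reduced mod~$2$ through it; moreover, as you yourself note, $\hob{e_i}{e_i}{e_j}{e_j}$ does \emph{not} lie in a single loop-degree summand of $\jacobi^{<}_2(H)$, so ``reading off'' components requires an actual computation you have not done. The fix is to use the \emph{integral} isomorphism $s\circ\varphi:\jacobi^c_2(L^\pm)\to\jacobi^{<,c}_2(H)$ from the proof of Lemma \ref{lem:no_torsion}. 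In $\jacobi^c_2(L^\pm)$ there is no STU-like relation, the loop-degree decomposition $A\oplus B\oplus C$ is an honest direct sum with $B$ free on the $\phin{x}{y}$ ($x\preceq y$) and $C=\Z\cdot\thetagraph$, and one checks that $s\varphi\big(-\phin{x}{y}+\hn{x}{y}{y}{x}\big)=-\big(\hob{x}{x}{y}{y}+\phio{x}{y}\big)$. Since the elements $-\phin{x}{y}+\hn{x}{y}{y}{x}$ have pairwise distinct, nonzero projections onto the free summand $B$ and no $C$-component, while $\thetagraph$ spans $C$, the required independence mod~$2$ is immediate --- this is exactly the content of the paper's basis change $B\rightsquigarrow B'$. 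Until this step is carried out with the integral isomorphism, your proof of injectivity is incomplete.
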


\noindent
This will conclude the proof of the theorem since, 
by the universal property of the LMO homomorphism (\ref{eq:univLMO}), 
we have the following commutative diagram:
$$
\xymatrix{
 \jacobi_{[2]}^{<,c}(H) \ar@{->>}[d]_(0.5){\psi_{[2]}} \ar[r]^(0.5){\digamma} & ``\frac{1}{2}" \jacobi^{<,c}_2(H)  \ar@{^{(}->}[d]\\
 \Jcob/Y_{3}  \ar[r]_(0.45){\chi\circ Z_2} & \jacobi^{<,c}_{2}(H)\otimes \Q.  
}
$$

We now prove Claim \ref{claim:iso}. For that purpose, we shall work with the abelian group $\jacobi(L^\pm)$ 
defined in \S \ref{subsec:Jacobi}, which is isomorphic to $\jacobi^<(H)$ via the composition
$$
\jacobi(L^\pm) \mathop{\longrightarrow}_\simeq^\varphi \jacobi^<(-H) \mathop{\longrightarrow}_\simeq^s \jacobi^<(H).
$$
In particular, recall that $L^\pm$ denotes the abelian group freely generated 
by the set $\{1^+,\dots,g^+\}\cup\{1^-,\dots,g^-\}$.  
There is a natural order $\preceq$ on this set, 
which declares that $i^-\preceq j^-\preceq i^+\preceq j^+$ if $i\leq j$.
The loop degree gives the following decomposition:
$$ 
\jacobi^{c}_2(L^\pm)=  
\underbrace{\jacobi^{c}_{2,0}(L^\pm)}_{A:=} \oplus 
\underbrace{\jacobi^{c}_{2,1}(L^\pm)}_{B:=} \oplus \underbrace{\jacobi^{c}_{2,2}(L^\pm)}_{C:=}.
$$
The abelian group $B$ being freely generated by the elements
$$
\phin{x}{y}  \ \hbox{(for all $x,y\in \{1^+,\dots,g^+\}\cup\{1^-,\dots,g^-\}$ such that $x\preceq y$)},
$$
we also have the decomposition $\jacobi^{c}_2(L^\pm)= A \oplus B' \oplus C$
where $B'$ is the subgroup of $\jacobi^{c}_{2}(L^\pm)$ generated by the elements
$$
b(x,y) := -\phin{x}{y}  + \hn{x}{y}{y}{x}
\ \hbox{(for all $x,y$ such that $x\preceq y$)}.
$$
We then consider the subgroup 
$$
``\frac{1}{2}" \jacobi^{c}_2(L^\pm) :=
A \oplus \frac{1}{2} B' \oplus \frac{1}{2}C
$$
of $(A\oplus B' \oplus C)\otimes \Q= \jacobi^{c}_2(L^\pm)\otimes \Q$.
This is a lattice of $\jacobi^{c}_2(L^\pm)\otimes \Q$ satisfying
$$
\jacobi^{c}_2(L^\pm) \subset ``\frac{1}{2}" \jacobi^{c}_2(L^\pm) 
\subset \frac{1}{2} \jacobi^{c}_2(L^\pm)  \subset \jacobi^{c}_2(L^\pm)\otimes \Q.
$$
We also consider the group homomorphism 
$$
\Upsilon:``\frac{1}{2}" \jacobi^{c}_2(L^\pm) \longrightarrow  \jacobi_{[2]}^{<,c}(H) 
$$
that coincides with $s\circ \varphi$ on $A$ 
and is defined on the basis of $\frac{1}{2} B' \oplus \frac{1}{2}C$ as follows:
$$
 \Upsilon\left(\frac{1}{2}b(x,y)\right) := \{(x,y)\}
\ \hbox{(for all $x,y$ with $x\preceq y$)}
\quad \hbox{and} \quad
\Upsilon\left(\frac{1}{2} \thetagraph \right) := \{1\}.
$$
Here an element $x$ of $\{1^+,\dots,g^+\}\cup \{1^-,\dots,g^-\}$ is regarded as an element of $H$ by (\ref{eq:L+-_H}).
By construction of $\digamma$ and $\Upsilon$ we have the following commutative diagram:
$$
\xymatrix{
& ``\frac{1}{2}" \jacobi^{c}_2(L^\pm) \ar[ld]_-\Upsilon \ar@{^{(}->}[r]  \ar[d]^-{s\circ \varphi}&
 \jacobi^{c}_2(L^\pm)\otimes \Q  \ar[d]^-{s\circ \varphi}_-\simeq \\
\jacobi_{[2]}^{<,c}(H) \ar@{->>}[r]_-\digamma & 
``\frac{1}{2}" \jacobi^{<,c}_2(H) \ar@{^{(}->}[r] &\jacobi^{<,c}_2(H)\otimes \Q 
}
$$
Therefore Claim \ref{claim:iso} will follow from the surjectivity of $\Upsilon$.

In order to prove that $\Upsilon$ is surjective, 
observe that we have $\Upsilon(x)=\{s\circ\varphi(x)\}$ 
for all $x\in \jacobi_2^c(L^\pm)\subset ``\frac{1}{2}"\jacobi_2^c(L^\pm)$,
so that any element of $\jacobi_{[2]}^{<,c}(H)$ coming from the summand $\jacobi_2^{<,c}(H)$ belongs to the image of $\Upsilon$.
It is also clear that $\{1\}$ is in the image of $\Upsilon$.
Thus, we just have to check that $(h)$ belongs to $\Img(\Upsilon)$ for all $h\in H$,
and that $(h,k)$ belongs to $\Img(\Upsilon)$  for all $h,k\in H$. Relation $(D_3)$ implies that
$$
\forall h_1,h_2\in H, \ (h_1+h_2) \equiv (h_1) +(h_2)  \mod \Img(\Upsilon)
$$
so that $(h)$ can be decomposed as a  sum of some  $(x)$'s
(with $x\in \{1^+,\dots,g^+,1^-,\dots,g^-\}$). 
Since we have $(x)=\Upsilon\left(\frac{1}{2}b(x,x)\right)$, this shows that $(h)\in \Img(\Upsilon)$.
Next, relations $(D_1)$ and $(D_2)$ imply that
$$
\forall h_1,h_2\in H, \ (h_1+h_2,k) \equiv  (h_1,k) + (h_2,k) \mod \Img(\Upsilon),
$$
$$
\forall k_1,k_2\in H, \ (h,k_1+k_2) \equiv  (h,k_1) + (h,k_2) \mod \Img(\Upsilon),
$$
so that $(h,k)$ writes as a sum of some  $(x,y)$'s (where $x,y\in \{1^+,\dots,g^+,1^-,\dots,g^-\}$). 
Since we have $(x,y)=\Upsilon\left(\frac{1}{2}b(x,y)\right)$ if $x\preceq y$ and 
since (\ref{eq:symmetry_defect}) implies that $(x,y) \equiv (y,x) \mod \Img(\Upsilon)$ in general,
all this shows that $(h,k)\in \Img(\Upsilon)$. 
\end{proof}

\subsection{Diagrammatic description of $\cyl/Y_{3}$}\label{subsec:description2} 

We can now give the diagrammatic description of the isomorphism type of the group $\cyl/Y_3$.  

\begin{theorem}\label{thm:2cocycle}
The characteristic class of the central extension 
\begin{equation}\label{eq:Habiro_sec_bis} 
\xymatrix{
0 \ar[r] & \Jcob/Y_3 \ar[r] & \cyl/Y_3 \ar[r]^{\tau_1} & \Lambda^3 H \ar[r] & 1} 
\end{equation}
seen as an element of 
$$
H^2\left(\Lambda^3H; \Jcob/Y_3\right) \simeq \Hom\left(\Lambda^2 \Lambda^3H,\Jcob/Y_3\right)
\overset{\psi_{[2]}}{\simeq} \Hom\left(\Lambda^2 \Lambda^3H,\jacobi_{[2]}^{<,c}(H)\right)
$$
is  the antisymmetric bilinear map
$[\cdot,\cdot]: \Lambda^3 H\times \Lambda^3 H \to \jacobi_{[2]}^{<,c}(H)$ defined by 
$$
[a\wedge b\wedge c,d\wedge e\wedge f] := 
\left(\begin{array}{c}
\quad \omega(c,d)\ho{a}{b}{e}{f} - \omega(b,d)\ho{a}{c}{e}{f} + \omega(a,d)\ho{b}{c}{e}{f}  \\
+ \omega(c,e)\hob{d}{a}{b}{f} - \omega(b,e)\hob{d}{a}{c}{f} + \omega(a,e)\hob{d}{b}{c}{f}  \\
+ \omega(c,f)\ho{d}{e}{a}{b} - \omega(b,f)\ho{d}{e}{a}{c} + \omega(a,f)\ho{d}{e}{b}{c}
\end{array}\right).
$$
\end{theorem}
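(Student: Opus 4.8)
The plan is to identify the characteristic class of (\ref{eq:Habiro_sec_bis}) with the \emph{commutator pairing} of arbitrary lifts, which is the standard description of the class of a central extension of a free abelian group. Concretely, write $p=\tau_1\colon \cyl/Y_3 \to \Lambda^3 H$ and choose, for each $x\in\Lambda^3 H$, a lift $\widetilde{x}\in\cyl/Y_3$. Since the extension is central and $\Lambda^3 H$ is abelian, the assignment $(x,y)\mapsto \widetilde{x}\,\widetilde{y}\,\widetilde{x}^{-1}\,\widetilde{y}^{-1}\in\Jcob/Y_3$ does not depend on the lifts, is $\Z$-bilinear and alternating, hence factors through $\Lambda^2\Lambda^3 H$; and under the isomorphism (\ref{eq:iso2cocycle}) (which uses $H_2(\Lambda^3 H)\simeq\Lambda^2\Lambda^3 H$ and the fact that $\Lambda^3 H$ is free) it represents the class of (\ref{eq:Habiro_sec_bis}). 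So it suffices to evaluate this commutator on a basis of $\Lambda^3 H$ and to recognise the answer inside $\jacobi_{[2]}^{<,c}(H)$ via $\psi_{[2]}$.

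First I would produce convenient lifts by clasper surgery. Fix a symplectic basis $(\alpha_i,\beta_i)$ of $H$ represented by curves as in Figure~\ref{fig:basis}, so that $\Lambda^3 H$ has a basis of triple wedges of basis curves. For such a generator $x=e\wedge e'\wedge e''$, choose a $Y$-graph $Y_x\subset\Sigma\times I$ whose three leaves have framing number zero and homology classes $e,e',e''$, all placed in one fixed horizontal layer and in a fixed vertical order, and set $\widetilde{x}:=\{(\Sigma\times I)_{Y_x}\}$; extend multiplicatively over the chosen basis in a fixed order. By the surgery formula for $\tau_1$ along a $Y$-graph (given in \cite{MM}) together with the identification (\ref{eq:D_3}) one has $\tau_1\big((\Sigma\times I)_{Y_x}\big)=e\wedge e'\wedge e''$, and since $\tau_1$ is additive this yields a set-theoretic section of $\tau_1$.

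The heart of the proof is the clasper-calculus evaluation of the commutator of two such lifts. A commutator of two $Y_1$-surgeries is a product of $Y_2$-surgeries (Lemma~\ref{lem:crossingchange}), so $\widetilde{x}\,\widetilde{y}\,\widetilde{x}^{-1}\,\widetilde{y}^{-1}$ lies in $Y_2\cyl/Y_3$, which by Corollary~\ref{cor:varphi2} is torsion-free and identified with $\jacobi^{<,c}_2(H)$ through $\psi_2$. Using the commutator identity for $Y$-graphs together with Lemmas~\ref{lem:crossingchange}, \ref{lem:doubling}, \ref{lem:slide}, \ref{lem:slide_special}, \ref{lem:linking_numbers}, \ref{lem:framed_connect} and Corollary~\ref{cor:surf} to normalise leaves and framings, the commutator of surgery along $Y_x$ (with $x=a\wedge b\wedge c$) and along $Y_y$ (with $y=d\wedge e\wedge f$) is, up to $Y_3$-equivalence, a product over the nine pairs (leaf $\ell$ of $Y_x$, leaf $\ell'$ of $Y_y$) of surgeries along H-graphs, each carrying the two remaining leaves of $Y_x$ and the two remaining leaves of $Y_y$, with multiplicity the linking number of $\ell$ and $\ell'$, i.e. $\omega$ of their homology classes. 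Tracking the vertex-orientations (Figure~\ref{fig:leaf_orientation}), the horizontal layering of $Y_x$ and $Y_y$ in $\Sigma\times I$ (which pins down the total order of the leaves of each resulting H-graph, hence which of the two ordered shapes occurring in the statement it is), and the signs coming from the antisymmetry of the wedges $a\wedge b\wedge c$ and $d\wedge e\wedge f$, one checks that this product is precisely $\psi_{[2]}$ applied to the element $[a\wedge b\wedge c,\,d\wedge e\wedge f]$ of $\jacobi_{[2]}^{<,c}(H)$ written in the statement. (Alternatively, one may first run this computation over $\Q$, where it reduces to the fact from \cite{HM_SJD} that $\psi\otimes\Q$ identifies the commutator bracket of $\Gr^Y\cyl\otimes\Q$ with the diagrammatic bracket of $\jacobi^c(H_\Q)$, so that the bracket of the two $Y$-shaped diagrams is the nine-term $\omega$-contraction producing H-shaped diagrams, and then use torsion-freeness of $Y_2\cyl/Y_3$ to descend to $\Z$; the layering bookkeeping is still needed to pass from unordered to ordered diagrams.)

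It then remains to assemble the routine points: the commutator pairing is bilinear and alternating, hence factors through $\Lambda^2\Lambda^3 H$; the map $[\cdot,\cdot]$ in the statement is visibly bilinear and antisymmetric, so agreement on basis wedges, proved above, gives agreement everywhere; and everything is independent of the auxiliary choices (curves, graphs $Y_x$, layering, ordering of the basis), since the characteristic class of (\ref{eq:Habiro_sec_bis}) is independent of them while the target formula is intrinsic. The main obstacle is exactly the bookkeeping in the commutator computation — obtaining the correct interleaving of leaves (the dichotomy between the two ordered H-graph shapes) and all nine signs — which requires being meticulous about the conventions of Figure~\ref{fig:leaf_orientation}, the horizontal layers of $\Sigma\times I$, and the identifications defining $\psi_2$ and $\psi_{[2]}$.
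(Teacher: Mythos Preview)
Your proposal is correct, and in fact your parenthetical alternative is essentially the paper's proof. The paper proceeds exactly as you outline: the characteristic class is identified with the commutator pairing $(x,y)\mapsto[s(x),s(y)]$ for any set-theoretic section $s$ of $\tau_1$, and this commutator is the degree-$2$ piece of the Lie bracket on $\Gr^Y\cyl$. The paper then invokes directly that the surgery map $\psi$ is a Lie algebra homomorphism (this is the fact from \cite{HM_SJD} you cite), so that $\psi_{[2]}^{-1}\big([\psi_1(a\wedge b\wedge c),\psi_1(d\wedge e\wedge f)]\big)$ equals the diagrammatic bracket of the two ordered $Y$-diagrams. That bracket is computed in one line by the STU-like relation, giving the nine-term $\omega$-contraction formula. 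No passage through $\Q$ is needed, since $\psi_2$ is already an isomorphism over $\Z$ by Corollary~\ref{cor:varphi2}.

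Your primary approach---computing the commutator of two $Y$-graph surgeries by direct clasper calculus via Lemma~\ref{lem:crossingchange}---is a valid but more laborious route: you are in effect re-deriving, in this one degree, that $\psi$ respects brackets. One small imprecision: the multiplicity attached to a pair of leaves is the algebraic crossing count (i.e.\ $\omega$ of the homology classes) rather than the linking number $\Lk$ as defined in Appendix~\ref{subsec:framing_numbers}, which in general differs by a factor; since your leaves lie in disjoint horizontal layers this distinction does not cause an error, but the cleaner justification is exactly the STU-like relation in $\jacobi^{<,c}(H)$, which is what the paper uses.
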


\noindent
Theorem \ref{thm:2cocycle} is a refinement of a result of Morita on the Torelli group. 
Indeed he described in \cite[Theorem 3.1]{Morita_Casson2} the characteristic class of the  central extension
 $$  
\xymatrix{0 \ar[r] & \tau_2\left(\Johnson \right) \ar[r]^{\tau_2^{-1}} 
& \Torelli/\Torelli[3] \ar[r]^{\tau_1} & \Lambda^3 H \ar[r] & 1}. 
$$
According to Lemma \ref{lem:tau2}, the map 
$\tau_2 \circ \psi_{[2]}:\jacobi_{[2]}^{<,c}(H) \to \frac{\left(\Lambda^2 H \otimes \Lambda^2H\right)^{\mathfrak{S}_2}}{\Lambda^4H}$
sends $\ho{a}{b}{c}{d}$ to $(a\wedge b)  \leftrightarrow (c\wedge d)$. 
Thus, taking into account the fact that Morita's $\tau_2$ differs from ours by a minus sign, 
one sees that Theorem \ref{thm:2cocycle}  generalizes Morita's description.

\begin{proof}[Proof of Theorem \ref{thm:2cocycle}]
Let us denote by $e$ the characteristic class in 
$$
H^2(\Lambda^3H;\Jcob/Y_3)\simeq  \Hom(\Lambda^2\Lambda^3H,\Jcob/Y_3)
$$ 
of the central extension (\ref{eq:Habiro_sec_bis}), and let $s$ be a setwise section of $\tau_1$.  
Then the cohomology class $e$ is represented by the $2$-cocycle $c$ (in the bar complex) defined by 
$$ 
\forall x,y\in \Lambda^3 H, \
c(x\vert y):=s(x)s(y)s(xy)^{-1} \ \in \Jcob/Y_3 \subset \cyl/Y_3,
$$ 
so that $e\in \Hom(\Lambda^2\Lambda^3H,\Jcob/Y_3)$ is given by 
\begin{eqnarray*}
\forall x,y\in \Lambda^3 H, \ e(x\wedge y) = c\big((x\vert y)-(y\vert x) \big) &=&
c(x\vert y)c(y\vert  x)^{-1}\\
& = & s(x)s(y)s(xy)^{-1}s(yx)s(x)^{-1}s(y)^{-1}\\ 
&= & [ s(x),s(y) ].
\end{eqnarray*} 
This shows that $e$ is determined by the Lie bracket of the Lie ring of homology cylinders $\Gr^Y\cyl$,
in the sense that the following diagram is commutative:
$$  
\xymatrix{
\cyl/Y_2\times \cyl/Y_2 \ar[r]^(0.6){[\cdot,\cdot]} \ar@{->>}[d]_-{\tau_1\times \tau_1}
& Y_2\cyl/Y_{3}\ar@{^{(}->}[d] \\
\Lambda^3 H\times \Lambda^3 H \ar[r]_{e} &  \Jcob/Y_3. 
 }
$$
But $\tau_1$ induces an isomorphism from $\frac{\cyl/Y_2}{\Tors(\cyl/Y_2)}$ to $\Lambda^3H\simeq \jacobi^{<,c}_1(H)$ \cite{MM},
with inverse given by the surgery map $\psi_1$ of \S \ref{subsec:LMO}. 
Moreover, the Lie bracket of $\Gr^Y\cyl$ factorizes to $\frac{\cyl/Y_2}{\Tors(\cyl/Y_2)}$ 
since $Y_2\cyl/Y_{3}$ is torsion-free by Corollary \ref{cor:varphi2}. Thus, we obtain
$$
\xymatrix{
\frac{\cyl/Y_2}{\Tors(\cyl/Y_2)}\times \frac{\cyl/Y_2}{\Tors(\cyl/Y_2)} \ar[r]^-{[\cdot,\cdot]}  & Y_2\cyl/Y_{3} \ar@{^{(}->}[d]  \\
\Lambda^3 H\times \Lambda^3 H \ar[r]_-{e}\ar[u]_{\simeq}^{\psi_1\times \psi_1} & \Jcob/Y_3. 
}
$$
Since the surgery map $\psi$ preserves the Lie brackets, 
we obtain that, for all $a\wedge b \wedge c\in \Lambda^3H$ and  $d\wedge e \wedge f\in \Lambda^3H$,
\begin{eqnarray*}
\psi_{[2]}^{-1}\circ e\big((a \wedge b \wedge c)\wedge (d\wedge e \wedge f)\big)
&=& \psi_2^{-1}\left(\big[\psi_1(a \wedge b \wedge c),\psi_1(d\wedge e \wedge f)\big]\right)\\
 &=&   \left[\yo{a}{b}{c},\yo{d}{e}{f}\right]\\
& = &\yo{a}{b}{ \ \ c<} \yo{d}{e}{f} - \yo{d}{e}{ \ \ f <}\yo{a}{b}{c}.
\end{eqnarray*}
The desired formula follows from the STU-like relation.
\end{proof}

We conclude this section with a few consequences of the previous results on the structure of the group $\cyl/Y_3$.
First of all, a presentation of $\cyl/Y_3$ could be obtained from 
a presentation of $\Jcob/Y_3$ (which is discussed after Theorem \ref{thm:iso_KC/Y3})
using the short exact sequence (\ref{eq:Habiro_sec_bis}). 
Besides, we can deduce the following assertions.

\begin{corollary}\label{cor:prop_modY3}
The group $\cyl/Y_{3}$ has the following properties:
\begin{itemize}
\item[(i)] It is torsion-free;
\item[(ii)] Its center is $\Jcob/Y_3$;
\item[(iii)] Its commutator subgroup is strictly contained in $Y_2\cyl/Y_3$,
and it is the image of $\mcyl(\Gamma_2 \Torelli)$ 
by the canonical projection $\cyl \to \cyl/Y_3$.
\end{itemize}
\end{corollary}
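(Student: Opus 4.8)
The plan is to read off each property from the diagrammatic descriptions established in the previous subsections, namely Theorem~\ref{thm:iso_KC/Y3} for $\Jcob/Y_3$ and the central extension (\ref{eq:Habiro_sec_bis}) together with the $2$-cocycle computed in Theorem~\ref{thm:2cocycle}.

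\emph{Property (i).} Since $\Lambda^3 H$ is torsion-free and, by Theorem~\ref{thm:iso_KC/Y3}, the group $\Jcob/Y_3\simeq \jacobi_{[2]}^{<,c}(H)$ is a free abelian group, the central extension (\ref{eq:Habiro_sec_bis}) exhibits $\cyl/Y_3$ as an extension of a free abelian group by a free abelian group. Any such extension is torsion-free: a torsion element would map to a torsion element of $\Lambda^3 H$, hence to $0$, hence it would lie in the torsion-free group $\Jcob/Y_3$ and be trivial. So (i) follows formally.

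\emph{Property (ii).} One inclusion is Lemma~\ref{lem:Habiro_sec}, which says that $\Jcob/Y_3$ is central in $\cyl/Y_3$. For the reverse inclusion, suppose $x\in \cyl/Y_3$ is central; then $\tau_1(x)\in \Lambda^3 H$ pairs trivially (via the bracket of $\Gr^Y\cyl$, equivalently via the cocycle $e$ of Theorem~\ref{thm:2cocycle}) with every element of $\Lambda^3 H$. Thus it suffices to check that the antisymmetric map $[\cdot,\cdot]:\Lambda^3 H\times \Lambda^3 H\to \jacobi_{[2]}^{<,c}(H)$ of Theorem~\ref{thm:2cocycle} has trivial left (equivalently right) radical, i.e. that $[u,\Lambda^3H]=0$ forces $u=0$ (for $g\geq 3$; the small-genus cases are handled by stabilization, under which all the relevant invariants are compatible). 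Concretely, taking $u=a\wedge b\wedge c$ with $a,b,c$ part of a symplectic basis and pairing against a suitable $d\wedge e\wedge f$ produces a nonzero H-graph term in $\jacobi_{[2]}^{<,c}(H)$, using that these H-graph generators are nonzero in $\jacobi^{<,c}_2(H)\subset \jacobi_{[2]}^{<,c}(H)$ by Lemma~\ref{lem:no_torsion}; hence $u=0$. This is the step I expect to require the most care, since one must argue that the various H-graph contributions in the displayed formula do not cancel, which is cleanest to see after applying $\tau_2\circ\psi_{[2]}$ (Lemma~\ref{lem:tau2}) and invoking that Morita's pairing on $\Lambda^3H$ has no radical, or equivalently that the symplectic representation $\Lambda^3H$ has no copy of the trivial representation.

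\emph{Property (iii).} The commutator subgroup of $\cyl/Y_3$ is generated, by the cocycle computation, by the image of the bracket $e:\Lambda^3H\times\Lambda^3H\to Y_2\cyl/Y_3$, i.e. by the $\Sp(H)$-submodule of $Y_2\cyl/Y_3\simeq\jacobi^{<,c}_2(H)$ spanned by the H-graph terms appearing in the formula of Theorem~\ref{thm:2cocycle}. Under $\psi_2$ these span precisely the tree part $\jacobi^{<,c}_{2,0}(H)$ together with (by the $\omega$-coefficients and the STU-like relation) the one-loop part coming from the diagonal, but they miss, for instance, the $\Theta$-graph generator $\thetagraph$ and the full $S^2H$ of one-loop $\Phi$-graphs; since $Y_2\cyl/Y_3$ contains these, the inclusion of the commutator subgroup in $Y_2\cyl/Y_3$ is strict. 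This can also be seen invariant-theoretically: $\lambda$ (via (\ref{eq:sum_formula})) and $\alpha$ are both nontrivial on $Y_2\cyl/Y_3$ while they vanish on the image of $e$. Finally, the image of $\mcyl(\Gamma_2\Torelli)$ in $\cyl/Y_3$ equals the commutator subgroup because $\mcyl$ is a monoid homomorphism (so $\mcyl([\Torelli,\Torelli])\subset[\cyl/Y_3,\cyl/Y_3]$), while conversely $\tau_1\circ\mcyl:\Torelli\to\Lambda^3H$ is surjective with the abelianization of $\Torelli$ surjecting onto $\cyl/Y_2$ modulo torsion by \cite{MM}, so every generator $[s(x),s(y)]$ of the commutator subgroup is realized by a commutator of mapping cylinders of Torelli elements. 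I would spell out this last identification using the commutative square relating $\psi_1$, $\tau_1$ and the bracket that already appears in the proof of Theorem~\ref{thm:2cocycle}.
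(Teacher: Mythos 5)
Your arguments for (i) and (iii) are essentially the paper's: (i) is the formal two-out-of-three argument from the central extension (\ref{eq:Habiro_sec_bis}) and Theorem \ref{thm:iso_KC/Y3}; for (iii) the paper likewise gets strictness from the fact that $\alpha$ (resp.\ the Casson invariant in genus $0$) is an abelian-group-valued homomorphism that is nontrivial on $Y_2\cyl/Y_3$, and gets the identification with $\mcyl(\Gamma_2\Torelli)$ by writing any $M$ as $K\cdot \mcyl(h)$ with $K\in\Jcob/Y_3$ central, which is your "realize each $[s(x),s(y)]$ by mapping cylinders" step in slightly different clothing. (Your alternative strictness argument "the H-graph terms miss $\thetagraph$ and $S^2H$" is the one I would not lean on: the STU-like relation lets H-graphs with repeated colors produce $\Phi$- and $\Theta$-contributions, so that claim needs checking; stick with the $\alpha$/$\lambda$ argument.)

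The genuine gap is in (ii), precisely at the step you flagged. You reduce to showing that the pairing $e$ of Theorem \ref{thm:2cocycle} has trivial radical, but your proposed justification does not work: the radical of an $\Sp(H_\Q)$-equivariant bilinear form is an $\Sp$-subrepresentation, and that is \emph{not} forced to vanish by the absence of a trivial summand in $\Lambda^3H_\Q$ (a form can be identically zero on one irreducible factor, e.g.\ on $\Gamma_{\omega_1}\subset\Lambda^3H_\Q$, without any trivial subrepresentation appearing); so "equivalently, $\Lambda^3H$ has no copy of the trivial representation" is a false equivalence, and "Morita's pairing has no radical" is exactly the assertion to be proved, not a citation. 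The paper closes this by composing $e$ with $p_{2,2}\circ\chi^{-1}$, i.e.\ by extracting only the $\thetagraph$-coefficient: a direct computation (quoted from \cite[Lemma 5.4]{HM_SJD}) gives
$$
b_\omega(x_1\wedge x_2\wedge x_3,\, y_1\wedge y_2\wedge y_3)
= -\tfrac{1}{4}\,\det\bigl(\omega(x_i,y_j)\bigr)_{i,j}\cdot \thetagraph ,
$$
and this determinant form is checked on a symplectic basis to be a symplectic (hence nondegenerate) form on $\Lambda^3H_\Q$. That single scalar projection kills all possible cancellations among the H-graph terms at once and works uniformly in $a\wedge b\wedge c$, which is what your "pair a basis trivector against a suitable $d\wedge e\wedge f$" sketch cannot do for general linear combinations. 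If you want to keep a representation-theoretic flavour, you would still have to verify by hand that $e$ is nonzero on \emph{each} irreducible summand of $\Lambda^3H_\Q$, which is no shorter than the determinant computation.
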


\begin{proof}
According to Theorem \ref{thm:iso_KC/Y3}, the group $\Jcob/Y_{3}$ is free abelian.
Since $\Lambda^3 H$ is  also a free abelian group,
assertion (i) follows from  the short exact sequence (\ref{eq:Habiro_sec_bis}).

We already know that (\ref{eq:Habiro_sec_bis}) is a central extension. To prove assertion (ii),
it thus remains to show that any central element $M$ of $\cyl/Y_3$ belongs to $\Jcob/Y_3$.
Since $M$ is central, 
$t:=\tau_1(M)\in \Lambda^3 H$ satisfies $[t,\cdot]=0$ for the bracket introduced in Theorem \ref{thm:2cocycle}.
By composing this bracket with 
$$
\xymatrix{
\jacobi_{2}^{<,c}(H) \subset \jacobi_{2}^{<,c}(H_\Q)  \ar[r]^-{\chi^{-1}}_-\simeq & \jacobi_{2}^{c}(H_\Q)  
\ar@{->>}[r]^-{p_{2,2}} &  \jacobi_{2,2}^{c}(H_\Q),
}
$$
we get a skew-symmetric bilinear form $b_\omega: \Lambda^3 H_\Q \times \Lambda^3 H_\Q \to \jacobi_{2,2}^{c}(H_\Q)$
and, again, we have $b_\omega(t,\cdot)=0$.
A direct computation shows that, for all $x_1,x_2,x_3,y_1,y_2,y_3 \in H_\Q$,
$$
b_\omega(x_1\wedge x_2 \wedge x_3, y_1\wedge y_2 \wedge y_3)=
- \frac{1}{4} \left| {\scriptsize \begin{array}{ccc} \omega(x_1, y_1) & \omega(x_1, y_2) & \omega(x_1 , y_3)\\
\omega(x_2 , y_1) & \omega(x_2, y_2) & \omega(x_2 , y_3)\\
\omega(x_3, y_1) & \omega(x_3 , y_2) & \omega(x_3 , y_3)
\end{array} }\right|\cdot \thetagraph.
$$
(See  \cite[Lemma 5.4]{HM_SJD}.) 
By considering a symplectic basis of $(H_\Q,\omega)$, 
it can be seen that $b_\omega$ is itself a symplectic form on $\Lambda^3 H_\Q$. 
We deduce that $t=0$, \ie $M$ belongs to $\Jcob/Y_3$. 

We now prove assertion (iii). 
The inclusion $\Gamma_n\!\left(\cyl/Y_k\right) \subset Y_n\cyl/Y_k$ holds true
for any integers $k\geq n \geq1$ by results of Goussarov \cite{Goussarov} and Habiro \cite{Habiro}.
For $k=3$ and $n=2$, this inclusion is strict for the following reasons:
in the case $g=0$, the group $\cyl/Y_3$ is abelian but $Y_2\cyl/Y_3$ is sent isomorphically to $2\Z$ 
by the Casson invariant \cite{Habiro}; in the case $g>0$, 
we know  by Proposition \ref{prop:alpha_properties} that the group homomorphism
$\alpha: \cyl/Y_3 \to S^2H$ is not trivial on $Y_2\cyl/Y_3$, but it is on $\Gamma_2\!\left(\cyl/Y_3\right)$
since $S^2H$ is abelian. It now remains to show that $\Gamma_2\!\left(\cyl/Y_3\right)$ is contained
in $\mcyl(\Gamma_2 \Torelli)/Y_3$ (the converse inclusion being trivially true).
For this, we consider a finite product of commutators 
$$
p:=\prod_{i=1}^r [M_i,N_i]
$$
in the group $\cyl/Y_3$. If we replace one of the $M_i$'s 
by its product $M_i \cdot K$ with a $K \in \Jcob/Y_3$, 
the product $p$ remains unchanged since $K$ belongs to the center of $\cyl/Y_3$.
But, any $M\in \cyl/Y_3$ can be decomposed in the form $M=K\cdot \mcyl(h)$ where $K \in \Jcob/Y_3$
and $h\in \Torelli$ since we have the short exact sequence  (\ref{eq:Habiro_sec_bis}) 
and $\tau_1: \Torelli \to \Lambda^3 H$ is surjective. Thus we can assume that
each $M_i$ and each $N_i$ in $p$ belongs to $\mcyl(\Torelli)/Y_3$.
\end{proof}

\section{Characterization of the $J_2$-equivalence and the $J_3$-equivalence} \label{sec:J2_J3}

In this section we characterize the $J_2$-equivalence  and the $J_3$-equivalence relations
(Theorem~B and Theorem~C, respectively).

\subsection{Proof of Theorem~B}

The following lemma with $k=2$ proves the necessary condition in Theorem~B.

\begin{lemma}
\label{lem:rho_k}
Let $M,M'\in \cyl$. 
If $M$ is $J_k$-equivalent to $M'$, then we have
$$
\rho_k(M)=\rho_k\left(M'\right) \ \in \Aut(\pi/\Gamma_{k+1}\pi).
$$
\end{lemma}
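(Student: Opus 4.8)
The plan is to unwind the definition of $J_k$-equivalence and show that $\rho_k$ is unchanged by a single Johnson surgery, which suffices by induction on the length of a chain of $J_k$-equivalences. So suppose $M' = M_s$ is obtained from $M$ by a Torelli surgery along a compact connected oriented surface $S \subset \interior(M)$ with one boundary component, where the gluing homeomorphism $s$ lies in the $k$-th term $\mcg(S)[k]$ of the Johnson filtration of $\Torelli(S)$. We must show $\rho_k(M_s) = \rho_k(M)$ in $\Aut(\pi/\Gamma_{k+1}\pi)$.

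First I would reduce to a statement about $\pi_1$ at the $k$-th nilpotent level. By definition $\rho_k(M) = m_{-,*}^{-1} \circ m_{+,*}$, where $m_{\pm,*}$ are the Stallings isomorphisms $\pi/\Gamma_{k+1}\pi \to \pi_1(M)/\Gamma_{k+1}\pi_1(M)$. The surgery is supported in $\interior(M)$, away from a collar of $\partial M$, so it is harmless to choose the base point and the arcs connecting it to $m_\pm(\star)$ inside $\partial M$; the bottom and top boundary inclusions factor through the complement $M_0 := M \setminus \interior(S \times [-1,1])$, which is common to $M$ and $M_s$. Thus it is enough to show that the two maps
$$
\pi_1(M_0)/\Gamma_{k+1} \longrightarrow \pi_1(M)/\Gamma_{k+1}, \qquad \pi_1(M_0)/\Gamma_{k+1} \longrightarrow \pi_1(M_s)/\Gamma_{k+1}
$$
are ``the same'' in a precise sense: there is an isomorphism $\pi_1(M)/\Gamma_{k+1} \to \pi_1(M_s)/\Gamma_{k+1}$ commuting with both boundary inclusions (composed with the two collar inclusions of $\partial_\pm M$). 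Given such an isomorphism, pre- and post-composing shows $m_{-,*}^{-1}\circ m_{+,*}$ agrees for $M$ and $M_s$.

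The key step is constructing that isomorphism. Here I would use a van Kampen argument: $M = M_0 \cup_{S \times \{-1,1\}} (S\times[-1,1])$ and $M_s = M_0 \cup_{S\times\{-1,1\}} \mcyl(s)$, glued along the two copies $S_- , S_+$ of $S$ (the boundary annulus $\partial S \times I$ being absorbed into the common part). The point is that $s \in \mcg(S)[k]$ means $s$ acts trivially on $\pi_1(S)/\Gamma_{k+1}\pi_1(S)$, hence the inclusion-induced maps $\pi_1(S_\pm) \to \pi_1(\mcyl(s))$ and $\pi_1(S_\pm)\to\pi_1(S\times[-1,1])$ agree modulo $\Gamma_{k+1}$ after identifying $\pi_1$ of both pieces with $\pi_1(S)$. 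A functorial van Kampen / Stallings-type statement for nilpotent quotients (as in Stallings \cite{Stallings}, which is exactly what is invoked elsewhere in the excerpt to extend $\rho_k$ to $\cob$) then yields a canonical isomorphism $\pi_1(M)/\Gamma_{k+1} \cong \pi_1(M_s)/\Gamma_{k+1}$ restricting to the identity on the image of $\pi_1(M_0)$; since the boundary inclusions for both $M$ and $M_s$ factor through $M_0$, this isomorphism intertwines them, which is what we need. Finally, one iterates over the finitely many surgeries realizing a $J_k$-equivalence $M \sim M'$ and composes the resulting identifications.

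The main obstacle is the precise formulation and justification of the ``relative'' van Kampen statement for $k$-th nilpotent quotients: one must check that because $s$ fixes $\pi_1(S)/\Gamma_{k+1}\pi_1(S)$, the amalgamated pushouts of groups computing $\pi_1(M)$ and $\pi_1(M_s)$ become isomorphic after passing to $\Gamma_{k+1}$-quotients, compatibly with the maps from $\pi_1(M_0)$. This is essentially the mechanism behind Stallings' theorem combined with the fact that the only data entering the $k$-th nilpotent quotient of a graph of groups (here, the double $M_0 \cup \text{(piece)}$) through the gluing $S$ is the induced automorphism of $\pi_1(S)/\Gamma_{k+1}$, which is trivial by hypothesis. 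Once this is set up carefully, the rest is formal. (One should also note the base-point/connectivity bookkeeping — $S$ has one boundary component and $M$ is connected — but these cause no real difficulty.)
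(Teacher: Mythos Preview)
Your proposal is correct and takes essentially the same approach as the paper: both arguments use van Kampen to show that, because $s$ acts trivially on $\pi_1(S)/\Gamma_{k+1}\pi_1(S)$, there is a canonical isomorphism $\pi_1(M)/\Gamma_{k+1}\cong\pi_1(M_s)/\Gamma_{k+1}$ compatible with the inclusion of the complement $E=M_0$, and hence with the boundary maps $m_{\pm,*}$. The paper is just a bit terser (it simply draws the triangular and bipyramidal diagrams and reads off the conclusion), and note that by the paper's definition a $J_k$-equivalence is already realized by a \emph{single} Torelli surgery (the relation is checked to be an equivalence relation beforehand), so your induction on chains of surgeries is not needed.
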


\begin{proof}
By assumption, there exist a surface $S \subset \interior(M)$ and an  $s\in \mcg(S)[k]$ such that $M'=M_s$.
Let $E$ be the closure of $M\setminus (S\times [-1,1])$ 
where $S\times [-1,1]$ is a regular neighborhood of $S$ in $M$.
Thus $M'$ is obtained by gluing to $E$ the mapping cylinder of $s$.
The van Kampen theorem shows that there exists a unique isomorphism
between $\pi_1(M)/\Gamma_{k+1} \pi_1(M)$ and $\pi_1(M')/\Gamma_{k+1} \pi_1(M')$ 
such that the following diagram commutes:
$$
\xymatrix @!0 @R=1cm @C=3cm {
{\frac{\pi_1(M)}{\Gamma_{k+1} \pi_1(M)}} \ar@{-->}[rr]_-\simeq^-{\exists !}
& & {\frac{\pi_1(M')}{\Gamma_{k+1} \pi_1(M')}} .\\
&{\frac{\pi_1\left(E\right)}{\Gamma_{k+1}\pi_1\left(E\right)}} 
\ar@{->>}[lu] \ar@{->>}[ru] & 
}
$$
This triangular diagram can be ``expanded''  as follows:
$$
\xymatrix{
&\frac{\pi_1\left(\Sigma\right)}{\Gamma_{k+1}\pi_1\left(\Sigma\right)}
\ar[dl]^-\simeq_-{m_{+,*}}  \ar[dr]_-\simeq^-{m_{+,*}'} \ar[dd]&\\
\frac{\pi_1\left(M\right)}{\Gamma_{k+1}\pi_1\left(M\right)}\ar[rr] |!{[ur];[dr]}\hole^<<<<<<<<{\simeq}
&&\frac{\pi_1\left(M'\right)}{\Gamma_{k+1}\pi_1\left(M'\right)}\\
&\frac{\pi_1\left(E\right)}{\Gamma_{k+1}\pi_1\left(E\right)}\ar[lu]\ar[ru]&\\
&\frac{\pi_1\left(\Sigma\right)}{\Gamma_{k+1}\pi_1\left(\Sigma\right)} 
\ar@/^1pc/[uul]_-\simeq^-{m_{-,*}} \ar@/_1pc/[uur]^-\simeq_-{m_{-,*}'} \ar[u]&
}
$$
The front faces of this bipyramidal diagram commute.
Therefore its back faces are commutative too, and we deduce that $\rho_k(M)=\rho_k(M')$.
\end{proof}

To prove the sufficient condition in Theorem~B, assume that $M,M' \in \cyl$ are such that $\rho_2(M)=\rho_2(M')$.
Since $\cyl/Y_2$ is a group \cite{Goussarov,Habiro}, 
there is a $D\in \cyl$ such that $M$ is $Y_2$-equivalent to $D\circ M'$. 
We deduce that $\rho_2(D)=1$ or, equivalently, that $\tau_1(D)=0$.
As in the proof of Lemma \ref{lem:d^2beta}, we can use \cite{MM}
to find some $Y$-graphs with special leaves $G_1, \dots, G_m$ in $(\Sigma \times I)$ such that 
$D$ is $Y_2$-equivalent to $\prod_{i=1}^m (\Sigma\times I)_{G_i}$.
Since surgery along a $Y$-graph with  special leaf is equivalent 
to a Dehn twist along a bounding simple closed curve of genus $1$,
each cobordism $(\Sigma\times I)_{G_i}$ is $J_2$-equivalent to $(\Sigma \times I)$.
Using the fact that ``$Y_2 \Rightarrow J_2$'', we conclude that
$$
M\stackrel{J_2}{\sim} D \circ M' \stackrel{J_2}{\sim}  \prod_{i=1}^m (\Sigma\times I)_{G_i} \circ M'
\stackrel{J_2}{\sim}  (\Sigma \times I) \circ M' = M'.
$$ 

\subsection{Proof of Theorem~C}

The necessary condition in Theorem~C follows from Lemma \ref{lem:rho_k} 
and the next result (with $k=3$).

\begin{lemma}
\label{lem:tau_mod_Ik}
Let $M,M'\in \cyl$. 
If $M$ is $J_k$-equivalent to $M'$, then we have
$$
\tau(M',\partial_-M';\xi_0)- \tau(M,\partial_-M;\xi_0) \in I^k.
$$
\end{lemma}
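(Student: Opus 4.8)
The plan is to reduce to a single Torelli surgery along a Heegaard surface and then run the same computation as in the proof of Theorem~\ref{thm:torsion_finiteness}, this time bookkeeping the $I$-adic order rather than the finite-type degree.

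First I would invoke the Heegaard-splitting description of the $J_k$-equivalence given in \S\ref{subsec:definition_relations}: it suffices to treat the case $M' = M_- \cup_s M_+$, where $M = M_- \cup M_+$ is a Heegaard splitting with Heegaard surface $S$ and $s \in \mcg(S)[k]$. Thus $M'$ is obtained from $M$ by cutting along the properly embedded separating surface $S$ and regluing the two copies of $S$ by $s$; since $s\in\Torelli(S)$, the Mayer--Vietoris isomorphism $\Phi_s\colon H_1(M)\to H_1(M')$ of (\ref{eq:iso_homology}) is available and commutes with the identifications $H_1(M)\simeq H\simeq H_1(M')$, and by Lemma~\ref{lem:Euler_surgery} the preferred Euler structure $\xi_0$ is transported by $\Omega_s$ to the preferred Euler structure of $M'$. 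Hence both torsions are computed with the ``same'' coefficient system $\mu\colon \Z[\pi_1]\to\Z[H]$ and the ``same'' normalization, and the whole problem is to estimate the difference $I$-adically.

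Next I would use the Fox-calculus formula for the relative \textsc{rt} torsion in terms of a Heegaard splitting --- this is ingredient~(ii) in the proof of Theorem~\ref{thm:torsion_finiteness}, borrowed from \cite{Turaev_spinc} and adapted in \cite{FJR} --- which expresses $\tau(M,\partial_-M;\xi_0)$, up to the normalization fixed by $\xi_0$, as the determinant of a square matrix whose entries are the images under $\mu$ of Fox free derivatives of the words describing the $M_+$-handles relative to a generating set of $\pi_1(M_-)$. Passing from $M$ to $M_s=M'$ replaces those words by their images under the automorphism induced by the regluing homeomorphism $s$; consequently $\tau(M',\partial_-M';\xi_0)-\tau(M,\partial_-M;\xi_0)$ is governed by the Magnus-type matrix $\big(\mu(\partial(s_\#x_i)/\partial x_j)\big)_{i,j}$ of $s$. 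The standard property of Fox derivatives one needs is that this matrix, read modulo $I^k$, depends only on the action of $s$ on the nilpotent quotient $\pi_1(S)/\Gamma_{k+1}\pi_1(S)$; since $s\in\mcg(S)[k]$ acts trivially there, it is congruent modulo $I^k$ to the matrix for the identity, so the two torsions agree modulo $I^k$. This is precisely where the argument diverges from the $Y_k$-case: for Theorem~\ref{thm:torsion_finiteness} combined with Lemma~\ref{lem:Y_FTI} one only used the inclusion $\Gamma_k\Torelli(S)\subseteq\mcg(S)[k]$, whereas here one exploits the Johnson filtration directly. Equivalently, and perhaps more transparently, one may argue by multiplicativity of the Turaev torsion (Lemma~\ref{lem:multiplicativity} and the gluing formula of \cite{Turaev_book}): cutting $M$ and $M'$ along $S$ yields the same pair of pieces $M_\pm$, so the quotient $\tau(M',\partial_-M';\xi_0)/\tau(M,\partial_-M;\xi_0)$ equals the quotient of the torsions of the two Mayer--Vietoris complexes, which differ only through the self-map induced by $s$ on the $\mu$-twisted homology of $S$; for $s\in\mcg(S)[k]$ that self-map differs from the identity by an endomorphism with entries in $I^k$, so the quotient lies in $1+I^k\Z[H]$ and the claim follows.

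The hard part is the precise bookkeeping common to both routes: justifying rigorously that the Magnus-type data of $s$, read modulo $I^k$, is controlled only by the class of $s$ in $\mcg(S)/\mcg(S)[k]$, and checking that the normalization by the preferred Euler structure $\xi_0$ transported along $\Omega_s$ does not perturb the $I$-adic estimate; both points are handled exactly as the corresponding steps in \cite{Massuyeau_torsion} and \cite{FJR}, and the argument for $3$-dimensional cobordisms goes through using the notion of Heegaard splitting of \S\ref{subsec:definition_relations}. As a sanity check, for $k=2$ the statement is immediate from the results already proved: by Lemma~\ref{lem:I2} one has $\tau(M,\partial_-M;\xi_0)\equiv t(M)^{\nicefrac12}\bmod I^2$ with $t(M)=\operatorname{cont}\circ\tau_1(M)$ a function of $\rho_2(M)$, and $\rho_2$ is a $J_2$-invariant by Lemma~\ref{lem:rho_k}.
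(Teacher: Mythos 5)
Your argument is correct and is essentially the proof the paper intends: the paper disposes of this lemma in two sentences by citing the closed-manifold analogue \cite[Lemma 4.14]{Massuyeau_torsion} and the adaptation already described in the proof of Theorem \ref{thm:torsion_finiteness}, and what you have written out (reduction to a single Torelli surgery along a Heegaard surface, the Fox-calculus expression of the torsion, the observation that the Magnus-type Jacobian of $s\in\mcg(S)[k]$ is congruent to that of the identity modulo $I^k$, and the compatibility of $\Omega_s$ with the preferred Euler structure) is precisely the content of that citation. No gap.
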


\noindent
The analogous statement for closed $3$-manifolds is proved in \cite[Lemma 4.14]{Massuyeau_torsion}.
The proof is easily adapted to  homology cylinders: 
see the proof of Theorem \ref{thm:torsion_finiteness}.

To prove the sufficient condition in Theorem C, we need the following result of Morita.

\begin{proposition}[Morita \cite{Morita_Casson1}]\label{prop:Morita}
There exists a homology $3$-sphere $P$ that is $J_3$-equivalent to $S^3$
and whose Casson invariant is $+1$.
\end{proposition}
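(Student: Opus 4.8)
Proposition \ref{prop:Morita} asserts the existence of a homology $3$-sphere $P$ with $\lambda(P)=+1$ that is $J_3$-equivalent to $S^3$. This is attributed to Morita \cite{Morita_Casson1}, so the natural plan is to recall Morita's construction and verify the two required properties, rather than to reinvent the argument from scratch.

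\begin{proof}[Proof of Proposition \ref{prop:Morita}]
The plan is to exhibit $P$ explicitly as surgery on a knot and then check the $J_3$-equivalence using the mapping-class-group description of the $J_k$-relation together with basic facts about the Casson invariant.

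First I would take $P$ to be the homology sphere obtained from $S^3$ by $(+1)$-surgery (equivalently, $(-1)$-surgery, adjusting the sign below) on a suitable knot $K\subset S^3$. By Casson's surgery formula, $\lambda$ of the result of $(\pm 1)$-surgery on a knot $K$ equals $\pm\tfrac12\Delta''_K(1)$, where $\Delta_K$ is the (symmetrized) Alexander polynomial; choosing $K$ so that $\tfrac12\Delta''_K(1)=\pm 1$ -- for instance a trefoil, whose Alexander polynomial is $t-1+t^{-1}$, giving $\Delta''_K(1)=2$ -- produces a homology sphere with Casson invariant $+1$ after fixing the surgery coefficient appropriately. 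This disposes of the second requirement.

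Next I would verify that $P$ is $J_3$-equivalent to $S^3$. By definition, $J_3$-equivalence is generated by surgeries $M\leadsto M_s$ along surfaces $S$ with $s\in\mcg(S)[3]$, i.e. in the third term of the Johnson filtration, so equivalently (for closed manifolds) by replacing a handlebody neighborhood of $S$ by a mapping cylinder of such an $s$. The key geometric input is that $(\pm 1)$-framed surgery on a knot $K$ can be realized by a Dehn twist along a separating curve on a Heegaard-type surface: pushing $K$ into the surface and performing surgery with framing coming from the surface corresponds to cutting along a genus-one bounding subsurface $S$ and regluing by a power of a bounding-pair-free Dehn twist. One then observes that such a Dehn twist along a \emph{separating} curve bounding a subsurface of genus $1$ actually lies in $\mcg(S)[3]$ (it acts trivially on $\pi/\Gamma_4\pi$, since a separating twist is in $\Gamma_2\Torelli\subset\mcg[2]$, and in the genus-one case a further computation places it in $\mcg[3]$; this is the content of Morita's argument in \cite{Morita_Casson1}). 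Hence the surgery exhibiting $P$ from $S^3$ is a $J_3$-move, so $P\stackrel{J_3}{\sim}S^3$.

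The main obstacle is precisely the last point: showing that the relevant separating Dehn twist lies in the \emph{third} term $\mcg[3]$ of the Johnson filtration rather than merely in $\mcg[2]=\Johnson$. This is not formal -- it uses the structure of $\Gr^\Gamma\Torelli$ in low degrees and the fact that a genus-one bounding twist has vanishing image under $\tau_2$ (equivalently, it maps to zero in $\DD_2(H)$), which forces it into $\mcg[3]$. Since this is exactly Morita's computation, I would cite \cite{Morita_Casson1} for it and keep the rest of the argument -- the surgery description and the Casson surgery formula -- self-contained. Choosing the orientation conventions so that the sign works out $+1$ rather than $-1$ is routine.
\end{proof}
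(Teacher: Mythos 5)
There is a genuine gap: your key claim that a Dehn twist along a genus-one bounding simple closed curve lies in $\mcg[3]$ is false. By Morita's formula (recalled as (\ref{eq:tau2BSCC}) in the paper), a twist $T_\gamma$ along a bounding simple closed curve of genus $h$ has
$\tau_2(T_\gamma)=\tfrac12\sum_i \hn{\alpha_i}{\beta_i}{\alpha_i}{\beta_i}+\sum_{i<j}\hn{\alpha_i}{\beta_i}{\alpha_j}{\beta_j}$,
which for $h=1$ is $(u\wedge v)^{\otimes 2}$ with $\omega(u,v)=1$; this is a \emph{nonzero} element of $\DD_2(H)\simeq\left(\Lambda^2 H \otimes \Lambda^2H\right)^{\mathfrak{S}_2}/\Lambda^4H$. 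So a genus-one separating twist lies in $\Johnson=\mcg[2]$ but \emph{not} in $\mcg[3]$, and realizing $(\pm1)$-surgery on the trefoil by such a twist only exhibits a $J_2$-move. There is no "further computation" placing it in $\mcg[3]$; the opposite is the entire point of Morita's construction.

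What the paper actually does (following Morita) is take a genus-$2$ surface and a carefully chosen product $\psi=T_{\gamma_0}^{-3}\circ\prod_i T_{\gamma_i}^{\mp1}$ of twists along bounding curves of genus $2$ and $1$, whose individual $\tau_2$-values are all nonzero but whose \emph{sum} $-s_1$ lies in $\Lambda^4H$ and hence vanishes in $\DD_2(H)$; this is what forces $\psi\in\mcg(R)[3]$. The Casson invariant of the resulting homology sphere is then computed not by the knot surgery formula but via Morita's decomposition $-\lambda_j=\frac{1}{24}d+q_j$: since $\tau_2(\psi)=0$ one has $q_j(\psi)=0$, and $d(T_\gamma)=4h(h-1)$ vanishes for the genus-one twists and equals $8$ for the genus-two one, giving $\lambda_j(\psi)=-\frac{1}{24}(-3\cdot 8)=1$. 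Your identification of the target value $+1$ via $\frac12\Delta''_K(1)$ for the trefoil is fine as arithmetic, but the manifold it produces has no visible $J_3$-presentation; the only way to salvage your route would be to invoke Pitsch's theorem that \emph{every} homology sphere is $J_3$-equivalent to $S^3$, which is a much harder external input (and one that the paper instead re-derives as a corollary of this very proposition).
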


\noindent
This is proved in \cite[Proposition 6.4]{Morita_Casson1}.
Since this will play a crucial role in the sequel,
we would like to develop a little bit Morita's argument.

\begin{proof}[Proof of Proposition \ref{prop:Morita}]
Let $R$ be a compact connected oriented surface of genus $2$ with one boundary component,
and let $j:R\to S^3$ be a Heegaard embedding as explained in \S \ref{subsec:Casson}.
Morita shows that there exists an element $\psi\in \mcg(R)[3]$ such that $\lambda_j(\psi)=1$.
To prove this, he uses his decomposition formula for the Casson invariant 
(which is recalled in \S \ref{subsec:review_Morita})
and he considers the following element of $\left(\Lambda^2 H \otimes \Lambda^2H\right)^{\mathfrak{S}_2}$:
$$ 
 s_1:= (\alpha_1\wedge \beta_1)\leftrightarrow (\alpha_2\wedge \beta_2) 
 -(\alpha_1\wedge \alpha_2)\leftrightarrow (\beta_1\wedge \beta_2)
+ (\alpha_1\wedge \beta_2)\leftrightarrow (\beta_1\wedge \alpha_2). 
$$
Note that $s_1\in \Lambda^4H\subset \left(\Lambda^2 H \otimes \Lambda^2H\right)^{\mathfrak{S}_2}$.  
Morita claims that there exists a family of elements 
$u_1,v_1,\dots,u_r,v_r$ of $H$ such that $\omega(u_i,v_i)=1$ for each $i\in \{1,\dots, r\}$, and  
\begin{equation}\label{eq:s1}
s_1 = 3\cdot (\alpha_1\wedge \beta_1 + \alpha_2\wedge \beta_2)^{\otimes 2} 
+ \sum_{i=1}^r \pm  (u_i\wedge v_i)^{\otimes 2}.
\end{equation}
For example, it can be checked that the following equality holds
\begin{eqnarray*}
  s_1 \quad =&  & 3\cdot (\alpha_1\wedge \beta_1 + \alpha_2\wedge \beta_2)^{\otimes 2} \\
  & + & \big((2\alpha_1 + \beta_2)\wedge (\beta_1 + \alpha_2)\big)^{\otimes 2} 
            - \big((\alpha_1 + \beta_1 + \alpha_2)\wedge (\alpha_1 + \beta_1 + \beta_2)\big)^{\otimes 2} \\
  & + & \big((\alpha_1 - \beta_2)\wedge \beta_1\big)^{\otimes 2} 
  + \big( \alpha_1\wedge (\beta_1 - \alpha_2)\big)^{\otimes 2} \\
  & - & 2\cdot\big( \alpha_1\wedge (\beta_1 + \alpha_2)\big)^{\otimes 2} 
  - 2\cdot \big(\alpha_2\wedge (\alpha_1 + \beta_2)\big)^{\otimes 2} \\
  & - & \big( \alpha_1\wedge (\beta_1 + \alpha_2 + \beta_2)\big)^{\otimes 2} 
  + \big( \alpha_2\wedge (\alpha_1 + \beta_1 + \beta_2)\big)^{\otimes 2} \\
  & + & \big((\alpha_1 + \beta_1 + \alpha_2)\wedge \beta_2\big)^{\otimes 2} 
  - \big((\alpha_1 + \alpha_2 + \beta_2)\wedge \beta_1\big)^{\otimes 2} \\
  & + & \big( \alpha_1\wedge (\beta_1 + \beta_2)\big)^{\otimes 2} 
  - \big((\beta_1 + \alpha_2)\wedge \beta_2\big)^{\otimes 2} 
           + \big((\alpha_1 + \alpha_2)\wedge \beta_1\big)^{\otimes 2} \\
  & - & 7\cdot(\alpha_1\wedge \beta_1)^{\otimes 2} - 2\cdot(\alpha_2\wedge \beta_2)^{\otimes 2},  
\end{eqnarray*}
which proves the existence of such a family.  

Now, for each pair of elements $(u_i,v_i)$ such that $\omega(u_i,v_i)=1$, 
there exists some $\phi_i\in \Sp(H)$ such that $\phi_i(u_i)=\alpha_1$ and $\phi_i(v_i)=\beta_1$.  
Therefore, there exists a genus one bounding simple closed curve $\gamma_i$ 
such that the Dehn twist $T_{\gamma_i}$ along $\gamma_i$ satisfies 
$\tau_2(T_{\gamma_i})=(u_i\wedge v_i)^{\otimes 2}$.  
Let also $\gamma_0$ be a genus two bounding simple closed curve 
such that $\tau_2(T_{\gamma_0})=(\alpha_1\wedge \beta_1 + \alpha_2\wedge \beta_2)^{\otimes 2}$. 
(Here, we have used Morita's formula \cite[Proposition 1.1]{Morita_Casson1} for $\tau_2$: see (\ref{eq:tau2BSCC}) below.)
Then  
$$ 
\psi := T_{\gamma_0}^{-3} \circ \prod_{i=1}^rT_{\gamma_i}^{\mp 1} \ \in \mcg(R)[2]
$$
has the desired properties. Indeed we have
$$
\tau_2(\psi)=\{-s_1\}=0 \ \in \left(\Lambda^2 H \otimes \Lambda^2H\right)^{\mathfrak{S}_2}/\Lambda^4H,
$$
so that $\psi$ actually belongs to $\mcg(R)[3]$, and
$$
\lambda_j(\psi)=-\frac{1}{24}d(\psi) = -\frac{1}{24}\left( -3\cdot 8+ 0\right)=1. 
$$
Here, $d:\mcg(R)[2]\to \Z$ denotes Morita's core of the Casson invariant 
and two of his formulas are used: 
see  (\ref{eq:decomposition_d}) and (\ref{eq:core_bscc}) below.  
This completes the proof.  
\end{proof}

We now prove the sufficient condition in Theorem~C. 
Let $M,M' \in \cyl$ be such that $\rho_3(M)=\rho_3(M')$ and $\alpha(M)=\alpha(M')$.
Since $\cyl/Y_3$ is a group \cite{Goussarov,Habiro}, 
there is a $D\in \cyl$ such that $M$ is $Y_3$-equivalent to $D\circ M'$.
This $D$ satisfies $\rho_3(D)=1$ and $\alpha(D)=0$. 
Let $P$ be the homology $3$-sphere from Proposition \ref{prop:Morita} and set
$$
 D' := \left(\Sigma \times I\right) \sharp P^{\sharp \lambda_j(D)}.
$$
Then, $D$ and $D'$ share the same invariants $\rho_3,\alpha$ and $\lambda_j$
so that they are $Y_3$-equivalent by Theorem~A. 
Using the fact that ``$Y_3 \Rightarrow J_3$'', we conclude that
$$
M\stackrel{J_3}{\sim}  D \circ M' \stackrel{J_3}{\sim}  D' \circ M' = 
 M' \sharp P^{\sharp \lambda_j(D)} \stackrel{J_3}{\sim}  M'.
$$

\begin{remark}
According to Habiro \cite{Habiro}, the $Y_4$-equivalence for homology $3$-spheres is also classified by the Casson invariant.
One can wonder whether there exists a homology $3$-sphere that is $J_4$-equivalent to $S^3$ and whose Casson invariant is $+1$.
It would follow from an affirmative answer to this question, and the same argument as above, 
that any  homology $3$-sphere is $J_4$-equivalent to $S^3$, thus improving Pitsch's result \cite{Pitsch}.  
\end{remark}

\section{Core of the Casson invariant for homology cylinders}\label{sec:core_Casson}

In this section, we extend Morita's definition of the core of the Casson invariant
\cite{Morita_Casson1,Morita_Casson2} to the monoid $\Jcob=\cob[2]$ (Theorem~D).
At this point, it is important to emphasize how our sign conventions and notation differ from Morita's. 
The $k$-th Johnson homomorphism $\tau_k: \mcg[k] \to H \otimes\mathcal{L}_{k+1}(H)$ 
defined in \S \ref{subsec:Johnson} corresponds to $-\tau_{k+1}$ in Morita's papers. 
(Note the shift of index and the minus sign: 
we have identified $H$ with $H^*$ by $h\mapsto \omega(h,\cdot)$
while Morita uses the map $h\mapsto \omega(\cdot,h)$ in his papers.)
Besides, for a Heegaard embedding $j: \Sigma \to S^3$,
our map $\lambda_j\circ \mcyl: \Torelli \to \Z$ defined in \S \ref{subsec:Casson}
corresponds to $-\lambda_j^*$ in Morita's papers.

\subsection{A quick review of Morita's work}\label{subsec:review_Morita}

We summarize some of the results obtained by Morita in \cite{Morita_Casson1,Morita_Casson2}.
Let $j:\Sigma \to S^3$ be a Heegaard embedding as in \S \ref{subsec:Casson}. 
Morita proved that the restriction of $\lambda_j$ to $\Johnson=\mcg[2]$ is a group homomorphism.
This restricted map ``suffices'' for the understanding of the Casson invariant,
since any homology $3$-sphere is $J_2$-equivalent to $S^3$.
Furthermore, Morita showed that $\lambda_j:\Johnson \to \Z$ decomposes 
as a sum of two homomorphisms, one being completely determined by the second Johnson homomorphism $\tau_2$, 
and the other one -- which he calls the \emph{core} of the Casson invariant -- 
being independent of the embedding $j$.
Let us recall this decomposition in more detail.

To define the first of the two homomorphisms in Morita's decomposition, let $\mathcal{A}$ 
be the algebra over $\Z$ generated by elements $l(u,v)$, for each pair of elements $u$, $v$ of $H$, 
and subject to the relations 
 $$ 
 l(n\cdot u+n'\cdot u',v)=n\cdot l(u,v) + n' \cdot l(u',v)\quad \textrm{and}\quad   l(v,u) = l(u,v) +\omega(u,v)
 $$
for all $u,u',v \in H$ and $n,n'\in \Z$. 
The embedding $j:\Sigma \to S^3$  defines an algebra homomorphism
$\varepsilon_j:\mathcal{A}\rightarrow \Z$ by setting 
$$ 
\varepsilon_j\left( l(u,v) \right) := \operatorname{lk}(u,v^+), 
$$
where $v^+$ is a push-off of $v$ 
in the positive normal direction of 
$F_g= j(\Sigma)\subset S^3$. 
Let $\theta: \left(\Lambda^2 H \otimes \Lambda^2H\right)^{\mathfrak{S}_2}\rightarrow \mathcal{A}$ be the group  homomorphism defined by
$$
\left\{\begin{array}{lcl}
\theta\left( (u\wedge v)\otimes (u\wedge v)\right) &:= &l(u,u)l(v,v) - l(u,v)l(v,u)\\
\theta\left( (a\wedge b)\leftrightarrow (c\wedge d) \right) &:= & l(a,c)l(b,d) - l(a,d)l(b,c) - l(d,a)l(c,b) + l(c,a)l(d,b).
\end{array}\right. 
$$
Using Casson's formula relating the variation of his invariant under surgery along a $(\pm 1)$-framed knot
to the Alexander polynomial of that knot, Morita was able to compute the value of $\lambda_j$ 
on a twist $T_\gamma$ along a bounding simple closed curve $\gamma \subset \Sigma$. 
He found that
\begin{equation}
\label{eq:lambda_bscc}
\lambda_j(T_\gamma) = \varepsilon_j\circ \theta(\omega_\gamma\otimes \omega_\gamma)
\end{equation}
where $\omega_\gamma$ is the symplectic form of the subsurface of $\Sigma$ bounded by $\gamma$.
(More precisely, we have $\omega_\gamma=\sum_{i=1}^h u_i\wedge v_i$ if the subsurface has genus $h$ 
and if $(u_i,v_i)_{i=1}^h$ is any symplectic basis of its first homology group.)
Let also $\overline{d}:  \left(\Lambda^2 H \otimes \Lambda^2H\right)^{\mathfrak{S}_2}\rightarrow \Z$ be the homomorphism defined by
$$
\left\{\begin{array}{lcl}
\overline{d}\left( (u\wedge v)\otimes (u\wedge v) \right)& :=  & 0\\
\overline{d}\left( (a\wedge b)\leftrightarrow (c\wedge d) \right) &:= &
\omega(a,b)\omega(c,d) - \omega(a,c)\omega(b,d) + \omega(a,d)\omega(b,c).\\
\end{array}\right. 
$$
It turns out that 
 $
 \overline{q_j}:=\varepsilon_j\circ \theta + \frac{1}{3}\overline{d} 
 $
vanishes on $\Lambda^4 H\subset \left(\Lambda^2 H \otimes \Lambda^2H\right)^{\mathfrak{S}_2}$, 
so that we can see it as a homomorphism $\overline{q_j}:\DD_2(H) \to \Z$.
(Recall that the target $\DD_2(H)$ of $\tau_2$ is identified
with $\left(\Lambda^2 H \otimes \Lambda^2H\right)^{\mathfrak{S}_2}/\Lambda^4H$
by Proposition \ref{prop:Morita-Levine}.) Hence we obtain a homomorphism 
$$
q_j := -\overline{q_j}\circ \tau_2:\Johnson \longrightarrow \Q.  
$$
Another description of $q_j$ is given by Perron in \cite{Perron} using Fox's differential calculus.

The definition of the second homomorphism in Morita's decomposition of $\lambda_j$ is more delicate.
Let $k: \mcg \to H$ be a crossed homomorphism 
whose homology class generates $H^1(\mcg;H) \simeq \Z$, see \cite{Morita_families1}, 
and which is invariant under stabilization of the surface $\Sigma$.
There is a $2$-cocycle $c_k: \mcg \times \mcg \to \Z$ associated to $k$ by the formula
$c_k(\phi,\psi) := \omega(k(\phi^{-1}),k(\psi))$. 
This cocycle represents the first characteristic class of surface bundles
$e_1 \in H^2(\mcg)$ introduced by Morita in \cite{Morita_characteristic1,Morita_characteristic2}. 
Meyer's signature $2$-cocycle on $\Sp(H)$ \cite{Meyer} composed with $\rho_1:\mcg \to \Sp(H)$
gives another $2$-cocycle $\tau$  on $\mcg$ such that $[-3\tau]=e_1$.
Since $\mcg$ is perfect (in genus $g\geq 3$), there is a unique $1$-cochain
$d_k:\mcg \to \Z$ whose coboundary is $c_k+3\tau$ and which is preserved by stabilization of the surface $\Sigma$. 
Because the $1$-cocycle $k$ vanishes on $\Johnson$,
the restriction of $d_k$ to $\Johnson$ is a group homomorphism
$$
d:\Johnson \longrightarrow \Z
$$
which does not depend on the choice of $k$.

The group $\Johnson$ is according to Johnson \cite{Johnson_subgroup}
generated by Dehn twists along bounding simple closed curves, and Morita proves that
\begin{equation}
\label{eq:core_bscc}
d(T_\gamma)=4h(h-1)
\end{equation}
for any bounding simple closed curve $\gamma\subset \Sigma$ of genus $h$.
He deduces from (\ref{eq:lambda_bscc}) and (\ref{eq:core_bscc}) 
the following decomposition formula for $\lambda_j$:
\begin{equation}
\label{eq:decomposition_d}
-\lambda_j = \frac{1}{24} d +q_j: \Johnson \longrightarrow \Z.
\end{equation}
Recall that any homology $3$-sphere is $J_3$-equivalent to $S^3$, as expected by Morita \cite{Morita_Casson1}
and proved by Pitsch \cite{Pitsch}. Thus the homomorphism $d:\Johnson \to \Z$,
and more precisely its restriction to the subgroup $\mcg[3]$,
contains all the topological information on homology $3$-spheres carried by the Casson invariant:
Morita calls $d$ the \emph{core of the Casson invariant}. 
Observe that $d$ takes values in $8 \Z$ according to (\ref{eq:core_bscc}),
and that it is obviously trivial in genus $g=0,1$.

\subsection{Proof of the existence in Theorem~D}\label{subsec:existence}

We now go back to homology cylinders and we consider the submonoid $\Jcob=\cob[2]$ of $\cyl$.
We shall prove that, for any genus $g\geq 0$,
the group homomorphism $d:\Johnson \to 8\Z$ can be extended 
to a $Y_3$-invariant and 
$\mcg$-conjugacy invariant 
monoid homomorphism $d:\Jcob \to 8\Z$.
We start by considering the map
$$ 
d'': \Jcob \longrightarrow \Q 
$$
defined by $d'':=p_{2,2}\circ Z_2$. 
In other words, $d''(M)$ is  the coefficient of $\thetagraph$ in $Z(M)$.  

\begin{lemma}\label{lem:d''}
The map  $d''$ is a monoid homomorphism and has the following properties:
\begin{itemize}
\item[(i)] It is canonical, \ie it does not depend on the choices which are needed 
in the construction of the LMO homomorphism $Z$;
\item[(ii)] It is invariant under $Y_3$-equivalence and conjugation by $\mcg$;
\item[(iii)] It takes  values in $\frac{1}{8}\Z$.
\end{itemize}
\end{lemma}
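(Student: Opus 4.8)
The plan is to establish each of the three properties of $d'' := p_{2,2}\circ Z_2$ by tracing them back to corresponding properties of the LMO homomorphism $Z$ that have already been recorded, together with the diagrammatic identifications from Section~\ref{subsec:invariantsLMO}.

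\textbf{Homomorphism property.} First I would observe that $Z:\cyl \to \jacobi(H_\Q)$ is a monoid homomorphism, so that $Z(M\circ M') = Z(M)\star Z(M')$. Restricting to $M,M'\in \Jcob$ we have $Z_1(M)=Z_1(M')=0$ since $\tau_1$ vanishes on $\Jcob$ (by the commutativity of diagram (\ref{eq:tau1_LMO})). Expanding the $\star$-product in degree $2$ and using that the only struts involved would have to pair degree-$1$ parts, one gets $Z_2(M\circ M') = Z_2(M) + Z_2(M')$; applying the linear projection $p_{2,2}$ gives $d''(M\circ M')=d''(M)+d''(M')$. The neutral element $\Sigma\times I$ has $Z(\Sigma\times I)=\varnothing$, so $d''(\Sigma\times I)=0$; hence $d''$ is a monoid homomorphism.

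\textbf{Properties (i), (ii), (iii).} For (ii): $Z_2$ is a finite-type invariant of degree $2$ (the degree-$i$ part of $Z$ is a degree-$i$ finite-type invariant, as recalled in the proof of Theorem~A), hence by Lemma~\ref{lem:Y_FTI} it is invariant under $Y_3$-equivalence; composing with the linear map $p_{2,2}$ preserves this, so $d''$ is $Y_3$-invariant. For $\mcg$-conjugation invariance, I would use that the mapping class group acts on $\cyl$ by conjugation and that $Z$ is equivariant for this action in an appropriate sense — more concretely, conjugating $M$ by $s\in\mcg$ acts on $Z(M)$ through the induced symplectic automorphism of $H_\Q$, and the $\Theta$-graph spans the trivial $\Sp(H_\Q)$-submodule $\jacobi^c_{2,2}(H)\simeq\Z\cdot\thetagraph$, so its coefficient $d''(M)$ is unchanged. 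For (i): the construction of $Z$ depends on a choice of system of meridians and parallels $(\alpha,\beta)$ and of an associator; the target $\jacobi(H_\Q)$ is intrinsically attached to $\Sigma$, and changing these choices modifies $\widetilde Z^Y$ but not the composite $Z=\kappa\circ\widetilde Z^Y$ in low degree — in particular the $\Theta$-coefficient $p_{2,2}Z_2$ is canonical. Alternatively, and more cheaply, one can invoke Lemma~\ref{lem:Casson_LMO}: when $\tau_2(M)=\alpha(M)=0$ one has $p_{2,2}Z_2(M)=-\lambda_j(M)/2$, and then reduce the general case to this one by composing with a suitable mapping cylinder to kill $\tau_2$ and $\alpha$; since $d''$ is additive this expresses $d''$ purely through topological data. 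For (iii): again by additivity it suffices to evaluate $d''$ on generators of $\Jcob/Y_3$; by Theorem~\ref{thm:iso_KC/Y3} these are realized by surgery on the graph claspers coming from $\jacobi^{<,c}_2(H)$ and on $Y$-graphs with special leaves, and Lemma~\ref{lem:Casson_LMO} together with the half-integrality of $\lambda_j$, or a direct computation of $p_{2,2}\psi_2$ on H-, $\Phi$- and $\Theta$-graphs using the formula for $\chi^{-1}$, shows each value lies in $\tfrac18\Z$.

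\textbf{Main obstacle.} The one point requiring genuine care is the canonicity statement (i): one must argue that although $\widetilde Z^Y$ genuinely depends on $(\alpha,\beta)$ and on the chosen associator, the $\Theta$-part of $Z_2$ does not. I expect the cleanest route is not to analyze the change-of-basis behaviour of $\widetilde Z^Y$ directly but to route everything through Lemma~\ref{lem:Casson_LMO} and the additivity of $d''$ — expressing $d''(M)$ in terms of $\lambda_j$, $\tau_2$ and $\alpha$ on an auxiliary decomposition $M \stackrel{Y_3}{\sim} D\circ\mcyl(h)$ with $D\in Y_2\cyl$ — so that its value is visibly built from invariants that do not involve the LMO choices. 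This also makes property (iii) fall out, since $\lambda_j\in\Z$, $\tau_2$ is integral, and $\alpha\in S^2H$, and one only needs to check the rational coefficients appearing in the conversion are in $\tfrac18\Z$.
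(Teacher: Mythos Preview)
Your homomorphism argument is fine and matches the paper's. The real issue is with (i) and the $\mcg$-conjugacy part of (ii), where you are missing the key reduction step that the paper uses.

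\textbf{The missing idea: the squaring trick.} The paper observes that $\Jcob/Y_2$ is $2$-torsion (this follows from \cite{MM}), so for any $M\in\Jcob$ the square $M\circ M$ lies in $Y_2\cyl$ up to $Y_3$. Since $d''$ is additive, $d''(M)=\tfrac12 d''(M\circ M)$, and this reduces every question about $d''$ on $\Jcob$ to a question about $d''$ on $Y_2\cyl$. On $Y_2\cyl$ the situation is much better: by the universal property (\ref{eq:univLMO}), $Z_2:(Y_2\cyl/Y_3)\otimes\Q\to\jacobi_2^c(H_\Q)$ is the inverse of $\psi_2\circ\chi$, hence independent of the associator and of $(\alpha,\beta)$; this gives (i). Likewise, the $\mcg$-equivariance of $Z_2$ is only cited in the paper (from \cite{HM_SJD}) for the restriction to $Y_2\cyl$, and the squaring trick is what transports it to $\Jcob$. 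For (iii), the paper shows directly that $Z_2(Y_2\cyl)\subset\frac14\jacobi_2^c(H)$ via $\chi^{-1}$, and then the extra $\tfrac12$ from squaring gives $\tfrac18\Z$.

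\textbf{Why your routes don't close.} For (ii), your claim that ``conjugating $M$ by $s\in\mcg$ acts on $Z(M)$ through the induced symplectic automorphism of $H_\Q$'' is exactly what needs proof; the paper does not assert $\mcg$-equivariance of $Z$ on all of $\cyl$ or $\Jcob$, only on $Y_2\cyl$. For (i), your first suggestion (``$Z$ is canonical in low degree'') is the statement to be proved. Your second suggestion, routing through Lemma~\ref{lem:Casson_LMO} and a decomposition $M\stackrel{Y_3}{\sim}D\circ\mcyl(h)$, has two problems: first, $\alpha$ vanishes on mapping cylinders (Proposition~\ref{prop:alpha_properties}), so you cannot ``kill $\alpha$'' by composing with one; second, even granting a decomposition with $D\in Y_2\cyl$ and $h\in\Johnson$, you would still need to know $d''$ is canonical on each piece, which is the same problem again for $\mcyl(h)$. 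Your approach to (iii) via Theorem~\ref{thm:iso_KC/Y3} could in principle be made to work, but you have not carried out the computation, and the paper's argument via the squaring trick is both shorter and independent of that theorem.
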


\begin {proof}
For any $M,M'\in \cyl$, we have $Z(M\circ M')= Z(M)\star Z(M')$
so that the coefficient of $\thetagraph$ in $Z(M\circ M')$ 
is the sum of the same coefficients in $Z(M)$ and $Z(M')$, 
plus a contribution of $Z_1(M)\star Z_1(M')$. 
By (\ref{eq:tau1_LMO}) 
the latter vanishes if $M,M'$ belong to $\Jcob$, 
so that $d''$ is a monoid homomorphism.  

To prove (i), observe that
for every $M \in \Jcob$ the square of $\{M\}\in \Jcob/Y_3$ belongs to the subgroup $Y_2\cyl/Y_3$ 
(as follows from \cite{MM} or from \S \ref{subsec:description1}).
Thus, the formula 
\begin{equation}
\label{eq:half_d''}
d''(M)=\frac{1}{2} d''(M\circ  M)
\end{equation}
shows that $d''$ is determined by its restriction to $Y_2\cyl$.
Since the inverse of  $Z_2: (Y_2\cyl / Y_3)\otimes \Q \rightarrow \jacobi^c_2(H_\Q)$ 
is the surgery map $\psi_2 \circ \chi_2$ by (\ref{eq:univLMO}),
it does not depend on the choices involved in the construction of $Z$.
We deduce assertion (i).

The invariance of $d''$ under $Y_3$-equivalence  is inherited from $Z_2:\cyl \to \jacobi_2(H_\Q)$.
Since $Z_2:Y_2\cyl \to  \jacobi_2^c(H_\Q)$ is $\mcg$-equivariant \cite{HM_SJD}, we have
$$
d''\left(\mcyl(f)\circ M \circ \mcyl(f^{-1})\right)
\stackrel{(\ref{eq:half_d''})}{=} 
\frac{1}{2}d''\left(\mcyl(f)\circ M^2\circ  \mcyl(f^{-1})\right)
= \frac{1}{2}d''(M^2) \stackrel{(\ref{eq:half_d''})}{=} d''(M)
$$
for all $M\in \Jcob$ and $f\in \mcg$. This proves assertion (ii).

We  prove (iii).
For any graph clasper  $G$ of degree $2$ in $(\Sigma\times I)$, 
we have $\widetilde{Z}^Y_2\left( (\Sigma\times I)_{G} \right)=\pm D$ 
where the Jacobi diagram $D$ has the same shape as $G$ \cite{CHM}. 
Thus we have 
$$
\widetilde{Z}^Y_2\left( Y_2\cyl  \right) \subset \jacobi^c_2(L^\pm) \subset \jacobi^c_2(L^\pm_\Q).
$$  
As seen in \S \ref{subsec:Jacobi}, we also have 
$s\circ \varphi\left( \jacobi^c_2(L^\pm) \right) = \jacobi^{<,c}_2(H)$, 
and it follows from the formula for $\chi^{-1}$ given in \cite{HM_SJD} that 
$\chi^{-1}\left( \jacobi^{<,c}_2(H) \right)$ is contained in $\frac{1}{4} \jacobi^{c}_2(H)$. 
Thus, we have
$$
Z_2(Y_2 \cyl)\subset \frac{1}{4} \jacobi^{c}_2(H) \subset \jacobi^{c}_2(H_\Q)
$$
so that $d''(Y_2 \cyl)$ is contained in $\frac{1}{4}\Z$.
We conclude thanks to (\ref{eq:half_d''}) that $d''(\Jcob)\subset \frac{1}{8}\Z$.
\end{proof}

We are now going to express $d''$ in terms of the classical invariants $\lambda_j$, $\alpha$ and $\tau_2$.
Here the Heegaard embedding $j:\Sigma \to S^3$ is chosen compatibly with the system of meridians and parallels $(\alpha,\beta)$ 
as explained in the paragraph preceding Lemma \ref{lem:Casson_LMO}.
We shall denote by 
$\langle \cdot , \cdot \rangle : L^\pm\times L^\pm \rightarrow \Z$ the symmetric bilinear form defined by 
$$
\forall i,j\in \{1,\dots,g\}, \quad
 \langle i^+ , j^- \rangle := \delta_{i,j}\quad,\quad \langle i^+ , j^+ \rangle := 0 \quad,\quad \langle i^- , j^- \rangle := 0 
$$
and, for all $a,b,c,d\in L^\pm$, we set 
\begin{eqnarray*}
 \operatorname{H}_\Phi\big(\hn{a}{b}{c}{d}\big) & := &  
 \langle a,d \rangle\phin{b}{c} + \langle b,c \rangle\phin{a}{d} -\langle a,c \rangle\phin{b}{d}  -\langle b,d \rangle\phin{a}{c}, \\
 \operatorname{H}_\Theta\big(\hn{a}{b}{c}{d}\big) & := &  
 \big( \langle a,d \rangle\langle b,c \rangle -\langle a,c \rangle\langle b,d \rangle\big) \thetagraph, \\
 \Phi_\Theta\big(\phin{a}{b}\big) & := &  \langle a,b \rangle \thetagraph.  
\end{eqnarray*}

\begin{lemma}\label{lem:formula_d''}
For all $M\in \Jcob$, we have
\begin{equation}\label{eq:formula_d''}
d''(M) = -\frac{\lambda_j(M)}{2} - \frac{1}{4}\Phi_\Theta(\alpha(M)) - \frac{1}{4}\operatorname{H}_\Theta(\tau_2(M)).
\end{equation}
\end{lemma}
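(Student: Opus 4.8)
The map $d''=p_{2,2}\circ Z_2$ and the right-hand side of (\ref{eq:formula_d''}) are both $Y_3$-invariant monoid homomorphisms on $\Jcob$: for $d''$ this is Lemma \ref{lem:d''}, while for the right-hand side it follows from the vanishing of $\tau_1$ on $\Jcob=\cob[2]$ (which makes $\lambda_j$ additive by (\ref{eq:sum_formula})) together with the additivity of $\alpha$ and $\tau_2$, the linearity of $\operatorname{H}_\Theta$ and $\Phi_\Theta$, and the $Y_3$-invariance of $\lambda_j$, $\alpha$, $\tau_2$. Hence it suffices to establish (\ref{eq:formula_d''}) after restriction to a generating set of $\Jcob/Y_3$, or, equivalently, to carry out the identification of coefficients of $\thetagraph$. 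The plan is to extract the formula from the relation $\widetilde{Z}^Y_2=\kappa^{-1}\circ Z_2$, where $\kappa$ is the isomorphism (\ref{eq:kappa}) with explicit gluing formula (\ref{eq:kappa_formula}).

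Since gluing external legs can only raise the loop degree, both $\kappa$ and $\kappa^{-1}$ are block lower-triangular for the decomposition $\jacobi_2=\jacobi_{2,0}\oplus\jacobi_{2,1}\oplus\jacobi_{2,2}$, and $\kappa^{-1}(\thetagraph)=-\thetagraph$ because the $\Theta$-graph has no external vertex and odd Euler characteristic. Projecting the identity $p_{2,2}\widetilde{Z}^Y_2(M)=\tfrac{\lambda_j(M)}{2}\thetagraph$ of (\ref{eq:cassonLMO}) onto loop degree $2$ therefore gives
\[
\tfrac{\lambda_j(M)}{2}\,\thetagraph=-d''(M)\,\thetagraph+p_{2,2}\kappa^{-1}\!\bigl(p_{2,1}Z_2(M)\bigr)+p_{2,2}\kappa^{-1}\!\bigl(p_{2,0}Z_2(M)\bigr).
\]
By Lemma \ref{lem:alpha_LMO}, $p_{2,1}Z_2(M)$ is the image of $-\tfrac12\alpha(M)$ under $a\cdot b\mapsto\phin{a}{b}$, and by Lemma \ref{lem:tau2_LMO}, $p_{2,0}Z_2(M)$ is the image of $\tau_2(M)$ under $(a\wedge b)\leftrightarrow(c\wedge d)\mapsto\hn{a}{b}{c}{d}$. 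So the crux is to compute the two off-diagonal blocks of $\kappa^{-1}$: the map sending a $\Phi$-graph to a multiple of $\thetagraph$, and the map sending an H-graph to a multiple of $\thetagraph$. Writing $\kappa^{-1}=\varphi^{-1}\circ s^{-1}\circ\chi$, these blocks are produced by the STU-like relation (which merges a pair of external legs, creating a $\Theta$) applied inside $\chi$ and inside the reordering step of $\varphi^{-1}$; using the explicit formula for $\chi^{-1}$ from \cite[Proposition 3.1]{HM_SJD} one finds that merging the legs of $\phin{x}{y}$ contributes $\tfrac12\langle x,y\rangle\thetagraph$ and merging the two crossing pairs of legs of $\hn{a}{b}{c}{d}$ contributes $\tfrac14\bigl(\langle a,d\rangle\langle b,c\rangle-\langle a,c\rangle\langle b,d\rangle\bigr)\thetagraph$, where $\langle\cdot,\cdot\rangle$ is the half-linking form attached to the Heegaard embedding $j$ chosen compatibly with $(\alpha,\beta)$. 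Substituting these into the displayed identity, recognising $\Phi_\Theta$ and $\operatorname{H}_\Theta$, tracking the signs $(-1)^{w(D)}$ (note $w(\thetagraph)=w(\text{H})=1$, $w(\Phi)=0$), and solving for $d''(M)$ yields (\ref{eq:formula_d''}); the coefficient $-\tfrac14$ in front of $\Phi_\Theta(\alpha(M))$ is the product of the $\tfrac12$ in $p_{2,1}Z_2=\phi(-\tfrac12\alpha)$ with the $\tfrac12$ in the $\Phi\to\Theta$ block, and similarly for $\operatorname{H}_\Theta(\tau_2(M))$.

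The main obstacle is precisely this bookkeeping of the off-diagonal blocks of $\kappa^{-1}$: one must disentangle the combined effect of $\chi$ (symmetrisation over orderings, invoking STU-like), $s^{-1}$ (the sign $(-1)^{w(D)}$ on passing between $\jacobi^{<}(H)$ and $\jacobi^{<}(-H)$), and $\varphi^{-1}$ (the change of $L^\pm$-colours via (\ref{eq:L+-_H}) and the reordering of external legs, again invoking STU-like, now with $-\omega$), and verify that the net $\Theta$-coefficient is the one prescribed by $\Phi_\Theta$ and $\operatorname{H}_\Theta$ with the stated normalisations. An alternative and perhaps more transparent organisation is to reduce first to $M\in Y_2\cyl$ via $d''(M)=\tfrac12 d''(M\circ M)$ (the right-hand side obeying the same relation, since it is additive on $\Jcob$ and $\lambda_j(M\circ M)=2\lambda_j(M)$ by (\ref{eq:sum_formula}) with $\tau_1|_{\Jcob}=0$), and then to verify the identity on the three types of generating claspers --- $\Theta$-, $\Phi$- and H-claspers --- using $Z_2\circ\psi_2=\chi^{-1}$ (a consequence of (\ref{eq:univLMO})) together with the already known values of $\lambda_j$, $\alpha$ and $\tau_2$ on such surgeries from Proposition \ref{prop:alpha_properties}, Lemma \ref{lem:tau2} and (\ref{eq:cassonLMO}); this is the same computation packaged differently. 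A useful consistency check is that, although each summand on the right-hand side of (\ref{eq:formula_d''}) depends on $j$, the total does not, as it must by the canonicity of $d''$ (Lemma \ref{lem:d''}(i)).
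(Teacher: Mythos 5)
Your argument is correct and is essentially the paper's proof run in the opposite direction: the paper writes $d''=p_{2,2}\circ\kappa\circ\widetilde{Z}^Y_2$, reads off the degree-$2$ blocks of $\kappa$ directly from (\ref{eq:kappa_formula}) (namely $\kappa(\thetagraph)=-\thetagraph$, a $\Phi\to\Theta$ block $\frac{1}{2}\Phi_\Theta$, an $\mathrm{H}\to\Phi$ block $-\frac{1}{2}\operatorname{H}_\Phi$ and an $\mathrm{H}\to\Theta$ block $-\frac{1}{4}\operatorname{H}_\Theta$), and then substitutes $p_{2,0}\widetilde{Z}^Y_2=-\tau_2$, $p_{2,1}\widetilde{Z}^Y_2=-\frac{1}{2}\alpha-\frac{1}{2}\operatorname{H}_\Phi(\tau_2)$ and $p_{2,2}\widetilde{Z}^Y_2=\frac{1}{2}\lambda_j\cdot\thetagraph$, which is your computation with $\kappa^{-1}$ read the other way. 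The one bookkeeping point to make explicit is that the full $\mathrm{H}\to\Theta$ block of $\kappa^{-1}$ is $-\frac{1}{4}\operatorname{H}_\Theta$ rather than the $+\frac{1}{4}\operatorname{H}_\Theta$ you quote for the double-gluing term (in internal degree $2$ the identity $\Phi_\Theta\circ\operatorname{H}_\Phi=2\operatorname{H}_\Theta$ makes $\kappa$ an involution, so $\kappa^{-1}$ has the same blocks as $\kappa$); this is exactly where the sign $(-1)^{w(\mathrm{H})}=-1$ you promise to track must land, and with it your coefficients do reproduce (\ref{eq:formula_d''}).
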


\noindent
Here, and in the sequel, a tacit identification between $\jacobi^c(L^\pm_\Q)$ and $\jacobi^c(H_\Q)$ 
is always through the ``obvious'' isomorphism that transforms $L^\pm_\Q$-colored diagrams  
into $H_\Q$-colored diagrams by the rules (\ref{eq:L+-_H}).
Thus the second Johnson homomorphism
$$
\tau_2(M) \in  
\frac{\left(\Lambda^2 H \otimes \Lambda^2H\right)^{\mathfrak{S}_2}}{\Lambda^4H} \subset 
\frac{\left(\Lambda^2 H_\Q \otimes \Lambda^2H_\Q\right)^{\mathfrak{S}_2}}{\Lambda^4H_\Q} 
\simeq \frac{S^2 \Lambda^2H_\Q}{\Lambda^4H_\Q} 
\simeq \jacobi^c_{2,0}(H_\Q)
$$ 
is seen as an element of $\jacobi^c_{2,0}(L^\pm_\Q)$, 
while the quadratic part of the relative RT torsion
$$
\alpha(M) \in S^2H \simeq  \jacobi^c_{2,1}(H)
$$ 
is interpreted as an element of $\jacobi^c_{2,1}(L^\pm)$.

\begin{proof}[Proof of Lemma \ref{lem:formula_d''}]
The ``non-obvious'' isomorphism $\kappa:\jacobi^c(L^\pm_\Q)\rightarrow \jacobi^c(H_\Q)$ defined by (\ref{eq:kappa_formula})
is given in degree $2$ by the formulas
\begin{eqnarray*}
 \kappa \big(\hn{a}{b}{c}{d}\big) & = & 
 -\hn{a}{b}{c}{d} -\frac{1}{2} \operatorname{H}_\Phi\big(\hn{a}{b}{c}{d}\big) - \frac{1}{4} \operatorname{H}_\Theta\big(\hn{a}{b}{c}{d}\big),\\
 \kappa \big(\phin{a}{b}\big) & = & \phin{a}{b} + \frac{1}{2} \Phi_\Theta\big(\phin{a}{b}\big), \\
 \kappa (\thetagraph ) & = & -\thetagraph, 
\end{eqnarray*}
where the labels on the right-hand side of these equalities are understood as elements of $H_\Q$ by the rules  (\ref{eq:L+-_H}).
Using these formulas, we obtain that $d''=p_{2,2}\circ \kappa\circ \widetilde{Z}^Y_2$ is given, for any $M\in \Jcob$, by 
\begin{equation}
\label{eq:d''}
 d''(M) = - p_{2,2}\circ \widetilde{Z}^Y_2(M)  + \frac{1}{2}\Phi_\Theta\big(p_{2,1}\circ \widetilde{Z}^Y_2(M)\big) 
          - \frac{1}{4}\operatorname{H}_\Theta\big(p_{2,0}\circ \widetilde{Z}^Y_2(M)\big).
\end{equation}
By Lemma \ref{lem:tau2_LMO}, we know that 
$$
p_{2,0}\circ \widetilde{Z}^Y_2(M)=-p_{2,0}\circ Z_2(M) = -\tau_2(M),
$$ 
and by Lemma \ref{lem:alpha_LMO}, we have that
$$ 
\alpha(M) = -2 p_{2,1}\circ Z_2(M) = -2 \left( p_{2,1}\circ \widetilde{Z}^Y_2(M) -\frac{1}{2}\operatorname{H}_\Phi\big(p_{2,0}\circ \widetilde{Z}^Y_2(M)\big) \right). 
$$
It follows that 
$$ 
p_{2,1}\circ \widetilde{Z}^Y_2(M) = -\frac{1}{2} \alpha(M) + \frac{1}{2} \operatorname{H}_\Phi(p_{2,0}\circ \widetilde{Z}^Y_2(M)\big)
= -\frac{1}{2} \alpha(M) - \frac{1}{2} \operatorname{H}_\Phi(\tau_2(M)). 
$$ 
Finally, recall from (\ref{eq:cassonLMO}) that the coefficient of $p_{2,2}\circ \widetilde{Z}^Y_2(M)$ is $\frac{1}{2}\lambda_j(M)$.  
The result follows from  these interpretations of $p_{2,i}\circ \widetilde{Z}^Y_2(M)$ for $i=0,1,2$ 
and equation (\ref{eq:d''}).
\end{proof}

Recall from Proposition \ref{prop:alpha_properties} that $\alpha\circ \mcyl: \Torelli \to S^2H$ is trivial.
Therefore, equation (\ref{eq:formula_d''}) gives another decomposition of $\lambda_j$ on the subgroup $\Johnson$:
\begin{equation}\label{eq:decomposition_d''}
-\lambda_j=  2d''   + \frac{1}{2}\operatorname{H}_\Theta\circ \tau_2: \Johnson \longrightarrow \Z.
\end{equation}
This identity should be compared to Morita's decomposition (\ref{eq:decomposition_d}).
Whereas $d$ depends quadratically on the genus of  bounding simple closed curves (\ref{eq:core_bscc}),
the next lemma shows that  $d''$ depends linearly on the genus. 
Thus the decomposition (\ref{eq:decomposition_d''}) is essentially Auclair's formula \cite[Theorem 4.4.6]{Auclair}.

\begin{lemma}\label{lem:d''_bscc}
Let $\gamma\subset \Sigma$ be a bounding simple closed curve of genus $h$,
and let $T_\gamma$ denote the Dehn twist along $\gamma$. 
Then we have $d''(T_\gamma)=-h/8$. 
\end{lemma}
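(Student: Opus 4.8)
The plan is to compute $d''(T_\gamma)$ by exploiting the formula \eqref{eq:formula_d''} together with Morita's known evaluations of $\lambda_j$ and $\tau_2$ on Dehn twists along bounding simple closed curves. First I would recall that for a bounding simple closed curve $\gamma$ of genus $h$, one has $\alpha(\mcyl(T_\gamma))=0$ by Proposition \ref{prop:alpha_properties}, since $T_\gamma \in \Torelli$. Hence \eqref{eq:formula_d''} reduces to
$$
d''(T_\gamma) = -\frac{\lambda_j(T_\gamma)}{2} - \frac{1}{4}\operatorname{H}_\Theta(\tau_2(T_\gamma)).
$$
So the computation splits into two pieces: evaluating $\lambda_j(T_\gamma)$ and evaluating $\operatorname{H}_\Theta(\tau_2(T_\gamma))$.

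For the first piece, I would use formula \eqref{eq:lambda_bscc}, namely $\lambda_j(T_\gamma) = \varepsilon_j\circ\theta(\omega_\gamma\otimes\omega_\gamma)$ where $\omega_\gamma = \sum_{i=1}^h u_i\wedge v_i$ for a symplectic basis $(u_i,v_i)_{i=1}^h$ of the first homology of the subsurface bounded by $\gamma$. Expanding $\theta$ on $\omega_\gamma\otimes\omega_\gamma$ using its definition, and then applying $\varepsilon_j$ via $\varepsilon_j(l(u,v))=\operatorname{lk}(u,v^+)$, one gets an explicit expression in terms of linking numbers of the push-offs. For the second piece, I would recall Morita's formula $\tau_2(T_\gamma) = \{\omega_\gamma\otimes\omega_\gamma\} = \{\sum_{i,k} (u_i\wedge v_i)\leftrightarrow(u_k\wedge v_k)\}$ inside $(\Lambda^2 H\otimes\Lambda^2 H)^{\mathfrak{S}_2}/\Lambda^4 H$ (this is the formula \eqref{eq:tau2BSCC} referenced in the proof of Proposition \ref{prop:Morita}), then apply the homomorphism $\operatorname{H}_\Theta$ term by term, using $\langle u_i, v_i\rangle$-type pairings. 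The key point is that $\operatorname{H}_\Theta$ produces a multiple of $\thetagraph$ whose coefficient is a sum of determinants of $2\times 2$ pairing matrices, and after identifying $L^\pm$-colors with $H$-classes, these collapse to a count governed by $h$.

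The cleanest route, rather than computing both $\lambda_j(T_\gamma)$ and $\operatorname{H}_\Theta(\tau_2(T_\gamma))$ from scratch, is to invoke the decomposition \eqref{eq:decomposition_d''}, i.e.\ $-\lambda_j = 2d'' + \tfrac{1}{2}\operatorname{H}_\Theta\circ\tau_2$ on $\Johnson$, combined with Morita's decomposition \eqref{eq:decomposition_d}, $-\lambda_j = \tfrac{1}{24}d + q_j$, and his formula \eqref{eq:core_bscc} $d(T_\gamma)=4h(h-1)$. Solving, $2d''(T_\gamma) = \tfrac{1}{24}d(T_\gamma) + q_j(T_\gamma) - \tfrac{1}{2}\operatorname{H}_\Theta(\tau_2(T_\gamma))$. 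Then I would use $q_j = -\overline{q_j}\circ\tau_2$ with $\overline{q_j} = \varepsilon_j\circ\theta + \tfrac{1}{3}\overline{d}$, and carry out the (purely combinatorial) evaluation of $\overline{d}(\omega_\gamma\otimes\omega_\gamma)$, $\varepsilon_j\circ\theta(\omega_\gamma\otimes\omega_\gamma)$, and $\operatorname{H}_\Theta$ of the same element. Each of these is a sum over pairs $(i,k)$ of index terms; the diagonal and off-diagonal contributions need to be bookkept carefully. The upshot should be that the $j$-dependent and the $h^2$-quadratic contributions cancel, leaving $d''(T_\gamma) = -h/8$.

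The main obstacle I expect is the careful bookkeeping of the off-diagonal terms $(u_i\wedge v_i)\leftrightarrow(u_k\wedge v_k)$ with $i\neq k$ when applying $\theta$, $\overline{d}$ and $\operatorname{H}_\Theta$: one must track the antisymmetrizations and the symmetric-tensor notation $\leftrightarrow$ consistently, and verify that the quadratic-in-$h$ parts coming from $d(T_\gamma)=4h(h-1)$ and from $\operatorname{H}_\Theta\circ\tau_2$ exactly cancel against each other, so that only the linear term survives with the correct coefficient $-1/8$. A consistency check at the end would be to verify $d''(T_\gamma)\in\tfrac18\Z$ (in agreement with Lemma \ref{lem:d''}(iii)) and, when $h=0$ or $h=1$, compatibility with the fact that $T_\gamma$ is then trivial or has $\lambda_j(T_\gamma)=0$, respectively; also, plugging back into \eqref{eq:decomposition_d''} should reproduce Morita's \eqref{eq:core_bscc}.
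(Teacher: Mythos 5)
Your overall strategy is the paper's: start from (\ref{eq:formula_d''}), kill the $\alpha$-term via Proposition \ref{prop:alpha_properties}, and feed in Morita's formula for $\tau_2(T_\gamma)$. But there is a genuine gap in how you propose to handle the remaining $j$-dependent term, and it is exactly the step you flag as "the main obstacle" without resolving it. For a bounding simple closed curve $\gamma$ in \emph{general} position, $\lambda_j(T_\gamma)=\varepsilon_j\circ\theta(\omega_\gamma\otimes\omega_\gamma)$ is a genuinely $j$- and position-dependent integer (already for $h=1$ it equals $\frac12\Delta_K''(1)$ for the relevant knot $K$ and is nonzero in general --- so your consistency check "$h=1$ implies $\lambda_j(T_\gamma)=0$" is false and is a symptom of the issue). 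Likewise $\operatorname{H}_\Theta(\tau_2(T_\gamma))$ depends on how the symplectic basis $(u_i,v_i)$ of the subsurface sits relative to the fixed system $(\alpha_i,\beta_i)$. Proving directly that $-\frac12\lambda_j(T_\gamma)-\frac14\operatorname{H}_\Theta(\tau_2(T_\gamma))=-h/8$ for an arbitrary position amounts to a nontrivial identity between linking numbers of push-offs and the pairing $\langle\cdot,\cdot\rangle$, and neither of your two routes supplies it. The missing idea is the one-line normalization the paper uses: $d''$ is $\mcg$-conjugacy invariant by Lemma \ref{lem:d''}(ii), so one may assume $j(\gamma)$ bounds a disk in the lower handlebody; then $S^3(\mcyl(T_\gamma),j)$ is obtained from $S^3$ by surgery on a $(+1)$-framed unknot, hence $\lambda_j(T_\gamma)=0$ outright, and all that remains is the elementary evaluation $\operatorname{H}_\Theta(\tau_2(T_\gamma))=\frac{h}{2}\cdot\thetagraph$ from (\ref{eq:tau2BSCC}) (only the diagonal terms $(\alpha_i\wedge\beta_i)\leftrightarrow(\alpha_i\wedge\beta_i)$ contribute). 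With that normalization, either of your routes closes immediately.

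A secondary remark on your "cleanest route": passing through Morita's decomposition (\ref{eq:decomposition_d}) and $q_j=-\overline{q_j}\circ\tau_2$ is a detour rather than a simplification. Equation (\ref{eq:decomposition_d''}) is itself a restatement of (\ref{eq:formula_d''}) on $\Johnson$, and subtracting the two decompositions just trades $\lambda_j(T_\gamma)$ for $\varepsilon_j\circ\theta(\tau_2(T_\gamma))$, which by (\ref{eq:lambda_bscc}) is the \emph{same} $j$-dependent quantity; you still need the normalization (or the general cancellation) to evaluate it. Once $\gamma$ is in standard position the extra bookkeeping with $d$, $\overline{d}$ and $q_j$ does reproduce $-h/8$ (the $h(h-1)$ terms cancel as you predict), but it buys nothing over the direct computation.
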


\begin{proof}
Since $d''$ is  $\mcg$-conjugacy invariant,  
we can assume without loss of generality that the curve $j(\gamma)$ 
bounds a disk in the lower handlebody of the genus $g$ Heegaard splitting of $S^3$.
Thus the $3$-manifold $S^3(\mcyl(T_\gamma),j)$ is obtained from $S^3$ by surgery
along a $(+1)$-framed trivial knot, so that it is homeomorphic to $S^3$.  
Hence we have $\lambda_j(T_\gamma)=0$, and we deduce from Lemma \ref{lem:formula_d''} that
$$
d''(T_\gamma) = -\frac{1}{4}\operatorname{H}_\Theta(\tau_2(T_\gamma)).
$$
By \cite[Proposition 1.1]{Morita_Casson1}, we have 
(taking into account the difference in sign conventions) 
\begin{equation}\label{eq:tau2BSCC}
\tau_2(T_\gamma) = \frac{1}{2}\sum_{i=1}^h\hn{\alpha_i}{\beta_i}{\alpha_i}{\beta_i} 
+ \sum_{1\leq i<j\leq h} \hn{\alpha_i}{\beta_i}{\alpha_j}{\beta_j}. 
\end{equation}
The result follows from the observations that
$$
\operatorname{H}_\Theta\big( \hn{\alpha_i}{\beta_i}{\alpha_i}{\beta_i} \big) 
= \thetagraph \quad\textrm{and}\quad \operatorname{H}_\Theta\big( \hn{\alpha_i}{\beta_i}{\alpha_j}{\beta_j} \big) = 0
$$
for all $i\neq j$.
\end{proof}

We  also need the homomorphism
$\overline{d'}:  \DD_2(H) \simeq \left(\Lambda^2 H \otimes \Lambda^2H\right)^{\mathfrak{S}_2}/\Lambda^4H\rightarrow \Z$ 
defined by Morita \cite{Morita_Casson2} in the following way:
$$
\left\{\begin{array}{lcl}
 \overline{d'}\left( (a\wedge b)\otimes (a\wedge b)\right) & := & -3 \omega(a,b)^2\\
\overline{d'}\left( (a\wedge b)\leftrightarrow (c\wedge d) \right) &:= &
-4 \omega(a,b)\omega(c,d) - 2 \omega(a,c)\omega(b,d) + 2 \omega(a,d)\omega(b,c).
\end{array}\right.
$$
We get a monoid homomorphism
$$ 
d': \Jcob \longrightarrow \Z 
$$
by  setting $d':=-\overline{d'}\circ \tau_2$. 
Observe that $d'$ shares the same properties as $d''$: 
it is canonical and it is invariant under $Y_3$-equivalence 
as well as under the action of $\mcg$ by conjugation. 
A simple computation based on (\ref{eq:tau2BSCC}) gives
\begin{equation}
\label{eq:d'_bscc}
d'(T_\gamma)=h(2h+1),
\end{equation}
for any bounding simple closed curve $\gamma\subset \Sigma$ of genus $h$ \cite{Morita_Casson2}.
Morita proved that, in genus $g\geq 2$, any  $\mcg$-conjugacy invariant 
group homomorphism $\Johnson \to \Z$ can be written in a unique way as a linear combination  
(with rational coefficients) of $d$ and $d'|_\Johnson$ \cite{Morita_Casson2}.
The next lemma expresses 
$8d''|_\Johnson$ in this way.

\begin{lemma}\label{lem:d_d'_d''}
We have
\begin{equation}
\label{eq:d_d'_d''}
8d''|_{\Johnson} = \frac{1}{6}d - \frac{1}{3}d'|_{\Johnson}. 
\end{equation}
\end{lemma}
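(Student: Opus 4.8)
The plan is to recognise (\ref{eq:d_d'_d''}) as an identity between group homomorphisms $\Johnson \to \Q$ and to verify it on a generating set. First I would note that the two sides really are homomorphisms on $\Johnson$: the map $d''$ is a monoid homomorphism on $\Jcob \supset \Johnson$ by Lemma \ref{lem:d''}, while $d$ and $d'|_{\Johnson}$ are group homomorphisms by their construction in \S\ref{subsec:review_Morita}. Since all three of $d$, $d'$ and $d''$ are invariant under stabilization of $\Sigma$ (the first two as recalled in \S\ref{subsec:review_Morita}, the last because $Z$ is compatible with stabilization as in \S\ref{subsec:LMO}) and the inclusion $\Johnson(\Sigma) \hookrightarrow \Johnson(\Sigma^s)$ is compatible with each of them, I may assume $g \geq 3$. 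By Johnson's theorem \cite{Johnson_subgroup}, $\Johnson$ is then generated by the Dehn twists $T_\gamma$ along bounding simple closed curves $\gamma \subset \Sigma$, so it suffices to check (\ref{eq:d_d'_d''}) on each such $T_\gamma$.

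For a bounding simple closed curve $\gamma$ of genus $h$ the relevant values are already recorded in the excerpt: $d(T_\gamma) = 4h(h-1)$ by (\ref{eq:core_bscc}), $d'(T_\gamma) = h(2h+1)$ by (\ref{eq:d'_bscc}), and $d''(T_\gamma) = -h/8$ by Lemma \ref{lem:d''_bscc}. Substituting these into (\ref{eq:d_d'_d''}) reduces the lemma to the elementary polynomial identity $\frac{1}{6}\cdot 4h(h-1) - \frac{1}{3}\cdot h(2h+1) = -h$, which holds for every integer $h$; since $8d''(T_\gamma) = -h$ as well, the identity follows on generators and hence on all of $\Johnson$. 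I expect this final computation to be entirely routine.

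The only points that need care in this plan are the stabilization reduction — which I need precisely so that Johnson's generation statement applies — and the bookkeeping of sign and indexing conventions for $\tau_2$, $\lambda_j$ and $d$; but the three input formulas above are stated in the excerpt already in the conventions fixed at the start of \S\ref{sec:core_Casson}, so nothing further is required. As an alternative route, essentially equivalent, I could instead invoke Morita's uniqueness result that every $\mcg$-conjugacy invariant homomorphism $\Johnson \to \Z$ is a rational linear combination of $d$ and $d'|_{\Johnson}$: then $8d''|_{\Johnson} = a\,d + b\,d'|_{\Johnson}$ for unique $a,b \in \Q$, and evaluating on twists along bounding curves of genus $1$ and genus $2$ (both available since $g \geq 3$) pins down $a = \frac{1}{6}$ and $b = -\frac{1}{3}$. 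Either way one arrives at the stated formula.
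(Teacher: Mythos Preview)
Your proposal is correct and matches the paper's own proof almost exactly: the paper likewise reduces to Johnson's generation of $\Johnson$ by bounding-curve twists and checks $\frac{1}{6}d(T_\gamma) - \frac{1}{3}d'(T_\gamma) = -h = 8d''(T_\gamma)$ using (\ref{eq:core_bscc}), (\ref{eq:d'_bscc}) and Lemma~\ref{lem:d''_bscc}. Your stabilization step is harmless but unnecessary, since Johnson's result already applies for $g\geq 2$ and the statement is vacuous for $g\leq 1$; the paper also mentions, as you do, the alternative of deducing it directly from the two decompositions (\ref{eq:decomposition_d}) and (\ref{eq:decomposition_d''}).
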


\begin{proof}
Equation (\ref{eq:d_d'_d''}) can be deduced from (\ref{eq:decomposition_d}),
(\ref{eq:decomposition_d''}) and the definition of $d'$ by a direct computation.
Alternatively, we can use the fact that $\Johnson$ is generated by Dehn twists along 
bounding simple closed curves \cite{Johnson_subgroup}. 
Let $\gamma \subset \Sigma$ be a bounding simple closed curve of genus $h$.
Equations (\ref{eq:core_bscc}) and (\ref{eq:d'_bscc}) give
$$
\frac{1}{6}d(T_\gamma) - \frac{1}{3}d'(T_\gamma)
= \frac{1}{6} \cdot 4h(h-1) - \frac{1}{3}\cdot h(2h+1)=-h
$$
and we conclude thanks to Lemma \ref{lem:d''_bscc}.
\end{proof}

To prove the existence in Theorem~D, we define the monoid map $d:\Jcob \to \Z$ by
\begin{equation}\label{eq:definition_extended_core}
d:= 2d'+48 d''.
\end{equation}
According to Lemma \ref{lem:d''}, this map $d$ is $\mcg$-conjugacy invariant 
as well as $Y_3$-invariant. 
According to Lemma \ref{lem:d_d'_d''}, it extends Morita's map $d:\Johnson \to \Z$ through $\mcyl$.
The invariant $d:\Jcob \to \Z$ can be written explicitly in terms of $\lambda_j$, $\alpha$ and $\tau_2$ 
using Lemma \ref{lem:formula_d''}:
\begin{equation}
\label{eq:formula_extended_core}
\forall M \in \Jcob, \quad d(M) = 
- 24\lambda_j(M) - 12 \Phi_\Theta(\alpha(M)) -12 \operatorname{H}_\Theta(\tau_2(M)) -2 \overline{d'}(\tau_2(M)).
\end{equation}
Besides, it can be written explicitly in terms of $Z$ using Lemma \ref{lem:tau2_LMO}:
\begin{equation}
\label{eq:formula_extended_core_bis}
\forall M \in \Jcob, \quad d(M) = -2 \overline{d'}\circ p_{2,0}\circ Z_2(M) + 48 p_{2,2}\circ Z_2(M).
\end{equation}
It remains to prove that $d(\Jcob)$ is contained in $8\Z$.
For this, we recall from  \S \ref{subsec:description1} that we have an isomorphism
$
\psi_{[2]}: \jacobi_{[2]}^{<,c}(H)\to \Jcob/Y_{3}
$
and we shall actually compute  $d\circ \psi_{[2]}(D)$ for each generator $D$ of $\jacobi^{<,c}_{[2]}(H)$.

\begin{proposition}\label{prop:values_d}
The monoid homomorphism $d:\Jcob \to \Z$ takes the following values on the generators of the group $\Jcob/Y_{3}$:
\begin{eqnarray}
\label{eq:line1}  d\circ \psi_{[2]} \left(\thetagraph \right) & = & 48, \\ 
\label{eq:line2}  d\circ \psi_{[2]} \left(\phio{a}{b} \right) & = & 24\omega(a,b), \\
\label{eq:line3}  \quad \quad  \quad d\circ \psi_{[2]} \left(\hob{a}{b}{c}{d} \right) 
& = & 16\omega(a,b)\omega(c,d) - 16 \omega(a,c)\omega(b,d) - 8\omega(a,d)\omega(b,c),\\
\label{eq:line4}  d\circ \psi_{[2]}(h,h')&=&12\omega(h,h')(\omega(h,h')-1),\\
\label{eq:line5} d\circ \psi_{[2]}(h)&=&0,\\
\label{eq:line6} d\circ \psi_{[2]}(1)&=&-24, 
\end{eqnarray}
\end{proposition}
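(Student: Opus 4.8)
The plan is to first observe that, by its definition (\ref{eq:definition_extended_core}), $d = 2d' + 48\,d''$ is a monoid homomorphism $\Jcob \to \Z$ that is invariant under $Y_3$-equivalence (both $d'$ and $d''$ being so, by the remark following the definition of $d'$ and by Lemma~\ref{lem:d''}), hence descends to a homomorphism of abelian groups $\Jcob/Y_3 \to \Z$ and, via the isomorphism $\psi_{[2]}$ of Theorem~\ref{thm:iso_KC/Y3}, to a homomorphism $d\circ\psi_{[2]}\colon \jacobi_{[2]}^{<,c}(H) \to \Z$. It therefore suffices to evaluate this map on the generators. Moreover, once (\ref{eq:line1})--(\ref{eq:line3}) are established, the remaining three values follow \emph{formally} from the defining relations of $\jacobi_{[2]}^{<,c}(H)$: applying $d\circ\psi_{[2]}$ to $(G_3)$ gives $2\,d\psi_{[2]}(1) = -d\psi_{[2]}(\thetagraph) = -48$, hence (\ref{eq:line6}); applying it to $(G_2)$ gives $2\,d\psi_{[2]}(h) = -d\psi_{[2]}(\phio{h}{h}) = -24\,\omega(h,h) = 0$, hence (\ref{eq:line5}); and applying it to $(G_1)$ gives $2\,d\psi_{[2]}(h,h') = -d\psi_{[2]}(\hob{h}{h}{h'}{h'}) - d\psi_{[2]}(\phio{h}{h'})$, which by (\ref{eq:line3}) and (\ref{eq:line2}) equals $24\,\omega(h,h')^2 - 24\,\omega(h,h')$, hence (\ref{eq:line4}).

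The substance is thus the computation on a diagram $D \in \jacobi_2^{<,c}(H)$, where $\psi_{[2]}(D) = \psi_2(D)$ lies in $Y_2\cyl/Y_3$, so that the universal property (\ref{eq:univLMO}) gives $Z_2(\psi_2(D)) = \chi^{-1}(D) \in \jacobi_2^c(H_\Q)$. Substituting this into the closed formula (\ref{eq:formula_extended_core_bis}), $d = -2\,\overline{d'}\circ p_{2,0}\circ Z_2 + 48\, p_{2,2}\circ Z_2$, one is reduced to reading off the internal-degree-$2$ form of $\chi^{-1}$ (equivalently, iterating the STU-like relation, as recorded in \cite[Proposition~3.1]{HM_SJD}), the value of Morita's homomorphism $\overline{d'}$ on $(a\wedge b)\leftrightarrow(c\wedge d)$ (namely $-4\omega(a,b)\omega(c,d) - 2\omega(a,c)\omega(b,d) + 2\omega(a,d)\omega(b,c)$), and the loop-degree projections $p_{2,0},p_{2,2}$. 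For the $\Theta$ graph this is immediate: $\chi^{-1}(\thetagraph) = \thetagraph$, so $p_{2,0}Z_2 = 0$ and the $\thetagraph$-coefficient is $1$, giving $d\psi_2(\thetagraph) = 48$. For the $\Phi$ graph, the STU-like relation yields $\chi^{-1}(\phio{a}{b}) = \phin{a}{b} + \tfrac12\,\omega(a,b)\,\thetagraph$, so $p_{2,0}Z_2 = 0$ and the $\thetagraph$-coefficient is $\tfrac12\,\omega(a,b)$, giving $d\psi_2(\phio{a}{b}) = 24\,\omega(a,b)$. For the $H$ graph, $p_{2,0}\chi^{-1}(\hob{a}{b}{c}{d})$ corresponds to $(a\wedge b)\leftrightarrow(c\wedge d)$ in $\DD_2(H)$ (via Lemma~\ref{lem:tau2_LMO}), on which $-2\overline{d'}$ contributes $8\omega(a,b)\omega(c,d) + 4\omega(a,c)\omega(b,d) - 4\omega(a,d)\omega(b,c)$, while the $\thetagraph$-part of $\chi^{-1}(\hob{a}{b}{c}{d})$ (the loop-degree-$2$ piece produced by the STU-like relation) supplies the complementary quadratic terms in the $\omega$'s; adding the two contributions yields (\ref{eq:line3}).

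The main difficulty is purely one of bookkeeping with conventions. One must keep straight the signs and normalizations relating $\widetilde Z^Y$, $Z$ and $\chi$ (in particular the $\thetagraph$-correction terms of $\chi^{-1}$ in internal degree $2$), the distinction between the $H$-graph pictures $\ho{a}{b}{c}{d}$ and $\hob{a}{b}{c}{d}$ (which differ by leaf framings and half-twists and are identified through clasper calculus, hence carry the same framing-independent invariant $\tau_2$), the identification of $\tau_2$ and $\DD_2(H)$ with $(\Lambda^2 H \otimes \Lambda^2 H)^{\mathfrak{S}_2}/\Lambda^4 H$, and the sign shifts between our $\tau_k$, $\lambda_j$ and Morita's. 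The $H$-graph value (\ref{eq:line3}) is the delicate point; alternatively it can be derived from (\ref{eq:formula_extended_core}) using the surgery formulas of Lemma~\ref{lem:tau2} and Proposition~\ref{prop:alpha_properties} together with the clasper-surgery variation of $\lambda_j$. As a consistency check, restricting $d$ to $\Johnson$ and specializing to Dehn twists along bounding simple closed curves via (\ref{eq:tau2BSCC}) must reproduce, through Lemma~\ref{lem:d_d'_d''}, Morita's values $d(T_\gamma) = 4h(h-1)$ and $d'(T_\gamma) = h(2h+1)$.
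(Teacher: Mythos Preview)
Your approach is essentially the same as the paper's: compute $d\circ\psi_2 = -2\,\overline{d'}\circ p_{2,0}\circ\chi^{-1} + 48\,p_{2,2}\circ\chi^{-1}$ on the three connected degree-$2$ diagrams, then deduce (\ref{eq:line4})--(\ref{eq:line6}) from the relations $(G_0),(G_1),(G_3)$. The $\Theta$ and $\Phi$ computations, and the reduction step, match the paper exactly.

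There is one concrete slip in the H-graph step. You assert that $p_{2,0}\chi^{-1}\!\big(\hob{a}{b}{c}{d}\big)$ corresponds to $(a\wedge b)\leftrightarrow(c\wedge d)$, hence that $-2\overline{d'}$ contributes $8\omega(a,b)\omega(c,d) + 4\omega(a,c)\omega(b,d) - 4\omega(a,d)\omega(b,c)$. But the paper's explicit $\chi^{-1}$ gives $p_{2,0}\chi^{-1}\!\big(\hob{a}{b}{c}{d}\big)=\hn{b}{c}{d}{a}$, i.e.\ $(b\wedge c)\leftrightarrow(d\wedge a)$, on which $-2\overline{d'}$ is $4\omega(a,b)\omega(c,d) - 4\omega(a,c)\omega(b,d) - 8\omega(a,d)\omega(b,c)$; together with $48\cdot\tfrac14\big(\omega(a,b)\omega(c,d)-\omega(a,c)\omega(b,d)\big)$ this indeed yields (\ref{eq:line3}). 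The two elements $(a\wedge b)\leftrightarrow(c\wedge d)$ and $(b\wedge c)\leftrightarrow(d\wedge a)$ differ by $(a\wedge c)\leftrightarrow(b\wedge d)$ in $\DD_2(H)$, so your $d'$-contribution is off, and your appeal to ``the $\thetagraph$-part supplies the complementary quadratic terms'' cannot be correct as stated: the genuine $\thetagraph$-coefficient is $\tfrac14(\omega(a,b)\omega(c,d)-\omega(a,c)\omega(b,d))$, which does not compensate your value. The fix is exactly the bookkeeping you flag: read off the leaf arrangement of the specific picture $\hob{\,}{\,}{\,}{\,}$ (it is \emph{not} the same pairing as $\hn{a}{b}{c}{d}$) before applying Lemma~\ref{lem:tau2}/Lemma~\ref{lem:tau2_LMO}, and actually compute the $p_{2,2}$-part of $\chi^{-1}$ rather than asserting it.
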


\begin{proof}
Since $ Z_2 \circ \psi_2 = \chi^{-1}$, we obtain that
\begin{eqnarray*}
Z_2\circ \psi_2 \left(\thetagraph \right) & = & \thetagraph \\
Z_2 \circ \psi_2 \left(\phio{a}{b} \right) & = & \phin{a}{b} + \frac{1}{2}\omega(a,b)\thetagraph \\
Z_2\circ \psi_2 \left(\hob{a}{b}{c}{d} \right) & = & \hn{b}{c}{d}{a} + \frac{1}{4}\left( \omega(a,b)\omega(c,d) - \omega(a,c)\omega(b,d) \right)\thetagraph\\
         & &+  \frac{1}{2}\left(\begin{array}{c} \omega(a,b) \phin{c}{d} - \omega(a,c) \phin{b}{d}\\
               + \omega(c,d) \phin{a}{b} - \omega(b,d) \phin{a}{c} \end{array}\right).
\end{eqnarray*} 
We  deduce from (\ref{eq:formula_extended_core_bis}) the formulas (\ref{eq:line1}), (\ref{eq:line2}) and (\ref{eq:line3}).
Next,   since $d$ is additive, formula (\ref{eq:line4}) is derived from equations (\ref{eq:line2}) and (\ref{eq:line3}) using relation ($G_1$).
Then (\ref{eq:line4}) and relation $(G_0)$ imply  (\ref{eq:line5}).
Finally,  (\ref{eq:line1}) and relation $(G_3)$ give (\ref{eq:line6}). 
\end{proof}

\subsection{Proof of the unicity in Theorem~D}

We need some representation theory of the symplectic group  $\Sp(H_\Q)\simeq \Sp(2g;\Q)$.
In particular, we need the following facts. 

\begin{lemma}\label{lem:Z_Q}
Let $V$ be a finite-dimensional rational $\Sp(2g;\Q)$-module.
\begin{itemize}
\item[(i)] If $L$ is an abelian subgroup of $V$ stable by the action of $\Sp(2g;\Z)$,
then $L\otimes \Q \subset V$ is stable by the action of $\Sp(2g;\Q)$.
\item[(ii)] If $L$ is a lattice of $V$ stable by the action of $\Sp(2g;\Z)$, 
then the action of $\Sp(2g;\Q)$ on $V=L\otimes \Q$ is determined by the action of $\Sp(2g;\Z)$ on $L$.
\item[(iii)] If $L$ is  a lattice of $V$ stable by the action of $\Sp(2g;\Z)$
and if $f:L\to \Z$ is an $\Sp(2g;\Z)$-invariant group homomorphism,
then $f\otimes \Q:V \to \Q$ is $\Sp(2g;\Q)$-invariant. 
\end{itemize}
\end{lemma}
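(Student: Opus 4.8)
The plan is to deduce all three parts from a single fact: the subgroup $\Sp(2g;\Z)$ is \emph{Zariski-dense} in the $\Q$-algebraic group $\Sp_{2g}$, so that every Zariski-closed subset of $\Sp(2g;\Q)$ containing $\Sp(2g;\Z)$ is already equal to $\Sp(2g;\Q)$. This density I would either cite (it is a special case of the Borel density theorem, $\Sp(2g;\Z)$ being a lattice in $\Sp(2g;\R)$, which has no compact factors for $g\geq 1$) or prove directly: for every root $\alpha$ of $\Sp_{2g}$ the root homomorphism $x_\alpha\colon \mathbb{G}_a\to \Sp_{2g}$ is defined over $\Z$, so $\{x_\alpha(t)\mid t\in \Z\}\subset \Sp(2g;\Z)$ is Zariski-dense in the one-dimensional subgroup $x_\alpha(\mathbb{G}_a)$ (an infinite subset of the affine line is Zariski-dense), and since the root subgroups generate $\Sp_{2g}$, the group they generate inside $\Sp(2g;\Z)$ is Zariski-dense. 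I would recall at the outset that a rational representation of $\Sp(2g;\Q)$ on $V$ is, by definition, a morphism of algebraic groups $\Sp_{2g}\to\operatorname{GL}(V)$; this is what makes the loci considered below Zariski-closed.

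For (i) I would set $W:=L\otimes\Q\subseteq V$ and consider $\{g\in \Sp_{2g}\mid gW\subseteq W\}$. This is Zariski-closed, being cut out by the vanishing of the matrix coefficients of the composite $W\hookrightarrow V\xrightarrow{\,g\,}V\twoheadrightarrow V/W$, which are regular functions of $g$. By hypothesis it contains $\Sp(2g;\Z)$, hence it equals $\Sp_{2g}$ by density; taking $\Q$-points shows that $\Sp(2g;\Q)$ stabilises $W$.

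For (ii): given two rational $\Sp(2g;\Q)$-module structures $\rho,\rho'$ on $V$ inducing the same $\Sp(2g;\Z)$-module structure on the lattice $L$ (in particular both preserving $L$), the equalizer $\{g\in\Sp_{2g}\mid \rho(g)=\rho'(g)\}$ is Zariski-closed (the entries of $\rho(g)-\rho'(g)$ are regular functions) and contains $\Sp(2g;\Z)$, hence is all of $\Sp_{2g}$; so $\rho=\rho'$, \ie the $\Sp(2g;\Q)$-action is the unique rational extension of the $\Sp(2g;\Z)$-action on $L$. For (iii): view $f\otimes\Q$ as a vector in the dual rational module $V^*$; its stabiliser $\{g\in\Sp_{2g}\mid g\cdot(f\otimes\Q)=f\otimes\Q\}$ is Zariski-closed and contains $\Sp(2g;\Z)$ because $f$ is $\Sp(2g;\Z)$-invariant, hence equals $\Sp_{2g}$, which says precisely that $f\otimes\Q$ is $\Sp(2g;\Q)$-invariant.

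The only point requiring genuine input is the Zariski-density of $\Sp(2g;\Z)$ in $\Sp_{2g}$; everything else is the routine observation that stabilisers of subspaces, equalizers of morphisms, and stabilisers of vectors are closed subschemes of an algebraic group. I therefore expect the main (and rather minor) obstacle to be phrasing the density statement cleanly — in particular, if one prefers a self-contained argument to a citation, making sure the elementary root-subgroup generators are correctly identified inside $\Sp(2g;\Z)$ and that they generate $\Sp_{2g}$ as an algebraic group.
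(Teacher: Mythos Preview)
Your argument is correct and is precisely the standard folklore proof. The paper itself does not give a proof: it remarks that these facts ``may belong to folklore'', cites Asada--Nakamura \cite[(2.2.8)]{AN} for (i), and says that (ii) and (iii) ``can be proved using the same kind of arguments'' --- the Zariski-density argument you spell out is exactly what underlies that reference.
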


\noindent
These facts may belong to folklore. 
Statement (i) is proved by Asada and Nakamura in \cite[(2.2.8)]{AN}.
Statements (ii), (iii) can be proved using the same kind of arguments.

We shall assume in the sequel that $g\geq 3$.
We denote
$$
Y_3\Torelli:= 
\ker\left(\xymatrix{ 
\Torelli \ar[r]^-\mcyl & \cyl \ar@{->>}[r] & \cyl/Y_3} \right)
$$
which is a subgroup of $\Torelli$ sitting between $\Gamma_3\Torelli$ and $\Gamma_2 \Torelli$.

\begin{lemma}
\label{lem:Sp}
The action of $\mcg$ by conjugation on $\Jcob$ (respectively on $\Johnson$) 
induces an action of $\Sp(H) \simeq \mcg/\Torelli$ on the abelian group $\Jcob/Y_3$
(respectively on $\Johnson/Y_3 \Torelli$), 
and this extends in a unique way to an action of $\Sp(H_\Q)$ on 
the vector space $(\Jcob/Y_3)\otimes \Q$ 
(respectively on $(\Johnson/Y_3 \Torelli)\otimes \Q$).
Under the assumption that $g\geq 3$,  the mapping cylinder construction induces an isomorphism
$$
\left( \frac{\Johnson}{Y_3\Torelli}\otimes \Q\right)^{\Sp(H_\Q)}
\mathop{\longrightarrow}_\simeq^\mcyl
\left( \frac{\Jcob}{Y_3}\otimes \Q\right)^{\Sp(H_\Q)}
$$
between the $\Sp(H_\Q)$-invariant subspaces.
\end{lemma}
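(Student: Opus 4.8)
The plan is to prove the two assertions of the lemma in order: first the statements about the $\mathcal{M}$-conjugacy actions and their extensions to $\Sp(H_\Q)$-actions, and then the isomorphism statement. For the first assertion, the key is that $\Jcob/Y_3$ and $\Johnson/Y_3\Torelli$ are free abelian groups of finite rank: for $\Jcob/Y_3$ this is Theorem \ref{thm:iso_KC/Y3}, and for $\Johnson/Y_3\Torelli$ it follows since $\Torelli/\Gamma_3\Torelli$ is finitely generated and $\Johnson/Y_3\Torelli$ is a quotient of $\Torelli[2]/\Gamma_3\Torelli$ by a subgroup, hence finitely generated abelian; its torsion-freeness is inherited by noting it embeds via $\mcyl$ into the torsion-free group $\Jcob/Y_3$ (by Corollary \ref{cor:prop_modY3}(i) and Corollary \ref{cor:varphi2}). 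The $\mathcal{M}$-conjugation action factors through $\Sp(H)=\mcg/\Torelli$ because an inner automorphism by $\mcyl(f)$ with $f\in\Torelli$ acts trivially on $Y_2\cyl/Y_3$ (as $Z_2$ is $\mcg$-equivariant and $\Torelli$ acts trivially on $\jacobi^c_2$ through $\Sp(H)$) and, more directly here, trivially on $\Jcob/Y_3$ since $\Jcob/Y_3$ is central in $\cyl/Y_3$ by Corollary \ref{cor:prop_modY3}(ii) — actually the cleanest route is: conjugation by $\mcyl(\Torelli)$ is trivial on $\Jcob/Y_3$ because $\Jcob/Y_3$ lies in the center of $\cyl/Y_3$ and $\mcyl(\Torelli)$ maps into $\cyl/Y_3$. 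Then, applying $-\otimes\Q$ to the resulting $\Sp(H)$-action on these finitely generated free abelian groups, which are $\Sp(H)=\Sp(2g;\Z)$-stable lattices inside the finite-dimensional $\Q$-vector spaces $(\Jcob/Y_3)\otimes\Q$ (resp.\ $(\Johnson/Y_3\Torelli)\otimes\Q$), Lemma \ref{lem:Z_Q}(ii) gives the unique extension to an $\Sp(2g;\Q)$-action — one needs to know the relevant $\Q$-vector space is a rational $\Sp(2g;\Q)$-module, which we get by identifying it (via $\psi_{[2]}$ and the surgery map) with a subquotient of $\jacobi^{<,c}(H_\Q)$, a known finite-dimensional rational representation.

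For the isomorphism statement, the plan is to exhibit the short exact sequence $\xymatrix{0 \ar[r] & \Johnson/Y_3\Torelli \ar[r]^-\mcyl & \Jcob/Y_3 \ar[r]^-r & Q \ar[r] & 0}$ where $Q$ is the cokernel of $\mcyl$, tensor with $\Q$ (exactness is preserved), take $\Sp(H_\Q)$-invariants, and use that taking invariants under a reductive group over a field of characteristic zero is exact. Concretely, since $\Sp(2g;\Q)$ is a reductive algebraic group, the functor $V\mapsto V^{\Sp(H_\Q)}$ on finite-dimensional rational modules is exact, so we get a short exact sequence on invariants and it suffices to show $(Q\otimes\Q)^{\Sp(H_\Q)}=0$. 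The identification of $Q\otimes\Q$ is the crux: using the central extension (\ref{eq:Habiro_sec_bis}) restricted appropriately, or more directly using Theorem \ref{thm:iso_KC/Y3}, the image $\mcyl(\Johnson)$ inside $\Jcob/Y_3\cong\jacobi_{[2]}^{<,c}(H)$ can be computed. By Morita's and Hain's results, $\Torelli[2]/\Gamma_3\Torelli$ maps onto $\tau_2(\Johnson)\cong \DD'_2(H)\cong \jacobi^c_{2,0}(H)$ (no loop-degree-one or loop-degree-two part appears for mapping cylinders, since $\alpha\circ\mcyl=0$ by Proposition \ref{prop:alpha_properties} and the $\Theta$-coefficient $d''$ restricted to $\Johnson$ is expressible via $\tau_2$ by (\ref{eq:decomposition_d''})). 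Thus rationally $\mcyl(\Johnson/Y_3\Torelli)\otimes\Q$ is precisely the tree part $\jacobi^c_{2,0}(H_\Q)$ inside $(\Jcob/Y_3)\otimes\Q$, whose complement $Q\otimes\Q$ is spanned by the loop-degree-one part $\jacobi^c_{2,1}(H_\Q)\cong S^2H_\Q=\Gamma_{2\omega_1}$ and the loop-degree-two part $\jacobi^c_{2,2}(H_\Q)\cong\Q=\Gamma_0$ (plus contributions from the extra $\Z$-summands, which land in $\Gamma_0$).

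So the heart of the argument reduces to computing $(Q\otimes\Q)^{\Sp(H_\Q)}$. The loop-one summand $S^2H_\Q\cong\Gamma_{2\omega_1}$ is an irreducible nontrivial $\Sp(H_\Q)$-module (for $g\geq 1$), hence contributes nothing to invariants. The loop-two summand is the trivial module $\Gamma_0\cong\Q\cdot\thetagraph$, so it would contribute a one-dimensional space of invariants — and this is exactly where the ``$g\geq 3$'' hypothesis and the subtlety of the action enter. The point is that with the $\Sp(H_\Q)$-action on $(\Jcob/Y_3)\otimes\Q$ coming from $\mathcal{M}$-conjugation (not the ``obvious'' graded action on $\jacobi$), the $\Theta$-generator is \emph{not} a genuine invariant: its $\Sp$-orbit generates, modulo the tree part, the same trivial line, but the precise claim is that the composite of the invariant projection with $r\otimes\Q$ lands in the image of $\mcyl$-invariants. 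I expect the main obstacle to be pinning down this action carefully: one must show that $(\Jcob/Y_3)\otimes\Q$, as an $\Sp(H_\Q)$-module via conjugation, actually splits as $\jacobi^c_{2,0}(H_\Q)\oplus\Gamma_{2\omega_1}\oplus\Gamma_0$ with the $\Gamma_0$ realized inside $\mcyl(\Johnson)\otimes\Q$ — i.e.\ that the class $\psi_{[2]}(1)$ (or $\psi_{[2]}(\thetagraph)$) is, rationally and modulo $\mcyl$, accounted for — using the $\mathcal{M}$-equivariance of $Z_2$ from \cite{HM_SJD}, Lemma \ref{lem:Z_Q}, and the fact that for $g\geq 3$ the relevant $\Sp$-decomposition of $\jacobi^c_2$ holds with the stated multiplicities (which fails for small $g$ where $\Lambda^4 H$-type relations degenerate). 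Once one has $(Q\otimes\Q)^{\Sp(H_\Q)}=0$, the exactness of invariants yields that $\mcyl\otimes\Q$ induces an isomorphism on invariant subspaces, completing the proof.
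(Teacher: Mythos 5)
Your treatment of the first assertion is essentially the paper's: centrality of $\Jcob/Y_3$ in $\cyl/Y_3$ kills the $\Torelli$-conjugation, and the extension/uniqueness over $\Sp(H_\Q)$ comes from identifying the lattice with a diagram space via the ($\mcg$-equivariant) degree-$2$ LMO map and applying Lemma \ref{lem:Z_Q}. That part is fine.

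The second assertion is where there is a genuine gap, and it sits exactly at the point you flag as the ``crux''. Your identification of $\mcyl\big(\Johnson/Y_3\Torelli\big)\otimes\Q$ with the tree part $\jacobi^c_{2,0}(H_\Q)$ is false: the image also contains the $\Theta$-line $\jacobi^c_{2,2}(H_\Q)\simeq\Gamma_0$. Indeed $d''=p_{2,2}\circ Z_2$ restricted to $\Johnson$ is \emph{not} determined by $\tau_2$ — it involves Morita's core $d$, and by Proposition \ref{prop:Morita} there are elements of $\mcg[3]$ (so with $\tau_2=0$) on which $d''\neq 0$. This matters because the target $(\Jcob/Y_3)\otimes\Q\simeq\jacobi^c_2(H_\Q)$ has a \emph{two}-dimensional space of invariants ($\Gamma_0\oplus\Gamma_0$, one in loop degree $0$ and one in loop degree $2$); if the image of $\mcyl$ really were only the tree part, the lemma would be false. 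Your proposed fix — that the $\Theta$-generator is ``not a genuine invariant'' for the conjugation action — cannot work either: the conjugation action corresponds under $Z_2$ to recoloring external vertices, and $\thetagraph$ has none, so it is honestly fixed. The correct resolution is that the trivial summand in loop degree $2$ \emph{is} hit by $\mcyl(\Johnson)\otimes\Q$, precisely because $(\Gamma_2\Torelli/\Gamma_3\Torelli)\otimes\Q$ carries a two-dimensional invariant subspace (one copy detected by $\tau_2$, one by the Casson core) that injects into $(Y_2\cyl/Y_3)\otimes\Q$ for $g\geq 3$. This is the hard input — Morita/Hain's computation of $\Gamma_2\Torelli/\Gamma_3\Torelli\otimes\Q$ together with the injectivity statement of \cite[Corollary 1.6]{HM_SJD} — which the paper invokes by comparing irreducible decompositions on both sides, and which your argument never supplies. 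With that input your cokernel strategy does close (one then gets $Q\otimes\Q\simeq\Gamma_{2\omega_1}$, which has no invariants, and exactness of invariants for the reductive group finishes it), but as written the proof is incomplete at its decisive step.
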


\begin{proof}
Let $f\in \Torelli$ and $M\in \Jcob$.
Since $\Jcob/Y_3$ is contained in the center of the group $\cyl/Y_3$ by Lemma \ref{lem:Habiro_sec},
the cobordism $\mcyl(f)\circ M \circ \mcyl(f^{-1})$ is $Y_3$-equivalent to $M$.
Therefore the action of $\mcg$ on $\Jcob/Y_3$ factorizes to $\mcg/\Torelli\simeq \Sp(H)$. 
Regarding $\Johnson/Y_3\Torelli$ as a subgroup of $\Jcob/Y_3$ via $\mcyl$,
we see that the same is true for the action of $\mcg$ on $\Johnson/Y_3\Torelli$.

It follows from \cite{MM} that the quotient group $(\Jcob/Y_3)/(Y_2\cyl/Y_3)\simeq \Jcob/Y_2$ 
is $2$-torsion, so that the inclusion $Y_2\cyl \subset \Jcob$ induces an $\Sp(H)$-equivariant isomorphism
$$
(Y_2\cyl/Y_3)\otimes \Q \mathop{\longrightarrow}^\simeq (\Jcob/Y_3)\otimes \Q.
$$
Since the action of $\Sp(H)$ on $Y_2\cyl/Y_3$ extends to an action of 
$\Sp(H_\Q)$ on $(Y_2\cyl/Y_3)\otimes \Q$ \cite{HM_SJD}, 
the action of $\Sp(H)$ on $\Jcob/Y_3$ extends to an action 
of $\Sp(H_\Q)$ on $(\Jcob/Y_3)\otimes \Q$ and this extension is unique by (ii) of Lemma \ref{lem:Z_Q}.
The same is true for the action of $\Sp(H)$ on $\Johnson/Y_3\Torelli$ by (i) and (ii) of Lemma \ref{lem:Z_Q}.

To prove the last statement, we use the  commutative diagram of $\Sp(H_\Q)$-modules
$$
\xymatrix{
\frac{\Gamma_2\Torelli}{\Gamma_3\Torelli} \otimes \Q \ar@{->>}[r] \ar@{>->}[rd]_-\mcyl& 
\frac{\Gamma_2\Torelli}{Y_3\Torelli} \otimes \Q \ar@{->}[d]^-\mcyl \ar[r]^-\simeq
& \frac{\Johnson}{Y_3\Torelli} \otimes \Q \ar@{->}[d]^-\mcyl\\
& \frac{Y_2\cyl}{Y_3}\otimes \Q \ar[r]^-\simeq&  \frac{\Jcob}{Y_3}\otimes \Q 
}
$$
where the horizontal maps are induced by inclusions 
and verticals maps are induced by the mapping cylinder construction.
The injectivity of the oblique map is proved in \cite[Corollary 1.6]{HM_SJD} assuming that $g\geq 3$.
The bijectivity of $({\Gamma_2\Torelli}/{Y_3\Torelli})\otimes \Q  \to ({\Johnson}/{Y_3\Torelli}) \otimes \Q$
follows from the fact that $\Johnson/\Gamma_2\Torelli$ is $2$-torsion \cite{Johnson_abelianization}.
Passing to the $\Sp(H_\Q)$-invariant subspaces, 
we obtain the following commutative diagram of vector spaces:
$$
\xymatrix{
\left(\frac{\Gamma_2\Torelli}{\Gamma_3\Torelli} \otimes \Q\right)^{\Sp(H_\Q)}
 \ar@{->}[d]_-\mcyl \ar[r]^-\simeq \ar[r]^-\simeq
& \left(\frac{\Johnson}{Y_3\Torelli} \otimes \Q\right)^{\Sp(H_\Q)} \ar@{->}[d]^-\mcyl\\
 \left(\frac{Y_2\cyl}{Y_3}\otimes \Q\right)^{\Sp(H_\Q)} \ar[r]^-\simeq&  
\left(\frac{\Jcob}{Y_3}\otimes \Q\right)^{\Sp(H_\Q)}
}
$$
The decompositions into irreducible $\Sp(H_\Q)$-modules done in \cite[\S 5]{HM_SJD} 
show that the first vertical map is an isomorphism. The conclusion follows.
\end{proof}

We can now prove the unicity in Theorem~D assuming that $g\geq 3$.
We deduce from Morita's formula (\ref{eq:decomposition_d})
that the group homomorphism $d:\Johnson \to \Z$ vanishes on $Y_3\Torelli$.  
Therefore we have a linear map 
$$
d\otimes\Q: ({\Johnson}/{Y_3\Torelli})\otimes \Q \longrightarrow \Q.
$$
Since $d$ is  $\mcg$-conjugacy invariant,  
we deduce from Lemma \ref{lem:Z_Q} (iii) 
that  $d\otimes \Q$ is $\Sp(H_\Q)$-invariant. We have the commutative diagram
$$
\xymatrix{
\Hom_{\Sp(H_\Q)}\left(\frac{\Jcob}{Y_3}\otimes \Q,\Q\right) \ar[r]^-\simeq \ar[d] & 
 \Hom_\Q\left(\left(\frac{\Jcob}{Y_3}\otimes \Q\right)^{\Sp(H_\Q)},\Q\right) \ar[d] \\
\Hom_{\Sp(H_\Q)}\left(\frac{\Johnson}{Y_3\Torelli}\otimes \Q,\Q\right) \ar[r]^-\simeq & 
\Hom_\Q\left(\left(\frac{\Johnson}{Y_3\Torelli}\otimes \Q\right)^{\Sp(H_\Q)},\Q\right) 
}
$$
where the horizontal maps are restrictions and are isomorphisms according to Schur's lemma;
the vertical maps of that diagram are induced by $\mcyl$.
We deduce from Lemma \ref{lem:Sp} that $d\otimes \Q$ 
extends in a unique way through $\mcyl$ 
to an $\Sp(H_\Q)$-invariant linear map $(\Jcob/Y_3)\otimes \Q \to \Q$.
According to Lemma \ref{lem:Z_Q} (iii), an  $\Sp(H_\Q)$-invariant linear map $(\Jcob/Y_3)\otimes \Q \to \Q$
is the same as an 
$\mcg$-conjugacy invariant
and $Y_3$-invariant monoid map $\Jcob\to \Q$.
This proves the unicity statement in Theorem~D (as well as the existence statement 
if one allows values in $\Q$ instead of $8\Z$).

\subsection{A stable version of Theorem D}

Whatever the genus $g\geq 0$ of $\Sigma$ is,  formula (\ref{eq:formula_extended_core}) defines 
an  $\mcg$-conjugacy invariant 
and $Y_3$-invariant monoid map $d:\Jcob \to 8\Z$ 
which extends Morita's map on the subgroup $\Johnson$. 
The invariants $\lambda_j$, $\alpha$ and $\tau_2$
being  preserved by stabilization of the surface $\Sigma$, 
the homomorphism $d$ is invariant under stabilization.
Thus we can summarize the results of this section in the following way. 

\begin{theorem}
There is a unique way to define, 
for each compact connected oriented surface $\Sigma$ with one boundary component,
a monoid homomorphism $d: \Jcob(\Sigma) \to \Z$ that has the following properties:
\begin{itemize}
\item $d$ is $Y_3$-invariant and $\mcg(\Sigma)$-conjugacy invariant;
\item $d\circ \mcyl:\Johnson(\Sigma) \to \Z$ coincides with Morita's core of the Casson invariant;
\item $d$ is preserved under stabilization of the surface $\Sigma$ as shown in Figure \ref{fig:stabilization}:
$$
\xymatrix{
\Jcob(\Sigma) \ar@{>->}[r]\ar[rd]_-d  & \Jcob(\Sigma^s)\ar[d]^-d\\
& \Z
}
$$
\end{itemize}
Moreover, this homomorphism $d$ takes values in $8\Z$ and is given by
$$
\forall M \in \Jcob(\Sigma), \quad d(M) = - 24\lambda_j(M) - 12 \Phi_\Theta(\alpha(M)) -12 \operatorname{H}_\Theta(\tau_2(M)) -2 \overline{d'}(\tau_2(M)).
$$
\end{theorem}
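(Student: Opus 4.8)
The plan is to assemble this statement from the results already established for Theorem~D in \S\ref{subsec:existence} and the subsequent subsection; the only substantially new ingredient is the removal of the hypothesis $g\geq 3$ by stabilization of the surface. For \textbf{existence}, I would take, for every compact connected oriented surface $\Sigma$ with one boundary component, the map $d:=2d'+48d''$ of (\ref{eq:definition_extended_core}). It is canonically attached to $\Sigma$, since $d''=p_{2,2}\circ Z_2$ is canonical by Lemma~\ref{lem:d''}(i) and $d'=-\overline{d'}\circ\tau_2$ obviously is; it is a monoid homomorphism, and it is $Y_3$-invariant and $\mcg(\Sigma)$-conjugacy invariant because $d'$ and $d''$ both are (Lemma~\ref{lem:d''} and the remarks around the definition of $d'$). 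Multiplying the identity of Lemma~\ref{lem:d_d'_d''} by $6$ gives $48\,d''|_{\Johnson(\Sigma)}=(\text{Morita's core})-2\,d'|_{\Johnson(\Sigma)}$, so $d|_{\Johnson(\Sigma)}$ is Morita's core, which is the second bullet. The displayed formula is exactly (\ref{eq:formula_extended_core}), obtained by substituting Lemma~\ref{lem:formula_d''} and $d'=-\overline{d'}\circ\tau_2$ into $d=2d'+48d''$.

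It then remains to check the third bullet and the $8\Z$-integrality. Since $\tau_2$, the quadratic part $\alpha$ of the relative RT torsion, and the Casson invariant $\lambda_j$ for a compatibly extended Heegaard embedding are all preserved under stabilization by the statements collected in \S\ref{subsec:Johnson}, \S\ref{subsec:Alexander} and \S\ref{subsec:Casson}, the displayed formula shows that $d$ commutes with stabilization, i.e. the triangle of the third bullet is commutative. To obtain $d(\Jcob(\Sigma))\subset 8\Z$, I would use that $\psi_{[2]}\colon\jacobi_{[2]}^{<,c}(H)\to\Jcob(\Sigma)/Y_3$ is an isomorphism (Theorem~\ref{thm:iso_KC/Y3}) together with the $Y_3$-invariance of $d$, so that it suffices to read off the values of $d$ on the generators of $\Jcob(\Sigma)/Y_3$ from Proposition~\ref{prop:values_d}; each of these lies in $8\Z$, using in particular that $12\,\omega(h,h')(\omega(h,h')-1)\in 24\Z$ since $\omega(h,h')(\omega(h,h')-1)$ is even. (Alternatively, since $d$ already commutes with stabilization, the integrality for arbitrary genus reduces at once to the case $g\geq 3$.)

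For \textbf{uniqueness}, let $\{d_\Sigma\}$ and $\{\tilde d_\Sigma\}$ be two assignments satisfying the three listed properties. When $g(\Sigma)\geq 3$, the uniqueness statement of Theorem~D already proved above — that a $Y_3$-invariant, $\mcg(\Sigma)$-conjugacy invariant monoid map $\Jcob(\Sigma)\to\Q$ restricting through $\mcyl$ to Morita's core on $\Johnson(\Sigma)$ is unique — forces $d_\Sigma=\tilde d_\Sigma$ (a $\Z$-valued map is in particular $\Q$-valued). For an arbitrary $\Sigma$, choose a stabilization $\Sigma^s$ with $g(\Sigma^s)\geq 3$; then for every $M\in\Jcob(\Sigma)$ one has $d_\Sigma(M)=d_{\Sigma^s}(M^s)=\tilde d_{\Sigma^s}(M^s)=\tilde d_\Sigma(M)$ by the stabilization property, so $d_\Sigma=\tilde d_\Sigma$. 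In particular $d$ must coincide with the map constructed in the existence part, which completes the argument.

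The step requiring the most care is precisely this genus reduction. The uniqueness argument for $g\geq 3$ is intrinsically a high-genus statement — it rests on the injectivity of $\mcyl$ on $(\Gamma_2\Torelli/\Gamma_3\Torelli)\otimes\Q$ taken from \cite{HM_SJD}, which fails for small $g$, and on the symplectic representation theory of Lemma~\ref{lem:Z_Q} — so one must verify that the stabilization inclusions $\cyl(\Sigma)\hookrightarrow\cyl(\Sigma^s)$ interact correctly with every piece of data entering $d$, namely $\tau_2$, $\alpha$ and $\lambda_j$; this is exactly what the stabilization compatibilities recorded in \S\ref{subsec:Johnson}, \S\ref{subsec:Alexander} and \S\ref{subsec:Casson} provide, so the reduction goes through without further difficulty.
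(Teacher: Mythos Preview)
Your proposal is correct and follows essentially the same approach as the paper: the paper presents this theorem as a summary of the results of \S\ref{subsec:existence} and the subsequent uniqueness subsection, noting only that the formula (\ref{eq:formula_extended_core}) is stabilization-invariant because $\lambda_j$, $\alpha$ and $\tau_2$ are; you have correctly and explicitly filled in the details, including the reduction of uniqueness for small genus to the case $g\geq 3$ via the stabilization property, which the paper leaves implicit.
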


\appendix
\section{Some results from calculus of claspers} \label{sec:calc_clasper}

In this appendix we have collected several results of calculus of claspers which are used in the paper.
In particular, \S \ref{subsec:special_calc_clasper} contains a list of  lemmas
dealing with surgeries along $Y$-graphs that have ``special'' leaves.
Several of these technical results were previously established, 
in a stronger form, by Auclair in \cite{Auclair}.   
(Note that his results are stated   at the level  of the Goussarov--Habiro filtration by finite-type invariants, 
rather than at the level of the $Y$-filtration of homology cylinders.
Moreover,  Auclair adopts in \cite{Auclair} Goussarov's convention 
for claspers  \cite{Goussarov_graphs,GGP} instead of Habiro's convention \cite{Habiro} used here.)

The definition of claspers is recalled  in \S \ref{subsec:clasper},
but it will be convenient to use in this appendix a slightly more general definition. 
Specifically, we shall fully use Habiro's original definition, 
where claspers are decomposed into edges, leaves, nodes and \emph{boxes}  \cite{Habiro}.
A box is a disk with three edges attached, one being distinguished from the other two.  This distinction is done by drawing a box 
as a rectangle, as in Figure \ref{fig:box}.  
In the process of replacing a clasper with a disjoint union of basic claspers, as explained in \S \ref{subsec:clasper}, 
each box is replaced with three leaves as depicted in Figure \ref{fig:box}.
Connected claspers without boxes are called \emph{graph claspers} and are presented in \S \ref{subsec:clasper}.
The drawing convention for claspers are those of \cite[Figure 7]{Habiro}, except for the following: 
a $\oplus$ (respectively a $\ominus$) on an edge represents a positive (respectively negative) half-twist. 
(This replaces the convention of a circled $S$ (respectively $S^{-1}$) used in \cite{Habiro}.)

\begin{figure}[!h]
\includegraphics{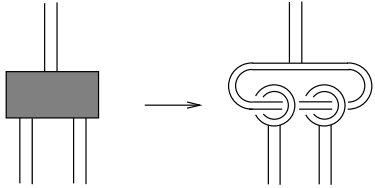}
\caption{How to replace a box with three leaves. }
\label{fig:box}
\end{figure}
\subsection{Standard calculus of claspers}\label{subsec:standard_calc_clasper}
In \cite[Proposition 2.7]{Habiro}, Habiro gives a list of 12 moves on claspers which give \emph{equivalent} claspers,
that is claspers with homeomorphic surgery effect.  
We will freely use \emph{Habiro's moves} (which are derived from Kirby calculus) 
by referring to their numbering in Habiro's paper.  

We start by recalling some basic lemmas of calculus of claspers.   
The proofs are omitted, as they use the same techniques as in \cite[\S 4]{Habiro} 
and \cite{Goussarov_graphs,GGP} (where similar statements appear).  
See also Appendix E of \cite{Ohtsuki_book}.  

\begin{lemma}\label{lem:edgeslide}
Let $G$ be a degree $k$ graph clasper in a $3$-manifold $M$, 
and let $K$ be some framed knot in $M$, disjoint from $G$.  
Let $G'$ be obtained from $G$ by a connected sum, along some band $b$, of an edge with the knot $K$.  
Then we have
$$ 
M_G\stackrel{Y_{k+2}}{\sim} M_{G'\cup H}, 
$$
where $H$ is a 
copy, disjoint from $G'$, of the degree $k+1$ graph clasper $G\cup b\cup K$.  
\end{lemma}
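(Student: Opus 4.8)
The statement is a standard "edge-slide" lemma in clasper calculus, and the plan is to reduce it to two more elementary moves: a local model for passing an edge along a knot, and a degree-count argument. First I would examine the surgery link associated to $G$. Recall that surgery along a graph clasper is performed by first resolving every node into three Borromean-linked leaves (producing a union of basic claspers) and then replacing each basic clasper by a two-component framed link. The point of the lemma is that the edge of $G$ which we connect-sum with $K$ along the band $b$ is, before node-resolution, just a $1$-handle; sliding this $1$-handle over $K$ (equivalently, performing a handle-slide in the surgery link picture) changes $G$ to $G'$ at the cost of creating a new parallel strand following $K$. So the first step is to set up the local picture: near the band $b$, the edge of $G$ and the knot $K$ look like two parallel strands, and I would perform a handle-slide / finger-move so that the edge now runs through the tube that $K$ bounds.

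Second, I would identify the "error term" of this handle-slide. A handle-slide of a clasper edge over a framed knot $K$ is not an equivalence of claspers on the nose; it introduces an extra clasper whose leaves are copies of $K$ together with the portion of $G$ that the edge was attached to. Concretely, the correction is surgery along a copy of the graph clasper obtained from $G$ by replacing the slid edge with the band $b$ and the knot $K$ — this is exactly the clasper $H = G \cup b \cup K$ in the statement. The key local identity I would invoke is the one that expresses "edge passing through a leaf/edge" in terms of an iterated commutator clasper; this is precisely the type of move proved in \cite[\S 4]{Habiro} (see also \cite{GGP}) and it is where the extra node of $H$ comes from. Since $G$ has degree $k$ (i.e.\ $k$ nodes), adjoining the data $b \cup K$ and treating the former attaching region as a new node gives a graph clasper with $k+1$ nodes, hence degree $k+1$. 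At this point $M_G \stackrel{}{\sim} M_{G' \cup H}$ where the relation is homeomorphism up to a further correction of strictly higher degree.

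Third, I would pin down that the remaining discrepancy lies in $Y_{k+2}\cyl$. After the handle-slide we obtain $M_{G'\cup H}$ modulo surgery along claspers that are themselves iterated connected sums of pieces of $G'$, $H$ and $K$; each such extra clasper has at least $k+1$ nodes from $G' \cup H$ plus at least one more node coming from the nested commutator produced by the slide, hence degree $\geq k+2$. By Proposition \ref{prop:Y_clasper} surgery along a degree $\geq k+2$ clasper preserves the $Y_{k+2}$-equivalence class, so all these corrections are invisible modulo $Y_{k+2}$. Combining the three steps gives $M_G \stackrel{Y_{k+2}}{\sim} M_{G'\cup H}$, as claimed; and I would note that $G'$ and $H$ can be taken disjoint (and $H$ disjoint from $G'$) simply because $K$ was disjoint from $G$ to begin with, so everything can be isotoped into disjoint neighborhoods.

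\textbf{Main obstacle.} The delicate point is the bookkeeping of the correction term: verifying that the extra clasper produced by the handle-slide is genuinely (equivalent to) a single copy of $G \cup b \cup K$ of degree exactly $k+1$, rather than a more complicated sum of claspers, and that every \emph{further} correction is of degree $\geq k+2$ and hence killed by the $Y_{k+2}$-equivalence. This requires a careful application of the local "edge through a leaf" and "edge through an edge" moves from \cite{Habiro} and a clean induction on the combinatorial complexity of the slide; the rest of the argument is routine clasper calculus. I would expect to invoke the analogous statements from \cite{Auclair} and \cite{GGP} to shorten this verification rather than re-deriving the local models from scratch.
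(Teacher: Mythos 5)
The paper gives no proof of this lemma: it states in \S\ref{subsec:standard_calc_clasper} that the proofs of Lemmas \ref{lem:edgeslide}--\ref{lem:twist_as} are omitted because they use the same techniques as \cite[\S 4]{Habiro} and \cite{Goussarov_graphs,GGP}, which is exactly the toolbox your plan invokes, and your outline (localize the band sum, identify the first-order correction as a parallel copy of the degree-$(k+1)$ clasper $G\cup b\cup K$, and push all further corrections into degree $\geq k+2$, where they are killed by Proposition \ref{prop:Y_clasper}) is the standard argument those references carry out. So this is essentially the same approach; the only caveat is that the move is realized by clasper moves (inserting Hopf-linked leaf pairs, leaf splitting, crossing changes as in Lemma \ref{lem:crossingchange}) rather than by a literal Kirby-calculus handle-slide, since $K$ is not a component of the surgery link associated to $G$.
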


\begin{lemma} \label{lem:crossingchange}
Let $T_1\cup T_2$ be a disjoint union of two graph claspers of degree $k_1$ and $k_2$ respectively in a $3$-manifold $M$.     
Let $T'_1\cup T'_2$ be obtained by a crossing change of a leaf $f_1$ of $T_1$ with a leaf $f_2$ of $T_2$.  
Then we have
$$ 
M_{T_1\cup T_2}\stackrel{Y_{k_1+k_2+1}}{\sim}  M_{T'_1\cup T'_2\cup T}, 
$$ 
where $T$ is a copy, disjoint from $T'_1\cup T'_2$, of some graph clasper 
of degree $k_1+k_2$ obtained from $T_1\cup T_2$ by connecting the edges incident to $f_1$ and $f_2$.  
\end{lemma}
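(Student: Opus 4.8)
\textbf{Proof plan for Lemma \ref{lem:crossingchange}.}

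The plan is to reduce the statement to the previous lemma (Lemma \ref{lem:edgeslide}) by the standard trick of replacing a crossing change with a clasper surgery. First I would recall the elementary fact from clasper calculus that performing a crossing change on two strands (here a strand of leaf $f_1$ and a strand of leaf $f_2$) is the same, up to isotopy, as surgery along a single basic clasper (a degree $0$ clasper, i.e. a $\mathsf{Y}$-less clasper with one edge and two small leaves $\ell_1,\ell_2$) whose leaves $\ell_1$ and $\ell_2$ are small meridians of the two strands being crossed. This is one of Habiro's moves and is also where one uses the hypothesis that the crossing change is between leaves, so that the ``small leaves'' $\ell_1,\ell_2$ of this basic clasper can each be fused (via Habiro's leaf-fusion move, \cite[Move 2 or 11 of Proposition 2.7]{Habiro}) onto the nearby leaves $f_1$ of $T_1$ and $f_2$ of $T_2$ respectively.

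Next I would carry out the fusion. After fusing $\ell_1$ into $f_1$ and $\ell_2$ into $f_2$, the union $T_1\cup T_2$ together with the basic clasper becomes a \emph{connected} clasper: it has the nodes and edges of $T_1$ and of $T_2$, linked together through the new edge coming from the basic clasper. This connected clasper has $k_1+k_2$ nodes, hence degree $k_1+k_2$; call it (after collapsing the box/fusion artefacts to an honest graph clasper by the moves in \S\ref{subsec:standard_calc_clasper}) the clasper $T$. The surgery effect of $T_1\cup T_2 \cup (\text{basic clasper})$ equals, up to homeomorphism, the surgery effect of $T'_1\cup T'_2 \cup T$ where the primes record that the crossing has been switched --- because surgering the basic clasper realizes exactly that crossing change on the ambient strands while leaving everything else fixed. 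So far this is an \emph{equality} of $3$-manifolds (modulo the usual ``equivalent claspers'' moves), not merely a $Y_{k_1+k_2+1}$-equivalence.

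The final step is to invoke Lemma \ref{lem:edgeslide}, or rather its proof technique, to tidy up: the clasper $T$ produced by the fusion may have its new edge passing near, or connected by a band to, pieces of $T'_1$ and $T'_2$, and one wants $T$ to be a \emph{disjoint} copy sitting away from $T'_1\cup T'_2$. Pulling $T$ off of $T'_1\cup T'_2$ by finger moves introduces only higher-degree correction claspers --- each such correction has at least one more node than $T$, hence degree $\geq k_1+k_2+1$ --- and these corrections are killed precisely by passing to the $Y_{k_1+k_2+1}$-equivalence class. This is the same mechanism that produces the ``$\stackrel{Y_{k+2}}{\sim}$'' in Lemma \ref{lem:edgeslide}, and I expect it to be the only genuinely nontrivial bookkeeping in the argument: one must check that \emph{every} correction clasper arising from disentangling $T$ from $T'_1\cup T'_2$ genuinely has degree at least $k_1+k_2+1$, which follows because each leaf-crossing-change used to disentangle produces, by the very statement we are proving applied in lower complexity (an induction on the number of crossings to be undone), a clasper with strictly more nodes. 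Thus the main obstacle is organizing this disentangling induction cleanly; the rest is a routine application of Habiro's moves from \cite{Habiro}.
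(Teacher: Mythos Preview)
The paper does not prove this lemma at all: it is listed among the ``basic lemmas of calculus of claspers'' whose ``proofs are omitted, as they use the same techniques as in \cite[\S 4]{Habiro} and \cite{Goussarov_graphs,GGP}.'' Your outline is exactly the standard argument one finds in those references, so in that sense you are on the right track.

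That said, the bookkeeping in your middle paragraph is muddled in a way that would cause trouble if you tried to write it out in full. First, the basic clasper $C$ must be inserted on the \emph{primed} side: one has $M_{T_1\cup T_2}=M_{T'_1\cup T'_2\cup C}$, where $C$ sits in a small ball and its surgery undoes the crossing change. (If you put $C$ next to $T_1\cup T_2$ as written, then $M_{T_1\cup T_2\cup C}=M_{T'_1\cup T'_2}$ and there is nothing left to process.) Second, and more importantly, fusing $\ell_i$ into $f_i$ via Habiro's Move~12 does not turn $T'_1\cup T'_2\cup C$ into a single connected graph clasper: it introduces \emph{boxes}, and resolving a box (the zip construction) \emph{duplicates} the subtree on the input side. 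One output of each box carries a parallel copy of $T'_i$ with its leaf $f_i$ intact, and the other output carries a copy of $T'_i$ with $f_i$ replaced by the edge coming from $C$. It is this duplication, not the sentence ``surgering the basic clasper realizes exactly that crossing change,'' that explains why $T'_1\cup T'_2$ and the new degree-$(k_1+k_2)$ clasper $T$ both appear in the output. Once this is understood, your final paragraph (disentangling $T$ from $T'_1\cup T'_2$ at the cost of claspers of degree $\geq k_1+k_2+1$) is correct and is indeed the only place where the $Y_{k_1+k_2+1}$-equivalence, rather than strict equivalence of claspers, is used.
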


It is convenient to state the next two lemmas in the case of graph claspers in $(\Sigma\times I)$.  

\begin{lemma} \label{lem:split}
Let $G$ be a degree $k$ graph clasper in $(\Sigma\times I)$. 
Let $f_1$ and $f_2$ be the two framed knots obtained by splitting a leaf $f$ of $G$ 
along an arc $\alpha$, \ie we have $f\cup \alpha = f_1\cup f_2$ 
and $f_1\cap f_2=\alpha$ (see e.g. Figure \ref{fig:scind}). 
Then we have
$$ 
(\Sigma\times I)_G \stackrel{Y_{k+1}}{\sim} (\Sigma\times I)_{G_1}\circ (\Sigma\times I)_{G_2}, 
$$ 
where $G_i$ denotes the graph clasper obtained from $G$ by replacing $f$ by $f_i$.  
\end{lemma}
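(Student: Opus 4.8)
The lemma is a standard instance of \emph{leaf-splitting} in calculus of claspers, and the plan is to deduce it from Habiro's moves together with Lemma~\ref{lem:edgeslide} and Lemma~\ref{lem:crossingchange}, the only real work being the bookkeeping of degrees. We may assume $k\geq1$, the case $k=0$ being vacuous since every homology cylinder is $Y_1$-equivalent to $\Sigma\times I$. First I would unwind the target: by the very definition of the composition of cobordisms (and since clasper surgery is a local operation on interiors), $(\Sigma\times I)_{G_1}\circ(\Sigma\times I)_{G_2}$ is homeomorphic to $(\Sigma\times I)_{\widetilde G_1\sqcup\widetilde G_2}$, where $\widetilde G_1\subset \Sigma\times[-1,0]$ and $\widetilde G_2\subset\Sigma\times[0,1]$ are copies of $G_1$ and $G_2$ pushed into the two collars; in particular $\widetilde G_1$ and $\widetilde G_2$ are disjoint and each still has degree $k$. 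So it suffices to prove
$$
(\Sigma\times I)_G \ \stackrel{Y_{k+1}}{\sim}\ (\Sigma\times I)_{\widetilde G_1\sqcup\widetilde G_2}.
$$

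The heart of the argument is to split the leaf $f$ by means of the box notation of \S\ref{sec:calc_clasper}. Since $f$ is the band sum $f_1\cup_\alpha f_2$ along a band neighbourhood of $\alpha$, Figure~\ref{fig:box} read from right to left, together with a Habiro move, shows that $(\Sigma\times I)_G$ is homeomorphic to $(\Sigma\times I)_{\widehat G}$, where $\widehat G$ is obtained from $G$ by inserting a box $B$ on the edge $e$ of $G$ incident to $f$: the distinguished edge of $B$ is the part of $e$ running towards the rest of $G$, and the two remaining edges of $B$ end in disjoint parallel copies of $f_1$ and $f_2$. This operation does not change the degree. One then distributes the box $B$ over the rest of $\widehat G$. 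By calculus of claspers --- applying Habiro's moves exactly as in \cite[\S4]{Habiro} --- this duplicates the sub-clasper $G\setminus f$ (together with the node at the far end of $e$, and so on), the two resulting copies being decorated by $f_1$ and $f_2$ respectively; after using crossing changes and isotopies to place one copy in the collar $\Sigma\times[-1,0]$ and the other in $\Sigma\times[0,1]$, one obtains exactly $\widetilde G_1\sqcup\widetilde G_2$. Each elementary step of this process is either a homeomorphism (a Habiro move) or changes the cobordism by surgery along one further clasper, whose degree is controlled by Lemma~\ref{lem:edgeslide} (for an edge connected-sum: degree $\geq k+1$) or by Lemma~\ref{lem:crossingchange} (for a crossing change between two degree-$k$ copies: degree $2k\geq k+1$). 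Surgery along a clasper of degree $d\geq k+1$ yields a $Y_d$-equivalent, hence $Y_{k+1}$-equivalent, cobordism by Proposition~\ref{prop:Y_clasper}, so none of these auxiliary claspers survives the reduction modulo $Y_{k+1}$, and the displayed equivalence follows.

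The step I expect to be the main obstacle, and the one I would write out carefully, is precisely this degree accounting in the box-distribution step: one must check that \emph{every} clasper created as a by-product --- the duplicated copy of $G\setminus f$, each crossing change, each edge connected-sum --- genuinely carries degree at least $k+1$. This is where Lemma~\ref{lem:edgeslide} and Lemma~\ref{lem:crossingchange} are invoked repeatedly; the point is that each such by-product contains a copy of $G$ with one leaf removed, together with at least one extra node or an extra edge joined to a clasper of degree $\geq k$, so its degree is at least $k+1$. Once this is verified, combining the $Y_{k+1}$-equivalences above proves the lemma.
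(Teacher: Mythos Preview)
Your proposal is correct and follows essentially the same route the paper points to: the paper itself omits the proof of this lemma entirely, stating that it ``use[s] the same techniques as in \cite[\S 4]{Habiro} and \cite{Goussarov_graphs,GGP}'', and what you have sketched is precisely that standard argument (box insertion via Habiro's move 7, zip construction to duplicate the clasper, then crossing changes to separate the two copies into disjoint horizontal layers, with degree bookkeeping via Lemmas~\ref{lem:edgeslide} and~\ref{lem:crossingchange}).

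One small remark on exposition: what you call ``distributing the box $B$ over the rest of $\widehat G$'' is Habiro's \emph{zip construction}, and it is worth naming it as such. The zip produces an exact equivalence (a homeomorphism) between $(\Sigma\times I)_{\widehat G}$ and surgery along two parallel, possibly linked, copies $G_1',G_2'$ of $G$ carrying the leaves $f_1,f_2$; the $Y_{k+1}$-error only enters afterwards, when you unlink $G_1'$ from $G_2'$ by crossing changes (Lemma~\ref{lem:crossingchange}, giving corrections of degree $\geq 2k\geq k+1$) and isotope them into separate collars. Your final paragraph slightly blurs this distinction by suggesting that the duplication step itself produces by-product claspers; it does not, and once you separate the two phases the degree accounting becomes cleaner (Lemma~\ref{lem:edgeslide} is in fact not really needed here).
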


\begin{lemma} \label{lem:twist_as}
Let $G$ be a degree $k$ graph clasper in $(\Sigma\times I)$. 
Let $G'$ be a clasper which differs from $G$ only by a half-twist on an edge, 
and let $\overline{G}$ be obtained from $G$ by reversing the cyclic order on the attaching regions of the three edges at one node.  
Then we have
$$ 
(\Sigma\times I)_G\circ (\Sigma\times I)_{G'} \stackrel{Y_{k+1}}{\sim} (\Sigma\times I)_G\circ (\Sigma\times I)_{\overline{G}} 
\stackrel{Y_{k+1}}{\sim}(\Sigma\times I).  
$$
\end{lemma}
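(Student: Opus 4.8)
The plan is to reduce the statement to the two cancellation relations
$$(\Sigma\times I)_G\circ (\Sigma\times I)_{G'}\stackrel{Y_{k+1}}{\sim}(\Sigma\times I)\qquad\text{and}\qquad (\Sigma\times I)_G\circ (\Sigma\times I)_{\overline G}\stackrel{Y_{k+1}}{\sim}(\Sigma\times I),$$
after which the intermediate equivalence in the statement follows by transitivity. Both relations express the classical fact that surgery along a degree-$k$ graph clasper is invertible modulo $Y_{k+1}$-equivalence, and the two arguments run in parallel: I would carry out the case of the half-twisted clasper $G'$ in detail, and then observe that reversing the cyclic order at a node of $G$ turns the corresponding basic-clasper configuration into the one realizing the inverse surgery, exactly as a half-twist on an edge does, so the case of $\overline G$ requires no new idea.

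For $G'$, first I would write $(\Sigma\times I)_G\circ (\Sigma\times I)_{G'}$ as surgery along the disjoint union $G\sqcup G'\subset \Sigma\times I$, with $G$ and $G'$ placed in two adjacent horizontal layers. Since $G'$ lies in a layer of its own, an ambient isotopy can be used to bring it onto a small parallel push-off $G^{\parallel}$ of $G$ carrying one extra half-twist on the chosen edge. Each crossing of a leaf of $G'$ past a leaf of $G$, and each band-crossing of an edge of $G'$ across $G$, performed along this isotopy would be undone at the cost of a clasper of degree $\ge 2k\ge k+1$, by Lemma \ref{lem:crossingchange} and Lemma \ref{lem:edgeslide}; this gives $(\Sigma\times I)_{G\sqcup G'}\stackrel{Y_{k+1}}{\sim}(\Sigma\times I)_{G\sqcup G^{\parallel}}$. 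It then remains to remove $G\sqcup G^{\parallel}$ modulo $Y_{k+1}$, which I would do by a bounded sequence of Habiro's Kirby-type moves \cite[Proposition 2.7]{Habiro}: the relative half-twist makes each matched pair of leaves of $G$ and $G^{\parallel}$ cancel, and the remaining edges, nodes and boxes are eliminated one at a time, each elimination again creating only claspers of degree $\ge k+1$ by Lemma \ref{lem:edgeslide}. Running the same steps with $G^{\parallel}$ replaced by a parallel push-off of $G$ with reversed cyclic order at one node yields the second relation.

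The main obstacle, and the only step with genuine content, is the degree bookkeeping: one has to make sure that \emph{every} clasper produced while isotoping $G'$ (or $\overline G$) into parallel position and while eliminating $G\sqcup G^{\parallel}$ is of degree at least $k+1$, so that surgery along it is invisible modulo $Y_{k+1}$. This is precisely what Lemma \ref{lem:edgeslide} and Lemma \ref{lem:crossingchange} are for, and once this is granted the argument is a direct transcription of the proofs of the analogous statements in \cite[\S 4]{Habiro} and \cite{Goussarov_graphs, GGP}; accordingly I would, as for the other lemmas of this appendix, present only a brief sketch rather than spelling out each move.
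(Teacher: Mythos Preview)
The paper does not actually prove this lemma: it is listed among several ``basic lemmas of calculus of claspers'' whose proofs are explicitly omitted, with a pointer to the techniques of \cite[\S 4]{Habiro}, \cite{Goussarov_graphs,GGP} and Appendix~E of \cite{Ohtsuki_book}. Your sketch is precisely the standard argument one finds in those references --- place the two claspers in disjoint horizontal layers, bring $G'$ into parallel position with $G$ at the cost of claspers of degree $\ge k+1$ using Lemmas~\ref{lem:crossingchange} and~\ref{lem:edgeslide}, then cancel the parallel pair by Habiro's moves --- so your proposal is correct and matches the paper's (implicit) approach.

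One small imprecision worth tightening if you were to write this out in full: the sentence ``the relative half-twist makes each matched pair of leaves of $G$ and $G^{\parallel}$ cancel'' is not quite how the cancellation works, since the half-twist sits on a single edge rather than on the leaves. The actual mechanism is that once $G$ and $G^{\parallel}$ are parallel, one merges each matched pair of leaves into a box (Habiro's move~5/7), and the single half-twist then propagates so that a trivial leaf is produced, killing the whole configuration up to degree $\ge k+1$ terms. This is exactly the AS relation at the clasper level, as the paper remarks after the statement.
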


\noindent
The latter part of Lemma \ref{lem:twist_as} is a version of the AS relation for claspers.  
Claspers also satisfy relations analogous to the IHX and STU relations, 
which we shall not recall here.  

\subsection{Refined calculus of claspers for $Y$-graphs with special leaves}\label{subsec:special_calc_clasper}

Let $G$ be a clasper in a $3$-manifold $M$ and let $m\in \Z$.
An \emph{$m$-special leaf of $G$} is a leaf $f$ of $G$ that bounds a disk in $M$ 
with respect to which it is $m$-framed, and such that this disk is disjoint from $G\setminus f$.
Claspers with $m$-special leaves have been studied in \cite{Meilhan}.  
A $0$-special leaf of $G$ is usually called a \emph{trivial leaf}.   
If a graph clasper
$G$ in $M$ contains a trivial leaf, then $M_G$ is homeomorphic to $M$ \cite{Habiro,GGP}.
A $(-1)$-special leaf is simply called a \emph{special leaf}.
Recall the following result from \cite{Meilhan}, see also \cite{Auclair,GGP}.

\begin{lemma}\label{lem:special}
Let $G$ be a graph clasper of degree $k\ge 2$ with an $m$-special leaf in a $3$-manifold $M$ (where $m\in \Z$).  
Then we have
$M_G\stackrel{Y_{k+1}}{\sim} M.$  
\end{lemma}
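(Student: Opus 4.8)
The statement is Lemma~\ref{lem:special}: a graph clasper $G$ of degree $k\ge 2$ carrying an $m$-special leaf (one that bounds a disk, disjoint from the rest of $G$, on which the leaf is $m$-framed) produces a surgery effect that is $Y_{k+1}$-equivalent to the original $3$-manifold. The guiding idea is that an $m$-special leaf is ``almost trivial'': it bounds a disk meeting $G$ in nothing but itself, and a genuinely trivial leaf ($m=0$) kills the clasper entirely. So the plan is to reduce the general $m$ case to the $m=0$ case, modulo higher-degree claspers.

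First I would recall the normalization step: by Lemma~\ref{lem:twist_as} (the AS-type relation for claspers) and standard clasper calculus, we may put the $m$-special leaf $f$ into a standard local position, namely a small leaf bounding an embedded disk $D$ with $f$ being $m$-framed with respect to $D$ and $D\cap G=f$. Next, I would use Habiro's moves to ``unwind'' the framing: the $m$ full twists in the framing of $f$ relative to $D$ can be removed at the cost of sliding the edge incident to $f$ over the disk $D$. Since $D$ is disjoint from $G\setminus f$, each such edge-slide, by Lemma~\ref{lem:edgeslide}, only creates an additional graph clasper of degree $\ge k+1$; modulo $Y_{k+1}$-equivalence these extra claspers can be discarded. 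This turns the $m$-special leaf into a $0$-special, i.e.\ \emph{trivial}, leaf, at the expense of a $Y_{k+1}$-error.

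Once $G$ has a genuinely trivial leaf, we invoke the known fact (Habiro \cite{Habiro}, Garoufalidis--Goussarov--Polyak \cite{GGP}) that a graph clasper with a trivial leaf has trivial surgery effect: $M_G$ is homeomorphic to $M$. Combining this with the $Y_{k+1}$-error accumulated in the normalization gives $M_G\stackrel{Y_{k+1}}{\sim}M$, as claimed. One technical point worth being careful about: the edge-slide in Lemma~\ref{lem:edgeslide} is stated for connected sums of an edge with a framed knot disjoint from $G$, so I would phrase the unwinding of the framing as a sequence of such slides of the edge incident to $f$ along a small meridian curve of $D$ (which is indeed disjoint from $G$), repeated $|m|$ times with the appropriate sign. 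The hypothesis $k\ge 2$ is exactly what guarantees the resulting degree-$(k+1)$ claspers from Lemma~\ref{lem:edgeslide} lie in $Y_{k+1}\cob$ rather than interfering at a lower level.

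\textbf{Main obstacle.} The delicate part is bookkeeping the degrees and the $Y_{k+1}$-errors precisely: each edge-slide spawns a clasper obtained by connect-summing $G$ with the slide band and the meridian, and one must check that its degree is genuinely $\ge k+1$ (not just $\ge k$) so that it vanishes modulo $Y_{k+1}$, and that no lower-degree interaction sneaks in when several slides are performed in succession. This is routine in the style of \cite[\S4]{Habiro} and already treated in a stronger (finite-type) form by Auclair \cite{Auclair} and in \cite{Meilhan}, so I would mostly cite those sources and indicate that the same argument, carried out at the level of the $Y$-filtration rather than the Goussarov--Habiro filtration, yields the stated $Y_{k+1}$-equivalence.
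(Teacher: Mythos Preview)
The paper does not prove this lemma at all; it simply recalls it from \cite{Meilhan} (with \cite{Auclair,GGP} as further references). So your plan to ultimately defer to those sources is exactly what the paper does, and nothing more is expected here.

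That said, the specific mechanism you sketch does not work. An edge-slide in the sense of Lemma~\ref{lem:edgeslide} band-sums an \emph{edge} of $G$ with a disjoint framed knot; it modifies the edge band but leaves the leaf annulus $f$ untouched, and hence does not change the framing of $f$. Sliding the incident edge along a meridian of $D$ will therefore not convert an $m$-special leaf into a $0$-special one. There is a clean sanity check: your argument, as written, uses nothing about $k$ beyond the fact that Lemma~\ref{lem:edgeslide} produces a degree-$(k+1)$ correction, and that correction can be discarded modulo $Y_{k+1}$ for \emph{any} $k\ge 1$. So if your mechanism were valid it would prove $M_G\stackrel{Y_2}{\sim}M$ for a $Y$-graph with a special leaf, contradicting the remark immediately following the lemma (and the role such $Y$-graphs play throughout \S\ref{subsec:special_calc_clasper}). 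Your stated justification for the hypothesis $k\ge 2$ is therefore vacuous, which is a signal that the genuine use of $k\ge 2$ lies elsewhere.

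The actual argument in \cite{Meilhan} (and the variants in \cite{GGP,Auclair}) does not try to trivialize the leaf by edge manipulations; rather it uses Habiro's moves to trade the $m$-special leaf and its adjacent node for a configuration involving the \emph{other} incidences of that node, and this step genuinely requires a second node to be present. Since the paper is content to cite this, you can be too; just drop the edge-slide reduction from your sketch.
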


\noindent
This result, however, is not true in degree $k=1$ for $m$ odd, and in particular for $Y$-graphs with special leaves.  
The purpose of this subsection is to provide refinements of the results of \S \ref{subsec:standard_calc_clasper} 
for $Y$-graphs with special leaves.  Before we do so, let us prepare a few auxiliary results.

In the next statement, the figure represents claspers in a given $3$-manifold 
which are identical outside a $3$-ball, where they are as depicted.  

\begin{lemma} \label{lem:slide}
The moves $(a)$ and $(b)$ of Figure \ref{fig:move1} produce equivalent claspers.  
\begin{figure}[!h]
{\labellist \small \hair 0pt 
\pinlabel {$(a)$} [l] at 52 26
\pinlabel {$(b)$} [l] at 221 28
\endlabellist}
\includegraphics[scale=1]{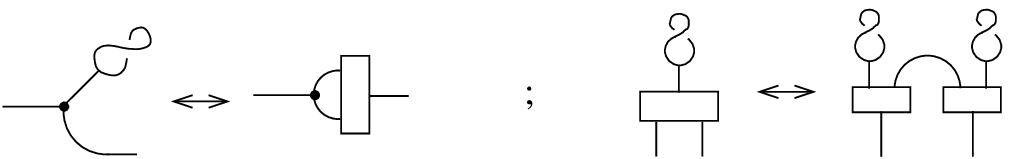}
\caption{The moves $(a)$ and $(b)$. } \label{fig:move1}
\end{figure}
\end{lemma}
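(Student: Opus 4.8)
The statement to prove is Lemma~\ref{lem:slide}: the two local pictures $(a)$ and $(b)$ of Figure~\ref{fig:move1} yield equivalent claspers, i.e.\ claspers with homeomorphic surgery effect. Since equivalence of claspers is local (two claspers that agree outside a $3$-ball are equivalent as soon as the pieces inside the ball have homeomorphic surgery effect relative to the boundary), the whole proof reduces to a computation in a standard $3$-ball containing the depicted pieces, and the plan is to carry it out entirely through Habiro's moves from \cite[Proposition 2.7]{Habiro} together with the elementary lemmas of \S\ref{subsec:standard_calc_clasper}.

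The plan is as follows. First I would expand the box in picture $(a)$ into its three-leaf form using the rule of Figure~\ref{fig:box}, so that both sides become honest graph claspers (with leaves, nodes and edges only). Then the move in question becomes a statement about sliding a leaf (or an edge together with its attached leaf) through the clasper, which is exactly the kind of assertion governed by the ``edge-slide'' and ``leaf-slide'' moves among Habiro's twelve moves. Concretely, I would identify the configuration on the left as a leaf encircling a strand that carries another part of the clasper, apply the relevant Habiro move (the analogue of a Kirby handle-slide, moves (7)--(10) in \cite{Habiro}) to pull that leaf off the strand, and then re-collapse the resulting three leaves back into a box via Figure~\ref{fig:box} read in reverse, landing on picture $(b)$. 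Wherever a half-twist or a change of cyclic order at a node appears as a byproduct, I would absorb it using Lemma~\ref{lem:twist_as} and the AS/IHX-type relations for claspers recalled in \S\ref{subsec:standard_calc_clasper}, and wherever a leaf gets split or merged I would invoke Lemma~\ref{lem:split}.

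The main obstacle I anticipate is bookkeeping rather than conceptual: keeping careful track of framings on the edges (the $\oplus$/$\ominus$ half-twist decorations), of the cyclic orderings at the nodes, and of the orientations of the bands, so that the box on the right is reconstructed with exactly the decorations drawn in Figure~\ref{fig:move1}(b) and not a twisted or reordered variant. In particular one must check that no spurious $(\pm1)$-framed unknot (i.e.\ a special leaf) is created in the process, since by Lemma~\ref{lem:special} that would only be harmless in degree $\geq 2$, whereas here we are working with a $Y$-graph-sized local picture. The verification is a routine but somewhat delicate sequence of Habiro moves; I would present it as a short chain of figures, each step labelled by the move applied, with the half-twist and cyclic-order corrections handled at the very end by a single application of Lemma~\ref{lem:twist_as}.

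Since the statement is purely local and ``topological'' (homeomorphic surgery effect, no $Y_k$-approximation involved), there is no need for the $Y$-filtration machinery here; the proof lives entirely in the framework of \cite[\S2--\S4]{Habiro}. I would therefore keep the write-up brief, essentially: expand the box, apply the handle-slide move, re-collapse, correct framings, and invoke Lemma~\ref{lem:twist_as}, referring the reader to \cite[Proposition 2.7]{Habiro} and the discussion in \cite{Goussarov_graphs,GGP} for the underlying calculus, exactly as is done for the other unproved lemmas in this appendix.
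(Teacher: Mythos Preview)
Your proposal has two genuine problems.

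First, you appear to misread Figure~\ref{fig:move1}: the labels $(a)$ and $(b)$ do not denote the ``before'' and ``after'' of a single move, but rather two \emph{separate} local moves, each with its own left- and right-hand side. The paper treats them independently: move~$(a)$ is quoted directly from \cite[Theorem~3.1]{GGP}, while move~$(b)$ is obtained by applying Habiro's moves~5 and~7 to the left-hand picture and then finishing with move~2. Your plan to ``expand the box in picture~$(a)$ \dots\ landing on picture~$(b)$'' is not what is being asserted.

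Second, and more seriously, you propose to absorb stray half-twists and cyclic-order changes via Lemma~\ref{lem:twist_as}, and to handle leaf splittings via Lemma~\ref{lem:split}. Both of those lemmas produce only $Y_{k+1}$-equivalences, not exact equivalences of claspers; since Lemma~\ref{lem:slide} asserts that the moves give \emph{homeomorphic surgery effect}, these tools are simply unavailable here --- a point you yourself concede in your final paragraph, contradicting the plan laid out earlier. The paper's argument stays entirely within Habiro's twelve moves from \cite[Proposition~2.7]{Habiro} (plus the citation to \cite{GGP} for move~$(a)$), and never passes through any $Y_k$-approximation. Your general instinct to work via Habiro's moves is sound, but you must identify the specific sequence (moves 5, 7, then 2 for~$(b)$) rather than fall back on the $Y$-filtration lemmas.
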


\begin{proof}
Move $(a)$ is an immediate consequence of \cite[Theorem 3.1]{GGP} 
(taking into account the fact that the convention used in \cite{GGP} for 
the definition of the surgery along a clasper is the opposite 
of the one used in \cite{Habiro} and  the present paper).

Now we consider the clasper on the left-hand side
of the figure illustrating move $(b)$.
By Habiro's move 5 and move 7, this clasper is equivalent to
$$
{\labellist \small \hair 0pt 
\pinlabel {$=$}  at 32 25
\pinlabel {$\sim$}  at 106 25
\pinlabel {.}  [l] at 167 25
\endlabellist}
\includegraphics[scale=1]{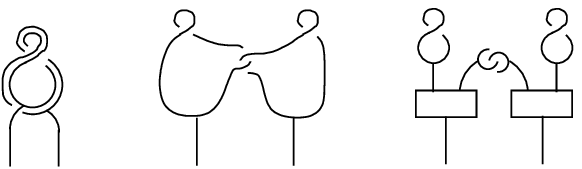}
$$
The proof then follows from Habiro's move 2.  
\end{proof}

The next lemma is in some sense 
a continuation of move $(b)$ for $Y$-graphs.

\begin{lemma}\label{lem:movec}
Let $G$ be a clasper which ``consists of'' two $Y$-graphs in $(\Sigma\times I)$, 
sharing a special leaf via a box as depicted in Figure \ref{fig:movec}. 
Then we have
$$ 
(\Sigma\times I)_G \stackrel{Y_{3}}{\sim}  (\Sigma\times I)_{Y'}\circ  (\Sigma\times I)_{Y''}\circ  (\Sigma\times I)_{H}, 
$$ 
where  $Y'$, $Y''$ and $H$ are the claspers represented in Figure \ref{fig:movec}.  
\begin{figure}[!h]
    {\labellist \small \hair 0pt 
    \pinlabel {$G$} [l] at 51 40
    \pinlabel {$Y'$} [l] at 102 40
    \pinlabel {$Y''$} [l] at 156 40
    \pinlabel {$H$} [l] at 215 40
    \pinlabel {$G'$} [l] at 366 40
    \endlabellist}
  \includegraphics{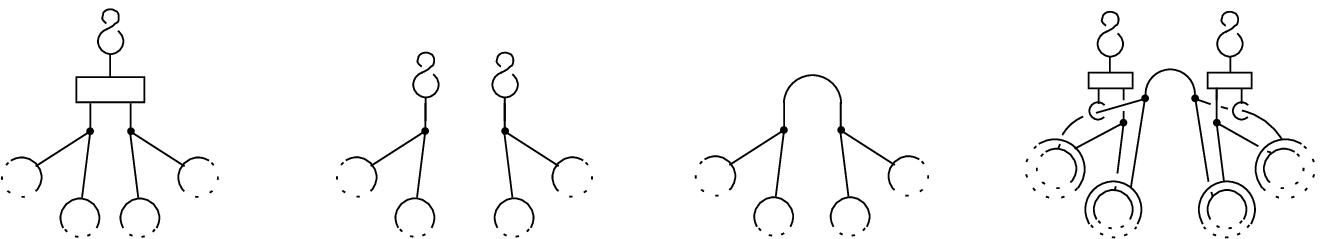} 
  \caption{ }\label{fig:movec}
 \end{figure}
\end{lemma}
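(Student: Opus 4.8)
The plan is to reduce the clasper $G$ to the disjoint union of $Y'$, $Y''$ and $H$ by a sequence of moves of calculus of claspers, working throughout modulo $Y_3$-equivalence. First I would apply move $(b)$ of Lemma \ref{lem:slide} to the box of $G$. Since the distinguished edge of that box is attached to the special leaf (the $(-1)$-framed leaf bounding a disk disjoint from the rest of $G$), this move transforms $G$ into the clasper $G'$ of Figure \ref{fig:movec}, in which the two $Y$-graphs no longer meet through a box but are each attached, via one of their edges, to a parallel copy of the former special leaf. The point to check here is that the disk bounding the original special leaf produces, for each of the two parallel copies, a disk exhibiting it again as a special leaf; these two disks will in general meet the \emph{opposite} $Y$-graph, and it is precisely the removal of those intersections that will create the degree $2$ correction term.

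Next I would separate the two $Y$-graphs of $G'$. An edge of one $Y$-graph that runs through the other may be slid off by Lemma \ref{lem:edgeslide}, at the cost of a degree $1+1=2$ graph clasper; a crossing change between a leaf of one $Y$-graph and a leaf of the other is free by Lemma \ref{lem:crossingchange}, again up to a degree $2$ graph clasper, since $1+1+1=3$. All the degree $2$ claspers produced in this way are, up to the AS, IHX and STU relations for claspers recalled in \S\ref{subsec:standard_calc_clasper} (in particular Lemma \ref{lem:twist_as}), either killed by Lemma \ref{lem:special} when they inherit a special leaf, or combine into the single H-graph $H$ drawn in Figure \ref{fig:movec}. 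After these manipulations the two $Y$-graphs are unlinked and each still carries one honest special leaf, so they are exactly the claspers $Y'$ and $Y''$ of the statement.

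Finally, $Y'$, $Y''$ and $H$ now lie in pairwise disjoint regions of $\Sigma \times I$ which, splitting leaves along arcs as in Lemma \ref{lem:split} if necessary, can be placed in disjoint horizontal layers; hence surgery along their union is $Y_3$-equivalent to the composition $(\Sigma\times I)_{Y'}\circ (\Sigma\times I)_{Y''}\circ (\Sigma\times I)_H$, which is the asserted formula.

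The main obstacle will be the bookkeeping: keeping track of the half-twists ($\oplus$ and $\ominus$ on edges), the cyclic orderings at the nodes, the orientations of the leaves, and above all the framings entering the definition of a special leaf, in order to identify the correction term with \emph{precisely} the H-graph $H$ of Figure \ref{fig:movec} rather than merely with ``some'' degree $2$ clasper. Pinning down the sign and the leaves of $H$ is the delicate part; the remaining steps are a routine, if somewhat lengthy, application of Habiro's twelve moves.
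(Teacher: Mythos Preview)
Your overall plan is sound in outline, and the ingredients you cite (move $(b)$, crossing changes, edge slides, Lemma \ref{lem:special}) are exactly the right ones. But you have misread the clasper $G'$ of Figure \ref{fig:movec}, and this changes where the H-graph $H$ comes from.

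In the paper, $G'$ is \emph{not} obtained from move $(b)$ alone: one applies move $(b)$ and then several iterations of Habiro's moves 11 and 5 (the ``zip'' moves). The result $G'$ already has \emph{three} connected components, one of which is a degree $2$ graph clasper $H'$; the other two are $Y'$ and $Y''$. So the H-graph is present from the outset in $G'$, not produced as a correction term. The remaining work is then pure separation: moving $H'$ into its own horizontal layer costs only degree $\geq 3$ corrections (Lemma \ref{lem:edgeslide} on a degree $2$ clasper, and Lemma \ref{lem:crossingchange} between a degree $2$ and a degree $1$ clasper), while separating $Y'$ from $Y''$ costs degree $2$ corrections that inherit a special leaf and are therefore killed by Lemma \ref{lem:special}. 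In particular, \emph{none} of the separation corrections survive to contribute to $H$.

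Your account reverses this: you take $G'$ to be two linked $Y$-graphs and expect $H$ to emerge from the degree $2$ error terms created when you pull them apart. If your $G'$ really were $Y'\cup Y''$ as in the paper, this would fail outright --- every degree $2$ correction would carry a special leaf and vanish. Of course, the clasper produced by move $(b)$ \emph{alone} is not the paper's $G'$, and your route might in principle be repaired by analysing that intermediate clasper directly; but then the bookkeeping you flag as ``the delicate part'' becomes the entire proof, and your sketch does not carry it out. The paper sidesteps this by using moves 11 and 5 to factor out $H'$ before any separation begins. (A small aside: invoking Lemma \ref{lem:split} at the end to ``place in disjoint horizontal layers'' is not what that lemma does --- it splits a leaf into two pieces, changing the clasper, not merely its position.)
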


\begin{rem}
In Figure \ref{fig:movec}, it is implicit that the leaves of $Y'\cup Y''$ (respectively $H$ and $G'$) 
are parallel copies of the leaves of $G$.  
In particular, a choice of orientation on the leaves of $Y'\cup Y''$ induces one for $H$ and $G'$. 
Similar comments apply to subsequent figures.   
\end{rem}

\begin{proof}[Proof of Lemma \ref{lem:movec}]
By using move $(b)$, followed by several applications of Habiro's moves 11 and 5, we have that $G$ 
is equivalent to the clasper $G'$ represented on the right-hand side of Figure \ref{fig:movec}.
Observe that one of the three components of $G'$ is a graph clasper of degree $2$,
which we denote by $H'$.
On one hand, $H'$ can be homotoped, by a sequence of 
crossing changes in $(\Sigma\times I)$, to a ``horizontal layer'' $\Sigma\times [-1,-1+\varepsilon]$ 
which is disjoint from the rest of $G'$.
Thus, by applying Lemma \ref{lem:edgeslide} and Habiro's move 3 first,
and by using Lemma \ref{lem:crossingchange} subsequently, we obtain that
$(\Sigma\times I)_G \stackrel{Y_{3}}{\sim}  (\Sigma\times I)_{Y'\cup Y''}\circ  (\Sigma\times I)_{H}$. 
On the other hand, it follows from Lemma \ref{lem:crossingchange} and Lemma \ref{lem:special} that 
$(\Sigma\times I)_{Y'\cup Y''}  \stackrel{Y_{3}}{\sim}  (\Sigma\times I)_{Y'}\circ  (\Sigma\times I)_{Y''}$. 
This concludes the proof.
\end{proof}

Finally, we will need the following.

\begin{lemma}\label{lem:general}
Let $G$ be a disjoint union of two $Y$-graphs in $(\Sigma\times I)$ which only differ by a positive half-twist on an edge, 
as represented in Figure \ref{fig:general}.  Then we have
$$
 (\Sigma\times I)_G\stackrel{Y_3}{\sim} (\Sigma\times I)_{C}, 
$$
where $C$ is a degree $2$ graph  clasper in $(\Sigma\times I)$ as represented in Figure \ref{fig:general}.
  \begin{figure}[!h]
    {\labellist \small \hair 0pt 
    \pinlabel {$G$} [l] at 36 32
    \pinlabel {$C$} [l] at 125 32
    \endlabellist}
    \includegraphics{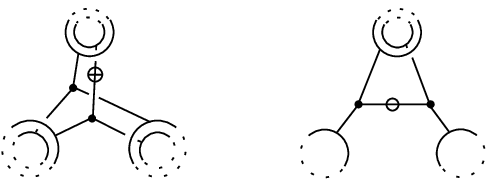}
    \caption{} \label{fig:general}
  \end{figure}
\end{lemma}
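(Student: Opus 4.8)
\textbf{Proof plan for Lemma \ref{lem:general}.} The statement says that a disjoint union $G = Y_1 \sqcup Y_2$ of two $Y$-graphs differing only by a positive half-twist on one edge produces, up to $Y_3$-equivalence, the same result as surgery along a single degree $2$ graph clasper $C$ obtained by ``fusing'' the two $Y$-graphs along that edge. The plan is to recognize this as the clasper-theoretic analogue of the commutator identity $[a,b]\,[b,a] = 1$ being refined to $[a,b] = $ (something of higher degree), i.e.\ it is essentially a consequence of the AS-type relation for claspers (Lemma \ref{lem:twist_as}) combined with the edge-leaf manipulations of \S\ref{subsec:standard_calc_clasper}, but carried out \emph{one degree more carefully} so that the ``error term'' is exhibited explicitly rather than merely being declared $Y_3$-trivial.

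First I would apply Lemma \ref{lem:twist_as}: stacking $Y_1$ (the untwisted graph) on top of $Y_2$ (which differs by a half-twist on an edge), we have $(\Sigma\times I)_{Y_1}\circ (\Sigma\times I)_{Y_2} \stackrel{Y_3}{\sim} (\Sigma\times I)$, so modulo $Y_3$ the two pieces cancel — but this only tells us $(\Sigma\times I)_G \stackrel{Y_3}{\sim}(\Sigma\times I)$, which is \emph{weaker} than what we want since the claimed error term $(\Sigma\times I)_C$ with $C$ of degree $2$ is itself $Y_3$-trivial. So the real content is a degree-$2$ refinement. Instead, the approach is: bring the two $Y$-graphs $Y_1$ and $Y_2$ into ``adjacent'' position so that they share a common sub-configuration (the leaves being parallel copies, as in the figure). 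Then perform a fusion of the two edges that differ by the half-twist, using Habiro's moves (move 2, move 3, moves 5, 7, 11) exactly as in the proofs of Lemmas \ref{lem:slide} and \ref{lem:movec}. Concretely: connect-sum the relevant edges via a band, apply Lemma \ref{lem:edgeslide} to see that the correction to the naive fusion is a graph clasper one degree higher (hence degree $2$), and then apply Lemma \ref{lem:crossingchange} and Lemma \ref{lem:special} to discard any degree $\geq 3$ contributions and any pieces with special leaves. The half-twist is precisely what controls the sign/orientation of the resulting degree $2$ graph clasper $C$, via Lemma \ref{lem:twist_as} applied in degree $2$.

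More precisely, I expect the cleanest route to mirror the proof of Lemma \ref{lem:movec}: use move $(b)$ of Lemma \ref{lem:slide} (or rather its proof, via Habiro's moves 5 and 7) to re-express the union $G$ as a clasper in which the half-twisted and untwisted edges have been brought together; the IHX/STU-type relations for claspers then let one ``resolve'' the twisted configuration, and what survives after absorbing $Y_{\geq 3}$ terms (Lemma \ref{lem:special}, Lemma \ref{lem:crossingchange}) and after using Lemma \ref{lem:twist_as} to trade remaining half-twists for orientation reversals is exactly the single degree $2$ graph clasper $C$ pictured in Figure \ref{fig:general}, whose two leaves are the parallel copies inherited from the two $Y$-graphs.

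The main obstacle I anticipate is \emph{bookkeeping of orientations and framings} through the sequence of Habiro moves: the precise shape of $C$ (in particular, which cyclic orderings appear at its two nodes, and the sign of its edge) depends on tracking the positive half-twist through every move, and a sign error here would give $C$ with the wrong orientation. This is the ``details of the computations left to the interested reader'' type of step that pervades \S\ref{subsec:special_calc_clasper}; the conceptual skeleton (twist $\leadsto$ AS relation $\leadsto$ degree-$2$ error term, all higher terms killed by Lemma \ref{lem:special}) is routine once one has Lemmas \ref{lem:edgeslide}, \ref{lem:crossingchange}, \ref{lem:twist_as}, \ref{lem:special}, \ref{lem:slide}, \ref{lem:movec} in hand, but verifying that the surviving term is genuinely $(\Sigma\times I)_C$ for the specific $C$ drawn — and not, say, $(\Sigma\times I)_C$ composed with some further special-leaf $Y$-graph — requires care with Lemma \ref{lem:special} in degree $2$ and with the fact that we work modulo $Y_3$, which is exactly the degree where degree-$2$ claspers are non-trivial but their \emph{interactions} with anything else vanish.
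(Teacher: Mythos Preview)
Your proposal has a genuine gap. First, a degree slip: Lemma \ref{lem:twist_as} applied to the two $Y$-graphs (degree $k=1$) gives only $Y_2$-equivalence, not $Y_3$-equivalence, and $(\Sigma\times I)_C$ for a degree-$2$ clasper $C$ is \emph{not} $Y_3$-trivial in general --- indeed, Lemma \ref{lem:general} is precisely the statement that the $Y_2$-trivial element $(\Sigma\times I)_G$ represents the nonzero class $(\Sigma\times I)_C$ in $Y_2\cyl/Y_3$. More seriously, your main mechanism does not do what you need: Lemma \ref{lem:edgeslide} band-sums an edge of a single clasper with a \emph{knot} and records a higher-degree correction; it does not ``fuse'' two edges belonging to two disjoint $Y$-graphs into a single degree-$2$ clasper. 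Likewise, move $(b)$ of Lemma \ref{lem:slide} and the argument of Lemma \ref{lem:movec} concern claspers with a box attached to a special leaf, which is not the configuration of $G$. So the proposed route has no concrete step that produces $C$.

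The paper's proof uses an idea you are missing: rather than manipulating $G$ directly, it \emph{starts from the empty clasper} and uses Habiro's move 4 (creation of a clasper pair from nothing), followed by moves 11, 5, 12, 5, to build an equivalent clasper $U$; thus $(\Sigma\times I)_U\cong (\Sigma\times I)$. This $U$ is then separated, via moves 11, 5 together with Lemma \ref{lem:edgeslide} and Lemma \ref{lem:crossingchange} exactly as in the proof of Lemma \ref{lem:movec}, into $G$ and a degree-$2$ clasper $C'$, yielding $(\Sigma\times I)\stackrel{Y_3}{\sim}(\Sigma\times I)_G\circ(\Sigma\times I)_{C'}$. Finally $C'$ differs from $C$ by three half-twists on edges, so Lemma \ref{lem:twist_as} (now in degree $2$, hence modulo $Y_3$) gives $(\Sigma\times I)_{C'}\circ(\Sigma\times I)_C\stackrel{Y_3}{\sim}(\Sigma\times I)$, and the result follows. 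The ``creation from $\varnothing$'' trick is what gives a clean equality to work from; your fusion approach lacks any analogous anchor.
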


\begin{proof}
We have the following equivalences of claspers:
$$
    {\labellist \small \hair 0pt 
    \pinlabel {$\varnothing \sim$} [r] at 0 28
    \pinlabel {$\sim$} [l] at 50 28
    \pinlabel {$\sim$} [l] at 133 28
    \pinlabel {$U$} [l] at 202 35
    \endlabellist}
    \includegraphics{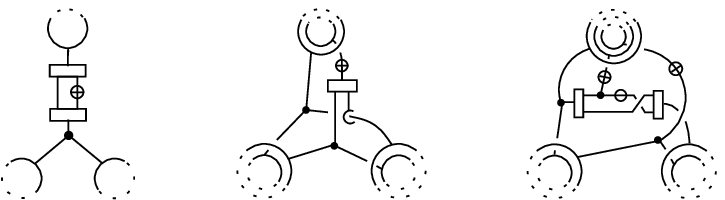}
$$
The first equivalence follows from Habiro's move 4,
and the second one is obtained by applying Habiro's moves 11 and 5.
The third equivalence follows from Habiro's moves 12  and 5: the resulting  clasper is denoted by $U$.
Next, we apply Habiro's move 11 and move 5 several times and, 
using Lemma \ref{lem:edgeslide} and Lemma \ref{lem:crossingchange} in a  way similar to the proof of Lemma \ref{lem:movec}, 
we obtain that 
$$ 
 (\Sigma\times I)_{U}\stackrel{Y_3}{\sim} (\Sigma\times I)_{G}\circ (\Sigma\times I)_{C'}.
$$
Here $C'$ is a degree $2$ graph clasper which only differs from $C$ by three half-twists on edges.  
The result then follows from Lemma \ref{lem:twist_as}.  
\end{proof}

We can now prove the main results of this appendix.  

\begin{lemma}[Edge Sliding]\label{lem:slide_special}
 Let $Y$ be a $Y$-graph with special leaf in a $3$-manifold $M$. 
 Let $Y'$ be obtained from $Y$ by a connected sum, along some band $b$, of an edge $e$ 
with some framed knot in $M$ disjoint from $Y$.  Then we have
  $
  M_Y\stackrel{Y_{3}}{\sim} M_{Y'}. 
  $
\end{lemma}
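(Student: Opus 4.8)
The plan is to mimic the standard Edge Sliding Lemma \ref{lem:edgeslide}, but to use the extra structure coming from the special leaf in order to absorb the "error term" that would otherwise appear only in degree $3$. First I would apply Lemma \ref{lem:edgeslide} directly to the edge $e$ of $Y$ and the framed knot $K$: this yields
$$
M_Y \stackrel{Y_{3}}{\sim} M_{Y' \cup H},
$$
where $H$ is a copy, disjoint from $Y'$, of the degree $2$ graph clasper $Y \cup b \cup K$ obtained from $Y$ by attaching the band $b$ and the knot $K$ to form a new edge. Note that $H$ still carries a special leaf (the one inherited from $Y$, which is untouched by the slide, together with its bounding $(-1)$-framed disk disjoint from everything). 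Since $H$ is a \emph{degree $2$} graph clasper with a special leaf, Lemma \ref{lem:special} (applied with $k=2$) gives $M_H \stackrel{Y_3}{\sim} M$, i.e. surgery along $H$ has no effect up to $Y_3$-equivalence. Combining the two equivalences — using that $Y_3$-equivalence is preserved under composition and that the clasper $H$ is disjoint from $Y'$ — yields $M_Y \stackrel{Y_3}{\sim} M_{Y'}$, as desired.

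The step I expect to be delicate is the bookkeeping required to see that $H$ genuinely inherits the special leaf of $Y$ and that the disk witnessing the special leaf can be kept disjoint from $Y'$ after the connected sum along $b$. The connected sum operation in Lemma \ref{lem:edgeslide} modifies one edge of the $Y$-graph and introduces the knot $K$ as part of the new clasper $H$; it does not touch the leaves. However, one must check that the regular neighborhood of $H$ can be isotoped so that the disk bounded by the special leaf avoids both $Y'$ and $K$. This is where Lemma \ref{lem:slide_special} would become circular if misapplied, so I would instead argue directly: since $K$ is disjoint from $Y$ by hypothesis and the special-leaf disk $D$ of $Y$ is disjoint from $Y \setminus f$, a small isotopy pushes $D$ off of the band $b$ and the knot $K$ as well (one may first tube $D$ around any intersection with $b$, reducing the framing correction appropriately, but in fact the band $b$ can be chosen to miss $D$ from the start since $D$ is a small embedded disk).

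Once that disjointness is arranged, the degree count is immediate: $Y$ has degree $1$, the connected sum along a knot raises the degree by $1$, so $H$ has degree $2$, and Lemma \ref{lem:special} applies on the nose. I would also remark, as is standard, that the orientation and framing conventions on the new edge of $H$ are irrelevant here because $H$ is killed up to $Y_3$-equivalence regardless. The only genuine subtlety, then, is the topological positioning of the special-leaf disk, and I would devote the bulk of the written proof to that point while leaving the clasper-calculus manipulations as citations to Lemma \ref{lem:edgeslide} and Lemma \ref{lem:special}.
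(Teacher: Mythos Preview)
Your proposal is correct and follows the same two-step approach as the paper: apply Lemma~\ref{lem:edgeslide} to obtain $M_Y \stackrel{Y_3}{\sim} M_{Y'\cup H}$ with $H$ a degree~$2$ graph clasper inheriting the special leaf, and then kill $H$ via Lemma~\ref{lem:special}. For the point you flag as delicate---arranging the disk of the special leaf of $H$ to miss $Y'$---the paper does not argue by direct isotopy but simply invokes Lemma~\ref{lem:crossingchange}: since $Y'$ has degree~$1$ and $H$ has degree~$2$, each crossing change between a leaf of $Y'$ and the special leaf of $H$ produces only a degree~$3$ error term, so one may freely unlink the disk from $Y'$ modulo $Y_3$; this sidesteps the positioning worries you raise (and note that the band $b$ is part of the hypothesis, so it cannot be ``chosen to miss $D$'').
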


\begin{proof}
By Lemma \ref{lem:edgeslide}, we have that 
$M_Y\stackrel{Y_3}{\sim} M_{Y'\cup H}$
where $H$ is a degree $2$ graph clasper, disjoint from $Y'$, and with a special leaf.  
By Lemma \ref{lem:crossingchange}, we can  assume that the disk of this special leaf is disjoint from $Y'$.
We conclude thanks to Lemma \ref{lem:special}. 
\end{proof}

\begin{lemma}[Leaf Splitting]\label{lem:split_special}
Let $Y$ be a $Y$-graph with one special leaf in $(\Sigma\times I)$.  
Let $f_1$ and $f_2$ be two framed knots obtained by splitting a leaf $f$ of $Y$ along an arc $\alpha$ as shown 
in Figure \ref{fig:scind}.  
Then we have
$$ 
(\Sigma\times I)_Y 
\stackrel{Y_{3}}{\sim}  (\Sigma\times I)_{Y'}\circ  (\Sigma\times I)_{Y''}\circ  (\Sigma\times I)_{H}, 
$$ 
where $Y_i$ denotes the $Y$-graph obtained from $Y$ by replacing $f$ by $f_i$ ($i=1,2$), 
and where $H$ is the degree 2 graph clasper shown in Figure \ref{fig:scind}.   
\end{lemma}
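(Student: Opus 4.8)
The plan is to realize the splitting $f=f_1\cup f_2$ by inserting a box and then to resolve that box by calculus of claspers, following the pattern of the proofs of Lemma \ref{lem:movec} and Lemma \ref{lem:general}. First I would replace the leaf $f$ of $Y$ by a box whose distinguished edge is attached to the node of $Y$, in place of the edge that carried $f$, and whose two other edges are attached to $f_1$ and to $f_2$; this is the standard clasper description of leaf splitting, and, expanding the box as in Figure \ref{fig:box}, one obtains a clasper $G$ with exactly one box and with $(\Sigma\times I)_G$ homeomorphic to $(\Sigma\times I)_Y$. Note that $G$ has four leaves: the special leaf of $Y$, the remaining (``third'') leaf of $Y$, and $f_1$, $f_2$.

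Next I would resolve the box. Using move $(b)$ of Lemma \ref{lem:slide} and a sequence of Habiro's moves, exactly as in the proofs of Lemma \ref{lem:movec} and Lemma \ref{lem:general}, the clasper $G$ is transformed, up to homeomorphism, into a clasper $G'$ that is a disjoint union of three graph claspers: the $Y$-graph $Y'$ obtained from $Y$ by replacing $f$ with $f_1$, the $Y$-graph $Y''$ obtained by replacing $f$ with $f_2$, and a degree $2$ graph clasper $H'$ whose leaves are parallel copies of $f_1$, of $f_2$ and of the third leaf of $Y$. The appearance of $H'$ is the clasper analogue of the $STU$ relation: pulling $f_1$ and $f_2$ apart from the node of $Y$ creates a crossing that is recorded, through Lemma \ref{lem:crossingchange}, by a degree $2$ graph clasper. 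One expects $H'$ to be the H-graph carrying $f_1$ and a copy of the third leaf on one node and $f_2$ and a copy of the third leaf on the other; this is consistent with Lemma \ref{lem:tau2}, since $\tau_2\big((\Sigma\times I)_Y\big)=\tau_2\big((\Sigma\times I)_{Y'}\big)+\tau_2\big((\Sigma\times I)_{Y''}\big)+\tau_2\big((\Sigma\times I)_{H'}\big)$ forces the second Johnson homomorphism of the correction to equal $(\phi_1\wedge\gamma)\leftrightarrow(\phi_2\wedge\gamma)$, where $\phi_1$, $\phi_2$ and $\gamma$ are the homology classes of $f_1$, $f_2$ and of the third leaf. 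Along the way, every connect-sum of an edge with a knot that appears is undone by Lemma \ref{lem:edgeslide} and Lemma \ref{lem:slide_special}, and every spurious half-twist on an edge of $H'$, $Y'$ or $Y''$ is absorbed by Lemma \ref{lem:twist_as}, so that these three pieces become exactly the claspers $H$, $Y'$ and $Y''$ of Figure \ref{fig:scind}.

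Then I would separate the three pieces of $G'$ into disjoint horizontal layers of $(\Sigma\times I)$. Pulling $H'$ off $Y'\cup Y''$ costs, by Lemma \ref{lem:crossingchange}, only correction claspers of degree $\geq 3$; and $Y'$ and $Y''$ can be pulled apart by crossing changes between pairs of leaves chosen so that every resulting correction is a degree $2$ graph clasper carrying a $(-1)$-special leaf, namely a copy of the special leaf of $Y$, hence is $Y_3$-trivial by Lemma \ref{lem:special}. All these moves are therefore free modulo $Y_3$-equivalence; once $Y'$, $Y''$ and $H$ lie in three disjoint horizontal layers, the corresponding surgery realizes the composition of the three cobordisms, which gives
$$(\Sigma\times I)_Y \stackrel{Y_3}{\sim} (\Sigma\times I)_{Y'}\circ (\Sigma\times I)_{Y''}\circ (\Sigma\times I)_H.$$

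The step I expect to be the main obstacle is the bookkeeping in the box-resolution: one must verify that the degree $2$ graph clasper dropping out of $G'$ is \emph{precisely} the clasper $H$ of Figure \ref{fig:scind}, with the correct framings and edge-orientations, and that every other correction clasper produced by the sequence of Habiro's moves is either of degree $\geq 3$ or a degree $2$ graph clasper carrying a special leaf — so that, by Lemma \ref{lem:special}, nothing further survives in $Y_2\cyl/Y_3$.
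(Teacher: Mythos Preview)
Your overall strategy—insert a box at the split leaf via Habiro's move 7, then resolve the box and separate the resulting pieces into horizontal layers—is sound, and matches the paper's approach in its opening step. However, the paper then takes a shorter route than the one you outline. Rather than resolving the box ``from scratch'' by mimicking the proof of Lemma \ref{lem:movec}, the paper uses Habiro's moves 11 and 5 (and then 5 and 1) to rearrange the resulting clasper so that it becomes exactly two $Y$-graphs sharing the \emph{special} leaf via a box—precisely the configuration of Lemma \ref{lem:movec}. A single invocation of that lemma then yields $Y'$, $Y''$, and an H-graph, and one application of Lemma \ref{lem:edgeslide} adjusts that H-graph to the $H$ of Figure \ref{fig:scind}.

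This reorganisation is what makes the argument short: the bookkeeping you rightly flag as the main obstacle has already been absorbed into Lemma \ref{lem:movec}. Note also that your invocation of move $(b)$ of Lemma \ref{lem:slide} is slightly misplaced: in your configuration the box sits at the split leaf $f$, not at the special leaf, whereas move $(b)$ is specifically about a special leaf attached through a box. Your route could still be carried through by expanding the box via Habiro's move 12 and tracking all the degree-$2$ debris directly, but the paper's reduction to Lemma \ref{lem:movec} sidesteps this entirely.
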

\begin{figure}[!h]
{\labellist \small \hair 0pt 
\pinlabel {$Y$} [l] at 24 24
\pinlabel {$f$} [tr] at 3 39
\pinlabel {$f'$} [tr] at 50 15
\pinlabel {$\alpha$} [l] at 20 45
\pinlabel {$f_1$} [r] at 78 49
\pinlabel {$f_2$} [l] at 120 49
\pinlabel {$Y''$} [l] at 110 30
\pinlabel {$Y'$} [t] at 96 16
\pinlabel {$f_1$} [r] at 153 49
\pinlabel {$f_2$} [l] at 194 49
\pinlabel {$H$} [l] at 186 29
\pinlabel {$f_1$} [r] at 284 49
\pinlabel {$f_2$} [l] at 325 49
\pinlabel {$G$} [l] at 314 29
\endlabellist}
\includegraphics[scale=1]{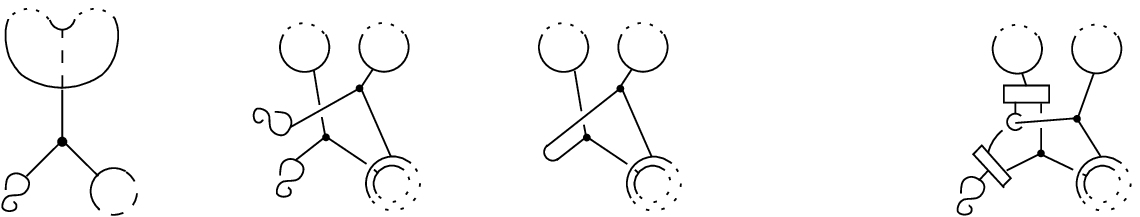}
\caption{ }\label{fig:scind}
\end{figure}
\begin{proof}
Using Habiro's move 7, followed by moves 11 and 5, 
we see that $Y$ is equivalent to the clasper $G$ shown in Figure \ref{fig:scind}. 
Thanks to Habiro's moves 5 and 1,  $G$ can be transformed into a clasper
to which we can apply Lemma \ref{lem:movec}. The H-graph resulting from this application
can be transformed into $H$ thanks to Lemma \ref{lem:edgeslide}.
\end{proof}

We shall need the following three consequences of Lemma \ref{lem:split_special}. 

\begin{corollary}\label{cor:split_2spe}
Suppose that, in the statement of Lemma \ref{lem:split_special} and Figure \ref{fig:scind}, the leaf $f'$ is special.  
Then we have
$$ (\Sigma\times I)_Y \stackrel{Y_{3}}{\sim}  (\Sigma\times I)_{Y'}\circ  (\Sigma\times I)_{Y''}\circ  (\Sigma\times I)_{P}, $$ 
where $P$ is obtained from $H$ by deleting the two parallel copies of $f'$ and connecting the edges by an untwisted band. 
\end{corollary}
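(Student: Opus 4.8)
The plan is to obtain Corollary~\ref{cor:split_2spe} as an immediate consequence of Lemma~\ref{lem:split_special}, the only extra input being a local clasper simplification of the H-graph $H$ appearing there, available once the leaf $f'$ is assumed to be a $(-1)$-special leaf. First I would apply Lemma~\ref{lem:split_special} word for word, which already gives
$$(\Sigma\times I)_Y \stackrel{Y_3}{\sim} (\Sigma\times I)_{Y'}\circ (\Sigma\times I)_{Y''}\circ (\Sigma\times I)_H,$$
with $H$ the degree $2$ graph clasper of Figure~\ref{fig:scind} and $Y',Y''$ obtained from $Y$ by replacing $f$ by $f_1,f_2$. (Note that, under the hypothesis of the corollary, $Y'$ and $Y''$ are now $Y$-graphs with two special leaves; they will play no role and are simply kept as they are.) So everything reduces to proving the single equivalence
$$(\Sigma\times I)_H \stackrel{Y_3}{\sim} (\Sigma\times I)_P,$$
where $P$ is the looped degree $2$ clasper described in the statement, obtained from $H$ by deleting the two parallel copies of $f'$ and joining the freed edge-ends by an untwisted band.

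\textbf{Key step.} To prove this, I would argue that when $f'$ is special, the two copies of $f'$ occurring among the leaves of $H$ are parallel $(-1)$-framed trivial knots, one attached to each node of the H-graph, whose common bounding annulus is disjoint from the rest of $H$ (and from $Y',Y''$). Using Habiro's moves 11 and 5 one slides these two copies together so that the clasp they form lies in a small ball meeting $H$ only along the two incident edges; then move $(b)$ of Lemma~\ref{lem:slide} applied to that local picture ``zips'' the clasp, replacing it by a direct band connecting the two nodes. The absence of a half-twist on that new band — i.e. that one really gets $P$ and not a twisted variant — is controlled by Lemma~\ref{lem:twist_as} (equivalently Habiro's move 12). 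The corrections produced along the way are of two harmless kinds: graph claspers of degree $\ge 3$ coming from edge slides and crossing changes (Lemma~\ref{lem:edgeslide}, Lemma~\ref{lem:crossingchange}, Lemma~\ref{lem:slide_special}), which are trivial modulo $Y_3$; and degree $2$ graph claspers inheriting a special leaf from a copy of $f'$, which are trivial modulo $Y_3$ by Lemma~\ref{lem:special}. Discarding them gives $(\Sigma\times I)_H \stackrel{Y_3}{\sim} (\Sigma\times I)_P$, and substituting back into the displayed decomposition proves the corollary. (An equivalent route is to re-run the proof of Lemma~\ref{lem:split_special} itself with $f'$ special: in that proof the H-graph is produced by an application of Lemma~\ref{lem:movec} to a box shared by two $Y$-graphs, and when the shared leaves are copies of the $(-1)$-special leaf $f'$ the intermediate H-graph degenerates in exactly the above manner.)

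\textbf{Main obstacle.} The delicate point is precisely this local computation: verifying that a $(-1)$-framed clasp joining the two nodes of an H-graph converts, modulo $Y_3$, into an untwisted connecting band, with no residual half-twist. This is a bookkeeping exercise in Habiro's moves strictly parallel to — and slightly simpler than — the proof of Lemma~\ref{lem:movec}; once it is carried out, the remainder is routine degree counting, since every auxiliary clasper that appears is either of degree $\ge 3$ or a degree $2$ graph clasper carrying a special leaf, hence dies modulo $Y_3$.
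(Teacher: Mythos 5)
Your reduction is exactly the paper's: apply Lemma \ref{lem:split_special} verbatim and reduce everything to the single equivalence $(\Sigma\times I)_H \stackrel{Y_3}{\sim} (\Sigma\times I)_P$. For that key step, however, you take a genuinely different route. The paper does not touch the two parallel copies of $f'$ directly: it inserts a pair of Hopf-linked leaves into the middle edge of $H$ (Habiro's move 2), which turns $H$ into two $Y$-graphs sharing the special leaf $f'$ via a box, and then applies Lemma \ref{lem:movec} wholesale; in the resulting decomposition $Y'\circ Y''\circ H^{\mathrm{new}}$ the two $Y$-graphs acquire trivial leaves (hence act trivially) and $H^{\mathrm{new}}$ carries the Hopf pair, so deleting it by move 2 gives exactly $P$. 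This buys you the twist bookkeeping for free, since it was already done once and for all in the proof of Lemma \ref{lem:movec}. Your primary route instead ``zips'' the $(-1)$-framed clasp formed by the two copies of $f'$ directly into a band. That is a legitimate alternative and the reduction to this local statement is sound, but be aware that the one assertion you defer — that the zipped band carries no residual half-twist — is the entire content of the step: Lemma \ref{lem:twist_as} does not decide whether a half-twist appears, it only tells you that a spurious half-twist would replace $P$ by its inverse modulo $Y_3$, i.e.\ would flip the sign of the answer. So that local computation must actually be carried out (it amounts to redoing a piece of the proof of Lemma \ref{lem:movec}). Your parenthetical ``equivalent route'' — rerunning the Lemma \ref{lem:movec} mechanism with $f'$ special — is essentially the paper's argument and is the path of least resistance.
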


\begin{proof}
It suffices to show that, in this situation, we have 
$(\Sigma\times I)_H \stackrel{Y_{3}}{\sim}  (\Sigma\times I)_P$. 
By Habiro's move 2, we can freely insert a pair of small Hopf-linked leaves in an edge of $H$  
so as to obtain a union of two $Y$-graphs with two parallel special leaves.  
The result then follows from Lemma \ref{lem:movec}   
\end{proof}

\begin{corollary}\label{cor:surf}
Suppose that, in the statement of Lemma \ref{lem:split_special}, 
the leaf $f_2$ bounds a genus $1$ surface disjoint from $Y$ 
with respect to which $f_2$ is $0$-framed.  Then we have 
$$ 
(\Sigma\times I)_Y \stackrel{Y_{3}}{\sim}  (\Sigma\times I)_{Y'}. 
$$ 
\end{corollary}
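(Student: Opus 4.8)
\textbf{Proof proposal for Corollary \ref{cor:surf}.}
The plan is to deduce this statement from Leaf Splitting (Lemma \ref{lem:split_special}) together with the special-leaf machinery already established. First I would apply Lemma \ref{lem:split_special} to write
$$
(\Sigma\times I)_Y \stackrel{Y_3}{\sim} (\Sigma\times I)_{Y'}\circ (\Sigma\times I)_{Y''}\circ (\Sigma\times I)_H,
$$
where $Y'$ is the $Y$-graph with leaf $f_1$, $Y''$ is the $Y$-graph with leaf $f_2$, and $H$ is the degree $2$ graph clasper of Figure \ref{fig:scind}. Since $f_2$ bounds a genus $1$ surface $F$ disjoint from $Y$ with respect to which $f_2$ is $0$-framed, the leaf $f_2$ is null-homotopic after a single ``genus reduction'': one can view $F$ as obtained by attaching a handle, so $f_2$ is isotopic (rel the rest of $Y$) to a trivial leaf, i.e.\ a $0$-special leaf. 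The precise mechanism is that $f_2$, being $0$-framed on a genus $1$ surface, can be written as the commutator-type band sum described in the standard clasper literature, so that $Y''$ (a $Y$-graph, degree $1$) contains a leaf bounding a disk disjoint from $Y''$ after a crossing change. By Lemma \ref{lem:crossingchange} we pay only a degree $2$ correction term, and then a $Y$-graph with a trivial leaf yields the identity homeomorphism \cite{Habiro,GGP}; hence $(\Sigma\times I)_{Y''}\stackrel{Y_3}{\sim}(\Sigma\times I)$ together with a degree $2$ remainder that is itself a graph clasper with a special leaf, killed by Lemma \ref{lem:special}.

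Next I would handle the graph clasper $H$. Inspecting Figure \ref{fig:scind}, $H$ has one leaf which is a parallel copy of the special leaf $f'$ of $Y$, hence $H$ has a special leaf; since $H$ has degree $2\geq 2$, Lemma \ref{lem:special} gives $(\Sigma\times I)_H \stackrel{Y_3}{\sim}(\Sigma\times I)$. (If in the particular normalization of the figure the special leaf of $H$ is instead the copy of $f_2$, then one first uses the genus $1$ surface for $f_2$ as above to make it special, i.e.\ $(-1)$-framed on a disk, after a crossing change absorbed by Lemma \ref{lem:crossingchange}; in either reading $H$ carries a special leaf and Lemma \ref{lem:special} applies.) Finally, $Y'$ is exactly the $Y$-graph obtained from $Y$ by replacing $f$ by $f_1$, which is the assertion to be proved.

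Combining the three reductions, all terms except $(\Sigma\times I)_{Y'}$ are $Y_3$-equivalent to $(\Sigma\times I)$, and since $Y_3$-equivalence is compatible with the monoid composition $\circ$ on $\cyl$ we conclude $(\Sigma\times I)_Y \stackrel{Y_3}{\sim}(\Sigma\times I)_{Y'}$.

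\textbf{Main obstacle.} The delicate point is the treatment of the leaf $f_2$ lying on a genus $1$ surface: one must argue that a $0$-framed curve on such a surface becomes (after an admissible crossing change) either a trivial or a special leaf, so that Lemma \ref{lem:special} or the trivial-leaf principle finishes the job. This requires expressing $f_2$ via the standard ``doubling'' presentation of a genus $1$ Seifert surface (cf.\ Lemma \ref{lem:doubling} and the discussion around special leaves in \cite{Meilhan,Auclair}) and carefully bookkeeping the degree of the correction claspers produced by Lemma \ref{lem:crossingchange}; everything else is a direct application of the lemmas already in place.
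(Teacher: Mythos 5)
Your skeleton is the right one (apply Lemma \ref{lem:split_special}, then argue that the factors $(\Sigma\times I)_{Y''}$ and $(\Sigma\times I)_{H}$ are $Y_3$-trivial), and you correctly flag the treatment of $f_2$ as the delicate point — but the mechanism you propose for that point does not work. A $0$-framed leaf bounding a genus $1$ surface in the complement is \emph{not} isotopic to a trivial leaf, nor does it become a $0$- or $(-1)$-special leaf ``after a crossing change'': if that were true, the genus $1$ hypothesis would be no weaker than bounding a disk, and the corollary would be vacuous. Moreover, Lemma \ref{lem:crossingchange} only governs crossing changes between leaves of two \emph{different} clasper components; it gives you no license to perform a self-crossing of $f_2$ or a crossing of $f_2$ with an edge of its own component at the cost of a controlled correction term. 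The correct tool here is Habiro's move 9: a leaf bounding a genus $1$ surface disjoint from the rest of the clasper (and $0$-framed with respect to it) may be traded for a node with two new leaves (the handle curves of the surface), \emph{raising the degree by one}. Applied to $Y''$ this yields a degree $2$ graph clasper which still carries the special leaf inherited from $Y$, so Lemma \ref{lem:special} kills it modulo $Y_3$; applied to the parallel copy of $f_2$ sitting in $H$ it yields a degree $3$ graph clasper, which is $Y_3$-trivial outright.

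Your alternative route for $H$ is also based on a misreading of Figure \ref{fig:scind}: in the construction of Lemma \ref{lem:split_special} the special leaf of $Y$ is the leaf shared by the two $Y$-graphs through the box, so it does \emph{not} survive as a leaf of $H$; the leaves of $H$ are parallel copies of $f_1$, $f_2$ and two copies of $f'$, and $f'$ is special only under the extra hypothesis of Corollary \ref{cor:split_2spe}, not here. So Lemma \ref{lem:special} cannot be applied to $H$ directly; you genuinely need the degree-raising move on the copy of $f_2$. With move 9 substituted for your ``crossing change'' step in both places, the argument closes and coincides with the paper's proof.
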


\begin{proof}
By Habiro's move 9, the $Y$-graph $Y''$ is equivalent to a degree $2$ graph clasper with a special leaf, 
and the H-graph $H$ is equivalent to a degree $3$ graph clasper.  
The result then follows from Lemma \ref{lem:special}.   
\end{proof}

\begin{corollary}\label{cor:+1}
Let $Y$ be a $Y$-graph with one special leaf in $(\Sigma\times I)$, and let $Y_+$ be obtained from $Y$ 
by replacing the special leaf with a $(+1)$-special leaf.  Then we have
$$ (\Sigma\times I)_Y \circ  (\Sigma\times I)_{Y_+}\stackrel{Y_{3}}{\sim}  (\Sigma\times I). $$ 
\end{corollary}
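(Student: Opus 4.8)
\textbf{Proof proposal for Corollary \ref{cor:+1}.}

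The statement asserts that, for a $Y$-graph $Y$ with one special leaf (i.e.\ a $(-1)$-special leaf), surgery along $Y$ composed with surgery along the $Y$-graph $Y_+$ obtained by replacing the special leaf with a $(+1)$-special leaf yields the trivial cobordism up to $Y_3$-equivalence. The plan is to produce $Y_+$ from $Y$ (and a correction term) by a single application of Leaf Splitting (Lemma \ref{lem:split_special}), exploiting the fact that a $(+1)$-special leaf can be realized by splitting a $(-1)$-special leaf into two copies of a $0$-framed unknot with a Hopf-type correction. Concretely, I would start from a $Y$-graph $Y'$ whose ``third'' leaf is a $(+1)$-framed unknot bounding a disk disjoint from the rest; split this leaf along an arc into two leaves $f_1,f_2$, choosing the splitting so that $f_1$ becomes a $(-1)$-framed unknot (hence $Y$ up to isotopy) and $f_2$ becomes a $0$-framed unknot bounding a disk disjoint from the graph. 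By Lemma \ref{lem:split_special},
$$
(\Sigma\times I)_{Y_+} \stackrel{Y_3}{\sim} (\Sigma\times I)_{Y_1}\circ (\Sigma\times I)_{Y_2}\circ (\Sigma\times I)_H,
$$
where $Y_1=Y$ (with the special leaf), $Y_2$ has a $0$-framed unknotted leaf, hence is a trivial leaf so $(\Sigma\times I)_{Y_2}\cong \Sigma\times I$, and $H$ is a degree $2$ graph clasper which, because both $f_1$ and $f_2$ (and thus the parallel copies appearing in $H$) bound disjoint disks, carries a special leaf. Then Lemma \ref{lem:special} gives $(\Sigma\times I)_H\stackrel{Y_3}{\sim}\Sigma\times I$.

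Assembling these, $(\Sigma\times I)_{Y_+}\stackrel{Y_3}{\sim}(\Sigma\times I)_Y$, which upon composing with $(\Sigma\times I)_{\overline{Y_+}}$ (the mirror, i.e.\ $Y_+$ with a half-twist on an edge so that $(\Sigma\times I)_Y\circ (\Sigma\times I)_{\overline Y}\stackrel{Y_3}{\sim}\Sigma\times I$ by Lemma \ref{lem:twist_as}) gives the claim --- provided I check that $\overline{Y_+}$ in the sense of ``reversed half-twist'' coincides with the clasper obtained from $Y$ by the framing change, i.e.\ that $Y_+$ and $Y$ are, up to $Y_3$, inverse to each other in the group $Y_1\cyl/Y_3$. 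Since $Y_2\cyl/Y_3$ is abelian, it is equivalent and cleaner to argue as follows: the class of $(\Sigma\times I)_Y$ in $\cyl/Y_3$ depends on the framing $m$ of the special leaf only through a term linear in $m$ (this is exactly the content of the surgery formulas for the $Y$-graph with special leaf, e.g.\ how changing an $m$-special leaf to an $(m+1)$-special leaf inserts an extra $(+1)$-twist, which by Lemma \ref{lem:crossingchange}, Lemma \ref{lem:special} and the framing-number computations of Appendix \ref{subsec:framing_numbers} contributes a fixed degree-$2$ correction); one then shows that the $m=-1$ and $m=+1$ contributions are negatives of each other modulo $Y_3$.

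The step I expect to be the main obstacle is verifying that the correction clasper $H$ produced by Lemma \ref{lem:split_special} genuinely has a special leaf (and not merely an $m$-special one for $m$ odd, which would fail to be $Y_3$-trivial by the remark following Lemma \ref{lem:special}). This requires keeping careful track of the framings of the parallel copies of $f_1,f_2$ that enter $H$, and of the half-twists introduced along edges in the proof of Lemma \ref{lem:split_special}. A safer route, which I would adopt if the framing bookkeeping becomes delicate, is to bypass the explicit $H$ entirely: compute both sides in $\cyl/Y_3$ using the invariants $\tau_1,\beta,\tau_2,\alpha$ and Casson (available via Theorem~A), noting that $(\Sigma\times I)_Y$ and $(\Sigma\times I)_{Y_+}$ have the same $\tau_1$ (both Johnson homomorphisms vanish on $Y$-graphs with a special leaf, cf.\ Lemma \ref{lem:tau2_2}), and that their $\beta$-images differ by $\prod(\overline{h_i}+f_i\overline 1)$ with the $(-1)$-vs-$(+1)$ discrepancy in the framing numbers $f_i$ producing cancelling cubic terms, so $\beta((\Sigma\times I)_Y\circ(\Sigma\times I)_{Y_+})=0$; combined with the vanishing of $\alpha$, $\tau_2$ and $\lambda$ on such composites this forces $Y_3$-triviality. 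I would present the first, purely clasper-calculus argument as the main proof and mention the invariant-theoretic check only as a consistency remark.
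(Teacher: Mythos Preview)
Your main clasper-calculus argument has a genuine gap that propagates from a framing miscount. You propose to split the $(+1)$-special leaf of $Y_+$ into a $(-1)$-framed piece $f_1$ and a $0$-framed piece $f_2$. But by the framed connected-sum formula $\Fr(f_1\sharp f_2)=\Fr(f_1)+\Fr(f_2)+2\Lk(f_1,f_2)$, this forces $\Lk(f_1,f_2)=1$: the two pieces are Hopf-linked. Consequently the disk bounded by $f_1$ is pierced by $f_2$, so $f_1$ is \emph{not} a special leaf of the resulting $H$-graph (the disk is not disjoint from the rest of the clasper), and Lemma~\ref{lem:special} does not apply to $H$. More decisively, your intermediate conclusion $(\Sigma\times I)_{Y_+}\stackrel{Y_3}{\sim}(\Sigma\times I)_Y$ is simply false: combined with the statement you are trying to prove it would give $((\Sigma\times I)_Y)^2\stackrel{Y_3}{\sim}\Sigma\times I$, contradicting Lemma~\ref{lem:doubling}. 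There is also the issue that Lemma~\ref{lem:split_special} requires the \emph{pre-split} $Y$-graph to carry a special leaf, which $Y_+$ does not.

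The paper's proof avoids all of this by choosing the right decomposition: a $0$-framed unknot splits as a band sum of a $(-1)$-framed and a $(+1)$-framed unknot with linking number $0$. Starting from the $Y$-graph $Y_0$ with a $0$-special leaf (so $(\Sigma\times I)_{Y_0}\cong\Sigma\times I$) and splitting that leaf, the two pieces are unlinked; the $(-1)$-framed piece is then a genuine special leaf, so the correction $H$-graph carries a special leaf and is $Y_3$-trivial by Lemma~\ref{lem:special}. This yields $(\Sigma\times I)\stackrel{Y_3}{\sim}(\Sigma\times I)_Y\circ(\Sigma\times I)_{Y_+}$ directly. Your ``safer route'' via Theorem~A is salvageable in principle but is heavy machinery for an appendix lemma, and your sketch misstates Lemma~\ref{lem:tau2_2} (which gives $\tau_2=\{(h\wedge h')\otimes(h\wedge h')\}\neq 0$, not zero) and does not address the corresponding computation for $Y_+$.
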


\begin{proof}
It suffices to observe that a $0$-framed unknot decomposes as a band sum of a $(-1)$-framed and a $(+1)$-framed unknot.  
Since a graph clasper with a $0$-special leaf is equivalent to the empty clasper, we have by Lemma \ref{lem:split_special} that
$$ 
(\Sigma\times I) \stackrel{Y_{3}}{\sim}  (\Sigma\times I)_{Y}\circ  (\Sigma\times I)_{Y_+}\circ  (\Sigma\times I)_{H}, 
$$ 
where $H$ is an H-graph with a special leaf.  The result then follows from Lemma \ref{lem:special}.  
\end{proof}

We now continue to give the main results of this appendix.

\begin{lemma}[Edge Twisting]\label{lem:twist_special}
 Let $Y$ be a $Y$-graph with one special leaf in $(\Sigma\times I)$, and let $Y'$ be obtained from $Y$ 
by inserting a half-twist in the $\ast$-marked edge of $Y$, see Figure \ref{fig:twist}.   
Then we have 
$$
 (\Sigma\times I)_{Y}\circ  (\Sigma\times I)_{Y'}\stackrel{Y_3}{\sim} (\Sigma\times I)_{H}, 
$$
where $H$ is a degree $2$ graph clasper in $(\Sigma\times I)$ as represented in Figure \ref{fig:twist}. 
  \begin{figure}[!h]
    {\labellist \small \hair 0pt 
    \pinlabel {$Y$} [l] at 29 27
    \pinlabel {$\ast$} [l] at 11 19
    \pinlabel {$H$} [l] at 110 27
    \pinlabel {$G$} [l] at 195 27
    \endlabellist}
    \includegraphics{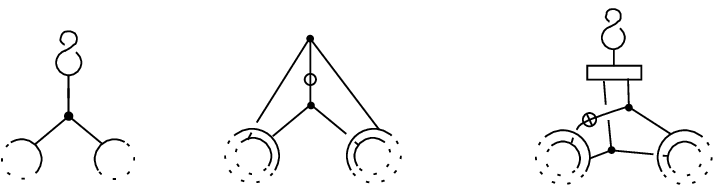}
    \caption{} \label{fig:twist}
  \end{figure}
\end{lemma}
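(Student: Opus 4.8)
Lemma \ref{lem:twist_special} is the "edge twisting" analogue of Lemma \ref{lem:twist_as}, but refined to the level of the $Y_3$-filtration and taking into account the special leaf. The plan is to follow the same strategy as in the proofs of Lemma \ref{lem:split_special} and Lemma \ref{lem:general}: transform the disjoint union $Y\cup Y'$ by a sequence of Habiro's elementary moves (from \cite[Proposition 2.7]{Habiro}) into a clasper to which one of the previously established structural lemmas (Lemma \ref{lem:movec} and Lemma \ref{lem:edgeslide}) applies, and then clean up the resulting degree $2$ piece.

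First I would combine the two $Y$-graphs $Y$ and $Y'$, which differ only by a half-twist on the $\ast$-marked edge, into a single clasper $G$ as indicated in Figure \ref{fig:twist}: using Habiro's move 2 to bring the two graphs together along that edge, and then moves 11 and 5 to tidy the picture, one obtains a clasper consisting of two $Y$-shaped parts joined through a box at the special leaf — exactly the configuration handled by Lemma \ref{lem:movec}. (This is where the hypothesis that one leaf is \emph{special}, i.e. $(-1)$-framed bounding a disk disjoint from the rest, is used, as in Lemma \ref{lem:movec}.) Applying Lemma \ref{lem:movec} then yields, up to $Y_3$-equivalence, a product of two small $Y$-graphs $Y'$, $Y''$ and one degree $2$ graph clasper; the two small $Y$-graphs can be arranged — by a homotopy in $(\Sigma\times I)$ realized through Lemma \ref{lem:crossingchange} and absorbed by Lemma \ref{lem:special} since they have degree $1$ but become trivial after the half-twist cancellation — so that the only surviving contribution is the degree $2$ graph clasper. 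Finally, using Lemma \ref{lem:edgeslide} (and Lemma \ref{lem:crossingchange}, Lemma \ref{lem:special} to discard higher-degree correction terms) this degree $2$ clasper is brought into the normal form $H$ depicted in Figure \ref{fig:twist}, and one more application of Lemma \ref{lem:twist_as} fixes the half-twists on its edges.

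The main obstacle I expect is bookkeeping: keeping track of orientations of leaves and of the precise half-twists and cyclic orderings at nodes through the long string of Habiro moves, so that the final clasper is genuinely $H$ (with the correct leaf orientations and framings as in the figure) and not $H$ modified by extra twists — this last discrepancy, if it arises, is exactly what Lemma \ref{lem:twist_as} is designed to absorb, which is why I would invoke it at the very end. In other words, the argument is structurally routine given the machinery already assembled in this appendix; the care lies entirely in verifying that the diagrammatic manipulations match Figure \ref{fig:twist} on the nose. I would present the proof compactly, displaying the intermediate claspers as in Figure \ref{fig:twist} and citing Habiro's move numbers together with Lemmas \ref{lem:edgeslide}, \ref{lem:crossingchange}, \ref{lem:special}, \ref{lem:twist_as} and \ref{lem:movec}, leaving the detailed isotopies to the reader as is done for the companion lemmas.
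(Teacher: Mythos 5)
Your overall toolkit is the right one, but the core of your argument is circular at exactly the point where the lemma has content. After applying Lemma \ref{lem:movec} you are left with the two $Y$-graphs $Y$, $Y'$ (degree $1$, each with a special leaf) together with a degree $2$ correction term, and you then claim the two $Y$-graphs ``become trivial after the half-twist cancellation'' so that only the degree $2$ clasper survives. That cancellation is not available modulo $Y_3$: Lemma \ref{lem:twist_as} kills a pair of degree $k$ claspers differing by a half-twist only modulo $Y_{k+1}$, i.e.\ modulo $Y_2$ here, and Lemma \ref{lem:special} explicitly fails for degree $1$ graphs with special leaves. The degree $2$ defect of that cancellation is precisely the quantity $(\Sigma\times I)_Y\circ(\Sigma\times I)_{Y'}$ that the lemma is computing, so asserting it vanishes (or equals the surviving H-graph) is assuming the conclusion. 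Relatedly, your opening step --- merging the disjoint union $Y\sqcup Y'$ into the clasper $G$ of Figure \ref{fig:twist} by Habiro moves --- cannot be an honest equivalence, since by Lemma \ref{lem:movec} surgery on $G$ differs from surgery on $Y\sqcup Y'$ by exactly the H-graph you are trying to produce.

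The missing idea is to compute $(\Sigma\times I)_G$ in \emph{two independent ways} and compare. On one hand, Lemma \ref{lem:movec} gives $(\Sigma\times I)_G\stackrel{Y_3}{\sim}(\Sigma\times I)_Y\circ(\Sigma\times I)_{Y'}\circ(\Sigma\times I)_{H'}$ with $H'$ an H-graph that Lemma \ref{lem:twist_as} identifies as the inverse of $H$ modulo $Y_3$. On the other hand, Lemma \ref{lem:general} --- which you cite only in passing and never actually use --- shows that $G$, being two $Y$-graphs differing by a half-twist in the configuration of Figure \ref{fig:general}, is equivalent to a degree $2$ clasper with a special leaf, hence $Y_3$-trivial by Lemma \ref{lem:special}. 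Equating the two computations yields $(\Sigma\times I)_Y\circ(\Sigma\times I)_{Y'}\stackrel{Y_3}{\sim}(\Sigma\times I)_H$. (A further small point: one first reduces to a \emph{positive} half-twist on the $\ast$-marked edge by sliding that edge along a $(-1)$-framed unknot, via Lemma \ref{lem:slide_special}; your write-up does not address the sign of the half-twist.)
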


\begin{proof}
By sliding an edge along a $(-1)$-framed unknot, 
a positive half-twist on that edge becomes a negative half-twist.
So, thanks to Lemma  \ref{lem:slide_special},
we can assume that the half-twist given to the $\ast$-marked edge of $Y$ is positive.
Then consider the clasper $G$ on the right-hand side of Figure \ref{fig:twist}.  
On one hand, we have by Lemma \ref{lem:movec}
$$ 
(\Sigma\times I)_{G}
\stackrel{Y_{3}}{\sim}  (\Sigma\times I)_{Y}\circ  (\Sigma\times I)_{Y'}\circ  (\Sigma\times I)_{H'}, 
$$
where $H'$ is a certain H-graph. By Lemma \ref{lem:twist_as}, 
we can replace $H'$  by another H-graph  which only differs from $H$ by a half-twist on an edge.
So we deduce that 
$$
(\Sigma\times I)_{G} \circ (\Sigma\times I)_{H} 
\stackrel{Y_{3}}{\sim}  (\Sigma\times I)_{Y}\circ  (\Sigma\times I)_{Y'}.
$$
On the other hand, Lemma \ref{lem:general} tells that
$(\Sigma\times I)_{G}$ is $Y_{3}$-equivalent to $(\Sigma\times I)_{C}$,
where $C$ is a degree two graph clasper with a special leaf.
So we deduce from Lemma \ref{lem:special} that $(\Sigma\times I)_{G}$ is $Y_3$-equivalent to $(\Sigma\times I)$.
The conclusion follows.  
\end{proof}

\begin{lemma}[Symmetry]\label{lem:as_special}
 Let $Y$ be a $Y$-graph with a special leaf in $(\Sigma\times I)$, and let $\overline{Y}$ be obtained from $Y$ 
 by reversing the cyclic order on the attaching regions of the three edges at its node, 
 as illustrated below. 
    $$
    \labellist \small \hair 0pt 
    \pinlabel {$Y$} [l] at 5 38
    \pinlabel {$\overline{Y}$} [l] at 100 38
    \endlabellist
    \includegraphics{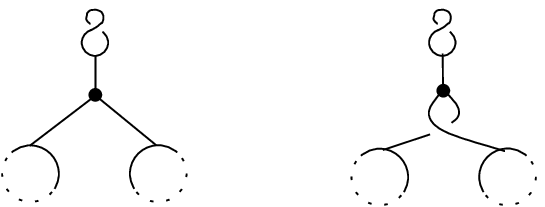}
    $$
 Then we have
 $
 (\Sigma\times I)_Y\stackrel{Y_3}{\sim} (\Sigma\times I)_{\overline{Y}}. 
 $
\end{lemma}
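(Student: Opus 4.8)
The plan is to realise $\overline{Y}$ from $Y$ by purely local clasper moves and then to measure the resulting discrepancy in $\cyl/Y_3$. First I would use the standard principle of calculus of claspers that reversing the cyclic ordering of the edges at a trivalent node is the same, up to homeomorphism of the surgered manifold, as inserting a half-twist into the edges that get interchanged (this is obtained by ``flipping'' the node disk, and follows from Habiro's moves). Hence $(\Sigma\times I)_{\overline Y}$ is homeomorphic to $(\Sigma\times I)_{Y'}$, where $Y'$ is obtained from $Y$ by inserting a half-twist in each of its two non-special edges (and, depending on the drawing convention, possibly in the edge to the special leaf as well).

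The second step is to push these half-twists off the $Y$-graph one at a time, keeping control modulo $Y_3$. A half-twist on the edge adjacent to the special leaf, if one is present, is precisely the situation of the Edge Twisting Lemma \ref{lem:twist_special}: it converts the pair consisting of $Y$ and of $Y$ half-twisted along that edge into a single degree-$2$ graph clasper. A half-twist on a non-special edge is governed, modulo $Y_2$, by the antisymmetry relation for claspers (Lemma \ref{lem:twist_as} at degree $1$), and the presence of the special leaf allows one to sharpen this to a $Y_3$-equivalence whose correction term is again a degree-$2$ graph clasper, by an argument parallel to the proof of Lemma \ref{lem:twist_special} (using Lemma \ref{lem:slide_special} and Lemma \ref{lem:crossingchange} to collect the excess). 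Assembling the two or three conversions, one arrives at a relation of the form
$$
(\Sigma\times I)_{\overline Y} \stackrel{Y_3}{\sim} (\Sigma\times I)_Y \circ (\Sigma\times I)_D ,
$$
where $D$ is an explicit disjoint union of degree-$2$ graph claspers whose leaves are parallel copies of the leaves of $Y$.

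It then remains to show that $(\Sigma\times I)_D \stackrel{Y_3}{\sim} \Sigma\times I$. By Corollary \ref{cor:varphi2} we have $Y_2\cyl/Y_3\simeq \jacobi^{<,c}_2(H)$, so it suffices to check that the class of $(\Sigma\times I)_D$ vanishes there; decomposing by loop degree, this amounts to the vanishing of $\tau_2$, $\alpha$ and $\lambda_j$ on $(\Sigma\times I)_D$, which are the three loop-degree components (Lemma \ref{lem:tau2}, Proposition \ref{prop:alpha_properties}, Lemma \ref{lem:Casson_LMO}). The reason these vanish is symmetry: reversing the cyclic order at the node of $Y$ merely interchanges its two non-special leaves, and each of the relevant degree-$2$ invariants of a $Y$-graph with one special leaf is symmetric under that interchange. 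For $\tau_2$ this is immediate from the formula $\{(h\wedge h')\otimes(h\wedge h')\}$ of Lemma \ref{lem:tau2_2}, which is unchanged under $h\leftrightarrow h'$; the analogous symmetries for $\alpha$ and for $\lambda_j$ can be read off from the surgery formulas of Proposition \ref{prop:alpha_properties} and from Morita's formula (\ref{eq:lambda_bscc}), respectively. Thus $D$ represents $0$ in $Y_2\cyl/Y_3$, and the lemma follows.

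The step I expect to be the main obstacle is the bookkeeping of the middle paragraph: tracking exactly which degree-$2$ graph claspers arise, with which framings, edge-orientations and parallel copies of leaves, when the half-twists are transported across the node, and then verifying that after combining the correction coming from the special edge with those coming from the non-special edges the total clasper $D$ is genuinely of the symmetric, hence $Y_3$-trivial, type and does not carry a residual degree-$2$ error. A shorter route — provided one first checks that it introduces no circular dependence with the proof of Theorem~A — is to bypass the clasper bookkeeping entirely: a $Y$-graph with a special leaf has $\tau_1=0$, so $\rho_3$ of its surgery is determined by $\tau_2$; the symmetry observations above then show that $(\Sigma\times I)_Y$ and $(\Sigma\times I)_{\overline Y}$ share $\rho_3$, $\alpha$ and $\lambda_j$, and Theorem~A immediately yields the $Y_3$-equivalence.
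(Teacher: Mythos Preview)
Your overall strategy---reduce to half-twists, collect a degree-$2$ correction $D$, then kill $D$ with invariants---is much heavier than what the paper does, and the invariant step has real gaps. The citations you give do not support the needed symmetry: Morita's formula (\ref{eq:lambda_bscc}) is for Dehn twists along bounding simple closed curves, not for $Y$-graphs or degree-$2$ claspers, and Proposition \ref{prop:alpha_properties} only gives $\alpha$ on looped degree-$2$ claspers and on $Y$-graphs with \emph{two} special leaves, not one. More fundamentally, you never actually identify $D$; the ``symmetry'' you invoke is a statement about $(\Sigma\times I)_Y$ versus $(\Sigma\times I)_{\overline Y}$, not about the explicit claspers produced by the bookkeeping you skipped. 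Appealing to Corollary \ref{cor:varphi2} to translate this into Jacobi diagrams also imports the full strength of Theorem~A into what is meant to be an elementary clasper lemma in the appendix.

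The paper's proof avoids all of this with a cancellation trick. One observes that $\overline{Y}$ is isotopic to $Y$ with a positive half-twist on each of the two \emph{non-special} edges, and introduces the intermediate $Y'$ obtained from $Y$ by a half-twist on just one of them. Then Lemma \ref{lem:twist_special} is applied twice: once to the pair $(Y,Y')$ and once to the pair $(Y',\overline{Y})$, each differing by a single half-twist. Both applications produce an H-graph built from parallel copies of the same two leaves, and a short use of Lemma \ref{lem:twist_as} and Lemma \ref{lem:edgeslide} shows these H-graphs agree up to $Y_3$. One obtains
\[
(\Sigma\times I)_{Y}\circ (\Sigma\times I)_{Y'}\ \stackrel{Y_3}{\sim}\ (\Sigma\times I)_{\overline{Y}}\circ (\Sigma\times I)_{Y'},
\]
and the result follows by cancelling $(\Sigma\times I)_{Y'}$ in the group $\cyl/Y_3$. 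No invariants, no diagram spaces, and no correction term to analyse.

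As for your alternative route: you are right that it is not circular---Lemma \ref{lem:as_special} is only invoked later, in the proof of Theorem \ref{thm:iso_KC/Y3}, and plays no role in the proof of Theorem~A. So one \emph{could} conclude via Theorem~A, but one would still need to verify $\alpha$ and $\lambda_j$ agree on $(\Sigma\times I)_Y$ and $(\Sigma\times I)_{\overline Y}$, which amounts to the computation of Proposition \ref{prop:Z2_special}. That is a valid but much longer path than the three-line cancellation argument above.
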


\begin{proof}
Observe that $\overline{Y}$ is isotopic to a copy of $Y$ with a positive half-twist inserted on each edge
that is incident to a non-special leaf (see \cite[page 92]{GGP} for example). 
Choose one of the two non-special leaves of $Y$, 
and denote by $Y'$ the $Y$-graph obtained from $Y$ by inserting a positive half-twist in the corresponding edge. 
Then by a double application of Lemma \ref{lem:twist_special} and
by using Lemma \ref{lem:twist_as} together with Lemma \ref{lem:edgeslide}, we obtain that 
$$
(\Sigma\times I)_{Y}\circ  (\Sigma\times I)_{Y'}\stackrel{Y_3}{\sim} (\Sigma\times I)_{\overline{Y}}\circ  (\Sigma\times I)_{Y'}.
$$ 
The result follows since the quotient monoid $\cyl/Y_3$ is a group.  
\end{proof}

\begin{lemma}[Doubling]\label{lem:doubling}
Let $Y$ be a $Y$-graph with one special leaf in $(\Sigma\times I)$.  Then we have
 $$ 
 \left((\Sigma\times I)_Y\right)^2 \stackrel{Y_3}{\sim} (\Sigma\times I)_H\circ (\Sigma\times I)_P
 $$
where $H$ and $P$ are the graph claspers represented in Figure \ref{fig:box3}. 
  \begin{figure}[!h]
   \begin{center}
    {\labellist \small \hair 0pt 
    \pinlabel {$Y$} [l] at 24 10
    \pinlabel {$H$} [l] at 98 10
    \pinlabel {$P$} [l] at 178 10
    \pinlabel {$G$} [l] at 297 36
    \pinlabel {$C$} [l] at 365 36
    \endlabellist}
    \includegraphics{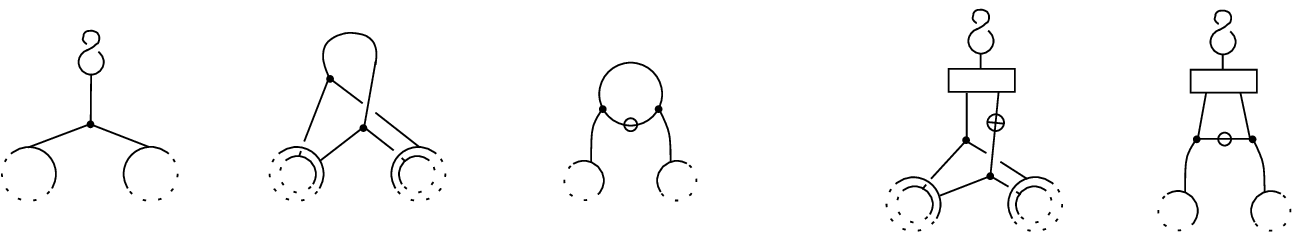}
    \caption{ } \label{fig:box3}
   \end{center}
  \end{figure}
\end{lemma}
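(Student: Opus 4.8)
The plan is to reduce the statement to elementary clasper calculus, by taking two parallel copies of the $Y$-graph $Y$ (which share a common special leaf) and untangling the resulting configuration step by step. First I would stack two copies of $(\Sigma\times I)_Y$ and observe that, since $\cyl/Y_3$ is a group, the composition $\left((\Sigma\times I)_Y\right)^2$ can be realized (up to $Y_3$-equivalence) by surgery along a disjoint union of two parallel $Y$-graphs, each with its own $(-1)$-special leaf. Using Habiro's move~2, I would insert a small pair of Hopf-linked leaves so as to merge the two special leaves into a single box, exactly as in the hypotheses of Lemma~\ref{lem:movec}. This puts us in a position to apply that lemma.

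The core of the argument is then a direct appeal to Lemma~\ref{lem:movec}: applying it to the configuration of two $Y$-graphs sharing a special leaf via a box produces, up to $Y_3$-equivalence, the product of two $Y$-graphs $Y',Y''$ and an H-graph. Here $Y'$ and $Y''$ each carry a special leaf, so Lemma~\ref{lem:special} forces $(\Sigma\times I)_{Y'}\stackrel{Y_3}{\sim}(\Sigma\times I)_{Y''}\stackrel{Y_3}{\sim}(\Sigma\times I)$, and they drop out. What survives is a degree~$2$ H-graph together with whatever extra graph clasper $P$ records the ``crossing correction'' coming from the two copies of $Y$ not being disjoint — this is the $\Phi$-type clasper $P$ pictured in Figure~\ref{fig:box3}. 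I would identify $H$ and $P$ with the claspers in that figure by tracking the leaves: the leaves of $H$ are parallel copies of the non-special leaves of $Y$, and $P$ is the looped (degree~$2$) clasper obtained from the node-to-node interaction. I would introduce an auxiliary clasper $G$ (also in Figure~\ref{fig:box3}) as an intermediate form, passing from $\left((\Sigma\times I)_Y\right)^2$ to $(\Sigma\times I)_G$ by Habiro's moves~11 and~5, and then from $(\Sigma\times I)_G$ to the desired product via Lemma~\ref{lem:movec} and Lemma~\ref{lem:edgeslide} (to normalize the H-graph), together with Lemma~\ref{lem:crossingchange} (to separate the degree~$2$ claspers into disjoint horizontal layers) and Lemma~\ref{lem:twist_as} (to correct the half-twists introduced along the way).

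The main obstacle will be bookkeeping the half-twists and the precise leaf orientations so that the claspers $H$ and $P$ that emerge match \emph{exactly} the pictures in Figure~\ref{fig:box3}, rather than differing from them by twists, node-orientation reversals, or framing changes on the leaves. Each application of Habiro's moves or of Lemmas~\ref{lem:movec} and~\ref{lem:general} can introduce such decorations, and Lemma~\ref{lem:twist_as} only lets us absorb them at the cost of extra degree-$3$ error terms — which is fine modulo $Y_3$, but one must check at each stage that the leftover clasper is genuinely of degree~$\geq 3$ (or has a special leaf of degree~$\geq 2$, killing it by Lemma~\ref{lem:special}). The proof therefore largely parallels that of Lemma~\ref{lem:twist_special} and Lemma~\ref{lem:split_special}: a chain of elementary moves, with all the genuine content residing in Lemma~\ref{lem:movec}, and the only real care needed in confirming that the surviving degree-$2$ data is precisely $H\cup P$.

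\begin{proof}
Since $\cyl/Y_3$ is a group, we may realize $\left((\Sigma\times I)_Y\right)^2$ by surgery along a disjoint union $Y_1\sqcup Y_2$ of two parallel copies of $Y$, each carrying its own $(-1)$-special leaf. By Habiro's move~2 we insert a pair of small Hopf-linked leaves so as to join the two special leaves through a box; denote the resulting clasper by $G$, as in Figure~\ref{fig:box3}. Several applications of Habiro's moves~11 and~5 show that $\left((\Sigma\times I)_Y\right)^2 \stackrel{Y_3}{\sim} (\Sigma\times I)_G$.

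We now apply Lemma~\ref{lem:movec} to $G$. This yields
$$
(\Sigma\times I)_G \stackrel{Y_3}{\sim} (\Sigma\times I)_{Y'}\circ (\Sigma\times I)_{Y''}\circ (\Sigma\times I)_{C},
$$
where $Y',Y''$ are $Y$-graphs each with a special leaf and $C$ is a degree~$2$ H-graph whose leaves are parallel copies of the non-special leaves of $Y$. By Lemma~\ref{lem:special}, the terms $(\Sigma\times I)_{Y'}$ and $(\Sigma\times I)_{Y''}$ are $Y_3$-equivalent to $(\Sigma\times I)$, so
$$
\left((\Sigma\times I)_Y\right)^2 \stackrel{Y_3}{\sim} (\Sigma\times I)_{C}.
$$

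It remains to identify $C$ with $H\circ P$ as claspers in Figure~\ref{fig:box3}. Tracking the construction in the proof of Lemma~\ref{lem:movec}, the clasper $C$ consists of an H-graph whose four leaves are parallel copies of the two non-special leaves of $Y$ (two copies of each), together with a looped degree~$2$ graph clasper recording the node-to-node interaction of the two copies $Y_1,Y_2$. Using Lemma~\ref{lem:edgeslide} and Lemma~\ref{lem:crossingchange}, we separate these two degree-$2$ pieces into disjoint ``horizontal layers'' of $(\Sigma\times I)$, picking up only degree-$3$ error terms. Finally, Lemma~\ref{lem:twist_as} allows us to absorb the half-twists introduced by the preceding moves at the cost of further degree-$3$ terms. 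What survives is precisely $(\Sigma\times I)_H \circ (\Sigma\times I)_P$, where $H$ and $P$ are the graph claspers depicted in Figure~\ref{fig:box3}. Hence $\left((\Sigma\times I)_Y\right)^2 \stackrel{Y_3}{\sim} (\Sigma\times I)_H\circ (\Sigma\times I)_P$, as claimed.
\end{proof}
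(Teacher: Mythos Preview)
Your proof has a genuine gap at the step where you invoke Lemma~\ref{lem:special} to kill $(\Sigma\times I)_{Y'}$ and $(\Sigma\times I)_{Y''}$. That lemma requires the graph clasper to have degree $k\geq 2$, and the paper explicitly warns just after its statement that it \emph{fails} for $Y$-graphs with special leaves. So $Y'$ and $Y''$, being degree~$1$, do not die. In the paper's argument, applying Lemma~\ref{lem:movec} to $G$ yields $(\Sigma\times I)_Y\circ(\Sigma\times I)_{Y'}\circ(\Sigma\times I)_{H'}$, where $Y'$ differs from $Y$ only by a half-twist on the edge attached to the special leaf --- and such a $Y'$ is \emph{isotopic} to $Y$. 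Hence $G \stackrel{Y_3}{\sim} \big((\Sigma\times I)_Y\big)^2\circ(\Sigma\times I)_{H'}$, with $H'$ the inverse of $H$ by Lemma~\ref{lem:twist_as}. That is how $Y^2$ appears: the two $Y$-graphs coming out of Lemma~\ref{lem:movec} \emph{are} the two copies of $Y$, not error terms to be discarded.

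This error propagates. Once $Y',Y''$ are (incorrectly) removed, you are left trying to decompose a single H-graph into $H\circ P$, and your ``node-to-node interaction'' producing a looped $\Phi$-clasper is not something Lemma~\ref{lem:movec} gives you --- its output is one H-graph, nothing more. In the paper, the $\Phi$-graph $P$ arises by a completely separate route: Lemma~\ref{lem:general} shows $(\Sigma\times I)_G\stackrel{Y_3}{\sim}(\Sigma\times I)_C$ for a specific degree-$2$ clasper $C$, and then Habiro's move~2 (inserting Hopf-linked leaves into the half-twisted edge of $C$) puts $C$ into the shape where a second application of Lemma~\ref{lem:movec} yields $P$. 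Combining $G\sim P$ with $G\sim Y^2\circ H^{-1}$ finishes the proof. You mention Lemma~\ref{lem:general} in your plan but never use it in the proof; it is the missing ingredient.
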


\begin{proof}
Consider the claspers $G$ and $C$ shown on the right-hand side of Figure \ref{fig:box3}. 
On one hand, we have   by Lemma \ref{lem:general} that
$(\Sigma\times I)_{G}\stackrel{Y_{3}}{\sim}  (\Sigma\times I)_{C}.$
By Habiro's move 2, we can freely insert a pair of small Hopf-linked leaves in the half-twisted edge of $C$, 
so that we can apply Lemma \ref{lem:movec}, which implies that 
$(\Sigma\times I)_{C}\stackrel{Y_{3}}{\sim} (\Sigma\times I)_{P}$.   
On the other hand, a second application of Lemma \ref{lem:movec} shows that 
$$
 (\Sigma\times I)_{G}
 \stackrel{Y_{3}}{\sim}  (\Sigma\times I)_{Y}\circ  (\Sigma\times I)_{Y'}\circ  (\Sigma\times I)_{H'}, 
$$
where $Y'$ (respectively $H'$) only differs from $Y$  (respectively from $H$)
by a positive half-twist on the edge attached to the special leaf
(respectively on the edge connecting the two nodes).  
Since $Y'$ is isotopic to $Y$ and
since $(\Sigma\times I)_{H'}$ is the inverse of $(\Sigma\times I)_{H}$ modulo $Y_3$ by Lemma \ref{lem:twist_as},
we  conclude  that
$$
(\Sigma\times I)_{P} \circ (\Sigma\times I)_{H} \stackrel{Y_{3}}{\sim} (\Sigma\times I)_{G} \circ (\Sigma\times I)_{H}
\stackrel{Y_{3}}{\sim} \left((\Sigma\times I)_{Y}\right)^2.
$$\\[-1cm]
\end{proof}

We conclude with two particular cases of Lemma \ref{lem:doubling}.
These are easily derived by making use, again, of  Lemma \ref{lem:movec}.

\begin{corollary}\label{cor:2spe}
(1) If $Y$ is a $Y$-graph in $(\Sigma\times I)$ with two special leaves, then we have
       $$
       \left((\Sigma\times I)_Y\right)^2 \stackrel{Y_3}{\sim} (\Sigma\times I)_{\Phi},
       $$ 
       where $\Phi$ is the graph clasper represented in Figure \ref{fig:spe2}.\\
(2) If $Y_s$ is a $Y$-graph in $(\Sigma\times I)$ with three special leaves, then we have 
       $$
       \left((\Sigma\times I)_{Y_s}\right)^2 \stackrel{Y_3}{\sim} (\Sigma\times I)_{\Theta},
       $$ 
       where $\Theta$ is the graph clasper represented in Figure \ref{fig:spe2}.
       \begin{figure}[!h]
    {\labellist \small \hair 0pt 
    \pinlabel {(1)} [r] at -10 23
    \pinlabel {$Y$} [l] at 27 23
    \pinlabel {$\Phi$} [l] at 94 23
    \pinlabel {(2)} [r] at 190 23
    \pinlabel {$\Theta$} [l] at 247 23
    \endlabellist}
	\includegraphics[scale=1]{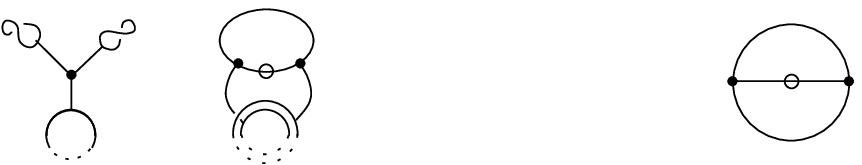}
	\caption{ } \label{fig:spe2}
	\end{figure}
\end{corollary}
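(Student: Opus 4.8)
The plan is to obtain both assertions by specializing the Doubling Lemma (Lemma~\ref{lem:doubling}) to the situation where the $Y$-graph carries more than one special leaf, and then to simplify the resulting claspers by means of Lemma~\ref{lem:special} and Lemma~\ref{lem:movec}. The first step is to apply Lemma~\ref{lem:doubling} to the given $Y$-graph, choosing one of its special leaves to play the role of the distinguished special leaf required by that lemma. This yields
$$
\left((\Sigma\times I)_Y\right)^2 \stackrel{Y_3}{\sim} (\Sigma\times I)_H \circ (\Sigma\times I)_P,
$$
where $H$ and $P$ are the graph claspers of Figure~\ref{fig:box3}. By the Remark following Lemma~\ref{lem:movec}, the leaves of the H-graph $H$ and of the looped clasper $P$ are parallel copies of the two leaves of $Y$ distinct from the distinguished one: in case~(1) one of these is special and the other is the genuine leaf, while in case~(2) both are special.

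Next I would discard the $H$-factor. Since $H$ is a graph clasper of degree~$2$ which is tree-shaped, a leaf of $H$ that is a parallel copy of a special leaf of $Y$ can be arranged, by Lemma~\ref{lem:crossingchange}, to bound its $(-1)$-framed disk disjointly from the rest of $H$; it is then a genuine special leaf, and Lemma~\ref{lem:special} gives $(\Sigma\times I)_H \stackrel{Y_3}{\sim} (\Sigma\times I)$. Hence $\left((\Sigma\times I)_Y\right)^2 \stackrel{Y_3}{\sim} (\Sigma\times I)_P$, and it remains only to identify $P$. Here the looped structure of $P$ is essential, and it is the reason Lemma~\ref{lem:special} does \emph{not} simply trivialize $P$ as well: the loop of $P$ threads the disks bounded by the copies of the special leaves, so these are not special leaves of $P$ in the sense of Lemma~\ref{lem:special}. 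Instead, each such leaf should be \emph{absorbed} into the loop exactly as in the proof of the Doubling Lemma, namely by inserting a pair of Hopf-linked leaves via Habiro's move~2 and applying Lemma~\ref{lem:movec}, so that the corresponding external leg of $P$ is traded for an internal edge. Performing this in case~(2) closes both legs and turns $P$ into the $\Theta$-graph, while performing it in case~(1) closes the leg coming from the special leaf and leaves the surviving legs as the two doubled copies of the genuine leaf, turning $P$ into the $\Phi$-graph; a final appeal to Lemma~\ref{lem:twist_as} corrects the half-twists introduced along the way and fixes the overall orientation.

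The main obstacle will be the last step: verifying precisely how the extra special leaves of $Y$ are recorded in the claspers $H$ and $P$ of Figure~\ref{fig:box3}, and checking that the absorption of a special leaf into the loop of $P$ produces \emph{exactly} the displayed $\Phi$ and $\Theta$ rather than a clasper differing from them by lower-degree or lower-loop-degree corrections (which would then have to be killed separately by Lemma~\ref{lem:special}). A convenient consistency check throughout is the behaviour of the invariant $\alpha$ of Proposition~\ref{prop:alpha_properties}: the square of the two-special-leaf $Y$-graph has $\alpha$ equal to $2h^2$, matching the value of $\alpha$ on the displayed $\Phi$, whereas the three-special-leaf case has $\alpha=0$, consistent with $\Theta$ being purely of loop degree~$2$. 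This confirms that no stray loop-degree-$1$ term survives in case~(2) and that the two legs of $\Phi$ in case~(1) carry the expected classes.
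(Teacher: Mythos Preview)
Your proposal is correct and follows essentially the same route as the paper: specialize the Doubling Lemma to a $Y$-graph with extra special leaves, then clean up using Habiro's move~2 together with Lemma~\ref{lem:movec} (and Lemma~\ref{lem:special} to dispose of the tree-shaped piece $H$). The paper's one-line proof says exactly ``these are easily derived by making use, again, of Lemma~\ref{lem:movec}'', which is your step~3; your step~2 (killing $H$ via Lemma~\ref{lem:special}) is implicit there.

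One small caution on your description of step~3 in case~(1): the looped clasper $P$ coming out of Lemma~\ref{lem:doubling} is a $\Phi$-shaped clasper with \emph{two} leaves, one parallel copy of each non-distinguished leaf of $Y$, not two doubled copies of the genuine leaf. So after you absorb the special leaf of $P$ into the loop via move~2 and Lemma~\ref{lem:movec}, you do not immediately obtain a $\Phi$-graph whose two legs are parallel copies of the genuine leaf; rather, Lemma~\ref{lem:movec} outputs three pieces, and you must again invoke Lemma~\ref{lem:special} on the tree pieces and track the half-twists (as you anticipate with Lemma~\ref{lem:twist_as}) to land on the displayed $\Phi$ of Figure~\ref{fig:spe2}. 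Your own caveat (``the main obstacle will be the last step'') identifies exactly this, and your $\alpha$-consistency check is a good sanity test, but the intermediate picture you sketch of $P$ is slightly off.
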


\section{Linking numbers in a homology cylinder}\label{subsec:framing_numbers}

In this shorter appendix, we define framing and linking numbers in a homology cylinder over $\Sigma$
and, as a particular case, in the thickened surface $(\Sigma \times I)$.
We also refer to  \cite{CT} where linking numbers are defined in a more general context. 
In the sequel,  $M$ is a homology cylinder over $\Sigma$,
and the inverse of the isomorphism $m_{\pm,*}: H_1(\Sigma) \to H_1(M)$ is denoted by $p:H_1(M) \to H$. 

Let $K$ and $L$ be two disjoint oriented knots in $M$.
We denote by $\N(K)$ an open regular neighborhood of $K$ disjoint from $L$,
and by $i: M \setminus \N(K) \to M$ the inclusion.
The long exact sequence in homology for the pair $\big(M, M \setminus \N(K) \big)$
yields the short exact sequence of abelian groups
\begin{equation}
\label{eq:exterior_K}
\xymatrix{
0 \ar[r] & \Z \ar[r] & H_1\big(M \setminus \N(K) \big) \ar[r]^-{p \circ i_*} & H \ar[r] & 0
}
\end{equation}
where $1\in \Z$ is sent to the homology class of the oriented meridian $\mu(K)$ of $K$.
We still denote by  $m_\pm: \Sigma \to  M  \setminus \N(K)$ the corestriction of $m_\pm:\Sigma \to M$.
The homomorphism $m_{+,*}: H \to H_1(M \setminus \N(K))$ is a section of (\ref{eq:exterior_K})
which, in general, is different from $m_{-,*}: H \to H_1(M \setminus \N(K))$.
Thus there are two ``signed'' versions of the linking number of $L$ and $K$, which are denoted by
$$
\Lk_+(L,K) \in \Z  \quad \hbox{and} \quad \Lk_-(L,K) \in \Z
$$ 
and are defined by
\begin{equation}
\label{eq:Lk_pm}
[L] - m_{\pm,*} \big(p \circ i_*([L]) \big) = \Lk_\pm(L,K) \cdot \left[\mu(K) \right]
\ \in H_1\big( M \setminus \N(K) \big).
\end{equation}
In other words, $\Lk_\pm(L,K)$ is the homological obstruction to ``push'' $L \subset M \setminus K$ 
in a collar neighborhood of $\partial_\pm M$.

\begin{definition}
Let $K$ and $L$ be two disjoint oriented knots in $M$.
The \emph{linking number} of $K$ and $L$ is
$$
\Lk(L,K) := \frac{\Lk_+(L,K)+ \Lk_-(L,K)}{2} \ \in \frac{1}{2}\Z.
$$
\end{definition}

\noindent
When $\Sigma$ is a disk, this definition agrees with the usual notion of linking number in a homology $3$-sphere.

\begin{lemma}\label{lem:linking_numbers}
Linking numbers in $M$ have the following properties:
\begin{enumerate}
\item The two signed versions  of linking numbers are related by
\begin{eqnarray*}
\Lk_-(L,K)- \Lk_+(K,L) &=&0\\
 \Lk_-(L,K) - \Lk_+(L,K) &=& \omega\big(p([L]),p([K])\big)
\end{eqnarray*}
where $\omega: H \times H \to \Z$ is the intersection pairing of $\Sigma$.
\item When  $M=\Sigma \times I$, linking numbers can be computed
from regular projections on $\Sigma \times \{-1\}$ by the following local formulas:

$$
\Lk_-(L,K) = 
\sharp  \begin{array}{c}
\labellist
\small\hair 2pt
 \pinlabel {$K$} [br] at 1 19
 \pinlabel {$L$} [bl] at 19 20
\endlabellist
\includegraphics[scale=1.0]{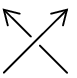}
\end{array}
-  \sharp  \begin{array}{c}
\labellist
\small\hair 2pt
 \pinlabel {$L$} [br] at 1 19
 \pinlabel {$K$} [bl] at 19 20
\endlabellist
\includegraphics[scale=1.0]{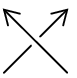}
\end{array}
= \Lk_+(K,L).
$$
\item Linking numbers are preserved by Torelli surgeries.
\end{enumerate}
\end{lemma}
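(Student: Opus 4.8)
The plan is to establish the three assertions in order, each by a short homological argument carried out in the relevant knot exterior; the only genuinely delicate point will be the duality bookkeeping in part~(1).

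For the symmetry $\Lk_-(L,K)=\Lk_+(K,L)$ in part~(1), I would represent both sides as intersection numbers in $M$. Since $M$ is a homology cylinder, the class $[L]-m_{-,*}(p[L])$ vanishes in $H_1(M)$, so after a general position argument there is a compact oriented surface $S_L\subset M$, transverse to $K$, with $\partial S_L=L-m_-(\ell_L)$ for some $1$-cycle $\ell_L$ on $\Sigma$ representing $p[L]$; comparing with the defining equation~(\ref{eq:Lk_pm}) shows $\Lk_-(L,K)=S_L\cdot K$. Building symmetrically a surface $S_K$ with $\partial S_K=K-m_+(\ell_K)$ gives $\Lk_+(K,L)=S_K\cdot L$. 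The equality $S_L\cdot K=S_K\cdot L$ is then the standard intersection-theoretic fact: pairing $\partial S_L$ with $S_K$ and $\partial S_K$ with $S_L$, the contributions of $m_-(\ell_L)$ and of $m_+(\ell_K)$ drop out because these curves lie on $\partial_-M$, respectively $\partial_+M$, while $S_K$ (resp.\ $S_L$) may be isotoped rel boundary off a collar of that boundary component, and there is no corner correction since $\partial S_L\cap\partial S_K=\varnothing$.

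For the second relation in part~(1), subtract the two defining equations~(\ref{eq:Lk_pm}) for $L$ and $K$: this yields $\big(\Lk_-(L,K)-\Lk_+(L,K)\big)\cdot[\mu(K)]=\big(m_{+,*}-m_{-,*}\big)(p[L])$ in $H_1(M\setminus\N(K))$, so it remains to show that for every $\gamma\in H$ the coefficient of $[\mu(K)]$ in $(m_{+,*}-m_{-,*})(\gamma)$ equals $\omega(\gamma,p[K])$. I would argue exactly as in the proof of Lemma~\ref{lem:c}: pick a compact oriented surface $R_\gamma$ properly embedded in $M$ with $\partial R_\gamma=m_+(\gamma)-m_-(\gamma)$, transverse to $K$; then $(m_{+,*}-m_{-,*})(\gamma)=(R_\gamma\cdot K)\,[\mu(K)]$, and evaluating the class $[R_\gamma]\in H_2(M,\partial M)$ against a basis of $H_1(M)$ coming from curves on $\Sigma$ (using Poincar\'e--Lefschetz duality) gives $R_\gamma\cdot K=\omega(\gamma,p[K])$.

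For part~(2) I would first use part~(1) to reduce to the equality $\Lk_-(L,K)=\Lk_+(K,L)$ and the local formula for $\Lk_-(L,K)$. By definition $\Lk_-(L,K)$ is the homological obstruction to isotoping $L$, rel $K$, into a collar of $\partial_-M=\Sigma\times\{-1\}$; pushing $L$ straight down along a regular projection to $\Sigma\times\{-1\}$ is unobstructed except at the crossings where $K$ lies above $L$, each of which contributes the sign of that crossing, and summing these signs is precisely the displayed local count. For part~(3), recall that the Torelli surgery $M\leadsto M_s$ is supported in a regular neighborhood $S\times[-1,1]$ of a surface $S\subset\interior(M)$; after a small isotopy we may take $S\times[-1,1]$ disjoint from $K\cup L$, so the same surgery produces $M_s\setminus\N(K)$ from $M\setminus\N(K)$, and Mayer--Vietoris furnishes a canonical isomorphism $H_1(M\setminus\N(K))\to H_1(M_s\setminus\N(K))$ that extends $\Phi_s$ of~(\ref{eq:iso_homology}), fixes $[\mu(K)]$, and commutes with both sections $m_{\pm,*}$ and with the projections onto $H$. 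Hence the defining identities~(\ref{eq:Lk_pm}) are transported verbatim, and $\Lk_\pm(L,K)$, and therefore $\Lk(L,K)$, are unchanged. The main obstacle is the sign and boundary bookkeeping in part~(1) --- getting the orientation conventions right so that the difference of the two sections is $\omega(\,\cdot\,,p[K])$ on the nose, and so that $S_L\cdot K=S_K\cdot L$ with the correct sign; the remaining steps are routine general position and Mayer--Vietoris.
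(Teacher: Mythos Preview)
Your argument is correct but follows a different logical order from the paper. The paper first proves (3) from the defining equation~(\ref{eq:Lk_pm}) and the Mayer--Vietoris isomorphism $\Phi_s$ (exactly as you do), then notes that (2) also follows directly from~(\ref{eq:Lk_pm}) by the elementary pushing argument, and finally \emph{deduces} (1) from (2) and (3) by invoking the fact that every homology cylinder over $\Sigma$ is obtained from $\Sigma\times I$ by a Torelli surgery (the result of Habiro and Habegger that $\cyl=Y_1\cyl$). In other words, the paper reduces the identities in (1) to the trivial cylinder, where they are immediate from the crossing-count formula.

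Your route instead proves (1) directly for an arbitrary $M\in\cyl$ via intersection theory with the relative Seifert surfaces $S_L$, $S_K$, and $R_\gamma$. This is more hands-on but self-contained: it does not rely on the nontrivial input $\cyl=Y_1\cyl$, at the price of the orientation bookkeeping you rightly flag as the main obstacle. The paper's approach is slicker precisely because it outsources that bookkeeping to the explicit case $M=\Sigma\times I$ and then transports the result by Torelli surgery.
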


\begin{proof}
Statement (3) means that, if a Torelli surgery $M\leadsto M_s$ (as defined in \S \ref{subsec:definition_relations})
is performed along a surface $S$ disjoint from $K$ and $L$, then we have 
$$
\Lk_\pm(K,L) = \Lk_\pm(K_s,L_s) \ \in \Z
$$
where $K_s,L_s\subset M_s$ denote the images of $K,L\subset M$.
This is easily deduced from (\ref{eq:Lk_pm})
by using the isomorphism $\Phi_s:H_1(M\setminus \N(K)) \to H_1(M_s \setminus \N(K))$ defined at (\ref{eq:iso_homology}).

Statement (2) also follows  from (\ref{eq:Lk_pm}). 
Assertion (1) is deduced from (2) and (3) 
using the fact that any homology cylinder $M$ is obtained from $(\Sigma \times I)$ by a Torelli surgery.
\end{proof}

Finally, we have the following notion of framing number in the homology cylinder $M$.

\begin{definition}
Let $F$ be a framed knot in $M$,
and let $F^\parallel$ be the parallel of $F$ induced by the framing.
The \emph{framing number} of $F$ is 
$$
\Fr(F) := \Lk(F,F^\parallel)\in \Z,
$$
where $F$ and $F^\parallel$ are oriented consistently.
\end{definition}

The following is deduced from the definition of framing number.

\begin{lemma}\label{lem:framed_connect}
Let $K$ and $L$ be two disjoint oriented framed knots in $M$,
and let $K \sharp L$ be a framed connected sum of $K$ and $L$.
Then, we have 
$$
\Fr(K\sharp L) = \Fr(K) + \Fr(L) + 2 \Lk(K,L) \in \Z.
$$
\end{lemma}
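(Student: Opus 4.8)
The statement to prove is that for disjoint oriented framed knots $K$ and $L$ in a homology cylinder $M$, one has $\Fr(K\sharp L) = \Fr(K) + \Fr(L) + 2\Lk(K,L)$.

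The plan is to reduce everything to the additivity of the signed linking numbers $\Lk_\pm$ and then average. First I would set up notation: let $K^\parallel$ and $L^\parallel$ be the parallels given by the framings of $K$ and $L$, and observe that a framed connected sum $K\sharp L$ comes with a canonical parallel $(K\sharp L)^\parallel$ which is a band sum of $K^\parallel$ and $L^\parallel$ along a band parallel to the one defining $K\sharp L$; concretely one may take $(K\sharp L)^\parallel = K^\parallel \sharp L^\parallel$, where the connected-sum band is pushed off along the framing. Thus $\Fr(K\sharp L) = \Lk\big(K\sharp L,\, K^\parallel \sharp L^\parallel\big)$ with all four knots oriented consistently (orientations being compatible with the connected-sum operation).

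Next I would establish the bilinearity of each signed linking number $\Lk_\pm$ under framed connected sum of disjoint knots. This follows directly from the defining formula $[L] - m_{\pm,*}(p\circ i_*([L])) = \Lk_\pm(L,K)\cdot[\mu(K)]$ in $H_1(M\setminus \N(K))$: the homology class of a connected sum is the sum of the homology classes, $p\circ i_*$ and $m_{\pm,*}$ are group homomorphisms, and the meridian class $[\mu(K)]$ is the same whether one takes the meridian of $K$, of $L$, or of $K\sharp L$ once one works in the exterior of the relevant union of knots (using the short exact sequence $(\ref{eq:exterior_K})$ and the inclusion-induced maps between the various exteriors). So $\Lk_\pm(A\sharp B, C) = \Lk_\pm(A,C) + \Lk_\pm(B,C)$ whenever $A$, $B$, $C$ are pairwise disjoint (after a small isotopy making the connect-sum band miss $C$), and symmetrically in the second variable. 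Expanding $\Lk_\pm\big(K\sharp L,\, K^\parallel \sharp L^\parallel\big)$ by bilinearity gives four terms: $\Lk_\pm(K,K^\parallel)$, $\Lk_\pm(L,L^\parallel)$, $\Lk_\pm(K,L^\parallel)$, and $\Lk_\pm(L,K^\parallel)$. Since $K^\parallel$ is isotopic to $K$ in the complement of $L$ (and likewise $L^\parallel$ isotopic to $L$ in the complement of $K$), the two cross terms are both equal to $\Lk_\pm(K,L)$, while the diagonal terms are $\Fr_\pm(K):=\Lk_\pm(K,K^\parallel)$ and $\Fr_\pm(L)$. Averaging over the two signs $\pm$ then yields exactly $\Fr(K\sharp L) = \Fr(K) + \Fr(L) + 2\Lk(K,L)$.

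The main obstacle, and the only place demanding genuine care rather than bookkeeping, is the first step: verifying that the parallel of the framed connected sum really is (isotopic to) the connected sum of the parallels along a suitably pushed-off band, and that all the orientation conventions in the phrase ``oriented consistently'' from the definitions of $\Lk$ and $\Fr$ line up so that no sign is lost. One must be slightly attentive that the connect-sum band, its parallel copy (framing the band), $K$, $L$, $K^\parallel$ and $L^\parallel$ can all be made mutually disjoint except where the connect sum forces intersections, so that the bilinearity computations take place in the correct knot exteriors. Once this geometric picture is pinned down, the remainder is an immediate consequence of the linearity of $m_{\pm,*}$, $p$ and $i_*$ together with the definition $\Lk = \tfrac12(\Lk_+ + \Lk_-)$. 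Alternatively, one could invoke Statement (3) of Lemma \ref{lem:linking_numbers} to reduce to the case $M = \Sigma\times I$ and then to $M = D^2\times I$, where the identity is the classical behaviour of the self-linking (framing) number under connected sum; but the direct homological argument above seems cleaner and avoids a case reduction.
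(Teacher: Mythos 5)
Your argument is correct and is essentially the intended one: the paper offers no proof of this lemma, merely stating that it is ``deduced from the definition of framing number,'' and your computation is the natural unwinding of that definition via bilinearity of $\Lk_\pm$ under band sum and the identification $(K\sharp L)^\parallel = K^\parallel\sharp L^\parallel$. One small imprecision: for a \emph{fixed} sign, the two cross terms are $\Lk_\pm(K,L^\parallel)=\Lk_\pm(K,L)$ and $\Lk_\pm(L,K^\parallel)=\Lk_\pm(L,K)$, and these are \emph{not} both equal to $\Lk_\pm(K,L)$ in general, since by Lemma \ref{lem:linking_numbers}(1) they differ by $\pm\,\omega\bigl(p([L]),p([K])\bigr)$. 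This washes out exactly when you average over $\pm$, because $\Lk_+(L,K)+\Lk_-(L,K)=\Lk_-(K,L)+\Lk_+(K,L)$, i.e.\ $\Lk(L,K)=\Lk(K,L)$; so the final formula stands, but the intermediate sentence should be corrected to invoke the symmetry of $\Lk$ rather than of $\Lk_\pm$.
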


\section{The LMO homomorphism up to degree $2$}\label{subsec:LMO_deg2}

The  LMO homomorphism $Z: \cyl \to \jacobi(H_\Q)$ being defined from the LMO functor $\widetilde{Z}$,
one can compute $Z$ up to degree $2$ using some properties of $\widetilde{Z}$ and some of its values gathered in \cite[Table 5.2]{CHM}.
In particular, one can compute in that way the variation of the monoid homomorphism
$$
Z_2 : \Jcob  \longrightarrow \jacobi_2(H_\Q)
$$
under surgery along \emph{any} H-graph, or, along \emph{any} $Y$-graph with one special leaf.
In this appendix, we state such variation formulas
which involve the linking and framing numbers introduced in Appendix \ref{subsec:framing_numbers}.
Since we do not need these formulas in the paper, we omit the detail of the computations.
(Note that special cases of these formulas are needed in the proof of Proposition \ref{prop:values_d},
but they are derived there from the universal property of the LMO homomorphism.)

\begin{proposition}\label{prop:Z2_special}
Let $M\in \cyl$ and let  $Y$ be a $Y$-graph in $M$ with one special leaf and two oriented leaves $K,L$ as shown below:\\[-0.4cm]
$$
\labellist
\small\hair 2pt
 \pinlabel {$K$} [r] at 0 71
 \pinlabel {$L$} [l] at 78 72
\endlabellist
\includegraphics[scale=0.8]{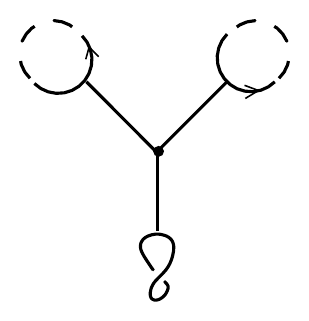}\\[-0.5cm]
$$
Then we have
\begin{eqnarray*}
Z_2\left(M_Y\right) - Z_2(M) &=& 
\frac{1}{2} \hn{k}{l}{k}{l} + \frac{1}{2}\left(\Lk(K,L)^2 + \Lk(K,L) - \Fr(K) \cdot \Fr(L)\right) \cdot \thetagraph \\
& & + \frac{\Fr(L)}{2} \phin{k}{k} + \frac{\Fr(K)}{2} \phin{l}{l}
- \left(\frac{1}{2}+ \Lk(K,L) \right) \cdot \phin{k}{l}
\end{eqnarray*}
where $k,l \in H$ denote the homology classes of $K,L$ respectively.
\end{proposition}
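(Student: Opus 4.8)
The plan is to reduce the computation of $Z_2(M_Y) - Z_2(M)$ to a surgery formula for the LMO functor $\widetilde{Z}$ and then transport it through the isomorphism $\kappa$. First I would recall from \cite{CHM} that a $Y$-graph surgery in the thickened surface is recorded by the LMO functor in a controlled way: in the bottom-top setting, $\widetilde{Z}$ of the elementary piece associated with a $Y$-graph is, up to low-degree corrections, the corresponding $\textsf{Y}$-shaped Jacobi diagram, and the colors of its legs are dictated by the homology classes $k$ and $l$ of the leaves. The special leaf contributes nothing at the level of homology, so the ``tree'' part in degree $2$ is simply the H-graph $\frac{1}{2}\hn{k}{l}{k}{l}$ obtained by gluing two copies of the $\textsf{Y}$-diagram along the null-homologous leaf. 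The $\frac12$ is the standard symmetry/normalization factor coming from the $\star$-product and the $\chi$-isomorphism, exactly as in Lemma \ref{lem:tau2_2} and Lemma \ref{lem:alpha_LMO}.

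Second, I would pin down the loop-degree $1$ and loop-degree $2$ corrections. These corrections are where the linking and framing numbers enter: when one ``closes up'' the $Y$-graph to compute $\widetilde{Z}$, the framings of $K$ and $L$ and the linking number $\Lk(K,L)$ contribute self-gluings and pair-gluings of the legs, producing $\Phi$-graphs $\phin{k}{k}$, $\phin{l}{l}$, $\phin{k}{l}$ and a $\Theta$-graph. Concretely, I expect the $\Phi$-part to be governed by the ``internal'' linking/framing data of the clasper in the spirit of Proposition \ref{prop:Z2_special}'s statement: a self-framing $\Fr(L)$ on the $L$-leaf produces $\frac{\Fr(L)}{2}\phin{k}{k}$ (and symmetrically), while the mutual linking $\Lk(K,L)$ produces the $\phin{k}{l}$ term with coefficient $-(\tfrac12 + \Lk(K,L))$; the extra $-\tfrac12$ is the intrinsic contribution of the clasper itself independent of the ambient linking. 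The $\Theta$-coefficient is then the remaining scalar, which by consistency with the Casson invariant (Lemma \ref{lem:Casson_LMO} and formula \eqref{eq:cassonLMO}) and with the known value $\lambda$ of an elementary Borromean-type surgery must be $\frac{1}{2}\bigl(\Lk(K,L)^2 + \Lk(K,L) - \Fr(K)\Fr(L)\bigr)$.

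Third, to make the bookkeeping rigorous I would fix a concrete handle/tangle presentation of $M_Y$: view $M_Y$ as $M$ with a two-component framed link inserted (via the $Y$-graph $\to$ Borromean rings $\to$ framed link procedure of \S \ref{subsec:clasper}), compute $\widetilde{Z}$ using its functoriality and the explicit low-degree values in \cite[Table 5.2]{CHM}, expand to internal degree $\leq 2$, and discard everything of higher degree. The resulting element of $\jacobi_{\leq 2}(L^\pm_\Q)$ is then pushed through $\kappa = \chi^{-1}\circ s\circ \varphi$; since $w(D)$ for the relevant diagrams is $0$ for $\mathsf{H}$ and $\Phi$ graphs and $0$ for $\Theta$ as well, the sign $(-1)^{w(D)}$ is trivial here, and the $(\times 1/2)$-gluings in \eqref{eq:kappa_formula} precisely reproduce the $\operatorname{H}_\Phi$ and $\operatorname{H}_\Theta$-type corrections already recorded in Lemma \ref{lem:formula_d''}. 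Matching these with the tree term $\frac12\hn{k}{l}{k}{l}$ yields the stated formula.

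The main obstacle will be controlling the loop-degree corrections with the correct scalar coefficients, i.e.\ the appearance of $\Lk(K,L)^2 + \Lk(K,L)$ rather than, say, $\Lk(K,L)^2$ alone, and the sign/ordering conventions relating $\widetilde{Z}$ and $Z$. There are several half-integer normalization factors floating around (from $\chi$, from the $\star$-product, from the Borromean-to-framed-link substitution), and the honest way to fix them is to test the formula against known special cases: the case $\Fr(K)=\Fr(L)=\Lk(K,L)=0$ must recover Lemma \ref{lem:tau2_2} and the $\alpha$-formula of Proposition \ref{prop:alpha_properties} in loop degrees $0$ and $1$, and the $\Theta$-coefficient must be consistent with $-\lambda_j/2$ via Lemma \ref{lem:Casson_LMO} once $\tau_2$ and $\alpha$ vanish. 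These consistency checks, together with the linearity of $Z_2$ in the inserted clasper data, will force all coefficients and complete the proof; since the proposition is stated only for reference and not used elsewhere, I would present the reduction and the final matching, omitting the routine diagram-expansion bookkeeping.
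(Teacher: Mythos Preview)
Your approach matches what the paper outlines: the paper explicitly omits the proof of this proposition, stating only that ``one can compute $Z$ up to degree $2$ using some properties of $\widetilde{Z}$ and some of its values gathered in \cite[Table 5.2]{CHM}'' and that the details are left out. Your plan to present $M_Y$ via the $Y$-graph $\to$ Borromean $\to$ framed-link procedure, evaluate $\widetilde{Z}$ from the CHM table, truncate to degree $\leq 2$, push through $\kappa$, and cross-check against the known values of $\tau_2$, $\alpha$, and $\lambda_j$ is exactly this computation spelled out.

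One concrete error to fix: you assert that ``$w(D)$ for the relevant diagrams is $0$ for $\mathsf{H}$ and $\Phi$ graphs and $0$ for $\Theta$ as well, the sign $(-1)^{w(D)}$ is trivial here.'' This is wrong. The Euler characteristic of the H-graph is $6-5=1$ and of the $\Theta$-graph is $2-3=-1$, so $(-1)^{w(D)}=-1$ in both cases; only the $\Phi$-graph has $w=0$. This is visible in the degree-$2$ formulas for $\kappa$ recorded in the proof of Lemma~\ref{lem:formula_d''}, where $\kappa(\hn{a}{b}{c}{d}) = -\hn{a}{b}{c}{d}-\cdots$ and $\kappa(\thetagraph)=-\thetagraph$. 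Since you intend to pin down the scalars by consistency with Lemmas~\ref{lem:tau2_2}, \ref{lem:alpha_LMO}, and \ref{lem:Casson_LMO} anyway, this sign slip would likely be caught, but it should be corrected in the narrative so the $\operatorname{H}_\Phi$ and $\operatorname{H}_\Theta$ corrections come out with the right signs before you invoke those checks.
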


\begin{proposition}\label{prop:Z2_H}
Let $M\in \cyl$ and let $G$ be a  H-graph in $M$ 
whose leaves $L_1, \dots, L_4$ are oriented as shown below:\\[-0.4cm]
$$
\labellist
\small\hair 2pt
 \pinlabel {$L_4$} [l] at 70 9
 \pinlabel {$L_3$} [l] at 69 43
 \pinlabel {$L_2$} [r] at 0 41
 \pinlabel {$L_1$} [r] at 0 9
\endlabellist
\includegraphics[scale=1.3]{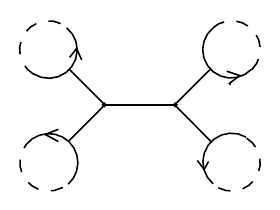}\\[-0.5cm]
$$
Then we have
\begin{eqnarray*}
Z_2\left(M_G\right) - Z_2(M) &=& 
\hn{l_1}{l_2}{l_3}{l_4} + \Lk(L_1,L_3) \phin{l_2}{l_4} + \Lk(L_2,L_4) \phin{l_1}{l_3} \\
&&- \Lk(L_1,L_4) \phin{l_2}{l_3}-\Lk(L_2,L_3) \phin{l_1}{l_4}\\
&& + \left(\Lk(L_1,L_4)\Lk(L_2,L_3) - \Lk(L_1,L_3) \Lk(L_2,L_4)\right)\cdot \thetagraph
\end{eqnarray*}
where $l_1,\dots,l_4\in H$ denote the homology classes of $L_1,\dots,L_4$ respectively.
\end{proposition}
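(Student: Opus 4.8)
The plan is to compute both sides by means of the LMO functor $\widetilde Z$ of \cite{CHM}, of which the homomorphism $Z$ (hence its degree‑$2$ part $Z_2$) is the restriction to homology cylinders. First I would realize the surgery $M\leadsto M_G$ as surgery along a framed link: following \S\ref{subsec:clasper}, one replaces each of the two nodes of the $\mathsf H$-graph $G$ by a Borromean configuration of three leaves, and then each of the resulting basic claspers by its associated $2$-component framed link. This produces a framed link $\mathbb L\subset\interior(M)$ with $M_G=M_{\mathbb L}$, whose isotopy class in $M$ is determined, up to the moves of clasper calculus, by the homology classes $l_1,\dots,l_4$ of the leaves, their framing numbers $\Fr(L_i)$ and their pairwise linking numbers $\Lk(L_i,L_j)$ in the sense of Appendix \ref{subsec:framing_numbers}; these data govern the relevant entries of the linking matrix of $\mathbb L$ and its homological colouring.

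Next I would invoke the behaviour of $\widetilde Z$ under surgery along a framed link: $\widetilde Z(M_G)$ is recovered from the value of $\widetilde Z$ on $M$ with $\mathbb L$ adjoined as a family of components coloured by fresh variables, by the formal Gaussian integration that implements surgery at the level of the Kontsevich--LMO invariant. Using functoriality of $\widetilde Z$ together with the explicit values gathered in \cite[Table 5.2]{CHM} --- the value on the elementary ``$\mathsf Y$'' piece, on the cup and cap morphisms, and on the elementary cobordisms producing the clasp components of $\mathbb L$ --- one computes the internal‑degree‑$\le 2$ part of $\widetilde Z(M_G)-\widetilde Z(M)$; post‑composing with $\kappa$ as in \S\ref{subsec:LMO} then gives $Z_2(M_G)-Z_2(M)$. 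Since $\mathbb L$ arises from a degree‑$2$ clasper, its non‑trivial contribution starts in internal degree $2$, which keeps the computation finite.

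The answer then organizes by loop degree, and I would identify the three layers separately. The tree part, in $\jacobi^c_{2,0}(H_\Q)$, is forced to be the $\mathsf H$-shaped Jacobi diagram coloured by $l_1,\dots,l_4$, in agreement with what Lemma \ref{lem:tau2} and Lemma \ref{lem:tau2_LMO} already predict for $p_{2,0}Z_2$. The loop‑degree‑one ($\Phi$-type) part collects the diagrams produced when the Gaussian integration contracts a single pair of leaves against one another: each such contraction contributes the corresponding linking number $\Lk(L_i,L_j)$ and fuses the two relevant legs of the $\mathsf H$ into a strut, yielding the four terms $\pm\Lk(L_i,L_j)$ times a $\Phi$-diagram. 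The loop‑degree‑two part (the $\Theta$-diagram) arises from contracting two disjoint pairs of leaves, whence the product of two linking numbers and the antisymmetric combination $\Lk(L_1,L_4)\Lk(L_2,L_3)-\Lk(L_1,L_3)\Lk(L_2,L_4)$ reflecting the two admissible pairings. One also checks that no $\Fr(L_i)$ survives: each leaf of an $\mathsf H$-graph carries a single edge, so there is no self‑contraction of a leaf available in internal degree $\le 2$ --- this is precisely what distinguishes the present formula from that of Proposition \ref{prop:Z2_special}, in which the special leaf plays exactly such a role.

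The main obstacle is not conceptual but bookkeeping. One must fix the sign and normalization conventions of \cite{CHM} (the half‑twists on the clasp components, the orientation of leaves dictated by Figure \ref{fig:leaf_orientation}, the precise form of the Gaussian integration and of $\kappa$), and then check that the numerous internal‑degree‑$\le 2$ contributions assemble exactly into the displayed combination --- in particular that the coefficients, which a priori involve products of the possibly half‑integral $\Lk(L_i,L_j)$, come out as stated. As consistency checks I would specialize to $M=\Sigma\times I$ with the $L_i$ lying in disjoint horizontal layers and $0$-framed, where the formula must reduce to the image of the $\mathsf H$-graph under the surgery map of Corollary \ref{cor:varphi2} (equivalently, to $\chi^{-1}$ of the $\mathsf H$-diagram computed from \cite[Proposition 3.1]{HM_SJD}), and to the case $\tau_2(M)=\alpha(M)=0$, where the $\Theta$-coefficient of the right‑hand side must reproduce the Casson‑invariant variation through Lemma \ref{lem:Casson_LMO} and the sum formula (\ref{eq:sum_formula}); these special cases pin down all the conventions.
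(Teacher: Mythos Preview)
Your approach is essentially the one the paper indicates: the authors state that these variation formulas are obtained by computing $Z$ up to degree $2$ from the LMO functor $\widetilde Z$ using properties and values from \cite[Table 5.2]{CHM}, and they explicitly omit the details of the computation. Your plan---realizing the $\mathsf H$-graph surgery as surgery along the associated framed link, applying the formal Gaussian integration that implements surgery for $\widetilde Z$, and organizing the degree-$2$ output by loop degree---is precisely how such a computation proceeds, and your consistency checks against Lemma \ref{lem:tau2_LMO}, Corollary \ref{cor:varphi2} and Lemma \ref{lem:Casson_LMO} are well chosen to pin down the conventions.
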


Proposition  \ref{prop:Z2_special} and Proposition \ref{prop:Z2_H} can be used to give independent proofs
of the relations that are shown in Appendix \ref{sec:calc_clasper} by means of clasper calculus.
Besides,  one can deduce from them the following formulas for the core of the Casson invariant.

\begin{corollary}
With the notation of Proposition \ref{prop:Z2_special}, we have
\begin{eqnarray*}
d(M_Y)-d(M) &=&
 - 24\cdot  \Fr(K)  \Fr(L) \\ 
&&+ 12 \left(\Lk_+(K,L) + \Lk_+(K,L)^2  \right) + 12 \left(\Lk_-(K,L) + \Lk_-(K,L)^2  \right).
\end{eqnarray*}
\end{corollary}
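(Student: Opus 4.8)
The plan is to deduce this corollary directly from Proposition \ref{prop:Z2_special} together with formula (\ref{eq:formula_extended_core_bis}), which expresses the extended core as $d(M) = -2\,\overline{d'}\circ p_{2,0}\circ Z_2(M) + 48\, p_{2,2}\circ Z_2(M)$ for all $M\in\Jcob$. Applying this identity to $M_Y$ and to $M$ and subtracting, one sees that $d(M_Y)-d(M)$ is determined by the loop-degree $0$ and loop-degree $2$ parts of $Z_2(M_Y)-Z_2(M)$ alone; the loop-degree $1$ part (the $\Phi$-graph terms) is irrelevant since $p_{2,1}$ does not occur in (\ref{eq:formula_extended_core_bis}). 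Note that $M_Y$ and $M$ both lie in $\Jcob$ here, so that (\ref{eq:formula_extended_core_bis}) indeed applies: surgery along a $Y$-graph with a special leaf does not change $\rho_1$, hence keeps us inside $\cyl[2]=\Jcob$.

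First I would read off from Proposition \ref{prop:Z2_special} that the loop-degree $2$ part of $Z_2(M_Y)-Z_2(M)$ is $\tfrac12\bigl(\Lk(K,L)^2+\Lk(K,L)-\Fr(K)\Fr(L)\bigr)\cdot\thetagraph$, so that $48\,p_{2,2}\bigl(Z_2(M_Y)-Z_2(M)\bigr) = 24\bigl(\Lk(K,L)^2+\Lk(K,L)-\Fr(K)\Fr(L)\bigr)$. Next, the loop-degree $0$ part is $\tfrac12\,\bigl(\begin{smallmatrix}\end{smallmatrix}\bigr)$ — the H-graph $\hn{k}{l}{k}{l}$ with coefficient $\tfrac12$. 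Under the identification of Lemma \ref{lem:tau2_LMO} this diagram corresponds to $(k\wedge l)\leftrightarrow(k\wedge l)$, and by the definition of $\overline{d'}$ recalled in \S\ref{subsec:existence} one computes $\overline{d'}\bigl((k\wedge l)\leftrightarrow(k\wedge l)\bigr) = -4\,\omega(k,l)^2 - 2\,\omega(k,k)\omega(l,l) + 2\,\omega(k,l)\omega(l,k) = -6\,\omega(k,l)^2$ (consistently with $\overline{d'}((a\wedge b)\otimes(a\wedge b))=-3\omega(a,b)^2$). Hence $-2\,\overline{d'}\circ p_{2,0}\bigl(Z_2(M_Y)-Z_2(M)\bigr) = -2\cdot\bigl(-3\,\omega(k,l)^2\bigr) = 6\,\omega(k,l)^2$. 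Adding the two contributions gives $d(M_Y)-d(M) = 24\Lk(K,L)^2 + 24\Lk(K,L) - 24\Fr(K)\Fr(L) + 6\,\omega(k,l)^2$.

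Finally I would convert this into signed linking numbers. By Lemma \ref{lem:linking_numbers}(1), $\omega(k,l)=\Lk_-(K,L)-\Lk_+(K,L)$, and by definition $\Lk(K,L)=\tfrac12\bigl(\Lk_+(K,L)+\Lk_-(K,L)\bigr)$, so $\Lk_\pm(K,L)=\Lk(K,L)\mp\tfrac12\omega(k,l)$. Using $\Lk_+(K,L)+\Lk_-(K,L)=2\Lk(K,L)$ and $\Lk_+(K,L)^2+\Lk_-(K,L)^2 = 2\Lk(K,L)^2+\tfrac12\omega(k,l)^2$, a one-line substitution rewrites the right-hand side above as $-24\Fr(K)\Fr(L) + 12\bigl(\Lk_+(K,L)+\Lk_+(K,L)^2\bigr) + 12\bigl(\Lk_-(K,L)+\Lk_-(K,L)^2\bigr)$, which is the asserted formula. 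There is no real difficulty here: the whole argument is a finite computation, and the only point that needs genuine care is bookkeeping of normalizations — namely the factor relating the $\leftrightarrow$-symmetrization to $\overline{d'}$, and the observation that only the loop-degrees $0$ and $2$ of $Z_2$ feed into (\ref{eq:formula_extended_core_bis}); this is exactly the same kind of care already exercised in the proof of Lemma \ref{lem:formula_d''}.
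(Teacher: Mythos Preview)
Your proof is correct and follows exactly the approach the paper intends: the paper itself omits the computation, saying only that the corollaries ``can be deduced from'' Proposition~\ref{prop:Z2_special} and the formula~(\ref{eq:formula_extended_core_bis}), and you have carried out precisely that deduction. The one small notational slip is writing $\cyl[2]$ where the paper's notation is $\cob[2]=\Jcob$, but the content is fine.
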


\begin{corollary}
With the notation of Proposition \ref{prop:Z2_H}, we have
\begin{eqnarray*}
d(M_G)-d(M) &=&
8 \cdot \omega(l_1,l_2) \omega(l_3,l_4)\\ 
&&- 8 \cdot \big(\Lk_+(L_1,L_3)\Lk_+(L_2,L_4)+\Lk_-(L_1,L_3)\Lk_-(L_2,L_4)\big)\\
&& + 8 \cdot \big(\Lk_+(L_1,L_4)\Lk_+(L_2,L_3)+\Lk_-(L_1,L_4)\Lk_-(L_2,L_3)\big)\\
&&- 16 \cdot \big(\Lk_+(L_1,L_3)\Lk_-(L_2,L_4)+\Lk_-(L_1,L_3)\Lk_+(L_2,L_4)\big)\\
&& +16 \cdot \big(\Lk_+(L_1,L_4)\Lk_-(L_2,L_3)+\Lk_-(L_1,L_4)\Lk_+(L_2,L_3)\big).
\end{eqnarray*}
\end{corollary}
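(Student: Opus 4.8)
The proof is a direct computation combining the surgery formula of Proposition~\ref{prop:Z2_H} with the expression (\ref{eq:formula_extended_core_bis}) for the core of the Casson invariant in terms of the LMO homomorphism. First I would check that the statement makes sense: since surgery along an H-graph is a $Y_2$-move and $\tau_1$ is $Y_2$-invariant, we have $\tau_1(M_G)=\tau_1(M)$, so $M_G$ lies in $\Jcob$ whenever $M$ does. The plan is then to exploit the fact that $d = -2\,\overline{d'}\circ p_{2,0}\circ Z_2 + 48\, p_{2,2}\circ Z_2$ is a $\Q$-linear functional of $Z_2$, so that it suffices to apply this functional to the difference $Z_2(M_G)-Z_2(M)$ provided by Proposition~\ref{prop:Z2_H}.

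The key simplification is that $d$ only sees the loop-degree $0$ and loop-degree $2$ parts of $Z_2$. Among the terms of $Z_2(M_G)-Z_2(M)$, the H-graph $\hn{l_1}{l_2}{l_3}{l_4}$ has loop degree $0$, the four $\Phi$-graphs have loop degree $1$, and the $\thetagraph$ term has loop degree $2$. Hence $p_{2,0}\big(Z_2(M_G)-Z_2(M)\big) = \hn{l_1}{l_2}{l_3}{l_4}$ and $p_{2,2}\big(Z_2(M_G)-Z_2(M)\big)$ is the scalar $\Lk(L_1,L_4)\Lk(L_2,L_3)-\Lk(L_1,L_3)\Lk(L_2,L_4)$, while the loop-degree $1$ terms (which feed $\alpha$) drop out of $d$ entirely. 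Applying $\overline{d'}$ to the H-graph through its defining formula and the identification $\hn{l_1}{l_2}{l_3}{l_4} = (l_1\wedge l_2)\leftrightarrow (l_3\wedge l_4)$ of Lemma~\ref{lem:tau2_LMO}, I would obtain
\begin{eqnarray*}
d(M_G)-d(M) &=& 8\,\omega(l_1,l_2)\omega(l_3,l_4) + 4\,\omega(l_1,l_3)\omega(l_2,l_4) - 4\,\omega(l_1,l_4)\omega(l_2,l_3)\\
&& + 48\,\big(\Lk(L_1,L_4)\Lk(L_2,L_3) - \Lk(L_1,L_3)\Lk(L_2,L_4)\big),
\end{eqnarray*}
where the first line is $-2\,\overline{d'}$ applied to the H-graph and the second is $48$ times the coefficient of $\thetagraph$.

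Finally, I would convert the symmetric (unsigned) linking numbers and the intersection numbers into the signed versions using Lemma~\ref{lem:linking_numbers}(1), namely $\Lk(L_i,L_j) = \tfrac12\big(\Lk_+(L_i,L_j)+\Lk_-(L_i,L_j)\big)$ and $\omega(l_i,l_j) = \Lk_-(L_i,L_j)-\Lk_+(L_i,L_j)$, recalling that $l_i=p([L_i])$. Expanding the products over the two index pairs $(1,3),(2,4)$ and $(1,4),(2,3)$, the contributions $4\,\omega(l_1,l_3)\omega(l_2,l_4)$ and $-48\,\Lk(L_1,L_3)\Lk(L_2,L_4)$ combine into $-8(\Lk_+\Lk_+ + \Lk_-\Lk_-) - 16(\Lk_+\Lk_- + \Lk_-\Lk_+)$ for the pair $(1,3),(2,4)$, and symmetrically with opposite sign for $(1,4),(2,3)$, while the term $8\,\omega(l_1,l_2)\omega(l_3,l_4)$ is left untouched. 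This yields exactly the asserted formula. The only point that genuinely requires care is this last sign bookkeeping: the coefficients $\pm 8$ and $\pm 16$ arise from the interference between the $\omega\omega$ contribution carried by $2d'$ and the $\Lk\,\Lk$ contribution carried by $48d''$, and one must track which of the two orderings $\Lk_+(L_i,L_j)$ versus $\Lk_-(L_i,L_j)$ appears; everything else is a routine substitution.
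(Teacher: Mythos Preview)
Your proof is correct and follows exactly the approach the paper intends: the corollary is stated without proof as a direct consequence of Proposition~\ref{prop:Z2_H} combined with formula~(\ref{eq:formula_extended_core_bis}), and your computation carries this out in full, including the final conversion to signed linking numbers via Lemma~\ref{lem:linking_numbers}(1). The intermediate expression $8\omega(l_1,l_2)\omega(l_3,l_4) + 4\omega(l_1,l_3)\omega(l_2,l_4) - 4\omega(l_1,l_4)\omega(l_2,l_3) + 48\big(\Lk(L_1,L_4)\Lk(L_2,L_3)-\Lk(L_1,L_3)\Lk(L_2,L_4)\big)$ and its subsequent expansion are both accurate.
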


\noindent
(These formulas generalize those of Proposition \ref{prop:values_d}.)
Observe that the variation of $d$ under surgery along a $Y$-graph with one special leaf 
belongs to $24 \Z$, while the variation for an H-graph belongs to $8 \Z$ in general.
We deduce that the group $\Jcob/Y_3$ is not generated by the surgeries on $(\Sigma \times I)$
 along $Y$-graphs with one special leaf.

%
%
%
%
%
%

\bibliographystyle{abbrv}

\bibliography{Y3}

\begin{thebibliography}{10}

\bibitem{AN}
M.~Asada and H.~Nakamura.
\newblock On graded quotient modules of mapping class groups of surfaces.
\newblock {\em Israel J. Math.}, 90(1-3):93--113, 1995.

\bibitem{Auclair}
E.~{A}uclair.
\newblock {\em {S}urfaces et invariants de type fini en dimension 3}.
\newblock PhD thesis, {U}niversit{\'e} {J}oseph-{F}ourier - {G}renoble {I}, 10
  2006.

\bibitem{BN}
D.~Bar-Natan.
\newblock On the {V}assiliev knot invariants.
\newblock {\em Topology}, 34(2):423--472, 1995.

\bibitem{BP}
R.~Benedetti and C.~Petronio.
\newblock Reidemeister-{T}uraev torsion of 3-dimensional {E}uler structures
  with simple boundary tangency and pseudo-{L}egendrian knots.
\newblock {\em Manuscripta Math.}, 106(1):13--61, 2001.

\bibitem{Birman_Siegel}
J.~S. Birman.
\newblock On {S}iegel's modular group.
\newblock {\em Math. Ann.}, 191:59--68, 1971.

\bibitem{BC}
J.~S. Birman and R.~Craggs.
\newblock The {$\mu $}-invariant of {$3$}-manifolds and certain structural
  properties of the group of homeomorphisms of a closed, oriented
  {$2$}-manifold.
\newblock {\em Trans. Amer. Math. Soc.}, 237:283--309, 1978.

\bibitem{Bourbaki}
N.~Bourbaki.
\newblock {\em \'{E}l\'ements de math\'ematique. {F}asc. {XXXVII}. {G}roupes et
  alg\`ebres de {L}ie. {C}hapitre {II}: {A}lg\`ebres de {L}ie libres.
  {C}hapitre {III}: {G}roupes de {L}ie}.
\newblock Hermann, Paris, 1972.
\newblock Actualit\'es Scientifiques et Industrielles, No. 1349.

\bibitem{CHM}
D.~Cheptea, K.~Habiro, and G.~Massuyeau.
\newblock A functorial {LMO} invariant for {L}agrangian cobordisms.
\newblock {\em Geom. Topol.}, 12(2):1091--1170, 2008.

\bibitem{Chillingworth}
D.~R.~J. Chillingworth.
\newblock Winding numbers on surfaces. {II}.
\newblock {\em Math. Ann.}, 199:131--153, 1972.

\bibitem{CT}
D.~Cimasoni and V.~Turaev.
\newblock A generalization of several classical invariants of links.
\newblock {\em Osaka J. Math.}, 44(3):531--561, 2007.

\bibitem{CGO}
T.~D. Cochran, A.~Gerges, and K.~Orr.
\newblock Dehn surgery equivalence relations on 3-manifolds.
\newblock {\em Math. Proc. Cambridge Philos. Soc.}, 131(1):97--127, 2001.

\bibitem{CM}
T.~D. Cochran and P.~Melvin.
\newblock Finite type invariants of 3-manifolds.
\newblock {\em Invent. Math.}, 140(1):45--100, 2000.

\bibitem{DP}
K.~{D}equidt {P}icot.
\newblock {\em {C}oeur de l'invariant de {C}asson et cobordismes d'homologie}.
\newblock PhD thesis, {U}niversit{\'e} de {N}antes, 05 2005.

\bibitem{FJR}
S.~Friedl, A.~Juh{\'a}sz, and J.~Rasmussen.
\newblock The decategorification of sutured {F}loer homology.
\newblock {\em J. Topol.}, 4(2):431--478, 2011.

\bibitem{GGP}
S.~Garoufalidis, M.~Goussarov, and M.~Polyak.
\newblock Calculus of clovers and finite type invariants of 3-manifolds.
\newblock {\em Geom. Topol.}, 5:75--108 (electronic), 2001.

\bibitem{GL_loop}
S.~Garoufalidis and J.~Levine.
\newblock Concordance and 1-loop clovers.
\newblock {\em Algebr. Geom. Topol.}, 1:687--697 (electronic), 2001.

\bibitem{GL_tree}
S.~Garoufalidis and J.~Levine.
\newblock Tree-level invariants of three-manifolds, {M}assey products and the
  {J}ohnson homomorphism.
\newblock In {\em Graphs and patterns in mathematics and theoretical physics},
  volume~73 of {\em Proc. Sympos. Pure Math.}, pages 173--203. Amer. Math.
  Soc., Providence, RI, 2005.

\bibitem{Goussarov}
M.~Goussarov.
\newblock Finite type invariants and {$n$}-equivalence of {$3$}-manifolds.
\newblock {\em C. R. Acad. Sci. Paris S\'er. I Math.}, 329(6):517--522, 1999.

\bibitem{Goussarov_graphs}
M.~Goussarov.
\newblock Variations of knotted graphs. {T}he geometric technique of
  {$n$}-equivalence.
\newblock {\em Algebra i Analiz}, 12(4):79--125, 2000.

\bibitem{GM}
L.~Guillou and A.~Marin.
\newblock Notes sur l'invariant de {C}asson des sph\`eres d'homologie de
  dimension trois.
\newblock {\em Enseign. Math. (2)}, 38(3-4):233--290, 1992.
\newblock With an appendix by Christine Lescop.

\bibitem{Habegger}
N.~Habegger.
\newblock Milnor, {J}ohnson, and tree level perturbative invariants.
\newblock Preprint, 2000.

\bibitem{Habiro}
K.~Habiro.
\newblock Claspers and finite type invariants of links.
\newblock {\em Geom. Topol.}, 4:1--83 (electronic), 2000.

\bibitem{HM_SJD}
K.~Habiro and G.~Massuyeau.
\newblock Symplectic {J}acobi diagrams and the {L}ie algebra of homology
  cylinders.
\newblock {\em J. Topol.}, 2(3):527--569, 2009.

\bibitem{HM_survey}
K.~{Habiro} and G.~{Massuyeau}.
\newblock {From mapping class groups to monoids of homology cobordisms: a
  survey}.
\newblock {\em ArXiv e-prints}, Mar. 2010.

\bibitem{Hain}
R.~Hain.
\newblock Infinitesimal presentations of the {T}orelli groups.
\newblock {\em J. Amer. Math. Soc.}, 10(3):597--651, 1997.

\bibitem{Hilden}
H.~M. Hilden.
\newblock Representations of homology {$3$}-spheres.
\newblock {\em Pacific J. Math.}, 94(1):125--129, 1981.

\bibitem{HL}
M.~Hutchings and Y.-J. Lee.
\newblock Circle-valued {M}orse theory and {R}eidemeister torsion.
\newblock {\em Geom. Topol.}, 3:369--396, 1999.

\bibitem{Johnson_first_homomorphism}
D.~Johnson.
\newblock An abelian quotient of the mapping class group {${\mathcal{I}_g}$}.
\newblock {\em Math. Ann.}, 249(3):225--242, 1980.

\bibitem{Johnson_BC}
D.~Johnson.
\newblock Quadratic forms and the {B}irman-{C}raggs homomorphisms.
\newblock {\em Trans. Amer. Math. Soc.}, 261(1):235--254, 1980.

\bibitem{Johnson_quadratic}
D.~Johnson.
\newblock Spin structures and quadratic forms on surfaces.
\newblock {\em J. London Math. Soc. (2)}, 22(2):365--373, 1980.

\bibitem{Johnson_survey}
D.~Johnson.
\newblock A survey of the {T}orelli group.
\newblock In {\em Low-dimensional topology ({S}an {F}rancisco, {C}alif.,
  1981)}, volume~20 of {\em Contemp. Math.}, pages 165--179. Amer. Math. Soc.,
  Providence, RI, 1983.

\bibitem{Johnson_subgroup}
D.~Johnson.
\newblock The structure of the {T}orelli group. {II}. {A} characterization of
  the group generated by twists on bounding curves.
\newblock {\em Topology}, 24(2):113--126, 1985.

\bibitem{Johnson_abelianization}
D.~Johnson.
\newblock The structure of the {T}orelli group. {III}. {T}he abelianization of
  {$\mathcal{I}$}.
\newblock {\em Topology}, 24(2):127--144, 1985.

\bibitem{KLW}
P.~Kirk, C.~Livingston, and Z.~Wang.
\newblock The {G}assner representation for string links.
\newblock {\em Commun. Contemp. Math.}, 3(1):87--136, 2001.

\bibitem{LMO}
T.~T.~Q. Le, J.~Murakami, and T.~Ohtsuki.
\newblock On a universal perturbative invariant of {$3$}-manifolds.
\newblock {\em Topology}, 37(3):539--574, 1998.

\bibitem{Lescop}
C.~Lescop.
\newblock A sum formula for the {C}asson-{W}alker invariant.
\newblock {\em Invent. Math.}, 133(3):613--681, 1998.

\bibitem{Levine_enlargement}
J.~Levine.
\newblock Homology cylinders: an enlargement of the mapping class group.
\newblock {\em Algebr. Geom. Topol.}, 1:243--270 (electronic), 2001.

\bibitem{Levine_addendum}
J.~Levine.
\newblock Addendum and correction to: ``{H}omology cylinders: an enlargement of
  the mapping class group''.
\newblock {\em Algebr. Geom. Topol.}, 2:1197--1204 (electronic), 2002.

\bibitem{Levine_quasi-Lie}
J.~Levine.
\newblock Labeled binary planar trees and quasi-{L}ie algebras.
\newblock {\em Algebr. Geom. Topol.}, 6:935--948 (electronic), 2006.

\bibitem{Massuyeau_these}
G.~{M}assuyeau.
\newblock {\em {I}nvariants de type fini des vari{\'e}t{\'e}s de dimension
  trois et structures spinorielles}.
\newblock PhD thesis, {U}niversit{\'e} de {N}antes, 10 2002.

\bibitem{Massuyeau_DSP}
G.~Massuyeau.
\newblock Finite-type invariants of 3-manifolds and the dimension subgroup
  problem.
\newblock {\em J. Lond. Math. Soc. (2)}, 75(3):791--811, 2007.

\bibitem{Massuyeau_torsion}
G.~Massuyeau.
\newblock Some finiteness properties for the {R}eidemeister-{T}uraev torsion of
  three-manifolds.
\newblock {\em J. Knot Theory Ramifications}, 19(3):405--447, 2010.

\bibitem{MM}
G.~Massuyeau and J.-B. Meilhan.
\newblock Characterization of {$Y\sb 2$}-equivalence for homology cylinders.
\newblock {\em J. Knot Theory Ramifications}, 12(4):493--522, 2003.

\bibitem{Matveev}
S.~V. Matveev.
\newblock Generalized surgeries of three-dimensional manifolds and
  representations of homology spheres.
\newblock {\em Mat. Zametki}, 42(2):268--278, 345, 1987.

\bibitem{Meilhan}
J.-B. Meilhan.
\newblock On surgery along {B}runnian links in 3-manifolds.
\newblock {\em Algebr. Geom. Topol.}, 6:2417--2453, 2006.

\bibitem{Meyer}
W.~Meyer.
\newblock Die {S}ignatur von {F}l\"achenb\"undeln.
\newblock {\em Math. Ann.}, 201:239--264, 1973.

\bibitem{Milnor_duality}
J.~Milnor.
\newblock A duality theorem for {R}eidemeister torsion.
\newblock {\em Ann. of Math. (2)}, 76:137--147, 1962.

\bibitem{Milnor}
J.~Milnor.
\newblock {\em Lectures on the {$h$}-cobordism theorem}.
\newblock Notes by L. Siebenmann and J. Sondow. Princeton University Press,
  Princeton, N.J., 1965.

\bibitem{Morita_characteristic1}
S.~Morita.
\newblock Characteristic classes of surface bundles.
\newblock {\em Bull. Amer. Math. Soc. (N.S.)}, 11(2):386--388, 1984.

\bibitem{Morita_characteristic2}
S.~Morita.
\newblock Characteristic classes of surface bundles.
\newblock {\em Invent. Math.}, 90(3):551--577, 1987.

\bibitem{Morita_Casson1}
S.~Morita.
\newblock Casson's invariant for homology {$3$}-spheres and characteristic
  classes of surface bundles. {I}.
\newblock {\em Topology}, 28(3):305--323, 1989.

\bibitem{Morita_families1}
S.~Morita.
\newblock Families of {J}acobian manifolds and characteristic classes of
  surface bundles. {I}.
\newblock {\em Ann. Inst. Fourier (Grenoble)}, 39(3):777--810, 1989.

\bibitem{Morita_Casson2}
S.~Morita.
\newblock On the structure of the {T}orelli group and the {C}asson invariant.
\newblock {\em Topology}, 30(4):603--621, 1991.

\bibitem{Morita_abelian}
S.~Morita.
\newblock Abelian quotients of subgroups of the mapping class group of
  surfaces.
\newblock {\em Duke Math. J.}, 70(3):699--726, 1993.

\bibitem{Morita_Casson3}
S.~Morita.
\newblock Casson invariant, signature defect of framed manifolds and the
  secondary characteristic classes of surface bundles.
\newblock {\em J. Differential Geom.}, 47(3):560--599, 1997.

\bibitem{Ohtsuki}
T.~Ohtsuki.
\newblock Finite type invariants of integral homology {$3$}-spheres.
\newblock {\em J. Knot Theory Ramifications}, 5(1):101--115, 1996.

\bibitem{Ohtsuki_book}
T.~Ohtsuki.
\newblock {\em Quantum invariants}, volume~29 of {\em Series on Knots and
  Everything}.
\newblock World Scientific Publishing Co. Inc., River Edge, NJ, 2002.
\newblock A study of knots, 3-manifolds, and their sets.

\bibitem{Perron}
B.~Perron.
\newblock Mapping class group and the {C}asson invariant.
\newblock {\em Ann. Inst. Fourier (Grenoble)}, 54(4):1107--1138, 2004.

\bibitem{Pitsch}
W.~Pitsch.
\newblock Integral homology 3-spheres and the {J}ohnson filtration.
\newblock {\em Trans. Amer. Math. Soc.}, 360(6):2825--2847, 2008.

\bibitem{Sakasai}
T.~Sakasai.
\newblock The {M}agnus representation and higher-order {A}lexander invariants
  for homology cobordisms of surfaces.
\newblock {\em Algebr. Geom. Topol.}, 8(2):803--848, 2008.

\bibitem{Stallings}
J.~Stallings.
\newblock Homology and central series of groups.
\newblock {\em J. Algebra}, 2:170--181, 1965.

\bibitem{Turaev_Alexander}
V.~Turaev.
\newblock The {A}lexander polynomial of a three-dimensional manifold.
\newblock {\em Mat. Sb. (N.S.)}, 97(139):341--359, 463, 1975 (in Russian).
  English translation: {\it Math. USSR Sb.}, 26:313--329, 1975.

\bibitem{Turaev_Euler}
V.~Turaev.
\newblock Euler structures, nonsingular vector fields, and {R}eidemeister-type
  torsions.
\newblock {\em Izv. Akad. Nauk SSSR Ser. Mat.}, 53(3):607--643, 672, 1989 (in
  Russian). English translation: {\it Math. USSR Izvestia}, 34:627--662, 1990.

\bibitem{Turaev_spinc}
V.~Turaev.
\newblock Torsion invariants of {${\rm Spin}^c$}-structures on {$3$}-manifolds.
\newblock {\em Math. Res. Lett.}, 4(5):679--695, 1997.

\bibitem{Turaev_book}
V.~Turaev.
\newblock {\em Introduction to combinatorial torsions}.
\newblock Lectures in Mathematics ETH Z\"urich. Birkh\"auser Verlag, Basel,
  2001.
\newblock Notes taken by Felix Schlenk.

\bibitem{yokomizo}
Y.~Yokomizo.
\newblock An {${\rm Sp}(2g;\Bbb Z_2)$}-module structure of the cokernel of the
  second {J}ohnson homomorphism.
\newblock {\em Topology Appl.}, 120(3):385--396, 2002.

\end{thebibliography}

\end{document}